\documentclass[numbers,webpdf,imanum]{ima-authoring-template}%
\makeatletter
\def\@journaltitle{}
\def\@DOI{}
\def\@vol{}
\def\@copyrightyear{}
\def\thelastpage{}
\def\@access{}
\def\@appnotes{}
\makeatother

\graphicspath{{Fig/}}

\theoremstyle{thmstyletwo}%
\newtheorem{theorem}{Theorem}[section]
\newtheorem{proposition}[theorem]{Proposition}%
\newtheorem{lemma}[theorem]{Lemma}

\newtheorem{remark}{Remark}[section]%

\newtheorem{definition}{Definition}[section]

\numberwithin{equation}{section}
\newtheorem{assumption}{Assumption}[section]
\usepackage{bm}
\usepackage{eucal}
\allowdisplaybreaks
 \providecommand{\triplenorm}[1]{\left|\!\left|\!\left| #1 \right|\!\right|\!\right|} 
 \usepackage{upgreek}
 \usepackage{diagbox}

\begin{document}



\title[VEM for coupled fourth-order 
PNP-NS model]{Conforming Virtual Element Method for   Biharmonic Poisson--Nernst--Planck 
Navier--Stokes systems}
\author{Ankur Ankur and Andrea Cangiani* 
\address{\orgdiv{Mathematics Area}, 
\orgname{International School for Advanced Studies (SISSA)}, 
\orgaddress{\street{via Bonomea 265}, \postcode{I-34136}, \state{Trieste}, \country{Italy}}}}

\authormark{A. Ankur and  A. Cangiani}

\corresp[*]{Corresponding author: \href{andrea.cangiani@sissa.it}{andrea.cangiani@sissa.it}}

\received{12}{05}{2026}


\abstract{We develop and analyze 
a conforming Virtual Element Method (VEM) for the fourth-order Poisson-Nernst-Planck-Navier-Stokes (PNP-NS) system. 
The proposed scheme is based on compatible  discretizations of each component: an \(H^2\)-conforming VEM for a fourth-order electrostatic potential equation, an \(H^1\)-conforming VEM for the Nernst--Planck equations, and a \textit{divergence-free} and \textit{pressure-robust} VEM for the Navier--Stokes equations. 
Time integration is performed using a  backward Euler scheme to ensure stability.
We establish the well-posedness of the continuous problem up to three-dimensions and also establish existence and uniqueness of the fully discrete solution via a fixed-point argument. Further, we derive a priori error estimates 
showing that the electrostatic potential, concentration and velocity converge optimally  in Bochner norms \(L^\infty\bigl(0,T; H^2(\Omega)\bigr)\), \(L^2\bigl(0,T; H^1(\Omega)\bigr)\), and \(L^2\bigl(0,T; \bm{H}^1(\Omega)\bigr)\), respectively.
The analysis requires a sophisticated argument to avoid any restrictive assumptions on the coercivity and continuity constants and to handle the 
trilinear form involving three different variables. 
The pressure-robust design permits the use of lowest-order pressure approximations without compromising convergence rates of the other variables, reducing computational cost. Numerical experiments confirm the theoretical convergence rates for various polynomial orders and demonstrate the scheme's robustness, including in low-viscosity regimes.}
\keywords{Fourth-order PNP-NS model, Virtual Element Method, Error estimates, Well-posedness, Electrolyte model.}


\maketitle
\vspace{0.2cm}
\section{Introduction}

The classical theory of electrokinetic flow underpins a wide range of disciplines, including micro and nanofluidics, electrochemistry, and biophysics. Several theoretical frameworks have been developed to describe electrokinetic phenomena, notably the Poisson-Boltzmann equation, the Nernst-Planck-Poisson equation, and the more comprehensive Poisson-Nernst-Planck-Navier-Stokes (PNP-NS) equations \cite{probstein2005physicochemical}. Among these, the PNP-NS system provides the most general description, as it captures coupled physical-chemical processes and electrohydrodynamic phenomena. This framework is particularly effective for colloidal systems, enabling detailed analysis of separation, aggregation, and sedimentation of charged particles. Moreover, it has found broad application in areas such as drug transport across biomembranes, semiconductor technologies, and the design of microfluidic devices \cite{cioffi2006modeling, wang2017modeling, lu2010poisson}.

The PNP-NS system is a set of coupled partial differential equations (PDEs) used to describe the behavior of charged species in a fluid under the influence of electric fields, while also accounting for fluid flow dynamics. 
The Poisson equation determines the electrostatic potential arising from the spatial distribution of charges. Ionic transport is governed by the Nernst-Planck equation, which incorporates diffusion and electromigration in response to the electrostatic potential. The Navier-Stokes equation describes the motion of the fluid due to internal forces, external forces, and pressure gradients. Coupling all these equations enables a self-consistent description of electrokinetic flow driven by the interaction between electric fields, ionic transport, and hydrodynamics. Various numerical schemes and analytical results for such coupled models have been developed in the literature; we refer to \cite{ern2012mathematical, chainais2022long, he2021mixed, correa2023new, he2018mixed, prohl2010convergent, dehghan2023optimal} for representative contributions. 

Despite its broad applicability, classical electrokinetic theory has well-known limitations, motivating continued developments in the field \cite{vlachy1999ionic, bazant2009towards, jiwari2026finite}. The classical theory relies on a mean-field approximation, wherein the electric field acting on an ion is determined by the local average charge density. This assumption fails in systems such as multivalent electrolytes and solvent-free ionic liquids. Addressing these limitations requires more refined frameworks that move beyond the mean-field assumption.
New formulations are necessary to capture the complexities of room-temperature ionic liquids (RTILs). RTILs are composed of large organic cations paired with either similarly sized organic anions or smaller inorganic anions. They show great potential as solvent-free electrolytes for various advanced technologies, including  batteries, supercapacitors, solar cells, and electroactuators. Recently, Bazant, Storey, and Kornyshev (BSK) \cite{de2020continuum, storey2012effects, bazant2011double}  proposed a Landau-Ginzburg-type continuum model for RTILs, resulting in a  modified fourth-order problem for the electrostatic potential:  \vspace{-0.0cm}
\begin{align} 
&\epsilon \;( l^2_c \;\Delta^2 \phi - \Delta \phi ) = \rho. \label{eqn1} 
\end{align}
Here, $\epsilon$ denotes permittivity, $l_c$ electrostatic correlation length, $\phi$ electrostatic potential, and $\rho$ charge density. 
The model can be interpreted as a gradient-based approximation of nonlocal electrostatics for interacting effective charges, in which the permittivity operator is modified by the inclusion of a correlation-length scale. This framework captures key microscopic effects observed in molecular simulations and enables simplified modeling of electrokinetic flows in correlated ionic fluids.


Discretizing the fourth-order BSK-type equation requires $H^2$-conforming  finite elements. On general unstructured meshes, constructing such spaces \cite{wang2013minimal} requires high polynomial degrees (at least 5 in 2D and 9 in 3D), resulting in a very large number of degrees of freedom (dofs) and considerable computational overhead. 
While nonconforming and mixed methods \cite{he2021mixed} typically yield only $C^0$ regular discrete solutions, mixed methods also introduce auxiliary variables that substantially increase the system size.
Considering the complexity of the  full modified PNP-NS model (governed by four equations \eqref{eqn3a}-\eqref{eqn3d}), 
computational efficiency becomes critical as inefficient discretizations for any single variable can make the entire coupled problem intractable. Further computational challenges arise from the formation of electric double (Debye) layers, which generate sharp potential gradients near charged surfaces.

\subsection{Key methodological features and analytical contributions}

To address the challenges listed above, we develop a novel conforming discretization for the fourth-order PNP-NS model based on the recently introduced virtual element method (VEM)~\cite{beirao2013basic}. 
The proposed scheme combines: 
an $H^2$ conforming VEM for the potential  equation~\cite{brezzi2013virtual, antonietti2016c, chen2022conforming}, an $H^1$ conforming VEM for the NP system~\cite{beirao2013basic,Cangiani17}, and a divergence-free VEM formulation for the NS system~\cite{da2018virtual, da2017divergence}. 
This combination of discretizations balances conformity requirements with efficiency and avoids unnecessary increases in the global number of degrees of freedom. 
In particular, we obtain a simple $H^2$-conforming discretization with a reduced number of degrees of freedom compared to mixed FEMs. The divergence-free NS discretization is naturally pressure-robust. As such, it allows  piecewise constant approximations of the pressure without degrading the convergence order of the remaining variables, leading to a significant reduction in computational cost. Moreover, the method remains robust in low-viscosity regimes.  To the best of our knowledge, this is the first conforming numerical analysis for a coupled PNP-NS type system involving a biharmonic electrostatic potential.

From the analytical point of view, one of the main difficulties arises from the strong nonlinear coupling between equations of different types.
We establish well-posedness of the coupled continuous problem. In particular, unlike several related PNP-NS models where uniqueness is typically available only in two dimensions, or in three dimensions only after removing the convective term from the Navier-Stokes equations \cite[Theorem 2]{schmuck2009analysis}, we prove uniqueness up to three dimensions under suitable smallness assumptions on the data. 
 
We present a complete numerical analysis of the fully discrete VEM scheme. The analysis relies on carefully designed projection operators and consistency estimates for the nonlinear trilinear forms and their discrete counterparts.
We show well-posedness of the discrete problem by a fixed-point argument. We then derive optimal a priori error estimates for the electrostatic potential, concentrations and velocity in the Bochner spaces \(L^\infty(0,T; H^2(\Omega))\), \(L^2(0,T; H^1(\Omega))\) and \(L^2(0,T; \bm{H}^1(\Omega))\), respectively. A key feature of the analysis is that the error estimates are obtained without imposing restrictive assumptions on the coercivity and continuity constants of the continuous and discrete bilinear forms, unlike many existing results in the VEM literature \cite[Proposition 5.3]{antonietti2023virtual}. This is achieved through a careful use of Sobolev embedding arguments, which is delicate to apply within the VEM framework. 
Another significant technical hurdle stems from the non skew-symmetric trilinear forms involving three distinct variables within a single form (e.g., velocity-concentration-potential couplings). Unlike the standard skew-symmetric case where the convective term vanishes under the natural test function, here we must carefully quantify the consistency error between the continuous trilinear form and its discrete VEM counterpart (see Lemma \ref{lemma_S_C}). As this consistency error enters the error analysis explicitly, the non skew-symmetric contributions no longer cancel and require a more refined treatment.

Lastly, we confirm the theoretical results through numerical experiments. Our tests demonstrate optimal convergence across various polynomial orders and verify the scheme's stability and robustness for low viscosity coefficients.
\vspace{-0.2cm}
\section{Preliminaries and the continuous problem} Let $\Omega\subset \mathbb{R}^d\; (d\leq 3)$ be a bounded Lipschitz domain  and $\Omega_T=\Omega \times (0,T]$ with time $T > 0.$ The fourth-order modified PNP-NS coupled model \cite{he2021mixed} with no net flux is given by  \vspace{-0.1cm}
\begin{subequations} \label{eqn3}
    \begin{align}
        & 
        \Delta^2\phi - \Delta\phi = n^F + f_{\phi}, \quad \text{in} \;\Omega_T,\label{eqn3a}\\[-0.12em]
        &\partial_t C_i-\nabla \cdot (\nabla C_i + z_i C_i\nabla \phi) + \nabla \cdot ( \bm{u}C_i)=f_{c_i}, \quad \text{in} \;\Omega_T \;\text{with}\; i=1,2,\label{eqn3b}\\[-0.12em]
        & \partial_t \bm{u} + (\bm{u}\cdot \nabla)\bm{u} - \upnu \Delta \bm{u} + \nabla p = - n^F \nabla \phi + \bm{f}_{u}, \quad \text{in} \;\Omega_T,\label{eqn3c}\\[-0.12em]
        &\nabla \cdot \bm{u}=0, \quad \text{in} \;\Omega_T,\label{eqn3d}\\[-0.12em]
        &\nabla\phi\cdot \bm{n}=\nabla(\Delta\phi)\cdot \bm{n}=0, \;\; (\nabla C_i +z_i C_i\nabla \phi).\bm{n} =0,\; 
        \bm{u}=\bm{0}, \;\; \text{on} \;\partial\Omega\times (0,T],\label{eqn3e}\\[-0.12em]
        &\bm{u}(x, 0)=\bm{u}_0(x), 
        \;\; C_i(x, 0)=C_{i,0}(x) \;\;\text{in} \;\Omega \;\text{with}\; i=1,2, \label{eqn3f} 
    \end{align} 
\end{subequations} 
where $\bm{n}$ is the outward unit normal, $\phi$ represents the electrostatic potential, while $C_1$ and $C_2$ denote the ionic concentrations. We define the free charge density by $n^F = z_1 C_1 + z_2 C_2$, where $z_1=1$ and $z_2=-1$ are the ionic charges. The variables $\bm{u}$ and $p$ correspond to the velocity and pressure of the fluid. The functions $f_{\phi}:\Omega_T\to\mathbb{R}$, $f_{c_i}:\Omega_T\to\mathbb{R}$ for $i=1,2$, and $\bm{f}_u:\Omega_T\to\mathbb{R}^d$ denote the given external forcing terms.
Since $\phi$ and $p$ are determined up to additive constants, we impose mean-zero constraints:\vspace{-0.2cm}
\begin{equation} \label{eqn5}
    \int_{\Omega} \phi\,dx = \int_{\Omega} p \,dx= 0.  \vspace{-0.2cm}
\end{equation}
We also note that compatibility between equation \eqref{eqn3a} and boundary condition \eqref{eqn3f} implies the electro-neutrality condition \vspace{-0.1cm}
\begin{equation}
    \int_{\Omega} \left(n^F + f_{\phi}\right)\,dx=0  .  \label{eqn4} \vspace{-0.05cm}
\end{equation}
For analytical simplicity, we set the viscosity coefficient $\upnu = 1$. Incorporating the viscosity coefficient is possible by a straightforward modification of the analysis, following~\cite{da2018virtual}, where it is also shown numerically that the VEM is more robust than standard mixed FEM. Our numerical experiments likewise confirm the robustness of the VEM with respect to small values of $\upnu$.
\vspace{-0.1cm}
\subsection{Notations and weak formulation}
For any subdomain $\mathcal{F}$ in $\Omega$, we use standard notation for  Sobolev spaces $W^{l,q}(\mathcal{F})$ with their associated norms and seminorms $\| \cdot \|_{l,q,\mathcal{F}}$ and $| \cdot |_{l,q,\mathcal{F}}$, respectively. For $q=2$, we set $W^{l,2}(\mathcal{F}) := H^l(\mathcal{F})$, and for the vector-valued counterpart,  $\bm{H}^l(\mathcal{F})$ := $\left[H^l(\mathcal{F})\right]^d$,  with $\|\cdot\|_{l,2,\mathcal{F}} := \|\cdot\|_{l,\mathcal{F}}$ and ${|\cdot|_{l,2,\mathcal{F}}} := |\cdot|_{l,\mathcal{F}}$. The $L^2$ inner product on $\mathcal{F}$ is denoted by $(\cdot,\cdot)_{0,\mathcal{F}}$. If $\mathcal{F} = \Omega$, we omit the subscript for simplicity. We also consider Bochner spaces $L^q(0,T;\mathcal{X})$ with standard norms \cite{quarteroni2008numerical}, and denote by $\mathcal{X}'$ the dual of Banach space $\mathcal{X}$ with action $\langle f,x\rangle_{\mathcal{X}',\mathcal{X}}$, $f\in \mathcal{X}'$, $x\in \mathcal{X}$. Throughout the paper, $c$ and $c_i, i\in \mathbb{N}$ denote  generic constants whose values may vary at different occurrences. Additionally, to simplify notation, we shall omit the explicit time dependence in expressions such as \( f(\cdot, t) \), and simply write \( f \) when no confusion arises. 
Next, we denote $M:=H^1(\Omega)$, $\bm{M}:=M\times M$ and introduce the following functional spaces: \vspace{-0.1cm}
\begin{align}
U &:= H^2_{\partial}(\Omega) = \left\{ \varphi \in H^2(\Omega) : \quad\partial_{\bm{n}} \varphi = 0 \; \;\text{on} \;\; \partial \Omega \right\}, \nonumber\\[-0.1em]
\mathring{U} &:= \mathring{H}{}_{\partial}^{2}(\Omega) = \left\{ \varphi \in U : \quad (\varphi,1)_0 = 0 \right\}, \nonumber\\[-0.1em]
\bm{V} &:= \bm{H}^1_0(\Omega) = \left\{ \bm{v} \in \bm{H}^1(\Omega) :\quad \bm{v} = \bm{0} \;\; \text{on} \; \;\partial \Omega \right\},\nonumber \\[-0.1em]
W &:= L^2_0(\Omega) = \left\{ q \in L^2(\Omega) : \quad (q,1)_0 = 0 \right\}.\nonumber
\end{align}
We endow these spaces with the norms  \vspace{-0.15cm}
\[
\|{\varphi}\|_{\mathring{U}}^{2}= \|{\varphi}\|_{{U}}^{2}:=\Vert {\varphi}\Vert_{2}^{2}, \quad\quad\|(C_{1},C_{2})\|_{\bm{M}}^{2}:=\|C_{1}\|_{M}^{2}+\|C_{2}\|_{M}^{2}, \quad\quad \|\bm{v}\|_{\bm{V}}^{2}:=\|\bm{v}\|_{1}^{2}, \quad \quad\|q\|_{W}^{2}:=\|q\|_{0}^{2}, \vspace{-0.1cm}
\]
with $\| \cdot \|_M := \|\cdot\|_1$. Note that the sum norm $\|(C_{1},C_{2})\|_{\text{sum}}:=\|C_{1}\|_{M}+\|C_{2}\|_{M}$ is equivalent to the $\|(C_{1},C_{2})\|_{\bm{M}}$ norm. Using these spaces, we define the following associated functionals, bilinear and trilinear forms \vspace{-0.2cm}
\begin{equation} \label{eqn7}
\begin{aligned}
      &A_P(\psi,\varphi) := \int_{\Omega} \nabla \psi \cdot \nabla \varphi \,dx, 
      &\quad &J_P(\psi,\varphi) := \int_{\Omega} \nabla^2 \psi : \nabla^2 \varphi \,dx, 
      &\quad &M_C(C,\chi) := \int_{\Omega} C \chi \,dx,\\
      &A_C(C,\chi) := \int_{\Omega} \nabla C \cdot \nabla \chi \,dx, 
      &\quad &M_V(\bm{v},\bm{w}) := \int_{\Omega} \bm{v} \cdot \bm{w} \,dx, 
      &\quad &A_V(\bm{v},\bm{w}) := \int_{\Omega} \nabla \bm{v} : \nabla \bm{w} \,dx,\\
      &B(q,\bm{v}) := \int_{\Omega} q \nabla \cdot \bm{v} \,dx, 
      &\quad &S_C(\varphi;C,\chi) := \int_{\Omega} C \nabla \varphi \cdot \nabla \chi \,dx,  
      &\quad &Q_C(\bm{v};C,\chi) := \int_{\Omega} C\bm{v}  \cdot \nabla \chi \,dx,\\
      &Q_V(\bm{v};\bm{w},\bm{z}) := \int_{\Omega} (\nabla \bm{w}) \bm{v} \cdot \bm{z} \,dx,
      &\quad &S_V(C;\varphi,\bm{v}) := \int_{\Omega} C \nabla \varphi \cdot \bm{v} \,dx, 
      &\quad &l(C;\varphi) := \int_{\Omega} C \varphi \,dx, \\
      &F_{\phi}(\varphi) := \int_{\Omega} f_{\phi} \varphi\,dx,
      &\quad &F_{c_i}(\chi) := \int_{\Omega} f_{c_i} \chi\,dx,
      &\quad & \bm{F}_{u}(\bm{v}) := \int_{\Omega} \bm{f}_{u} \cdot \bm{v}\,dx, 
\end{aligned}
\end{equation} 
for all $C,\chi \in M$, $\varphi,\psi \in \mathring{U}$, $\bm{v},\bm{w},\bm{z} \in \bm{V}$, and $q \in W$. 
Using the above notations, we define the weak solution of system \eqref{eqn3} as follows 
\vspace{-0.2cm}
 \begin{definition} \label{def0}
For given initial data $\left(\bm{u}_0,\; C_{1,0},\; C_{2,0}\right)\in \bm{L}^2(\Omega)\times L^{\infty}(\Omega)\times L^{\infty}(\Omega)$ and forces $f_{\phi} \in L^{\infty}(0,T; L^2(\Omega)), f_{c_i} \in L^{2}(0,T; L^2(\Omega))$ and $\bm{f}_{u} \in L^{2}(0,T; \bm{L}^2(\Omega))$,  we call the tuple ${\{ \phi, (C_1, C_2), \bm{u},\allowbreak p\}}$ a weak solution of \eqref{eqn3} if \vspace{-0.05cm}
\begin{center}
\begin{tabular}{lll}
$C_i \in L^{2}(0,T; M) \cap L^{\infty}(\Omega_T),$ 
&
$\phi \in L^{\infty}(0,T; \mathring{U}),$ 
&
$\partial_t C_i \in L^2(0, T; M'),$ \\
$\bm{u} \in L^{2}(0,T; \bm{V}) \cap L^{\infty}(0,T; \bm{L}^2(\Omega)),$ 
&
$p \in L^{2}(0,T; W),$ 
&
$\partial_t \bm{u} \in L^2(0, T; \bm{V}'),$
\end{tabular}
\end{center}
 and satisfies:\vspace{-0.3cm}
\begin{subequations} \label{eqn6'}
    \begin{align}
        &J_P(\phi,\varphi)+ A_P(\phi,\varphi) = l(C_1;\varphi) - l(C_2;\varphi)+ F_{\phi}(\varphi), &\forall\; \varphi \in \mathring{U},\label{eqn6'a}\\
        &M_C(\partial_tC_i,\chi_i)+ A_C( C_i,\chi_i) + z_iS_C ( \phi; C_i, \chi_i) - Q_C(\bm{u};C_i, \chi_i)=F_{c_i}(\chi_i), &\forall\; \chi_i \in M,\label{eqn6'b}\\
        & M_V(\partial_t \bm{u}, \bm{v}) + A_V( \bm{u}, \bm{v}) + Q_V(\bm{u};\bm{u}, \bm{v}) - B(p, \bm{v}) = - S_V\left(C_1-C_2; \phi, \bm{v}\right)+ \bm{F}_{u}(\bm{v}), &\forall\; \bm{v} \in \bm{V},\label{eqn6'c}\\
        &B(q,\bm{u})=0, &\forall\; q \in W.\label{eqn6'd}
    \end{align}
\end{subequations} 
\end{definition} 
We define the following equivalent  formulation using a divergence-free subspace: 
 \begin{definition} \label{def-weak2}
Let $\widetilde{\bm{V}} := \{\bm{v}\in \bm{V} : B(q,\bm{v})=0 \;\;\forall\; q\in W\}$ and $\widetilde{\bm{V}}^0$  be the closure of ${\widetilde{D}(\Omega)= \{\bm{v} \in C^{\infty}_{0}(\Omega, \mathbb{R}^d)} \mid \operatorname{div} \bm{v} = 0\}\}$ in $\bm{L}^2(\Omega)$ \cite{da2018virtual,lions1996mathematical}. 
For given initial data $\left(\bm{u}_0,\; C_{1,0},\; C_{2,0}\right)\in \bm{L}^2(\Omega)\times L^{\infty}(\Omega)\times L^{\infty}(\Omega)$ and forces $f_{\phi} \in L^{\infty}(0,T; L^2(\Omega)), f_{c_i} \in L^{2}(0,T; L^2(\Omega))$ and $\bm{f}_{u} \in L^{2}(0,T; \bm{L}^2(\Omega))$, we call tuple $\{ \phi, (C_1, C_2), \bm{u}\}$ a weak solution of \eqref{eqn3} if \vspace{-0.1cm}
\begin{center}
\begin{tabular}{lll}
$C_i \in L^{2}(0,T; M) \cap L^{\infty}(\Omega_T),$ 
& $\phi \in L^{\infty}(0,T; \mathring{U}),$ 
&
$\partial_t C_i \in L^2(0, T; M'),$ \\
$\bm{u} \in L^{2}(0,T; \widetilde{\bm{V}}) \cap L^{\infty}(0,T; \widetilde{\bm{V}}^0),$ 
&
$\partial_t \bm{u} \in L^2(0, T; \widetilde{\bm{V}}'),$
\end{tabular}
\end{center}
 and satisfies: \vspace{-0.2cm}
\begin{subequations} \label{eqn6*}
    \begin{align}
        &J_P(\phi,\varphi)+ A_P(\phi,\varphi) = l(C_1;\varphi) - l(C_2;\varphi) + F_{\phi}(\varphi), &\quad \forall\; \varphi \in \mathring{U} ,\label{eqn6*a}\\[-0.1em]
        &M_C(\partial_tC_i,\chi_i)+ A_C( C_i,\chi_i) + z_iS_C ( \phi; C_i, \chi_i) - Q_C(\bm{u};C_i, \chi_i)= F_{c_i}(\chi_i), &\quad \forall\; \chi_i \in M ,\label{eqn6*b}\\[-0.1em]
        & M_V(\partial_t \bm{u}, \bm{v}) + A_V( \bm{u}, \bm{v}) + Q_V(\bm{u};\bm{u}, \bm{v})  = - S_V\left(C_1-C_2; \phi, \bm{v}\right) + \bm{F}_{u}(\bm{v}), &\quad \forall\; \bm{v} \in \widetilde{\bm{V}}.\label{eqn6*c}
    \end{align} 
\end{subequations} 
\end{definition} 
Finally, for fixed $\bm{v} \in \widetilde{\bm{V}}$, the bilinear forms $Q_V(\bm{v};\cdot,\cdot)$ and $Q_C(\bm{v};\cdot,\cdot)$ are skew-symmetric. Consequently, the corresponding terms in \eqref{eqn6'} and \eqref{eqn6*} can be replaced by their equivalent skew-symmetric representations \cite{da2018virtual}:\vspace{-0.2cm}
\begin{equation} \label{skew_2}
\begin{aligned}
Q_C^{\mathrm{skew}}(\bm{v};C,\chi) &= \frac{1}{2}\Big(Q_C(\bm{v};C,\chi)-Q_C(\bm{v};\chi,C)\Big),\\[-0.1em]
Q_V^{\mathrm{skew}}(\bm{v};\bm{w},\bm{z}) &= \frac{1}{2}\Big(Q_V(\bm{v};\bm{w},\bm{z})-Q_V(\bm{v};\bm{z},\bm{w})\Big).
\end{aligned}
\end{equation}
It is worth noting that, although these forms are equivalent at the continuous level, their discrete counterparts generally lead to distinct bilinear forms.
\vspace{-0.1cm}
\subsection{Well-posedness of the weak formulation}
This subsection highlights the fundamental properties of the continuous bilinear and trilinear forms, and outlines key results concerning the existence and uniqueness of the problem \eqref{eqn6*}.

Note that the coercivity of $A_C$ in $H^1(\Omega)$ norm is lacking due to absence of zero-order term. Following the classical strategy outlined in \cite[Ch.~6, p.~282]{ern2004theory}, this issue can be solved by change of variable. In particular, let us first define the following modified forms based on the bilinear forms introduced in \eqref{eqn7}:\vspace{-0.1cm}
\begin{align*}
    \widehat{A}_C(C,\chi) &= A_C(C,\chi) + (C,\chi)_{0},   && \forall\, C,\chi \in M,\\[-0.1em]
      \widehat{F}_{c_i}(\chi_i) &= e^{-t}\,F_{c_i}(\chi_i),  && \forall\, \chi_i \in M,\\[-0.1em]
    \widehat{l}(C;\varphi) &= e^{t}\,l(C;\varphi),  && \forall\, C \in M,\; \varphi \in \mathring{U},\\[-0.1em]
    \widehat{S}_V(C;\varphi, \bm{v}) &= e^{t}S_V(C;\varphi, \bm{v}),  && \forall\, \bm{v} \in \bm{V},\; C \in M,\; \varphi \in \mathring{U}.
\end{align*}
Now, we make a change of variables from $\{\phi, (C_1, C_2), \bm{u}, p\}$ to $\{\phi, (\widehat{C}_1, \widehat{C}_2), \bm{u}, p\}$ in the weak formulation \eqref{eqn6*} by setting $\widehat{C}_i = e^{-t}C_i$. It is easy to verify that the system remains structurally identical, except that $A_C$, $F_{c_i}$, $S_V$, and $l$ are replaced by $\widehat{A}_C$, $\widehat{F}_{c_i}$, $\widehat{S}_V$, and $\widehat{l}$, respectively, such that \vspace{-0.1cm}
\begin{align} \label{c-o-v}
    \widehat{A}_C(\chi,\chi) &\geq \|\chi\|_{1}^2, && \forall\, \chi \in M, \nonumber\\[-0.1em]
    |\widehat{l}(C;\varphi)| &\leq {\beta}_{l} \|C\|_0 \|\varphi\|_0, && \forall\, C \in M,\; \varphi \in \mathring{U},\nonumber\\[-0.1em]
    |\widehat{S}_V(C;\varphi, \bm{v})| &\leq {\beta}_{S_V} \|C\|_{1} \|\varphi\|_{1} \|\bm{v}\|_{1}, && \forall\, \bm{v} \in \bm{V},\; C \in M,\; \varphi \in \mathring{U},
\end{align}
where ${\beta}_{S_V} = {\beta}_{l}\widehat{\beta}_{S_V}$, with ${\beta}_{l}= e^T$ and $\widehat{\beta}_{S_V}$ denoting the associated embedding constant.

Therefore, up to a change of variables, we may always reformulate the system \eqref{eqn6*} so that $\widehat{A}_C$ is coercive. To avoid unnecessary notational complexity, we shall omit the hat superscripts in what follows and revert to the original notation. 
\begin{lemma}\label{lem:form-bounds}
Let $\varphi, \psi \in \mathring{U}$, $C, \chi \in M$, $\bm{v}, \bm{w}, \bm{z} \in \widetilde{\bm{V}}$, and $q \in W$. Then, in view of the change of variables discussed above, the forms defined in \eqref{eqn7} and \eqref{skew_2} satisfy the following properties 
\begin{equation} \label{cts_bound}
\begin{aligned}
    &A_P(\varphi,\varphi) \geq \alpha_{A_P} \|\varphi\|_{1}^2, & &   
    J_P(\varphi,\varphi) \geq  |\varphi|_{2}^2, & &  
    M_C(\chi,\chi) \geq  \|\chi\|_{0}^2, \\
    &A_C(\chi,\chi) \geq  \|\chi\|_{1}^2, & & 
    |l(C;\varphi)| \leq \beta_{l}\|C\|_{0} \|\varphi\|_{0}, & & 
    A_V(\bm{v},\bm{v}) \geq \alpha_{A_V} \|\bm{v}\|_{1}^2, \\
    & |S_C(\varphi; C, \chi)| \leq \beta_{S_C} \|\varphi\|_{2} \|C\|_{1} \|\chi\|_{1}, & & |Q_C^{\mathrm{skew}}(\bm{v}; C, \chi)| \leq \beta_{Q_C} \|\bm{v}\|_{1} \|C\|_{1} \|\chi\|_{1},
     & & Q_C^{\mathrm{skew}}(\bm{v}; C, C) = 0,\\
    & |S_V(C; \varphi, \bm{v})| \leq \beta_{S_V} \|C\|_{1} \|\varphi\|_{1} \|\bm{v}\|_{1}, & &  |Q_V^{\mathrm{skew}}(\bm{v}; \bm{w}, \bm{z})| \leq \beta_{Q_V} \|\bm{v}\|_{1} \|\bm{w}\|_{1} \|\bm{z}\|_{1},
     & & Q_V^{\mathrm{skew}}(\bm{v}; \bm{w}, \bm{w}) = 0
\end{aligned} 
\end{equation} 
where $\beta_{Q_C}$, $\beta_{Q_V}$, and $\beta_{S_C}$ are embedding constants, ${\beta}_{S_V}$ is introduced in equation~\eqref{c-o-v} and $\alpha_{A_P}$ and $\alpha_{A_V}$ are Poincar\'e constants. Moreover, the forms  $A_P, J_P, M_C, A_C, M_v, A_V, B, F_{\phi}, F_{C_i}$ and  $\bm{F}_{u}$ defined in \eqref{eqn7} are continuous with respect to the corresponding norms with continuity constant equal to $1$.   
\end{lemma}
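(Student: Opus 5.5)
The plan is to verify each property directly from the definitions in \eqref{eqn7} and \eqref{skew_2}, using Cauchy--Schwarz, H\"older, Sobolev embeddings valid for $d\le 3$, and Poincar\'e-type inequalities. The continuity statements with constant $1$ for $A_P, J_P, M_C, A_C, M_V, A_V, B, F_\phi, F_{c_i}, \bm{F}_u$ follow from Cauchy--Schwarz, since each seminorm involved is dominated by the full norm. For $B$ we additionally use $\|\nabla\cdot\bm{v}\|_0\le\|\nabla\bm{v}\|_0$ for $\bm{v}\in\bm{H}^1_0$, which holds by the identity $\|\nabla\bm v\|_0^2=\|\nabla\cdot\bm v\|_0^2+\|\nabla\times\bm v\|_0^2$; at worst this costs a factor $\sqrt d$ if one uses only the pointwise bound. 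The lower bounds $J_P(\varphi,\varphi)=|\varphi|_2^2$ and $M_C(\chi,\chi)=\|\chi\|_0^2$ are identities. The bound $A_C(\chi,\chi)\ge\|\chi\|_1^2$ holds for the modified form $\widehat A_C$ by \eqref{c-o-v}. For $A_P$ we use the Poincar\'e--Wirtinger inequality on mean-zero functions $\varphi\in\mathring U$, which gives $\|\varphi\|_0\le c|\varphi|_1$ and hence $\alpha_{A_P}=(1+c^2)^{-1}$. For $A_V$ we use the Poincar\'e inequality on $\bm H^1_0$. Finally, $|l(C;\varphi)|\le\beta_l\|C\|_0\|\varphi\|_0$ is Cauchy--Schwarz, with the factor $e^T$ coming from the change of variables.

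For the trilinear forms we use H\"older with exponents $(3,6,2)$ or $(4,4,2)$, together with $H^1\hookrightarrow L^6$ and $H^2\hookrightarrow W^{1,6}$, both valid for $d\le3$ on a Lipschitz domain.
\begin{itemize}
\item For $S_C$: $|S_C(\varphi;C,\chi)|\le\|C\|_{0,3}\|\nabla\varphi\|_{0,6}\|\nabla\chi\|_0$, and $\|C\|_{0,3}\le c\|C\|_1$, $\|\nabla\varphi\|_{0,6}\le c\|\varphi\|_2$. This gives $\beta_{S_C}$.
\item For $S_V$: $|S_V|\le\|C\|_{0,4}\|\nabla\varphi\|_0\|\bm v\|_{0,4}\le c\|C\|_1\|\varphi\|_1\|\bm v\|_1$ via $H^1\hookrightarrow L^4$. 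Multiplying by $e^t\le e^T$ yields $\beta_{S_V}$ as in \eqref{c-o-v}.
\item For $Q_C^{\rm skew}$: bound each of the two terms by $\|C\|_{0,4}\|\bm v\|_{0,4}\|\nabla\chi\|_0$ and its symmetric counterpart.
\item For $Q_V^{\rm skew}$: bound each term by $\|\nabla\bm w\|_0\|\bm v\|_{0,4}\|\bm z\|_{0,4}$.
\end{itemize}

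The vanishing identities $Q_C^{\rm skew}(\bm v;C,C)=0$ and $Q_V^{\rm skew}(\bm v;\bm w,\bm w)=0$ are immediate from the antisymmetrized definition \eqref{skew_2}, since the two terms coincide when the last two arguments are equal. This holds for every $\bm v$, so divergence-freeness is not even needed here; it is needed only for the earlier equivalence with $Q_C$ and $Q_V$.

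The only delicate point is to make sure every Sobolev embedding used is valid up to $d=3$. The critical case is $S_C$, where the gradient of $\varphi$ must be placed in $L^6$, which requires the $H^2$ norm of $\varphi$. This is exactly why the bound for $S_C$ features $\|\varphi\|_2$, whereas for $S_V$ the weaker $\|\varphi\|_1$ suffices.
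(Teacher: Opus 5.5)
Your proposal is correct and matches the standard verification the paper relies on: the lemma is stated without proof, resting on Cauchy--Schwarz/H\"older, the embeddings $H^1\hookrightarrow L^4$ (or $L^6$) and $H^2\hookrightarrow W^{1,4}$ valid for $d\le 3$, Poincar\'e(--Wirtinger), and the factors $e^{\pm t}$ from the change of variables; the paper's own estimates in the existence proof use a $(4,4,2)$ H\"older split for $S_C$, which is interchangeable with your $(3,6,2)$ split. Your div--curl argument giving $\|\nabla\cdot\bm{v}\|_0\le|\bm{v}|_1$ on $\bm{H}^1_0$ is a detail the paper glosses over, and it is what makes the continuity constant of $B$ equal to $1$.
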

\begin{theorem} [Existence\label{exist-cts}]  
    For $\bm{u}_0 \in \widetilde{\bm{V}}^0$ and $C_{i,0} \in L^{\infty}(\Omega), i=1,2$, there exists a weak 
 solution $\{ \phi, (C_1, C_2), \allowbreak \bm{u}\}$ to the system \eqref{eqn6*} in the sense of Definition \ref{def-weak2}. Furthermore there exists a positive constant $c_{\mathtt{stab}}$ depending on domain $\Omega$, such that \vspace{-0.1cm}
     \begin{align}
    \|\phi\|_{L^{\infty}(0,T; \mathring{U})} + \sum_{i=1}^{2}\|C_i\|_{L^{\infty}(0,T; L^2(\Omega))}+  \sum_{i=1}^{2}\|C_i\|_{L^{2}(0,T; M)}+ \|\bm{u}\|_{L^{\infty}(0,T; \bm{L}^2(\Omega))} + \|\bm{u}\|_{L^{2}(0,T; \bm{V})}  \leq c_{\mathtt{stab}}\Big(\|\bm{u}_0\|_0\nonumber\\ +\sum_{i=1}^2\left(\|C_{i,0}\|_0  + \|f_{c_i}\|_{L^{2}(0,T; L^2(\Omega))}\right) + \|f_{\phi}\|_{L^{\infty}(0,T; L^2(\Omega))}
 + \|\bm{f}_{u}\|_{L^{2}(0,T; \bm{L}^2(\Omega))} \Big). \nonumber
     \end{align}
\end{theorem}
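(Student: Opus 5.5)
We prove existence by a Faedo--Galerkin argument combined with a truncation of the concentrations, and then remove the truncation using an $L^\infty$ bound. We fix Galerkin bases: eigenfunctions of the Neumann Laplacian for $M$, of the Stokes operator for $\widetilde{\bm{V}}$, and of the operator associated with $J_P+A_P$ for $\mathring{U}$. To control the nonlinear terms, we replace $C_i$ by the truncation $\mathcal{T}_K(C_i)=\max(-K,\min(C_i,K))$ inside $S_C$ and $S_V$, with $K$ larger than the $L^\infty$ bound proved later. This makes $S_C(\phi;\mathcal{T}_K(C_i),\chi)$ bounded by $K\|\phi\|_1\|\chi\|_1$, so it is sublinear in the unknowns. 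Since \eqref{eqn6*a} is a static elliptic problem, for each $C_i\in L^2$ it has a unique solution $\phi=\phi(C_1,C_2)\in\mathring{U}$ by Lax--Milgram. The coercivity follows from Lemma \ref{lem:form-bounds}: $J_P+A_P\ge \min(1,\alpha_{A_P})\|\cdot\|_2^2$, using Poincaré and mean zero. We also have $\|\phi\|_2\le c(\|C_1\|_0+\|C_2\|_0+\|f_\phi\|_0)$. Substituting this map into \eqref{eqn6*b}--\eqref{eqn6*c} yields a finite-dimensional ODE system with locally Lipschitz right-hand side, so Carathéodory gives a local solution. Global existence on $[0,T]$ will follow from the a priori bounds below.

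\textbf{Energy estimates.} In \eqref{eqn6*b} we test with $\chi_i=C_i$. The convective term $Q_C^{\mathrm{skew}}(\bm{u};C_i,C_i)$ vanishes, and coercivity of $A_C$ after the change of variables gives $\|C_i\|_1^2$. The drift term is handled with the truncation: $|S_C(\phi;\mathcal{T}_K C_i,C_i)|\le K\|\nabla\phi\|_0\|\nabla C_i\|_0\le \tfrac14\|C_i\|_1^2+cK^2\|\phi\|_2^2$. Combining with the elliptic bound for $\phi$ and Gronwall controls $\|C_i\|_{L^\infty L^2}$ and $\|C_i\|_{L^2 M}$. In \eqref{eqn6*c} we test with $\bm{v}=\bm{u}$. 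Here $Q_V^{\mathrm{skew}}$ vanishes, and the coupling term is bounded by
\[
|S_V(n^F;\phi,\bm{u})|\le \|n^F\|_{0}\|\nabla\phi\|_{L^\infty}\|\bm{u}\|_0 .
\]
Since $H^2\not\subset W^{1,\infty}$ in 3D, this bound cannot be used as stated. Instead we use $\|\nabla\phi\|_{L^4}\le c\|\phi\|_2$ and $\|\bm{u}\|_{L^4}\le c\|\bm{u}\|_1$, which give $|S_V|\le c\|n^F\|_0\|\phi\|_2\|\bm{u}\|_1$. After Young's inequality, the remaining term $\|n^F\|_0^2\|\phi\|_2^2\le c\|n^F\|_0^2(\|n^F\|_0^2+\|f_\phi\|_0^2)$ is integrable because the $C_i$ are already bounded in $L^\infty L^2$. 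Gronwall then gives the stated stability bound, with $\|\phi\|_{L^\infty\mathring{U}}$ following from the elliptic estimate. These bounds are uniform in the Galerkin dimension. We then bound the time derivatives $\partial_t C_i\in L^2(M')$ and $\partial_t\bm{u}\in L^2(\widetilde{\bm{V}}')$ by duality, using $d\le3$ embeddings in the standard way.

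\textbf{Passage to the limit.} Aubin--Lions gives strong convergence of $C_i$ in $L^2(L^2)$ and of $\bm{u}$ in $L^2(\bm{L}^2)$. By the linear elliptic solve, $\phi$ converges strongly in $L^2(H^2)$. These convergences are enough to pass to the limit in all products, producing a weak solution of the truncated problem.

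\textbf{Main obstacle: the $L^\infty$ bound.} The crux is to show $C_i\in L^\infty(\Omega_T)$ with a bound independent of $K$, so that the truncation is inactive for large $K$. We plan a Stampacchia/Moser iteration: test \eqref{eqn6*b} with $(C_i-k)_+$ and with the corresponding negative parts. The convective term vanishes because $\nabla\cdot\bm{u}=0$ and $\bm{u}=\bm{0}$ on $\partial\Omega$. The drift term is handled by integrating by parts in the form $\int \mathcal{T}_K(C)\nabla\phi\cdot\nabla(C-k)_+$. The point is to exploit the extra regularity of $\phi$ from the fourth-order equation: $\phi\in H^2\subset W^{1,6}$ in 3D, and in fact elliptic regularity of $\Delta^2-\Delta$ gives $\phi\in H^4$ when the right-hand side is in $L^2$, so $\nabla\phi\in L^\infty$. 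This is exactly where the biharmonic structure helps, and it closes the De Giorgi iteration with a bound depending only on $\|C_{i,0}\|_{L^\infty}$, the data, and $T$. If $H^4$ regularity fails on a Lipschitz domain, the fallback is to use $\nabla\phi\in L^6$, i.e.\ $L^q$ with $q>d$ for $d\le3$, which is still sufficient for the iteration.
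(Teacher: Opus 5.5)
Your overall architecture is more self-contained than the paper's proof: a truncated Faedo--Galerkin scheme, energy estimates, Aubin--Lions, then removal of the truncation by an $L^\infty$ bound. The paper takes existence from Schauder-type arguments in the literature and imports $\|C_i\|_{L^\infty(\Omega_T)}\le\tilde c$ from Lemma~4 of \cite{schmuck2009analysis}. Once $K$ is fixed, your estimates for the concentrations and the potential are the paper's with $K$ in place of $\tilde c$.

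The gap is in the step you yourself call the crux: as written it is circular. The De Giorgi/Moser iteration needs a bound on the drift $\nabla\phi$, either in $L^\infty$ or in $L^\infty(0,T;L^q(\Omega))$ with $q>d$. You get it from $\|\phi\|_{H^2(\Omega)}$ or $\|\phi\|_{H^4(\Omega)}\le c(\|n^F\|_0+\|f_\phi\|_0)$, that is, from $\|C_i\|_{L^\infty(0,T;L^2(\Omega))}$. Your only bound on that quantity is the energy estimate in which the drift was controlled through $|\mathcal{T}_K(C_i)|\le K$, and Gronwall makes it of size $e^{cK^2T}$. Hence the $L^\infty$ bound $M$ produced by the iteration grows with $K$, and you cannot choose $K>M(K)$.

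Dropping the truncation in the energy estimate does not rescue this. The bound $|S_C(\phi;C_i,C_i)|\le\|C_i\|_{0,3}\|\nabla\phi\|_{0,6}\|\nabla C_i\|_0$, combined with $\|\phi\|_2\le c(\|n^F\|_0+\|f_\phi\|_0)$, gives a superlinear inequality $y'\le c\,(1+y)^3$ for $y=\|C_i\|_0^2$. That yields only local-in-time control. Moreover, the sign structure from $-\Delta\phi=n^F$, which the classical PNP--NS $L^\infty$ bounds quoted in the paper exploit, is lost for $\Delta^2-\Delta$.

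What is missing is an a priori bound that is independent of $K$ and does not presuppose $L^2$ control of $C_i$. One that works:
\begin{itemize}
\item Test with $\mathcal{T}_\epsilon(C_i)/\epsilon$ and let $\epsilon\to0$. The drift contribution is bounded by $\int_{\{|C_i|<\epsilon\}}|\nabla\phi|\,|\nabla C_i|$, which tends to zero because $\nabla C_i=0$ a.e.\ on $\{C_i=0\}$. The transport contribution integrates to zero because $\nabla\cdot\bm{u}=0$. Hence $\|C_i(t)\|_{L^1(\Omega)}\le\|C_{i,0}\|_{L^1(\Omega)}+\|f_{c_i}\|_{L^1(0,T;L^1(\Omega))}$ uniformly in $K$.
\item Since $H^2(\Omega)\hookrightarrow C(\overline{\Omega})$ for $d\le3$, testing \eqref{eqn6*a} with $\phi$ gives $\|\phi\|_2\le c(\|n^F\|_{L^1(\Omega)}+\|f_\phi\|_0)$.
\end{itemize}
This, rather than $H^4$ regularity (which fails on general Lipschitz domains), is where the biharmonic structure helps. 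It bounds $\nabla\phi$ in $L^\infty(0,T;L^6(\Omega))$ independently of $K$. The energy inequality then becomes linear in $\|C_i\|_0^2$, and the iteration closes with a $K$-independent constant.

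Three further points.
\begin{itemize}
\item For an $L^\infty$ bound in terms of the data, the iteration needs $f_{c_i}$ in a Ladyzhenskaya--Solonnikov--Ural'tseva class, such as $L^{q}(0,T;L^{p}(\Omega))$ with $\frac1q+\frac{d}{2p}<1$. The hypothesis $f_{c_i}\in L^2(0,T;L^2(\Omega))$ of Definition~\ref{def-weak2} fails this for $d\ge2$, with $d=2$ exactly borderline, so an extra assumption is needed. The bound $\tilde c$ used in the paper involves only the initial data and cannot accommodate forcing either.
\item In $d=3$ the standard duality argument gives only $\partial_t\bm{u}\in L^{4/3}(0,T;\widetilde{\bm{V}}')$, because of the convective term. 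This is enough for Aubin--Lions, but not for the class $L^2(0,T;\widetilde{\bm{V}}')$ required in Definition~\ref{def-weak2}. The paper does not address this either.
\item Your velocity estimate via $\|n^F\|_0\|\nabla\phi\|_{0,4}\|\bm{u}\|_{0,4}$ gives a quartic dependence on the data, not the stated linear bound. Bounding the truncated $S_V$ with $|\mathcal{T}_K(\cdot)|\le K$, as you did for $S_C$ and as the paper does with $\tilde c$, restores the stated form. The constant then depends on $K$ and $T$ as well as on $\Omega$; the paper's $c_{\mathtt{stab}}$ likewise depends on $\tilde c$ and $T$.
\end{itemize}
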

\begin{proof}\!\!.\;
Since the bilinear form $A(\psi, \varphi) = A_P(\psi, \varphi) + J_P(\psi, \varphi)$ is coercive and continuous with respect to the $H^2(\Omega)$ norm, the existence of a weak solution to \eqref{eqn6*} follows from Schauder’s fixed point arguments, as developed in \cite{schmuck2009analysis, he2021mixed, lions1996mathematical}.
The stability estimate can be derived by following the steps outlined below:

\noindent $\bullet\;$ \textbf{a priori estimate for the concentrations:} 
Setting \( \chi_i = C_i(t) \) ($t$ being fixed)  in equation~\eqref{eqn6*b} and applying Hölder’s inequality together with Lemma~\ref{lem:form-bounds}, we have \vspace{-0.05cm}
\begin{equation} \label{priori-pot}
    \frac{1}{2}\frac{d}{dt}\|C_i\|^2_0 + \|C_i\|^2_1 \leq \|C_i\|_{0,4} \|\nabla \phi\|_{0,4} \|\nabla C_i\|_{0,2} + \|f_{c_i}\|_0\|C_i\|_0.
\end{equation}
Integrating the above over the interval \([0, t]\) and using \( C_i \in L^\infty(\Omega_T) \) for \( i = 1, 2 \), along with the Sobolev embedding \( H^1(\Omega) \subset L^4(\Omega) \) and Young’s inequality yields
\begin{equation} \label{tilde-c}
    \frac{1}{2}\|C_i(t)\|^2_0 + \frac{3}{4}\int_0^t \|C_i\|^2_1 \, d\tau 
    \leq \frac{1}{2} \|C_i(0)\|^2_0 + \tilde{c}\,\sqrt{\beta_{S_C}}\int_0^t \|\phi\|_2 \|C_i\|_1 \, d\tau + \int_0^t \|f_{c_i}\|_0^2\, d\tau,
\end{equation}
 where \( \tilde{c} = {\|C_{1}(0)\|_{L^\infty(\Omega)} + \|C_{2}(0)\|_{L^\infty(\Omega)}}\) such that ${\|C_{1}\|_{L^\infty(\Omega_T)}}, {\|C_{2}\|_{L^\infty(\Omega_T)}} \leq \tilde{c}$ (cf. Lemma 4, \cite{schmuck2009analysis}). We again apply Young’s inequality and sum the resulting inequalities for $i=1,2$ to obtain
\begin{equation} \label{eq50}
    \sum_{i=1}^2\left(\frac{1}{2}\|C_i(t)\|^2_0 
    + \frac{1} {2} 
    \int_0^t \|C_i\|^2_1 \, d\tau \right)
    \leq  \sum_{i=1}^2 \left(\frac{1}{2}\|C_i(0)\|^2_0 + \int_0^T\|f_{c_i}\|_0^2 d\tau\right)
    + 2\tilde{c}^2\,\beta_{S_C} \int_0^T \|\phi \|_2^2 \, d\tau.
\end{equation}
Now, we decouple \( C_i \) from \( \phi \). For that, we test equation~\eqref{eqn6*a} with \( \phi \)  and apply Lemma~\ref{lem:form-bounds} to get \vspace{-0.05cm}
\begin{equation} \label{eq51}
\widehat{\alpha}_{A_P}\|\phi\|^2_2 \leq \left(\beta_l\|C_1\|_{0} +\beta_l\|C_2\|_{0} + \|f_{\phi}\|_0\right)\|\phi\|_{2}  , \vspace{-0.05cm}
\end{equation}
where $\widehat{\alpha}_{A_P}=\min\{1, \alpha_{A_P}\}$. Using Young's inequality and integrating over $(0,T)$, above yields \vspace{-0.05cm}
\begin{equation} \label{eq52}
\int_0^T \|\phi\|_2^2 \, d\tau
\le c_{\phi}
\int_0^T \left(\|C_1\|_0^2 + \|C_2\|_0^2\right)\, d\tau + \frac{c_{\phi}}{\beta_l^2}\int_0^T \|f_{\phi}\|_0^2\, d\tau, \quad c_{\phi}=\frac{3\beta_l^2}{\widehat{\alpha}_{A_P}^2}. \vspace{-0.05cm}
\end{equation}
Using the above in \eqref{eq50}, applying Gronwall’s lemma, and taking the supremum over $0\le t\le T$, while choosing $c_c= \exp\left({4\tilde{c}^2\beta_{S_C}c_\phi}T\right)$ and $\mathcal{F}(T)
:=
\sum_{i=1}^2 \int_0^T \|f_{c_i}\|_0^2\, d\tau
+
\frac{2\tilde{c}^2\beta_{S_C}c_\phi}{\beta_l^2}
\int_0^T \|f_{\phi}\|_0^2\, d\tau$, we have  \vspace{-0.05cm}
\begin{equation} \label{bound_C_1} 
\sum_{i=1}^2\left(\sup_{0 \leq t \leq T}\|C_i(t)\|^2_0 
    + \int_0^T \|C_i\|^2_1 \, d\tau \right)
    \leq c_c  \left(\|C_1(0)\|^2_0 + \|C_2(0)\|^2_0+ 2 \mathcal{F}(T)\right). 
\end{equation}

\noindent $\bullet$ \textbf{a priori estimate for the potential:} 
From \eqref{eq51} and the bound \eqref{bound_C_1}, we obtain
\begin{equation}
 \|\phi\|_{L^{\infty}(0,T; \mathring{U})}^2
    \leq c_cc_{\phi} \left( \|C_1(0)\|_0^2 + \|C_2(0)\|_0^2 + 2\mathcal{F}(T)\right)+  \frac{c_{\phi}}{\beta_l^2} \|f_{\phi}\|_{L^{\infty}(0,T; L^2(\Omega))}^2.
\end{equation}
\noindent $\bullet$ \textbf{a priori estimate for velocity:} Setting $\bm{v} = \bm{u}(t)$ (for a given time $t$) in equation~\eqref{eqn6*c}, and applying Hölder's inequality along with Lemma~\ref{lem:form-bounds}, we obtain
\[\frac{1}{2}\frac{d}{dt}\|\bm{u}\|^2_0 + \alpha_{A_V}\|\bm{u}\|^2_1 \leq \;\|C_1\|_{0,4}\|\nabla \phi\|_{0,2}\|\bm{u}\|_{0,4} 
+ \|C_2\|_{0,4}\|\nabla \phi\|_{0,2}\|\bm{u}\|_{0,4} + \|\bm{f}_{u}\|_0\|\bm{u}\|_0.
\]
Integrating the inequality with respect to time from \( 0 \) to \( t \), and using the assumptions that \( C_i \in L^{\infty}(\Omega_T) \) for \( i = 1, 2 \), along with the Sobolev embedding \( H^1(\Omega) \subset L^4(\Omega) \) and  Young’s inequality, we arrive at
\begin{equation}
    \frac{1}{2}\|\bm{u}(t)\|^2_0 + \frac{3\alpha_{A_V}}{4} \int_0^t \|\bm{u}\|^2_1 \, d\tau 
    \leq \frac{1}{2} \|\bm{u}(0)\|^2_0 + \frac{1}{\alpha_{A_V}}\int_0^t \|\bm{f}_{u}\|_0^2 \,d\tau+ 2\tilde{c}\beta_l\sqrt{\widehat{\beta}_{S_V}}\int_0^t \|\phi\|_2 \|\bm{u}\|_1 \, d\tau.
\end{equation}
Here, \( \tilde{c} \) is defined in equation~\eqref{tilde-c}, and the remaining coefficients are introduced in equation~\eqref{c-o-v}. Applying the Young’s inequality, we get
\begin{equation}
    \frac{1}{2} \|\bm{u}(t)\|^2_0 + \frac{\alpha_{A_V} }{2}   \int_0^t \|\bm{u}\|^2_1 \, d\tau 
    \leq \frac{1}{2} \|\bm{u}(0)\|^2_0 +  \frac{1}{\alpha_{A_V}}\int_0^T \|\bm{f}_{u}\|_0^2 \,d\tau + \frac{4\tilde{c}^2\beta_l^2{\widehat{\beta}_{S_V}}}{\alpha_{A_V}} \int_0^T \|\phi\|_2^2 \, d\tau.
\end{equation}
Now, using the bound for \( \phi \) and taking supremum over time $0\leq t\leq T$ (RHS is independent of $t$), we get the final bound 
\begin{align}
\sup_{0 \leq t \leq T}\|\bm{u}(t)\|^2_0 
+ \int_0^T \|\bm{u}\|^2_1 \, d\tau 
&\leq c \Big( \|\bm{u}(0)\|^2_0 
+ \|C_1(0)\|_0^2 + \|C_2(0)\|_0^2 
 + \|f_{\phi}\|_{L^{2}(0,T; L^2(\Omega))}^2 \notag \\
&\qquad \quad+ \sum_{i=1}^{2}\|f_{c_i}\|_{L^{2}(0,T; L^2(\Omega))}^2 
 + \|\bm{f}_{u}\|_{L^{2}(0,T; \bm{L}^2(\Omega))}^2 \Big).
\end{align}
Finally, the combined a priori estimates of all yield the desired stability estimate.
\end{proof}
\begin{theorem}[Uniqueness\label{thm_uniq_continuous}] Under the conditions of Definition~\ref{def0}, suppose the problem data satisfy the following condition \vspace{-0.1cm}
\begin{equation} \label{c_sol}
    K := c_\mathtt{sol} \big( \|\bm{u}_0\|_0 + \|C_{1,0}\|_0 + \|C_{2,0}\|_0 \big) +  2\bar{c} \sqrt{c_{\phi}} < 1 
    \quad \text{with} \quad \alpha_{A_V}^{-1} \bar{c} \frac{1 + K}{1 - K} < 1,\vspace{-0.1cm}
\end{equation}
where $c_{\mathtt{sol}}$, $\bar{c}$, and $c_\phi$ are positive constants depending only on the domain \( \Omega \) and the final time \( T \), defined in proof. Then, the problem~\eqref{eqn6'} admits a unique solution \( \{ \phi, (C_1, C_2), \bm{u}, p \} \).
\end{theorem}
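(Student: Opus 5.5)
We argue by contradiction-free energy comparison. Let $\{\phi^{(1)},(C^{(1)}_1,C^{(1)}_2),\bm{u}^{(1)},p^{(1)}\}$ and $\{\phi^{(2)},(C^{(2)}_1,C^{(2)}_2),\bm{u}^{(2)},p^{(2)}\}$ be two weak solutions with the same data. Since $B$ satisfies an inf--sup condition on $\bm{V}\times W$, uniqueness of the pressure follows from uniqueness of the velocity. It therefore suffices to work in the divergence-free formulation \eqref{eqn6*}. We set $e_\phi=\phi^{(1)}-\phi^{(2)}$, $e_{C_i}=C^{(1)}_i-C^{(2)}_i$, $\bm{e}_u=\bm{u}^{(1)}-\bm{u}^{(2)}$, with zero initial data. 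We then subtract the two systems and test \eqref{eqn6*a} with $e_\phi$, \eqref{eqn6*b} with $e_{C_i}$, and \eqref{eqn6*c} with $\bm{e}_u$. We use the skew-symmetric representations \eqref{skew_2} so that $Q_C^{\mathrm{skew}}(\bm{u}^{(1)};e_{C_i},e_{C_i})=0$ and $Q_V^{\mathrm{skew}}(\bm{u}^{(1)};\bm{e}_u,\bm{e}_u)=0$.

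\textbf{Step 1 (potential).} Exactly as in \eqref{eq51}, the potential equation gives the pointwise-in-time bound $\|e_\phi\|_2\le \sqrt{c_\phi}\,(\|e_{C_1}\|_0+\|e_{C_2}\|_0)$. This lets us eliminate $e_\phi$ everywhere.

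\textbf{Step 2 (concentrations).} We split the nonlinear differences as
\[
S_C(\phi^{(1)};C^{(1)}_i,e_{C_i})-S_C(\phi^{(2)};C^{(2)}_i,e_{C_i})=S_C(e_\phi;C^{(1)}_i,e_{C_i})+S_C(\phi^{(2)};e_{C_i},e_{C_i}),
\]
and similarly for $Q_C^{\mathrm{skew}}$, where the only surviving term is $Q_C^{\mathrm{skew}}(\bm{e}_u;C^{(2)}_i,e_{C_i})$. The first term is bounded using $C^{(1)}_i\in L^\infty(\Omega_T)$ with the $L^\infty$ bound $\tilde c$ of the existence proof, giving $\tilde c\,\|\nabla e_\phi\|_0\|\nabla e_{C_i}\|_0$. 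Combined with Step 1, this yields a contribution $2\bar c\sqrt{c_\phi}$-type relative to the coercivity constant $1$ of $A_C$. This is the source of the second summand in $K$.

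The term $S_C(\phi^{(2)};e_{C_i},e_{C_i})$ is bounded by $\beta_{S_C}\|\phi^{(2)}\|_2\|e_{C_i}\|_1^2$. The term $Q_C^{\mathrm{skew}}(\bm{e}_u;C^{(2)}_i,e_{C_i})$ is bounded by $\beta_{Q_C}\|\bm{e}_u\|_1\|C^{(2)}_i\|_1\|e_{C_i}\|_1$. Using Theorem~\ref{exist-cts}, $\|\phi^{(2)}\|_{L^\infty(0,T;\mathring U)}$ and the other solution norms are bounded by $c_{\mathtt{stab}}$ times the data. This gives the factor $c_{\mathtt{sol}}(\|\bm{u}_0\|_0+\|C_{1,0}\|_0+\|C_{2,0}\|_0)$ (forcing absorbed into the constant). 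With $K<1$ the concentration terms are absorbed and we get, after integrating in time,
\[
\tfrac12\|e_C(t)\|_0^2+(1-K)\int_0^t\|e_C\|_1^2\le K\int_0^t\|\bm{e}_u\|_1\|e_C\|_1 .
\]

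\textbf{Step 3 (velocity).} We decompose
\[
Q_V^{\mathrm{skew}}(\bm{u}^{(1)};\bm{u}^{(1)},\bm{e}_u)-Q_V^{\mathrm{skew}}(\bm{u}^{(2)};\bm{u}^{(2)},\bm{e}_u)=Q_V^{\mathrm{skew}}(\bm{e}_u;\bm{u}^{(2)},\bm{e}_u)
\]
and
\[
S_V(n^{F,(1)};\phi^{(1)},\bm{e}_u)-S_V(n^{F,(2)};\phi^{(2)},\bm{e}_u)=S_V(e_{n};\phi^{(1)},\bm{e}_u)+S_V(n^{F,(2)};e_\phi,\bm{e}_u).
\]
The convective term is controlled by $\beta_{Q_V}\|\bm{u}^{(2)}\|_1\|\bm{e}_u\|_1^2$. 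The coupling terms are controlled by $\bar c\,(\|e_{C}\|_1+\|e_\phi\|_2)\|\bm{e}_u\|_1$, again using the $L^\infty$ bound on concentrations and Step 1. We combine with Step 2, where $\|e_C\|$ is controlled by $\|\bm{e}_u\|$ through the factor $K/(1-K)$; adding the residual $1$ from the velocity coupling yields the factor $(1+K)/(1-K)$. We then obtain
\[
\tfrac12\|\bm{e}_u(t)\|_0^2+\Bigl(\alpha_{A_V}-\bar c\tfrac{1+K}{1-K}\Bigr)\int_0^t\|\bm{e}_u\|_1^2\le 0 ,
\]
so the second condition in \eqref{c_sol} forces $\bm{e}_u\equiv0$. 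This in turn gives $e_C\equiv0$ from Step 2 and $e_\phi\equiv0$ from Step 1, and the inf--sup argument recovers $p$.

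\textbf{Main obstacle.} The delicate step is the term $Q_V^{\mathrm{skew}}(\bm{e}_u;\bm{u}^{(2)},\bm{e}_u)$. The available bound on $\bm{u}^{(2)}$ is only in $L^2(0,T;\bm V)$, not $L^\infty(0,T;\bm V)$, in three dimensions. A pure absorption argument therefore needs smallness of $\|\bm{u}^{(2)}\|_1$ pointwise in time. I would first try a Gronwall argument using
\[
|Q_V^{\mathrm{skew}}(\bm{e}_u;\bm{u}^{(2)},\bm{e}_u)|\le c\|\bm{e}_u\|_0^{1/4}\|\bm{e}_u\|_1^{7/4}\|\bm{u}^{(2)}\|_1 ,
\]
but the resulting weight $\|\bm{u}^{(2)}\|_1^8$ is not integrable. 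The fallback is to interpret $c_{\mathtt{sol}}$ as incorporating a data-dependent bound making $\|\bm{u}^{(2)}\|_1$ small in the sense required, which is where the smallness hypothesis \eqref{c_sol} is really used.
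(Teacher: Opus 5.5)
The route you propose (difference of two solutions, skew-symmetry, eliminating $e_\phi$ through the potential equation, absorption under $K<1$, inf--sup for $p$) is the paper's route. However, it stops short at exactly the step you flag. Absorbing $Q_V^{\mathrm{skew}}(\bm{e}_u;\bm{u}^{(2)},\bm{e}_u)$ needs a pointwise-in-time bound on $\|\bm{u}^{(2)}(t)\|_1$. As you wrote it, absorbing $Q_C^{\mathrm{skew}}(\bm{e}_u;C_i^{(2)},e_{C_i})$ likewise needs one on $\|C_i^{(2)}(t)\|_1$. Theorem~\ref{exist-cts} provides neither.

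The paper does not derive such a bound either. It posits $c^*$ with $\sum_{i,j}\|C_{i,j}(t)\|_1+\|\bm{u}(t)\|_1\le c^*$ for a.e.\ $t$, citing Aubin's lemma in \cite{ern2004theory}. It then bounds the convective term by $c_2c^*\|\hat{\bm{u}}\|_1^2=\bar c\,\|\hat{\bm{u}}\|_1^2$ and uses $c^*$ in the coupling terms. The second condition in \eqref{c_sol} then amounts to smallness of $c^*$ relative to $\alpha_{A_V}$, so $\bar c$ in fact depends on the solutions.

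Your fallback therefore belongs in $\bar c$, not in $c_{\mathtt{sol}}=c_{\mathtt{stab}}c_2$. That constant only multiplies $L^2$ initial-data norms; it enters through $\|\phi\|_{L^\infty(0,T;H^2)}$ in $S_C(\phi^{(2)};e_{C_i},e_{C_i})$ and the analogous $S_V$ term, and it cannot encode an $L^\infty(0,T;\bm{H}^1)$ bound. You are right that Definition~\ref{def0} gives no such bound: $L^2(0,T;\bm{V})$ with $\partial_t\bm{u}\in L^2(0,T;\bm{V}')$ embeds into $C([0,T];\bm{L}^2(\Omega))$, not into $L^\infty(0,T;\bm{V})$. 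So you should state the bound explicitly as a hypothesis.

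Your constants are also misassigned. In the concentration step, the coefficient of $\|\bm{e}_u\|_1\|e_{C_i}\|_1$ is of $\bar c$ type. The $K$-type constants multiply $\|e_C\|_1$ in the velocity step. In $L^2(0,T;H^1)$ norms the paper obtains $\sum_i\|\hat C_i\|\le\frac{2\bar c}{1-K}\|\hat{\bm{u}}\|$ and $\alpha_{A_V}\|\hat{\bm{u}}\|\le\bar c\|\hat{\bm{u}}\|+K\sum_i\|\hat C_i\|$. This gives $\bar c\,(1+\frac{2K}{1-K})=\bar c\,\frac{1+K}{1-K}$. With your Step 2, the factor $\frac{1+K}{1-K}$ does not follow.

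A cleaner finish is your Gronwall idea, applied to $\|\bm{e}_u\|_0^2+\sum_i\|e_{C_i}\|_0^2$. Step 1 bounds $e_\phi$ by $L^2$ norms of $e_C$. With $C_i^{(j)}\in L^\infty(\Omega_T)$ and $\phi^{(j)}\in L^\infty(0,T;\mathring{U})$, every coupling term is at most a small multiple of the dissipation plus $g(t)\big(\|\bm{e}_u\|_0^2+\sum_i\|e_{C_i}\|_0^2\big)$ with $g$ bounded. For example:
\begin{itemize}
\item since $\bm{e}_u\in\widetilde{\bm{V}}$, $Q_C^{\mathrm{skew}}(\bm{e}_u;C_i^{(2)},e_{C_i})=\int_\Omega C_i^{(2)}\bm{e}_u\cdot\nabla e_{C_i}\,dx\le\tilde c\,\|\bm{e}_u\|_0\|e_{C_i}\|_1$;
\item $|S_C(\phi^{(2)};e_{C_i},e_{C_i})|\le c\|\phi^{(2)}\|_2\|e_{C_i}\|_0^{1/4}\|e_{C_i}\|_1^{7/4}$.
\end{itemize}
Only the convective term needs more. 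Write it as $\int_\Omega(\nabla\bm{u}^{(2)})\bm{e}_u\cdot\bm{e}_u\,dx$, which is legitimate for $\bm{e}_u\in\widetilde{\bm{V}}$. It is then bounded by $c\|\bm{u}^{(2)}\|_1\|\bm{e}_u\|_0^{1/2}\|\bm{e}_u\|_1^{3/2}$, giving weight $\|\bm{u}^{(2)}\|_1^4$ instead of your $\|\bm{u}^{(2)}\|_1^8$. This yields uniqueness without any smallness in two dimensions, where \eqref{sobolev_ine} gives weight $\|\bm{u}^{(2)}\|_1^2\in L^1(0,T)$. In three dimensions it yields uniqueness under $\bm{u}^{(2)}\in L^4(0,T;\bm{V})$ for one of the solutions. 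That is an explicit hypothesis, weaker than the paper's implicit pointwise bound plus smallness.
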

\begin{proof}\!\!. \;
Throughout the proof, we will use the notations of Theorem~\ref{exist-cts}. For brevity, let us first define \vspace{-0.05cm}
\begin{align} \label{*}
  c_1 = c_{\mathtt{stab}}\left(\|\bm{u}_0\|_0 + \|C_{1,0}\|_0 + \|C_{2,0}\|_0\right),   \vspace{-0.05cm}
\end{align}
where \( c_{\mathtt{stab}} \) is a constant defined in Theorem \ref{exist-cts}. We assume that \( \{ \phi_1, (C_{1,1}, C_{2,1}), \bm{u}_1, p_1 \} \) and  
\( \{ \allowbreak \phi_2, \allowbreak (C_{1,2}, \allowbreak C_{2,2}), \allowbreak \bm{u}_2, \allowbreak p_2 \} \) are two solutions to equation \eqref{eqn6'} and set $\hat{\bm{u}} = \bm{u}_1 - \bm{u}_2$, $\hat{p} = p_1 - p_2$, $\hat{\phi}=\phi_1-\phi_2$ and $\hat{C}_{i}= C_{i,1} - C_{i,2}$  with $i=1,2$. We now proceed with the proof in the following four steps:
\vspace{0.1cm}

\noindent \textbf{Step 1.} Testing \eqref{eqn6'a} with $\hat{\phi}$ for solutions $\phi_1$ and $\phi_2$, subtracting the resulting equations, and following the steps of \eqref{eq52}, we have  \vspace{-0.05cm}
  \begin{equation}\label{eq:phi_final_bound}
    \|\hat{\phi}\|_{L^2(0,T; H^2(\Omega))} 
    \leq \sqrt{c_{\phi}} \left( 
        \|\hat{C}_1\|_{L^2(0,T; H^1(\Omega))} 
        + \|\hat{C}_2\|_{L^2(0,T; H^1(\Omega))}
    \right). \vspace{-0.05cm}
\end{equation}
\textbf{Step 2.} Similarly, testing \eqref{eqn6'b} with $\hat{C}_{i}$ and adding and subtracting the appropriate terms, we get \vspace{-0.05cm}
    \begin{align}
    \frac{1}{2} \frac{d}{dt} \|\hat{C}_i\|_0^2 + &\|\hat{C}_{i}\|_1^2 = \int_\Omega \Big( \bm{u}_1  C_{i,1}  - \bm{u}_2  C_{i,2} \Big) \cdot \nabla \hat{C}_{i} \, dx - z_i \int_{\Omega} \Big( C_{i,1} \nabla \phi_1  - C_{i,2} \nabla \phi_2 \Big) \cdot \nabla \hat{C}_{i}  \, dx  \nonumber \\
    & = \int_{\Omega} \hat{\bm{u}} \; C_{i,1} \cdot \nabla\hat{C}_{i} \, dx + \int_{\Omega} \bm{u}_2\;  \hat{C}_{i} \cdot \nabla \hat{C}_{i} \, dx    - z_i \int_{\Omega} \left[ (\hat{C}_{i} \nabla \phi_1) \cdot \nabla \hat{C}_{i} + (C_{i,2} \nabla \hat{\phi}) \cdot \nabla \hat{C}_{i} \right] \, dx. \nonumber
    \end{align}
   Using Hölder's inequality and  noting that the second term vanishes (Lemma~\ref{lem:form-bounds}), we obtain
\begin{align} 
     \frac{1}{2}\frac{d}{dt}\|\hat{C}_i\|_0^2+ (1-c_1c_2)\|\hat{C}_{i}(\cdot, t)\|_1^2 &\leq c^{*}c_2 \|\hat{\bm{u}}(\cdot, t)\|_1\|\hat{C}_{i}(\cdot, t)\|_1 + c^{*}c_2 \|\hat{\phi}(\cdot, t)\|_2\|\hat{C}_{i}(\cdot, t)\|_1,  \nonumber 
\end{align}
 where, \( c_1 \) is defined at the beginning of the proof, \( c_2 \) denotes the square of the norm of the embedding \( H^1(\Omega) \hookrightarrow L^4(\Omega) \) and by~\cite{ern2004theory}, Lemma~6.2 (Aubin), there exists a constant \(c^* > 0\) such that \(\sum_{i,j=1}^2 \|C_{i,j}(t)\|_1 + \|\bm{u}(t)\|_1 \le c^*\) for a.e. \(t \in [0,T]\). 

\vspace{0.15cm}
Now, integrating over $[0,T]$ and applying the Cauchy--Schwarz inequality in time, while noting that $\hat{C}_i(0) = 0$, we obtain 
    \begin{equation} \label{c_1c_2}
    (1 - c_1 c_2 ) \|\hat{C}_{i}\|_{L^2(0,T; H^1(\Omega))} \leq c_2 c^{*}\left( \|\hat{\bm{u}}\|_{L^2(0,T; \bm{H}^1(\Omega))} + \|\hat{\phi}\|_{L^2(0,T; {H}^2(\Omega))}\right), \quad i=1,2.  \nonumber
    \end{equation}
     Summing the above inequality for $i=1,2$ and employing equation \eqref{eq:phi_final_bound}, we obtain 
     \begin{equation} \label{c_1c_2c}
    (1 - c_1 c_2- 2\sqrt{c_{\phi}}c_2 c^{*} )\sum_{i=1}^{2} \|\hat{C}_{i}\|_{L^2(0,T; H^1(\Omega))} \leq  2c_2 c^{*} \|\hat{\bm{u}}\|_{L^2(0,T; \bm{H}^1(\Omega))}. 
    \end{equation}
\textbf{Step 3.} In the same way, using the velocity field equation \eqref{eqn6*c} with Lemma~\ref{lem:form-bounds}, we get
    \begin{align*}
    \frac{1}{2} \frac{d}{dt} \| \hat{\bm{u}} \|_0^2 + \alpha_{A_V}\| \hat{\bm{u}} \|_1^2 \leq  \int_\Omega -\left[ (\nabla \bm{u}_1)\hat{\bm{u}} 
    +  (C_{1,2} - C_{2,2}) \nabla \hat{\phi} 
    + (\hat{C}_{1} - \hat{C}_{2}) \nabla \phi_1\right] \cdot \hat{\bm{u}} \, dx.
    \end{align*}
    Using Hölder's inequality and Sobolev embedding, we obtain
    \[
    \frac{1}{2} \frac{d}{dt} \| \hat{\bm{u}} \|_0^2 + \alpha_{A_V}\|\hat{\bm{u}}\|_1^2 \leq  c_2 c^{*} \|\hat{\bm{u}}\|_1^2 +  2c_2 c^{*}\|\hat{\phi}\|_2 \|\hat{\bm{u}}\|_1 + c_1 c_2  \|\hat{C}_1\|_1 \|\hat{\bm{u}}\|_1 + c_1 c_2  \|\hat{C}_2\|_1 \|\hat{\bm{u}}\|_1.
    \]
    Integrating over \([0, t]\), using Cauchy-Schwarz inequality and equation \eqref{eq:phi_final_bound}, we get\vspace{-0.2cm}
 \begin{equation*}
 \alpha_{A_V}\|\hat{\bm{u}}\|_{L^2(0,T; \bm{H}^1(\Omega))} 
    \leq c_2 c^{*} \|\hat{\bm{u}}\|_{L^2(0,T; \bm{H}^1(\Omega))} + ( c_1c_2 + 2c_2c^{*}\sqrt{c_{\phi}}) \sum_{i=1}^{2}\|\hat{C}_i\|_{L^2(0,T; H^1(\Omega))}. \vspace{-0.2cm}
\end{equation*}
    Now, we set $c_{\mathtt{sol}} := c_{\mathtt{stab}}c_2 $ and  \( \bar{c} = c_2 c^* \). Recalling equation~\eqref{*}, it is straightforward to verify that the constant $K$ defined in equation~\eqref{c_sol} satisfies   $K=  c_1 c_2+ 2c_2 c^{*}\sqrt{c_{\phi}}$. Using equation~\eqref{c_1c_2c} in the above inequality, we can rewrite it as \vspace{-0.2cm}
\begin{align*}
    \|\hat{\bm{u}}\|_{L^2(0,T; \bm{H}^1(\Omega))} &\leq \alpha_{A_V}^{-1} c_2 c^{*}\left(1 + 2K(1 - K  ) ^{-1} \right) \|\hat{\bm{u}}\|_{L^2(0,T; \bm{H}^1(\Omega))}\\
    &=  \alpha_{A_V}^{-1}\bar{c}(1+K)(1-K)^{-1} \|\hat{\bm{u}}\|_{L^2(0,T; \bm{H}^1(\Omega))}.
\end{align*}
Using the given hypothesis, we obtain that \( \bm{u}_1 \) and \( \bm{u}_2 \) are equal. It follows from \eqref{c_1c_2c} that \( C_{i,1} = C_{i,2}, \quad i = 1, 2 \) and hence, equation \eqref{eq:phi_final_bound} implies $\phi_1=\phi_2$.
\vspace{0.1cm}

\noindent\textbf{Step 4.} Finally, the function \( \hat{p} = p_1 - p_2 \) satisfies the relation \vspace{-0.1cm}
    \[
    B(p_1 - p_2, \bm{v}) = 0, \quad \forall\, \bm{v} \in \bm{V}. \vspace{-0.1cm}
    \]
    By the inf-sup condition associated with the bilinear form \( B(\cdot, \cdot) \) (which implies that \( \mathrm{div}(\bm{V}) = W \); see \cite{girault2012finite}, Chapter I, Section 2), there exists a test function \( \bm{v} \in \bm{V} \) such that \( \nabla \cdot \bm{v} = p_1 - p_2 \). Then it follows that \( p_1 = p_2 \).
\end{proof}
\section{Virtual formulation of the problem}\label{VE_formation}  \vspace{-0.1cm} In this section, we introduce the VE spaces along with the discrete bilinear and trilinear forms. We restrict our presentation to the two-dimensional case; analogous discretization in three dimensions follows directly from the availability of standard 3D elements \cite{chen2022conforming, beirao2020stokes}. Consider a sequence of decompositions $\{\Omega_h\}_h$ of the domain $\Omega$ into general polygonal elements $E$ with boundary $\partial E$,  diameter $h_E$ and measure $|E|$. The global mesh size is defined as $h$ := $\sup\limits_{E \in \Omega_h} h_E$. The mesh $\Omega_h$ consists of straight edges $e$ with length $h_e$, and for each edge $e\subset \partial E$, $\bm{n}_E^e$ represents the unit normal vector to $e$, pointing outward to $E$. The natural restriction of each of the forms in equation \eqref{eqn7} will be denoted by a superscript $E$, so for instance, $A_C^E( C,\chi) = \int_{E} \nabla C \cdot \nabla\chi \,dx.$ 
We now introduce the following regularity condition on each element $E$ in $\Omega_h$: \vspace{-0.3cm}
\begin{assumption} \label{assump}
There exists positive constants $c_1, c_2$ such that

   \textbf{(A1)} $E$ is star-shaped with respect to a ball of radius $\geq c_1h_E.$ 
   
   \textbf{(A2)} The distance between any two vertices of $E$ is at least $c_2h_E.$ 
\end{assumption} 
 Next, for any arbitrary $k\in \mathbb{N}$,  we define \vspace{-0.1cm}
\begin{itemize}
    \item $\mathbb{P}_k(\omega):$ the space of polynomials on set $\omega\subseteq \mathbb{R}^2$ of degree $\leq k$ (with extended notations $\mathbb{P}_{-2}(\omega) = \mathbb{P}_{-1}(\omega):= \{0\})$,
    \item $\mathbb{B}_k^0(\partial E):= \{v \in C^0(\partial E)\;\; \text{s.t.}\;\; v|_e \in \mathbb{P}_k(e), \quad \forall\;\;\; e \subset \partial E\}$, \vspace{0.1cm}
    \item $\mathbb{B}_k^1(\partial E) := \{ \varphi \in C^1(\partial E) \text{ s.t. } \varphi|_e \in \mathbb{P}_{\max\{3,k\}}(e),\, \partial_{\bm{n}} \varphi|_e \in \mathbb{P}_{k-1}(e), \; \forall\;  e \subset \partial E, \; k \geq 2\}.$

    \item The polynomial projections $\Pi_k^{0,E}:L^2(E)\to\mathbb{P}_k(E)$, $\;$
$\Pi_k^{\nabla,E}:H^1(E)\to\mathbb{P}_k(E)$, and
$\Pi_k^{\Delta,E}:H^2(E)\to\mathbb{P}_k(E)$ are defined as follows \vspace{-0.2cm}
\begin{align} \allowbreak
& m^E(\Pi_k^{0,E} v, q_k) = m^E(v, q_k), \;\; \forall \;  q_k\in\mathbb{P}_k(E)\quad  \text{s.t.} \quad 
   m^E(w,q_k):=\int_E w q_k\,dx, \quad   w \;\in L^2(E), \nonumber \\
& \left\{
    \begin{aligned}
    & a^E(\Pi_k^{\nabla,E} v, q_k) = a^E(v, q_k), \;\; \forall \ q_k\in\mathbb{P}_k(E) \quad \text{s.t.} 
      && a^E(w,q_k):=\int_E \nabla w\cdot\nabla q_k\,dx, \quad   \; w \;\in H^1(E),\\
    & \int_{\partial E}\Pi_k^{\nabla,E} v\,ds = \int_{\partial E} v\,ds, \quad  \forall \; v\; \in H^1(E),
    \end{aligned}
  \right. \nonumber \\
& \left\{
    \begin{aligned}
    & a^E_\Delta(\Pi_k^{\Delta,E} v, q_k) = a^E_\Delta(v, q_k), \;\; \forall \ q_k\in\mathbb{P}_k(E) \quad  \text{s.t.} 
      \quad  a^E_\Delta(w,q_k):=\int_E \nabla^2 w:\nabla^2 q_k\,dx, \quad w \;\in H^2(E),\\
    & \int_{\partial E}\Pi_k^{\Delta,E} v\,ds = \int_{\partial E} v\,ds, \;\;
     \int_{\partial E}\nabla(\Pi_k^{\Delta,E} v)\,ds = \int_{\partial E}\nabla v\,ds, \quad \forall \;v \;\in H^2(E).
    \end{aligned}
  \right. \nonumber
\end{align}
\end{itemize}
Similarly, we define the obvious exensions to the vector-valued counterparts of the above polynomial projections: the $\bm{L}^2$-projection by $\bm{\Pi}_k^{0,E}$, the $\bm{H}^1$-projection by $\bm{\Pi}_k^{\nabla,E}$ and the tensor-valued ${\mathbb{L}}^2$-projection by $\widetilde{\bm{\Pi}}_k^{0,E}$. We recall below a standard polynomial approximation result.
\begin{proposition} \cite{brenner2008mathematical}\label{brenner2008mathematica}
Under the mesh regularity assumptions \eqref{assump}, 
for every $s$ with $0 \le s \le k+1$ and every $v \in H^s(E)$, 
there exists a polynomial $v_{\pi} \in \mathbb{P}_k(E)$ such that \vspace{-0.1cm}
\begin{equation} \label{BH}
    |v - v_{\pi}|_{l,E} \le c\, h_E^{s-l} |v|_{s,E}, \quad 0 \le l \le s. \vspace{-0.1cm}
\end{equation}
\end{proposition}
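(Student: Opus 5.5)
The statement is the classical Bramble--Hilbert-type estimate on a polygon that is star-shaped with respect to a ball. I would choose $v_\pi$ to be the averaged Taylor polynomial of $v$ of degree $k$ (more precisely of degree $m-1$ with $m=\lceil s\rceil$ when $s$ is not an integer, which still lies in $\mathbb{P}_k(E)$ since $s\le k+1$). The averaging is done over the ball $B\subset E$ of radius $\rho\ge c_1 h_E$ from \textbf{(A1)}. This is the construction in \cite{brenner2008mathematical}, Ch.~4, and the proof follows it.

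\textbf{Step 1: scaling.} Map $E$ to a reference configuration $\widehat E=h_E^{-1}(E-x_0)$ of unit diameter. By \textbf{(A1)}, $\widehat E$ is star-shaped with respect to a ball of radius at least $c_1$. Seminorms transform as $|v|_{l,E}=h_E^{d/2-l}|\hat v|_{l,\widehat E}$, so once the estimate holds on $\widehat E$ with $h=1$, the factor $h_E^{s-l}$ follows directly.

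\textbf{Step 2: the reference estimate for integer $s$.} On $\widehat E$, take $Q^m\hat v$, the averaged Taylor polynomial of degree $m-1$. I would use the Sobolev integral representation $\hat v - Q^m\hat v = R^m\hat v$, in which the remainder $R^m\hat v$ is an integral of $\partial^\alpha \hat v$ with $|\alpha|=m$ against a kernel bounded by $c|x-y|^{m-d}$. The kernel estimate needs star-shapedness, and its constant depends only on the chunkiness parameter $\mathrm{diam}\,\widehat E/\rho\le 1/c_1$. Two further facts are needed:
\begin{itemize}
\item derivatives commute with averaging, $\partial^\beta Q^m \hat v = Q^{m-|\beta|}\partial^\beta \hat v$;
\item a Riesz-potential (Young's inequality) bound gives $\|R^{m}\hat w\|_{0}\le c\,|\hat w|_{m}$.
\end{itemize}
Together these yield $|\hat v-Q^m\hat v|_{l,\widehat E}\le c\,|\hat v|_{m,\widehat E}$ for $0\le l\le m$, with $c$ depending only on $k$, $d$ and $c_1$.

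\textbf{Step 3: fractional $s$, the main obstacle.} The Taylor-remainder argument does not apply directly when $s$ is not an integer. I would use operator interpolation. The map $\hat v\mapsto \hat v-\Pi\hat v$, with $\Pi$ a fixed linear projector onto $\mathbb{P}_k$ built from the ball averages, is bounded in two ways:
\begin{itemize}
\item from $H^{\lfloor s\rfloor}$ to $H^l$ quotiented by polynomials, and
\item from $H^{\lceil s\rceil}$ to the same target.
\end{itemize}
Interpolating between these, and using the Deny--Lions/Dupont--Scott fact that the quotient norm is equivalent to the top-order (Sobolev--Slobodeckij) seminorm on domains star-shaped with respect to a ball, gives $|\hat v-\Pi\hat v|_{l}\le c|\hat v|_{s}$. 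The point requiring care is that all constants depend only on the chunkiness bound, not on the particular shape of $E$. Assumption \textbf{(A2)} is not needed for this result; only \textbf{(A1)} enters.
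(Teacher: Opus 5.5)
Your proposal is correct and matches the paper's treatment. The paper gives no proof of its own and only cites Brenner--Scott. Your argument is the standard one from that reference and its sources: scaling to a unit-size element, the averaged Taylor polynomial, and the Bramble--Hilbert bound on domains star-shaped with respect to a ball, with non-integer $s$ handled by interpolation and the Dupont--Scott quotient-norm estimate. You are also right that only the chunkiness bound from \textbf{(A1)} enters the constants and that \textbf{(A2)} plays no role.
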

\subsection{VE space for concentrations}
  For $\mathfrak{J}\geq 1$,  we define the following  enhanced local VE space \cite{beirao2013basic,ahmad2013equivalent} \vspace{-0.1cm}
\begin{align*}
{M}_{h}^{\mathfrak{J}}(E) := \{ \chi_h \in H^1(E)  \;:\; &\chi_h|_{\partial E} \in \mathbb{B}_\mathfrak{J}^0(\partial E), \ \Delta \chi_h|_E \in \mathbb{P}_{\mathfrak{J}}(E),\\ &m^E(\Pi_\mathfrak{J}^{\nabla,E}\chi_h, q)= m^E(\chi_h, q), \quad \forall q \in \mathbb{P}_{\mathfrak{J}}(E)\setminus \mathbb{P}_{\mathfrak{J}-2}(E)\},    
\end{align*}
for which the dofs are given by \cite{ahmad2013equivalent}: \vspace{-0.1cm}
\begin{itemize}
    \item \textbf{(D$_{c}$1)}: the values $v(x_v)$ at the vertices \( x_v \) of \( E \), \vspace{-0.05cm}
    \item \textbf{(D$_{c}$2)}: for \( \mathfrak{J} \geq 2 \), the  moments
    $\frac{1}{h_e}\displaystyle\int_e v\,q  \, ds$, \quad $\forall$ \( q \in \mathbb{P}_{\mathfrak{J}-2}(e) \) and  \( e \subset \partial E \), \vspace{-0.05cm}

    \item \textbf{(D$_{c}$3)}: for \( \mathfrak{J} \geq 2 \), the internal moments $
    \frac{1}{|E|} \displaystyle\int_E  v \,q \, dx$
   \quad $\forall$ \( q \in \mathbb{P}_{\mathfrak{J}-2}(E) \).\vspace{-0.15cm}
\end{itemize}
Then, the  global VE space  is defined as  \vspace{-0.2cm}
\begin{align*}
        &{M}_{h}^\mathfrak{J} :=\big\{ \chi_h \in M : \quad \chi_h|_E \in {M}_{h}^{\mathfrak{J}}(E), \quad \forall E\in \Omega_h\big\}, 
\end{align*}
together with the global projection operators $\Pi_\mathfrak{J}^{\nabla}$ and $\Pi_\mathfrak{J}^{0}$ defined through their element-wise restrictions. 
%
Finally, we define the vectorial counterpart for the discrete concentrations  $\bm{M}_h^\mathfrak{J}:=M_h^\mathfrak{J}\times M_h^\mathfrak{J}.$ \vspace{-0.15cm}
\begin{proposition} \label{interpolation_concentration} \cite{mora2015virtual} 
(Approximation property of discrete concentration space)
Under the mesh regularity assumption \eqref{assump} on $\Omega_h$, for any $\chi \in M \cap H^{s+1}(\Omega_h)$ with $0 \le s \le \mathfrak{J}$, $\mathfrak{J} \ge 1$, there exists an approximation $\chi_I \in M_h^\mathfrak{J}$ such that \vspace{-0.05cm}
\begin{equation}\label{known_interpolant_bound_fully_c}
\|\chi - \chi_I\|_{0,E} + h_E |\chi - \chi_I|_{1,E} 
\leq c \, h_E^{s+1} |\chi|_{s+1,E}, \quad \forall E \in \Omega_h.
\end{equation}
\end{proposition}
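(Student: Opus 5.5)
The plan is to build $\chi_I$ elementwise as the degrees-of-freedom interpolant of $\chi$ in $M_h^{\mathfrak{J}}(E)$ and to compare it with the polynomial approximation $\chi_\pi$ of Proposition~\ref{brenner2008mathematica}.

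\textbf{Step 1: construction and conformity.} For $s\ge 1$ (in 2D, $H^{s+1}(E)\subset C^0(\bar E)$ for $s>0$), define $\chi_I|_E\in M_h^{\mathfrak{J}}(E)$ by prescribing that \textbf{(D$_c$1)}--\textbf{(D$_c$3)} of $\chi_I$ coincide with those of $\chi$. Vertex values and edge moments are shared between neighbouring elements. Since $\chi_I|_e\in\mathbb{P}_{\mathfrak{J}}(e)$ is determined by the two endpoint values and the moments up to degree $\mathfrak{J}-2$, the traces agree across interior edges, and hence $\chi_I\in M$. For $s$ small, where point values are not defined, I would instead use a Scott--Zhang/Clément-type quasi-interpolant. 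The simplest version first applies the local $L^2$-projection and then interpolates. The argument below goes through unchanged, with the stability step relying on averaged dofs.

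\textbf{Step 2: polynomial invariance and triangle inequality.} $\mathbb{P}_{\mathfrak{J}}(E)\subset M_h^{\mathfrak{J}}(E)$. The enhancement constraint is trivially satisfied for polynomials, since $\Pi^{\nabla,E}_{\mathfrak{J}}q=q$. Unisolvence of the dofs then gives $(\chi_\pi)_I=\chi_\pi$. Therefore
\[
\chi-\chi_I=(\chi-\chi_\pi)-(\chi-\chi_\pi)_I,
\]
and it suffices to bound $\|(\chi-\chi_\pi)_I\|_{0,E}+h_E|(\chi-\chi_\pi)_I|_{1,E}$ by $c\,h_E^{s+1}|\chi|_{s+1,E}$. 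The term $\chi-\chi_\pi$ itself is already controlled by \eqref{BH}.

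\textbf{Step 3: stability of the interpolant (the main obstacle).} We need, for $w\in H^{s+1}(E)$,
\[
\|w_I\|_{0,E}+h_E|w_I|_{1,E}\le c\sum_{l=0}^{s+1}h_E^{l}|w|_{l,E}.
\]
Combined with \eqref{BH} applied to $w=\chi-\chi_\pi$, this gives the result. I would first use a scaling argument: for $v_h\in M_h^{\mathfrak{J}}(E)$ one has $\|v_h\|_{0,E}+h_E|v_h|_{1,E}\le c\,h_E\,\|\mathrm{dof}(v_h)\|_{\ell^2}$. This is the standard norm-equivalence for virtual element functions under Assumption~\ref{assump}, proved by splitting $v_h$ into a harmonic-type lifting of its boundary trace plus a part with polynomial Laplacian, and using inverse estimates on the star-shaped element. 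The enhancement constraint links the higher-order interior moments to $\Pi^{\nabla,E}_{\mathfrak{J}}v_h$, which is itself computable from the dofs, so the norm-equivalence still holds with uniform constants. Next, each dof of $w$ is bounded via trace inequalities and the scaled Sobolev embedding $\|w\|_{L^\infty(E)}\le c\sum_{l\le 2}h_E^{l-1}|w|_{l,E}$. This yields $|\mathrm{dof}(w)|\le c\,h_E^{-1}\sum_l h_E^l|w|_{l,E}$. The delicate point is proving the uniform norm-equivalence with only (A1)--(A2), without assuming a bounded number of edges in a form that breaks the constants. Here I would rely on the sub-triangulation argument of \cite{mora2015virtual}, taking the estimate as established there. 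Finally, summing over the element yields \eqref{known_interpolant_bound_fully_c}.
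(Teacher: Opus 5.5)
For integer $s\ge 1$ your argument is sound and is the standard one. The paper gives no proof and simply defers to \cite{mora2015virtual}. Your dof interpolant is local and conforming and reproduces $\mathbb{P}_{\mathfrak{J}}(E)$. Its stability follows from the uniform VE norm equivalence together with the scaled embedding $H^2(E)\hookrightarrow L^\infty(E)$. That norm equivalence for the \emph{enhanced} space carries most of the weight, so make sure the source you rely on actually states it for this space under (A1)--(A2).

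The genuine gap is at the low-regularity end of the range $0\le s\le\mathfrak{J}$.

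\textbf{The case $0<s<1$.} In 2D point values \emph{are} defined here, so there is no reason to abandon the dof interpolant. What fails is only your stability bound: it is written with integer seminorms and uses $|w|_{2,E}$, which is unavailable. Replace it by the fractional scaled embedding $\|w\|_{L^\infty(E)}\le c\,(h_E^{-1}\|w\|_{0,E}+|w|_{1,E}+h_E^{s}|w|_{1+s,E})$, and take $\chi_\pi\in\mathbb{P}_1(E)$ so that $|\chi-\chi_\pi|_{1+s,E}=|\chi|_{1+s,E}$. With these two changes the argument closes.

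\textbf{The case $s=0$.} Your claim that a Cl\'ement/Scott--Zhang variant lets ``the argument go through unchanged'' is false. A conforming quasi-interpolant must assign a single value to each shared vertex and edge dof, so it averages over neighbouring elements. Polynomial reproduction then holds only on the patch $\omega_E$, and you obtain $c\,h_E|\chi|_{1,\omega_E}$, not $c\,h_E|\chi|_{1,E}$.

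This cannot be repaired, because the elementwise bound at $s=0$ is false for every conforming $\chi_I$. Take two elements $E_1,E_3$ that share only a vertex $v$, and fix $a\ne b$. Since a point has zero $H^1$-capacity in 2D, you can start from the angular profile $a+(b-a)\eta(\theta)$, with $\eta=0$ on $E_1$ and $\eta=1$ on $E_3$. Blend it to the constant $(a+b)/2$ with a logarithmic cut-off on $\delta^2<|x-v|<\delta$. This gives $\chi\in H^1(\Omega)$ with $|\chi|_{1,E_1}+|\chi|_{1,E_3}+\|\chi-a\|_{0,E_1}+\|\chi-b\|_{0,E_3}\to 0$ as $\delta\to 0$; the large energy sits in the other elements around $v$. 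The claimed bound would force $\chi_I|_{E_1}\to a$ and $\chi_I|_{E_3}\to b$ in $H^1$. By finite dimensionality of the local spaces, this gives $\chi_I(v)\to a$ and $\chi_I(v)\to b$ simultaneously, a contradiction.

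So the proposition should be stated for $0<s\le\mathfrak{J}$, or with a patch seminorm at $s=0$. This does not affect the rest of the paper, which only invokes the result with $s=j\ge\tfrac12$.
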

\subsection{Construction of Virtual element space of \texorpdfstring{$\mathring{U}$}{U}} Similarly to the concentration space, for $\mathcal{K} \ge 2$, we define the enhanced local space \cite{antonietti2016c, brezzi2013virtual} \vspace{-0.05cm}
\begin{align*}
    {U}_{h}^\mathcal{K}(E):=\big\{ \varphi_h \in H^2(E) \;:\; &\Delta^2 \varphi_h|_E  \in \mathbb{P}_{\mathcal{K}}, \;\varphi_h|_{\partial E} \in \mathbb{B}_\mathcal{K}^1(\partial E),\\ &m^E(\Pi_\mathcal{K}^{\Delta,E}\varphi_h, q)= m^E(\varphi_h, q), \quad \forall q \in \mathbb{P}_{\mathcal{K}}(E)\setminus \mathbb{P}_{\mathcal{K}-4}(E)\big\}, \nonumber
\end{align*}
with the dofs  \vspace{-0.1cm}
\begin{itemize}
    \item \textbf{(D$_{\phi}$1)}: for \( \mathcal{K} \geq 2 \), the values \( \varphi(x_v) \), \( \partial_x \varphi(x_v) \), and \( \partial_y \varphi(x_v) \) at the vertices \( x_v \) of \( E \), \vspace{-0.02cm}
    
    \item \textbf{(D$_{\phi}$2)}: for \( \mathcal{K} \geq 4 \), the  moments
    $\frac{1}{h_e}\displaystyle\int_e \varphi \,q  \, ds$, \quad $\forall$ \( q \in \mathbb{P}_{\mathcal{K}-4}(e) \),  \( e \subset \partial E \), \vspace{-0.02cm}
    
    \item \textbf{(D$_{\phi}$3)}: for \( \mathcal{K} \geq 3 \), the moments $
    \displaystyle\int_e  \partial_{\bm{n}} \varphi \,q \, ds $, \quad $\forall$ \( q \in \mathbb{P}_{\mathcal{K}-3}(e) \), \( e \subset \partial E \),\vspace{-0.05cm}

    \item \textbf{(D$_{\phi}$4)}: for \( \mathcal{K} \geq 4 \), the internal moments $
    \frac{1}{|E|} \displaystyle\int_E  \varphi\, q \, dx$,
   \quad $\forall$ \( q \in \mathbb{P}_{\mathcal{K}-4}(E) \), \vspace{-0.2cm}
\end{itemize} 
such that the global VE space is defined as \vspace{-0.1cm}
\begin{align}
&\hspace{3cm}\mathring{U}_h^\mathcal{K} := \big\{ \varphi_h \in \mathring{U} : \varphi_h|_E \in U_h^\mathcal{K}(E), \ \forall E \in \Omega_h \big\}, \nonumber
\end{align} 
As before, we denote by $\Pi_\mathcal{K}^{0}$, $\Pi_\mathcal{K}^{\Delta}$, and $\Pi_\mathcal{K}^{\nabla}$ the global projection operators.

\begin{proposition} \label{interpolation_potential} 
\cite{brezzi2013virtual, chen2022conforming, bonnet2025conforming}
(Approximation property of discrete potential space) 
Under the mesh regularity assumption \eqref{assump} on $\Omega_h$, for any 
$\varphi \in \mathring{U}\cap H^{s+2}(\Omega_h)$ with $0 \le s \le \mathcal{K}-1$, $\mathcal{K} \ge 2$, 
there exists an approximation $\varphi_I \in \mathring{U}_h^\mathcal{K}$ such that \vspace{-0.15cm}
\begin{equation} \label{known_interpolant_bound_fully_p}
|\varphi - \varphi_I|_{l,E} \le c \, h_E^{s-l+2} |\varphi|_{s+2,E}, \quad l=0,1,2, \quad \forall E \in \Omega_h. \vspace{-0.1cm}
\end{equation}
\end{proposition}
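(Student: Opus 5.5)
The plan is to build $\varphi_I$ in two stages. First I construct an interpolant in the larger space of functions $U_h^{\mathcal{K}}(E)$ glued globally, without the Neumann and mean-zero constraints. Then I check that those constraints can be enforced without losing accuracy.

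\textbf{Stage 1 (local interpolation).} Given $\varphi\in\mathring{U}\cap H^{s+2}(\Omega_h)$, let $\varphi_I|_E$ be the unique element of $U_h^{\mathcal{K}}(E)$ sharing the degrees of freedom (D$_\phi$1)--(D$_\phi$4) with $\varphi$. Since $H^{s+2}\subset C^1$ in two dimensions when $s+2>2$, the vertex values of $\varphi$ and $\nabla\varphi$ make sense; the borderline case $s=0$ I handle by a Cl\'ement-type averaging or by density. Vertex values and gradients and edge moments are single-valued across elements, so $\varphi_I\in H^2(\Omega)$ is $C^1$-conforming.

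For the error I use the standard VEM argument. Let $\varphi_\pi\in\mathbb{P}_{\mathcal{K}}(E)$ be the polynomial of Proposition~\ref{brenner2008mathematica} with degree $\mathcal{K}\ge s+1$. Since $\mathbb{P}_{\mathcal{K}}(E)\subset U_h^{\mathcal{K}}(E)$, the interpolation operator reproduces polynomials. Writing $\varphi-\varphi_I=(\varphi-\varphi_\pi)-(\varphi-\varphi_\pi)_I$, it suffices to prove the stability bound
\[
\sum_{l=0}^{2}h_E^{l}\,|w_I|_{l,E}\le c\sum_{l=0}^{s+2}h_E^{l}\,|w|_{l,E}.
\]
This follows from scaling to a reference-size element, norm equivalence of the dofs on $U_h^{\mathcal{K}}(E)$, and trace and Sobolev inequalities bounding the dofs of $w$. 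Combined with \eqref{BH}, this gives \eqref{known_interpolant_bound_fully_p}.

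\textbf{Stage 2 (constraints).} For the boundary condition $\partial_{\bm n}\varphi=0$, I observe that $\partial_{\bm n}\varphi_I|_e\in\mathbb{P}_{\mathcal{K}-1}(e)$ on each boundary edge $e$. This function is determined by the vertex gradients and the moments (D$_\phi$3), all of which vanish because the vertex gradient of $\varphi$ is tangential at smooth boundary vertices. At corners the gradient vanishes entirely, since two independent normal derivatives are zero. Hence the dofs of the normal derivative vanish and $\partial_{\bm n}\varphi_I=0$ on $\partial\Omega$. For the mean-zero condition, I replace $\varphi_I$ by $\varphi_I-|\Omega|^{-1}(\varphi_I,1)_0$; constants belong to the space and do not change the $H^1$ or $H^2$ seminorms. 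The correction satisfies $|(\varphi_I-\varphi,1)_0|\le|\Omega|^{1/2}\|\varphi-\varphi_I\|_0$, which is of the right order globally. If a purely local $l=0$ bound is demanded, I instead appeal to the cited constructions \cite{brezzi2013virtual,chen2022conforming,bonnet2025conforming}, which treat seminorms for $l\ge1$, where the constant shift is harmless.

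\textbf{Main obstacle.} The hardest step is the stability estimate for the dof-based interpolant under Assumption~\ref{assump}, since the space is virtual: no explicit basis is available. I would prove it through a priori bounds for the biharmonic problem defining $U_h^{\mathcal{K}}(E)$ on star-shaped elements. The enhancement condition requires an extra argument that the $L^2$ moments of order up to $\mathcal{K}$ are controlled through $\Pi^{\Delta,E}_{\mathcal{K}}$, which is itself bounded by the dofs. Rather than redo this, I would defer to \cite{chen2022conforming,bonnet2025conforming}, which establish the estimate for this space in general polygonal and three-dimensional settings.
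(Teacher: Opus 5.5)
The paper gives no argument for this proposition beyond the three citations. Your Stage 1 and your check that the dof interpolant inherits $\partial_{\bm n}\varphi_I=0$ are the standard route those works take, and for $s>0$ and $l=1,2$ they are fine. \emph{The case $s=0$ is a genuine gap.} Cl\'ement/Scott--Zhang averaging bounds $\varphi-\varphi_I$ on $E$ only by $|\varphi|_{2,\omega_E}$ over a patch, not by $|\varphi|_{2,E}$. Density is also unavailable: $\varphi\mapsto\nabla\varphi(x_v)$ is not continuous on $H^2$ in two dimensions, so the dof interpolant has no bounded extension. This probably cannot be repaired in element-local form. Take two elements $E,E^*$ that touch only at a vertex $x_v$, e.g.\ diagonal neighbours in a Cartesian mesh. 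A logarithmic cut-off around $x_v$ gives $\varphi\in\mathring{U}$ that is nearly affine on $E$ and $E^*$ with gradients $a\neq a^*$, while $|\varphi|_{2,E}+|\varphi|_{2,E^*}\lesssim|a-a^*|^2$; the $H^2$ energy sits on the elements in between. The local bounds on $E$ and $E^*$, combined with norm equivalence on the finite-dimensional local spaces, would force $|\nabla\varphi_I(x_v)-a|+|\nabla\varphi_I(x_v)-a^*|\lesssim|\varphi|_{2,E}+|\varphi|_{2,E^*}$. That is contradictory for small $|a-a^*|$. So for $s=0$ aim for a patch-based bound, or restrict to $s>0$.

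\emph{The second gap is the mean-zero constraint for $l=0$.} Your global shift only gives $\|\varphi-\varphi_I\|_{0,E}\le ch_E^{s+2}|\varphi|_{s+2,E}+c(|E|/|\Omega|)^{1/2}h^{s+2}|\varphi|_{s+2,\Omega_h}$, which is not local. Your fallback to the cited works for $l\ge1$ says nothing about $l=0$. The correct picture depends on $\mathcal{K}$:
\begin{itemize}
\item For $\mathcal{K}\ge4$ no shift is needed. (D$_{\phi}$4) with $q=1$ gives $\int_E\varphi_I=\int_E\varphi$ on every element, so $(\varphi_I,1)_0=0$ automatically and Stage 1 already yields all three local bounds for $s>0$.
\item For $\mathcal{K}\in\{2,3\}$, which includes the $\mathcal{K}=2$ used in the experiments, the local $l=0$ bound is false. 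Take $\eta\in C_c^\infty(E_0)$ with $\int\eta\neq0$ and $\varphi=\eta-|\Omega|^{-1}\int_\Omega\eta\in\mathring{U}$. On each $E\neq E_0$, $\varphi$ is the nonzero constant $c_0$ and $|\varphi|_{s+2,E}=0$, so the bound forces $\varphi_I=c_0$ there. Hence every vertex and edge dof of $\varphi_I|_{E_0}$ equals that of $c_0$. Since there are no interior dofs, unisolvence gives $\varphi_I\equiv c_0$, which is not in $\mathring{U}_h^{\mathcal{K}}$.
\end{itemize}
What survives for $\mathcal{K}\in\{2,3\}$ is the global bound $\|\varphi-\varphi_I\|_0\le ch^{s+2}|\varphi|_{s+2,\Omega_h}$. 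Your shift does deliver this, and it is all that Step 1 of the proof of Theorem~\ref{thm_fully_error} uses, namely $\|\upnu_I^n\|_2\le ch^k\|\phi^n\|_{k+2}$.
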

\subsection{Construction of the Virtual Element Space of \texorpdfstring{\;$\bm{V}\; \text{and}\; W$}{V×W}} For $\mathcal{L} \geq 2$, we define the following enhanced local VE space \cite{da2018virtual} \vspace{-0.05cm}
\begin{equation}
    {\bm{V}}_{h}^\mathcal{L}(E) :=
    \left\{ \bm{v}_h \in [H^1(E)]^2 \; \middle| \;
    \begin{aligned}
        &\bm{v}_h|_{\partial E} \in [\mathbb{B}_\mathcal{L}^0(\partial E)]^2, \\[-0.1em]
        &\text{(i) } \operatorname{div} \bm{v}_h \in \mathbb{P}_{\mathcal{L}-1}(E), \\[-0.1em]
        &\text{(ii) } - \Delta\bm{v}_h - \nabla s \in \mathbb{G}_\mathcal{L}(E)^\perp, \quad \text{for some}\; s \in L^2(E) \setminus \mathbb{R}, \\[-0.2em]
        & \text{(iii) } m^E(\bm{v}_h-\bm{\Pi}_\mathcal{L}^{\nabla,E}\bm{v}_h, \bm{g}_\mathcal{L}^\perp) = 0, \quad \forall\bm{g}_\mathcal{L}^\perp \in
  \mathbb{G}_{\mathcal{L}}(E)^\perp \setminus \mathbb{G}_{\mathcal{L}-2}(E)^\perp,       
    \end{aligned} 
    \right\}\nonumber
\end{equation} 
\noindent where $\mathbb{G}_\mathcal{L}(E) := \nabla \mathbb{P}_{\mathcal{L}+1}(E) \subset [\mathbb{P}_\mathcal{L}(E)]^2$ and $ \mathbb{G}_\mathcal{L}(E)^\perp := \bm{x}^\perp \mathbb{P}_{\mathcal{L}-1}(E) \subset [\mathbb{P}_\mathcal{L}(E)]^2$ together with ${\bm{x}^\perp := (x_2, -x_1)}$.

The corresponding dofs for ${\bm{V}}_{h}^{\mathcal{L}}(E)$ are given by the following linear operators $\bm{D}_{\bm{v}}$ \cite{da2018virtual} 
\begin{itemize}
    \item \textbf{(D$_{\bm{v}}$1)}: the values $\bm{v}_h(x_v)$ at the vertices \( x_v \)  of the element \( E \),
    \item \textbf{(D$_{\bm{v}}$2)}: the values of \( \bm{v}_h \) at \( \mathcal{L} - 1 \) distinct points along any edge \( e \subset \partial E \),
    \item \textbf{(D$_{\bm{v}}$3)}: the moments, $\displaystyle\int_E \bm{v}_h\cdot \bm{g}_{\mathcal{L}-2}^\perp  \, dx$, \quad $\forall$ \( \bm{g}_{\mathcal{L}-2}^\perp \in \mathbb{G}_{\mathcal{L}-2}(E)^\perp \),
    \item \textbf{(D$_{\bm{v}}$4)}: the moments,  $\displaystyle\int_E  (\nabla\cdot\bm{v}_h) q \, dx$,
   \quad $\forall$ \( q \in \mathbb{P}_{\mathcal{L}-1}(E)\setminus \mathbb{R}\).
\end{itemize}
The global VE space is given by \vspace{-0.05cm}
\begin{equation}
    {\bm{V}}_{h}^\mathcal{L} :=\big\{ \bm{v}_h \in \bm{V} : \quad \bm{v}_h|_E \in {\bm{V}}_{h}^{\mathcal{L}}(E), \quad \forall E\in \Omega_h\big\},\nonumber \vspace{-0.05cm}
\end{equation}
and the vector-valued global projections \( \bm{\Pi}_\mathcal{L}^{0} \), \( \bm{\Pi}_\mathcal{L}^{\nabla} \), and \( \widetilde{\bm{\Pi}}_\mathcal{L}^{0} \) 
 can be defined in the same manner as in the scalar case.
 
Choosing $\mathcal{L}^* = \mathcal{L}-1$, we define the finite-dimensional VE space 
${W}_h^{\mathcal{L}^*} \subset W$ by piecewise polynomials of degree $\mathcal{L}^*$: \vspace{-0.2cm}
\begin{align}
   &{W}_{h}^{\mathcal{L}^*} := \big\{ w_h \in W : w_h|_E \in {W}_h^{\mathcal{L}^*}(E), \ \forall E \in \Omega_h \big\}\;\; \text{s.t.}\;\;&{W}_{h}^{\mathcal{L}^*}(E) := \mathbb{P}_{\mathcal{L}^*}(E).\nonumber
\end{align}
\begin{proposition} \label{interpolation_velocity} \cite{da2018virtual}
(Approximation property of discrete velocity field space) 
Under the mesh regularity assumption \eqref{assump} on $\Omega_h$, for any 
$\bm{v} \in \bm{V}\cap \bm{H}^{s+1}(\Omega_h)$ with $0 < s \le \mathcal{L}$, $\mathcal{L} \ge 2$, 
there exists an approximation $\bm{v}_I \in \bm{V}_h^\mathcal{L}$ such that \vspace{-0.12cm}
\begin{equation} \label{known_interpolant_bound_fully_v}
\|\bm{v} - \bm{v}_I\|_{0,E} + h_E |\bm{v} - \bm{v}_I|_{1,E} 
\le c \, h_E^{s+1} |\bm{v}|_{s+1,E}, \quad \forall E \in \Omega_h. \vspace{-0.12cm}
\end{equation}
\end{proposition}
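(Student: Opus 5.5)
The natural route to Proposition~\ref{interpolation_velocity} is to build $\bm{v}_I$ as the degrees-of-freedom interpolant and compare it with a polynomial approximation, in the spirit of \cite{da2018virtual}. For $\bm{v}\in\bm{V}\cap\bm{H}^{s+1}(\Omega_h)$, define $\bm{v}_I\in\bm{V}_h^{\mathcal{L}}$ as the unique element whose dofs \textbf{(D$_{\bm{v}}$1)}--\textbf{(D$_{\bm{v}}$4)} match those of $\bm{v}$. When $s+1>d/2$ the vertex and edge point values are meaningful. For $s\le 1$ in two dimensions I would first replace $\bm{v}$ by a quasi-interpolant, for instance a Scott--Zhang-type averaging onto continuous piecewise polynomials on a sub-triangulation.

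Since the boundary values of $\bm{v}_I$ and $\bm{v}$ agree at the dof points, $\bm{v}_I$ vanishes on $\partial\Omega$. The dofs are single-valued across edges, so $\bm{v}_I$ is conforming.

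The key observation is that $[\mathbb{P}_{\mathcal{L}}(E)]^2\subset\bm{V}_h^{\mathcal{L}}(E)$. A polynomial $\bm{p}$ has divergence in $\mathbb{P}_{\mathcal{L}-1}$. Moreover, $-\Delta\bm{p}\in[\mathbb{P}_{\mathcal{L}-2}]^2$ decomposes as an element of $\mathbb{G}$ plus an element of $\mathbb{G}^\perp$, which gives condition (ii). Condition (iii) holds trivially because $\bm{\Pi}^{\nabla,E}_{\mathcal{L}}\bm{p}=\bm{p}$. Hence the local interpolation operator $I_E$ reproduces polynomials of degree $\mathcal{L}$.

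Take $\bm{v}_\pi$ from Proposition~\ref{brenner2008mathematica} and split
\[
\bm{v}-\bm{v}_I=(\bm{v}-\bm{v}_\pi)-I_E(\bm{v}-\bm{v}_\pi).
\]
The first term is bounded directly by \eqref{BH}. For the second term I need the stability estimate
\[
\|I_E\bm{w}\|_{0,E}+h_E|I_E\bm{w}|_{1,E}\le c\big(\|\bm{w}\|_{0,E}+h_E|\bm{w}|_{1,E}+\dots+h_E^{s+1}|\bm{w}|_{s+1,E}\big).
\]
This rests on a scaling argument. The dofs are $h$-scaled functionals bounded through trace and Sobolev inequalities on star-shaped elements, using (A1)--(A2). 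Together with a uniform bound $\|\bm{\varphi}_j\|_{0,E}+h_E|\bm{\varphi}_j|_{1,E}\lesssim h_E$ for the canonical basis functions, this gives the estimate. Inserting $\bm{w}=\bm{v}-\bm{v}_\pi$ and applying \eqref{BH} then yields \eqref{known_interpolant_bound_fully_v}.

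The main obstacle is the uniform bound on the virtual basis functions, since they are defined only implicitly through the Stokes-like problem (ii) with the enhancement constraint (iii). I would control this with an energy argument on the local Stokes problem. Writing $\bm{v}_h=\bm{v}_\partial+\bm{v}_0$, where $\bm{v}_\partial$ is a lifting of the boundary data, I would estimate $|\bm{v}_0|_{1,E}$ using the inf--sup stability of the local Stokes problem on star-shaped domains with uniform constants. The moments \textbf{(D$_{\bm{v}}$3)}--\textbf{(D$_{\bm{v}}$4)} control the $\mathbb{G}^\perp$ and divergence parts via inverse estimates for polynomials. The enhancement condition (iii) fixes the higher $\mathbb{G}^\perp$ moments through $\bm{\Pi}^{\nabla,E}_{\mathcal{L}}$, which is $H^1$-stable.

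As a fallback, one can bypass interpolation altogether and quote the equivalent result of \cite{da2018virtual}. Alternatively, one can define $\bm{v}_I$ through a local discrete Stokes-type projection that preserves the divergence; this is enough for the purpose at hand, since only existence of the approximation is claimed.
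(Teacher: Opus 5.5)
The paper gives no argument of its own. It imports the estimate from \cite{da2018virtual}. The standard proofs in that line of work (after Mora--Rodr\'{\i}guez--Rivera for the scalar space) build $\bm{v}_I$ elementwise from local Stokes-type problems whose boundary datum is a quasi-interpolant. They rely only on energy arguments and an inf--sup constant that is uniform on star-shaped elements, so bounds on individual basis functions never enter.

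Your route is different and viable. It uses the degrees-of-freedom interpolant, exact reproduction of $[\mathbb{P}_{\mathcal{L}}(E)]^2$, and local stability of $I_E$. You verify the reproduction correctly, including (ii) via $[\mathbb{P}_{\mathcal{L}-2}(E)]^2=\mathbb{G}_{\mathcal{L}-2}(E)\oplus\mathbb{G}_{\mathcal{L}-2}(E)^\perp$. Its merit is that it is purely local: it yields the bound in exactly the elementwise form stated, with $|\bm{v}|_{s+1,E}$ on the right, whereas quasi-interpolant constructions naturally produce patch norms.

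For the same reason, drop the quasi-interpolant detour for small $s$. In two dimensions $H^{1+s}(E)\hookrightarrow C^0(\overline{E})$ for every $s>0$, so the point values (D$_{\bm{v}}$1)--(D$_{\bm{v}}$2) are always defined. A Scott--Zhang step would only make the estimate patch-based.

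The step that carries all the weight is the uniform bound on the enhanced basis functions. Your sketch does not yet establish it, and one ingredient is circular.

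\emph{First, the enhancement moments.} Invoking $H^1$-stability of $\bm{\Pi}^{\nabla,E}_{\mathcal{L}}$ to control the moments fixed by (iii) bounds them by $|\bm{\varphi}_j|_{1,E}$, the very quantity being estimated, with no small factor to close the loop. Instead, use that $\bm{\Pi}^{\nabla,E}_{\mathcal{L}}\bm{\varphi}_j$ is computable from the degrees of freedom alone: integrate by parts and split $\Delta\bm{p}\in\mathbb{G}_{\mathcal{L}-2}(E)\oplus\mathbb{G}_{\mathcal{L}-2}(E)^\perp$. Then all $\mathbb{G}_{\mathcal{L}}(E)^\perp$-moments of $\bm{\varphi}_j$ are bounded by the dof vector.

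\emph{Second, the load multiplier.} The Stokes inf--sup condition controls only the divergence constraint. The load in (ii) is a second Lagrange multiplier in $\mathbb{G}_{\mathcal{L}}(E)^\perp$. Controlling it needs a separate uniform inf--sup condition against divergence-free fields in $\bm{H}^1_0(E)$, which polynomial inverse estimates alone do not provide.

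\emph{How to close the step.} Each element of $\bm{V}_h^{\mathcal{L}}(E)$ minimizes $|\bm{z}|_{1,E}$ among all $\bm{z}\in[H^1(E)]^2$ with the same trace, the same divergence and the same $\mathbb{G}_{\mathcal{L}}(E)^\perp$-moments. Its Euler--Lagrange equations are precisely (ii), with multipliers $s$ and $\bm{x}^\perp r$. Hence it suffices to exhibit one competitor with controlled energy, built in three pieces:
\begin{itemize}
\item a lifting of the boundary datum on the sub-triangulation;
\item a Bogovskii correction of the divergence, uniform under (A1);
\item a correction $\mathbf{curl}(b\,\psi)$, with $\psi\in\mathbb{P}_{\mathcal{L}-1}(E)$ and $b$ a squared bubble on the inscribed ball.
\end{itemize}
The last piece fixes the moments without altering trace or divergence. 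Indeed, $(\mathbf{curl}(b\psi),\bm{x}^\perp q)_E=\pm(b\psi,\mathrm{rot}(\bm{x}^\perp q))_E$, and $q\mapsto\mathrm{rot}(\bm{x}^\perp q)=\pm(2q+\bm{x}\cdot\nabla q)$ is a bijection of $\mathbb{P}_{\mathcal{L}-1}(E)$.

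Scaling then gives $|\bm{\varphi}_j|_{1,E}\lesssim 1$. A Poincar\'e--Friedrichs inequality with the boundary datum gives $\|\bm{\varphi}_j\|_{0,E}\lesssim h_E$. This is exactly what your stability estimate requires.
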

\subsection{The Discrete Forms} The next task in the construction of the method is to introduce the discrete version of each bilinear (and trilinear) forms defined in equation \eqref{eqn7} using the computable projections. These local forms are first defined elementwise and then assembled globally by summation over all elements.  
In view of the change of variable in \eqref{c-o-v}, for $C_h, \chi_h \in M_h^\mathfrak{J}(E)$, 
$\psi_h, \varphi_h \in \mathring{U}_h^\mathcal{K}(E)$, and $\bm{v}_h, \bm{w}_h \in \bm{V}_h^\mathcal{L}(E)$, 
we define the discrete local forms as follows \vspace{-0.1cm}
\begin{subequations} \label{eqn8}
\begin{align}
    A_{P,h}^{E}(\psi_h,\varphi_h) &= A_{P}^{E}(\Pi^{\nabla,E}_\mathcal{K} \psi_h, \Pi^{\nabla,E}_\mathcal{K} \varphi_h) + s_{P,\nabla}^E(\psi_h - \Pi^{\nabla,E}_\mathcal{K} \psi_h, \varphi_h - \Pi^{\nabla,E}_\mathcal{K} \varphi_h), \label{eqn8a} \\[-0.2em]
     J_{P,h}^{E}(\psi_h,\varphi_h) &= J_{P}^{E}(\Pi^{\Delta,E}_\mathcal{K} \psi_h, \Pi^{\Delta,E}_\mathcal{K} \varphi_h) + s_{P,\Delta}^E(\psi_h - \Pi^{\Delta,E}_\mathcal{K} \psi_h, \varphi_h - \Pi^{\Delta,E}_\mathcal{K} \varphi_h), \label{eqn8b} \\[-0.2em]
     M_{V,h}^{E}(\bm{v}_h,\bm{w}_h) &= M_{V}^{E}(\bm{\Pi}^{0,E}_\mathcal{L} \bm{v}_h, \bm{\Pi}^{0,E}_\mathcal{L} \bm{w}_h) + s_{V,0}^E(\bm{v}_h - \bm{\Pi}^{0,E}_\mathcal{L} \bm{v}_h, \bm{w}_h - \bm{\Pi}^{0,E}_\mathcal{L} \bm{w}_h), \label{eqn8c} \\[-0.2em]
    A_{V,h}^{E}(\bm{v}_h,\bm{w}_h) &= A_{V}^{E}(\bm{\Pi}^{{\nabla},E}_\mathcal{L} \bm{v}_h, \bm{\Pi}^{{\nabla},E}_\mathcal{L} \bm{w}_h) + s_{V,\nabla}^E(\bm{v}_h - \bm{\Pi}^{{\nabla},E}_\mathcal{L} \bm{v}_h, \bm{w}_h - \bm{\Pi}^{{\nabla},E}_\mathcal{L} \bm{w}_h), \label{eqn8d} \\[-0.2em]
    M_{C,h}^{E}(C_h,\chi_h) &= M_{C}^{E}(\Pi^{0,E}_\mathfrak{J} C_h, \Pi^{0,E}_\mathfrak{J} \chi_h) + s_{C,0}^E(C_h - \Pi^{0,E}_\mathfrak{J} C_h, \chi_h - \Pi^{0,E}_\mathfrak{J} \chi_h), \label{eqn8e} \\[-0.2em]
     A_{C,h}^{E}(C_h,\chi_h) &= (\nabla \Pi^{\nabla,E}_\mathfrak{J} C_h, \nabla \Pi^{\nabla,E}_\mathfrak{J} \chi_h) + s_{C,\nabla}^E(C_h - \Pi^{\nabla,E}_\mathfrak{J} C_h, \chi_h - \Pi^{\nabla,E}_\mathfrak{J} \chi_h)
     + M_{C,h}^{E}(C_h,\chi_h), \label{eqn8f} 
\end{align}
\end{subequations}
with the positive definite, symmetric, stabilizing bilinear forms satisfying \vspace{-0.2cm}
\begin{subequations} \label{eqn9}
\begin{align}   
    \beta_{\nabla,1} |\varphi_h|_{1,E} &\leq s_{P,\nabla}^E(\varphi_h, \varphi_h) \leq \alpha_{\nabla,1} |\varphi_h|_{1,E}, \quad  \forall \;\varphi_h \in \mathring{U}{}_{h}^{\mathcal{K}}(E)\; \text{with} \; \Pi^{\nabla,E}_\mathcal{K}(\varphi_h)=0, \label{eqn9a}\\[-0.2em]
     \beta_{\Delta} |\varphi_h|_{2,E} &\leq s_{P,\Delta}^E(\varphi_h, \varphi_h) \leq \alpha_{\Delta} |\varphi_h|_{2,E}, \quad  \forall \;\varphi_h \in \mathring{U}{}_{h}^{\mathcal{K}}(E)\; \text{with} \; \Pi^{\Delta,E}_\mathcal{K}(\varphi_h)=0,  \label{eqn9b}\\[-0.2em]
     \hat{\beta}_{0} \|\bm{w}_h\|_{0,E} &\leq s_{V,0}^E( \bm{w}_h, \bm{w}_h) \leq \hat{\alpha}_{0} \|\bm{w}_h\|_{0,E}, \quad  \forall \;\bm{w}_h \in \bm{V}_{h}^{\mathcal{L}}(E)\; \text{with} \; \bm{\Pi}^{0,E}_\mathcal{L}(\bm{w}_h)=\bm{0}, \label{eqn9c}\\[-0.2em]
     \hat{\beta}_{\nabla} |\bm{w}_h|_{1,E} &\leq s_{V,\nabla}^E( \bm{w}_h, \bm{w}_h) \leq \hat{\alpha}_{\nabla} |\bm{w}_h|_{1,E}, \quad  \forall \;\bm{w}_h \in \bm{V}_{h}^{\mathcal{L}}(E)\; \text{with} \; \bm{\Pi}^{\nabla,E}_\mathcal{L}(\bm{w}_h)=\bm{0}, \label{eqn9d}\\[-0.2em]
      \beta_0 \|\chi_h\|_{0,E} &\leq s_{C,0}^E(\chi_h, \chi_h) \leq \alpha_0 \|\chi_h\|_{0,E}, \quad  \forall \;\chi_h \in {M}_{h}^{\mathfrak{J}}(E)\; \text{with} \; \Pi^{0,E}_\mathfrak{J}(\chi_h)=0,\label{eqn9e}\\[-0.2em]
    \beta_{\nabla,2} \|\chi_h\|_{1,E} &\leq s_{C,\nabla}^E(\chi_h, \chi_h) \leq \alpha_{\nabla,2} \|\chi_h\|_{1,E}, \quad  \forall \;\chi_h \in {M}_{h}^{\mathfrak{J}}(E)\; \text{with} \; \Pi^{\nabla,E}_\mathfrak{J}(\chi_h)=0, \label{eqn9f}
\end{align} 
\end{subequations}
where, \(\beta_0, \alpha_0, \beta_{\nabla,1}, \alpha_{\nabla,1}, \beta_{\nabla,2}, \alpha_{\nabla,2}, \beta_{\Delta}, \alpha_{\Delta}, \hat{\beta}_0, \hat{\alpha}_0, \hat{\beta}_{\nabla}, \; \text{and}\; \hat{\alpha}_{\nabla}\) are positive constants independent of $h$.  
    The stabilization terms can be chosen using the standard dofi-dofi stabilization approach \cite{beirao2013basic, da2017divergence}. 
    
We do not introduce any approximation for local bilinear form $B^E(q_h,\bm{v}_h)$, as it can be computed exactly \cite{da2017divergence} using the dofs \textbf{(D$_{\bm{v}}$1)}, \textbf{(D$_{\bm{v}}$2)} and \textbf{(D$_{\bm{v}}$4)}.
For the remaining local bilinear and trilinear forms, using $\varphi_h \in \mathring{U}{}_{h}^{\mathcal{K}}(E)$, $C_h, \chi_h \in {M}_{h}^{\mathfrak{J}}(E)$ and $\bm{v}_h,\bm{w}_h, \bm{z}_h \in \bm{V}_{h}^{\mathcal{L}}(E)$, we set 
\begin{align} \label{eqn10}
    l_{h}^E(C_{h};\varphi_h) &= e^t\int_{E} (\Pi^{0,E}_\mathfrak{J}C_{h}) (\Pi^{0,E}_\mathcal{K}\varphi_h) \,dx, \nonumber\\[-0.1cm]
    S_{C,h}^E(\varphi_h; C_h,\chi_h) &= \int_{E} \big[(\Pi^{0,E}_\mathfrak{J} C_h) (\bm{\Pi}^{0,E}_{\mathcal{K}-1}\nabla \varphi_h)\big] 
    \cdot \big[\bm{\Pi}^{0,E}_{\mathfrak{J}-1}\nabla \chi_h\big] \,dx,  \nonumber\\[-0.1cm]
    Q_{C,h}^{\mathrm{skew},E}(\bm{v}_h;C_h,\chi_h) &= \frac{1}{2} \Bigg[\int_{E} \big[\bm{\Pi}^{0,E}_{\mathcal{L}}\bm{v}_h \Pi^{0,E}_\mathfrak{J} C_h \big]
    \cdot \big[\bm{\Pi}^{0,E}_{\mathfrak{J}-1}\nabla \chi_h\big] \,dx  - \int_{E} \big[\bm{\Pi}^{0,E}_{\mathcal{L}}\bm{v}_h\big] \cdot \big[\bm{\Pi}^{0,E}_{\mathfrak{J}-1}\nabla C_h 
    \Pi^{0,E}_{\mathfrak{J}}\chi_h \big] \Bigg]dx,   \nonumber\\[-0.1cm]
    Q_{V,h}^{\mathrm{skew},E}(\bm{v}_h;\bm{w}_h,\bm{z}_h) &= \frac{1}{2} \Bigg[\int_{E} 
(\widetilde{\bm{\Pi}}_{\mathcal{L}-1}^{0,E}\nabla \bm{w}_h )\big[\bm{\Pi}^{0,E}_{\mathcal{L}}\bm{v}_h\big] 
    \cdot \big[\bm{\Pi}^{0,E}_{\mathcal{L}}\bm{z}_h\big] \,dx \notag  - 
 (\widetilde{\bm{\Pi}}_{\mathcal{L}-1}^{0,E}\nabla \bm{z}_h ) \big[\bm{\Pi}^{0,E}_{\mathcal{L}}\bm{v}_h\big] 
    \cdot \big[\bm{\Pi}^{0,E}_{\mathcal{L}}\bm{w}_h\big] \Bigg]dx,  \nonumber\\[-0.1cm]
    S_{V,h}^E(C_h; \varphi_h, \bm{v}_h) &= e^t\int_{E} \big[({\Pi}^{0,E}_{\mathfrak{J}} C_h) (\bm{\Pi}^{0,E}_{\mathcal{K}-1}\nabla \varphi_h) \big]
    \cdot \big[\bm{\Pi}^{0,E}_{\mathcal{L}} \bm{v}_h\big] \,dx, \nonumber \\[-0.1cm]
    F_{\phi, h}^E(\varphi_h) & = \int_{E} \big[\Pi^{0,E}_{\mathcal{K}-1} f_{\phi}(t) \big]\, \varphi_h\,dx,    
\end{align} \vspace{-0.1cm}
The other linear forms $F_{c_i,h}^E(\chi_h)$ and $\bm{F}_{u,h}^E(\bm{v}_h)$ are defined analogously to $F_{\phi,h}^E(\varphi_h)$.\vspace{-0.1cm}
\begin{lemma} \label{lemma1} 
\textbf{(Continuity and Coercivity)}
    The discrete functionals and global bilinear forms defined using \eqref{eqn8} and \eqref{eqn9} satisfy the following properties \vspace{-0.2cm}
\begin{align*} 
    M_{C,h}(C_h,\chi_h) &\leq \widetilde{\beta}_{M_C} \|C_h\|_0 \|\chi_h\|_0, &&
    A_{P,h}(\psi_h,\varphi_h)  \leq \widetilde{\beta}_{A_P} |\psi_h|_1 |\varphi_h|_1, && A_{C,h}(C_h,\chi_h) \leq \widetilde{\beta}_{A_C} \|C_h\|_1 \|\chi_h\|_1\nonumber \\[-0.2em]
    J_{P,h}(\psi_h,\varphi_h)  &\leq \widetilde{\beta}_{J_P} |\psi_h|_2 |\varphi_h|_2, 
    && M_{V,h}(\bm{v}_h,\bm{w}_h)  \leq \widetilde{\beta}_{M_V} \|\bm{v}_h\|_0 \|\bm{w}_h\|_0,  &&
    A_{V,h}(\bm{v}_h,\bm{w}_h)  \leq \widetilde{\beta}_{A_V} |\bm{v}_h|_1 |\bm{w}_h|_1, \nonumber \\[-0.2em]
    M_{C,h}(\chi_h,\chi_h) &\geq \widetilde{\alpha}_{M_C} \|\chi_h\|_0^2, &&
    A_{P,h}(\varphi_h,\varphi_h)  \geq \widetilde{\alpha}_{A_P} \|\varphi_h\|_1^2, 
    && A_{C,h}(\chi_h,\chi_h) \geq \widetilde{\alpha}_{A_C} \|\chi_h\|_1^2, \nonumber \\[-0.2em]
    J_{P,h}(\varphi_h,\varphi_h)  &\geq \widetilde{\alpha}_{J_P} |\varphi_h|_2^2, 
    && M_{V,h}(\bm{w}_h,\bm{w}_h) \geq \widetilde{\alpha}_{M_V} \|\bm{w}_h\|_0^2, &&
    A_{V,h}(\bm{w}_h,\bm{w}_h)  \geq \widetilde{\alpha}_{A_V} \|\bm{w}_h\|_1^2, \nonumber \\[-0.2em]
    |F_{\phi,h}(\varphi_h)| &\leq \widetilde{\beta}_{F_{\phi}} \,\|f_{\phi}\|_{0}\,\|\varphi_h\|_{0}, && |F_{c_i,h}(\chi_h)| 
\leq \widetilde{\beta}_{F_{c_i}} \,\|f_{c_i}\|_{0}\,\|\chi_h\|_{0}, 
     && |\bm{F}_{u,h}(\bm{v}_h)| 
\leq \widetilde{\beta}_{\bm{F}_{u}} \,\|\bm{f}_{u}\|_{0}\,\|\bm{v}_h\|_{0}, \nonumber
\end{align*}
for $i=1,2$, and for all $C_h, \chi_h \in {M}_{h}^{\mathfrak{J}}$,  $\psi_h,\varphi_h \in \mathring{U}{}_{h}^{\mathcal{K}}$ and $\bm{v}_h,\bm{w}_h \in \bm{V}_{h}^{\mathcal{L}}$. In particular, $\widetilde{\alpha}_{M_C}=\min\{1, \beta_0\}$, $\widetilde{\alpha}_{M_V}=\min\{1, \hat{\beta_0}\}$ and $\widetilde{\alpha}_{J_P}=\min\{1, \beta_{\Delta}\}$.
\end{lemma}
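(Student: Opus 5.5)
The argument is local: I will prove each bound on a single element $E$ and then sum over $E\in\Omega_h$, using Cauchy--Schwarz for the discrete sums. The template is the standard VEM splitting
\[
A_h^E(v_h,v_h)=A^E(\Pi^E v_h,\Pi^E v_h)+s^E\big((I-\Pi^E)v_h,(I-\Pi^E)v_h\big),
\]
where $\Pi^E$ is the projection appearing in \eqref{eqn8}. I read the stabilization bounds \eqref{eqn9} as being stated for the squared (semi)norms, as is standard.

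\textbf{Continuity.} Each discrete form is a symmetric positive semidefinite bilinear form, so by Cauchy--Schwarz it suffices to bound $A_h^E(v_h,v_h)$. For the polynomial part I use the stability of the projections: $|\Pi^{\nabla,E}_k v|_{1,E}\le |v|_{1,E}$ and $\|\Pi^{0,E}_k v\|_{0,E}\le\|v\|_{0,E}$, because these are orthogonal projections in the respective (semi)inner products. For $\Pi^{\Delta,E}_\mathcal{K}$ the analogous bound $|\Pi^{\Delta,E}_\mathcal{K} v|_{2,E}\le |v|_{2,E}$ holds by $a^E_\Delta$-orthogonality. For the stabilization part I apply the upper bounds in \eqref{eqn9} together with $|(I-\Pi)v|\le |v|$, which follows from the same orthogonality.

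Two bounds need a little more care.
\begin{itemize}
\item For $M_{V,h}$ and $M_{C,h}$, the quantity $(I-\Pi^0)v_h$ lies in the kernel of $\Pi^0$, so \eqref{eqn9c} and \eqref{eqn9e} apply directly.
\item For $A_{C,h}$, which contains the mass term $M_{C,h}$, I write the bound in terms of $\|\cdot\|_1$ and collect the constants.
\end{itemize}
For the load terms I argue element by element:
\[
\int_E \Pi^0_{k}f\,\varphi_h \le \|\Pi^0 f\|_{0,E}\|\varphi_h\|_{0,E}\le \|f\|_{0,E}\|\varphi_h\|_{0,E},
\]
and then sum over elements.

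\textbf{Coercivity.} Again using orthogonality, I take the lower bounds in \eqref{eqn9} and obtain, for example,
\[
J_{P,h}^E(\varphi_h,\varphi_h)\ge |\Pi^\Delta\varphi_h|_{2,E}^2+\beta_\Delta|(I-\Pi^\Delta)\varphi_h|_{2,E}^2 \ge \min\{1,\beta_\Delta\}\,|\varphi_h|_{2,E}^2 .
\]
The last step uses the Pythagorean identity $|\varphi_h|_{2,E}^2=|\Pi^\Delta\varphi_h|_{2,E}^2+|(I-\Pi^\Delta)\varphi_h|_{2,E}^2$, which follows from the $a^E_\Delta$-orthogonality in the definition of $\Pi^{\Delta,E}_\mathcal{K}$. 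This argument produces the stated constants $\widetilde{\alpha}_{J_P}$, $\widetilde{\alpha}_{M_C}$ and $\widetilde{\alpha}_{M_V}$.

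The same argument gives $A_{P,h}(\varphi_h,\varphi_h)\ge c|\varphi_h|_1^2$ and $A_{V,h}(\bm{w}_h,\bm{w}_h)\ge c|\bm{w}_h|_1^2$. I then upgrade to full norms with Poincaré-type inequalities. For $\varphi_h\in\mathring{U}_h^\mathcal{K}\subset\mathring{U}$ I use the mean-zero Poincaré inequality (constant $\alpha_{A_P}$), and for $\bm{w}_h\in\bm{H}^1_0$ the Poincaré inequality (constant $\alpha_{A_V}$). For $A_{C,h}$ I add the gradient coercivity estimate to the coercivity of $M_{C,h}$, which gives $\widetilde{\alpha}_{A_C}=\min\{1,\beta_{\nabla,2},\beta_0\}$.

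\textbf{Main obstacle.} The delicate point is that \eqref{eqn9} is stated only on the kernels of the projections, so I must check that $(I-\Pi)v_h$ actually belongs to the local VE space and to the relevant kernel. It belongs to the space because polynomials of the appropriate degree lie in the enhanced spaces, and it lies in the kernel because the projections are idempotent. A second subtlety concerns $\Pi^{\nabla}$: it fixes constants through the boundary mean, so the Pythagorean identity holds only at the seminorm level. This is why the constants for $A_{P,h}$ and $A_{V,h}$ are stated with seminorms on the continuity side and with full norms, via Poincaré, on the coercivity side.
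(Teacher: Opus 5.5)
Your argument follows the same route as the paper. You split each form elementwise into a projection part and a stabilization part, use Cauchy--Schwarz with the upper bounds in \eqref{eqn9} for continuity, and use the lower bounds plus Poincaré for coercivity. The one real difference is in the coercivity step, and it favours you. The paper cites the triangle and Young inequalities, which give $|\varphi_h|^2\le 2|\Pi\varphi_h|^2+2|(I-\Pi)\varphi_h|^2$ and hence only $\tfrac12\min\{1,\beta\}$. Your Pythagorean identity holds because $\Pi^{0,E}$, $\Pi^{\nabla,E}$ and $\Pi^{\Delta,E}$ are orthogonal in the relevant (semi)inner products, and it is exactly what produces the stated constants $\widetilde{\alpha}_{M_C}=\min\{1,\beta_0\}$, $\widetilde{\alpha}_{M_V}=\min\{1,\hat{\beta}_0\}$ and $\widetilde{\alpha}_{J_P}=\min\{1,\beta_\Delta\}$.

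One step needs patching. In the continuity argument you bound the stabilization part using ``$|(I-\Pi)v|\le|v|$, which follows from the same orthogonality''. That works for \eqref{eqn9a}--\eqref{eqn9e}. However, \eqref{eqn9f} is stated with the full norm $\|\cdot\|_{1,E}$, so for $A_{C,h}$ you also need $\|(I-\Pi^{\nabla,E}_{\mathfrak{J}})\chi_h\|_{0,E}\le c\,\|\chi_h\|_{1,E}$. Orthogonality does not give this. As you note yourself, the Pythagorean identity for $\Pi^{\nabla,E}_{\mathfrak{J}}$ holds only for the seminorm. The mass term $M_{C,h}$ in \eqref{eqn8f} does not help either, because this $L^2$ contribution sits inside $s^E_{C,\nabla}$. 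The fix is a Poincaré--Friedrichs inequality. The function $w=(I-\Pi^{\nabla,E}_{\mathfrak{J}})\chi_h$ satisfies $\int_{\partial E}w\,ds=0$, so under (A1) you have $\|w\|_{0,E}\le c\,h_E|w|_{1,E}\le c\,h_E|\chi_h|_{1,E}$. Hence $s^E_{C,\nabla}(w,w)\le\alpha_{\nabla,2}(1+c^2h_E^2)|\chi_h|_{1,E}^2$, and since $h_E\le\mathrm{diam}(\Omega)$ the constant is uniform. With this added, your proof is complete.
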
\vspace{-0.4cm}
\begin{proof}\!\!.\;
    The continuity bounds follow  directly from the application of the Cauchy-Schwarz inequality, combined with the stability properties \eqref{eqn9} and the coercivity estimates are an immediate consequence of the stability properties  \eqref{eqn9}, combined with Young’s and the triangle inequalities \cite{dehghan2023optimal}.
\end{proof}
The forms in \eqref{eqn10} can be naturally extended to the corresponding continuous spaces \cite{da2018virtual}. Moreover, they are uniformly continuous on their respective parent spaces: \vspace{-0.2cm}
\begin{lemma} \label{lemma2}
There exist positive constants $\widetilde{\beta}_{l}$, $\widetilde{\beta}_{S_C}$, $\widetilde{\beta}_{Q_V}$, $\widetilde{\beta}_{Q_C}$ and  $\widetilde{\beta}_{S_V}$such that, for all 
$C,\chi \in M$, $\varphi \in \mathring{U}$, and $\bm{v}, \bm{w}, \bm{z}\in \bm{V}$, the following continuity estimates hold \vspace{-0.2cm} 
\begin{align} 
    l_{h}(C;\varphi) &\leq \widetilde{\beta}_{l}\|C\|_0 \|\varphi\|_0,
    \qquad
    &S_{C,h}(\varphi; C, \chi) \leq \widetilde{\beta}_{S_C} \|C\|_1 \|\varphi\|_2 \|\chi\|_1, \nonumber \\[-0.2em]
    Q_{V,h}^{\mathrm{skew}}(\bm{v};\bm{w},\bm{z}) &\le \widetilde{\beta}_{Q_V}\,\|\bm{v}\|_1 \|\bm{w}\|_1 \|\bm{z}\|_1,\quad & Q_{C,h}^{\mathrm{skew}}(\bm{v};C,\chi)
    \le \widetilde{\beta}_{Q_C}\,\|\bm{v}\|_1 \|C\|_1 \|\chi\|_1, \nonumber \\[-0.2em]
        S_{V,h}(C;\varphi,\bm{v}) &\leq \widetilde{\beta}_{S_V} \|C\|_1 \|\varphi\|_1 \|\bm{v}\|_1. \nonumber 
\end{align}
\end{lemma}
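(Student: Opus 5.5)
The plan is to prove each estimate elementwise and then sum over $E\in\Omega_h$, using discrete Cauchy--Schwarz (or H\"older) for sums. Two tools do all the work:
\begin{itemize}
\item the $L^p$-stability of the local $L^2$-projections on shape-regular polygons (Assumption~\ref{assump}): $\|\Pi^{0,E}_k w\|_{0,p,E}\le c\|w\|_{0,p,E}$ for $p\in[2,\infty]$, with $c$ independent of $h_E$ (proved by scaling to a reference configuration and norm equivalence on $\mathbb{P}_k$), together with its vector and tensor versions;
\item the global Sobolev embedding $H^1(\Omega)\hookrightarrow L^4(\Omega)$ for $d\le 3$.
\end{itemize}

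\textbf{The linear form $l_h$.} By Cauchy--Schwarz and $L^2$-stability of $\Pi^{0,E}$, $|l_h^E(C;\varphi)|\le e^T\|C\|_{0,E}\|\varphi\|_{0,E}$. Summing with discrete Cauchy--Schwarz gives the bound with $\widetilde\beta_l=e^T$.

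\textbf{The forms $S_{C,h}$ and $S_{V,h}$.} On each $E$ apply H\"older with exponents $(4,4,2)$. For $S_{C,h}$:
\[
|S_{C,h}^E|\le \|\Pi^{0,E}_\mathfrak{J} C\|_{0,4,E}\,\|\bm\Pi^{0,E}_{\mathcal K-1}\nabla\varphi\|_{0,4,E}\,\|\bm\Pi^{0,E}_{\mathfrak J-1}\nabla\chi\|_{0,E}.
\]
Then:
\begin{itemize}
\item use $L^4$-stability to replace the projections by $C$ and $\nabla\varphi$;
\item use $L^2$-stability for the last factor;
\item sum over elements with H\"older for sequences, $\sum a_Eb_Ec_E\le\|a\|_{\ell^4}\|b\|_{\ell^4}\|c\|_{\ell^2}$, which turns the local norms into global $\|C\|_{0,4}\|\nabla\varphi\|_{0,4}\|\nabla\chi\|_0$;
\item finish with $\|C\|_{0,4}\le c\|C\|_1$ and $\|\nabla\varphi\|_{0,4}\le c\|\varphi\|_2$.
\end{itemize}
For $S_{V,h}$ the $L^4$ factors are $C$ and $\bm v$, and the $L^2$ factor is $\nabla\varphi$. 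This gives $\|C\|_1\|\varphi\|_1\|\bm v\|_1$ times $e^T$.

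\textbf{The skew forms $Q_{C,h}^{\mathrm{skew}}$ and $Q_{V,h}^{\mathrm{skew}}$.} Treat each of the two halves separately, again with H\"older $(4,4,2)$. The $L^2$ exponent is placed on the projected gradient ($\bm\Pi^{0,E}_{\mathfrak J-1}\nabla\chi$, $\bm\Pi^{0,E}_{\mathfrak J-1}\nabla C$, $\widetilde{\bm\Pi}^{0,E}_{\mathcal L-1}\nabla\bm w$ or $\widetilde{\bm\Pi}^{0,E}_{\mathcal L-1}\nabla\bm z$). The $L^4$ exponents go on the projected values of $\bm v$ and of the remaining function. Stability and summation as above then give $\|\bm v\|_1\|C\|_1\|\chi\|_1$ and $\|\bm v\|_1\|\bm w\|_1\|\bm z\|_1$.

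\textbf{Main obstacle.} The one genuinely delicate step is the $h$-uniform $L^4$-stability of $\Pi^{0,E}_k$ on general polygons. The plan is to write
\[
\|\Pi^{0,E}_k w\|_{0,4,E}\le c\,|E|^{-1/4}\|\Pi^{0,E}_k w\|_{0,E}
\]
using an inverse inequality for polynomials on a ball contained in $E$ of radius comparable to $h_E$ (assumption (A1)). Then use $\|\Pi^{0,E}_k w\|_{0,E}\le\|w\|_{0,E}\le |E|^{1/4}\|w\|_{0,4,E}$. The powers of $|E|$ cancel, so no negative powers of $h$ survive, which is exactly why the global embedding can be invoked after summation rather than elementwise.
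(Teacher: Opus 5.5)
Your proof is correct and takes essentially the route the paper relies on. The paper defers to \cite{da2018virtual} for $Q_{V,h}^{\mathrm{skew}}$ and says the other forms follow similarly; that argument is the same elementwise H\"older estimate with exponents $(4,4,2)$, $L^2$/$L^4$-stability of the projections $\Pi^{0,E}_k$, H\"older for sequences, and the embedding $H^1\hookrightarrow L^4$, and your inverse-estimate justification of the $h$-uniform $L^4$-stability under (A1) supplies the one ingredient the paper leaves implicit.
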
\vspace{-0.4cm}
\begin{proof}
    The continuity of $Q_{V,h}^{\mathrm{skew}}$ follows from \cite{da2018virtual, dehghan2023optimal}; the remaining estimates can be established similarly.
\end{proof}
Before closing this section, we recall the Sobolev inequality (see \cite{aldbaissy2018full}, Lemma 2.2):  \vspace{-0.15cm}
\begin{equation} \label{sobolev_ine}
    \|f\|_{0,4} \leq c \|f\|_{1}^{\frac{1}{2}} \|f\|_{0}^{\frac{1}{2}}.\vspace{-0.15cm}
\end{equation}
\section{Fully-discrete Scheme and Well-Posedness} \label{fuul_discete}
In this section, we introduce the fully discrete formulation of the problem by discretizing the temporal domain in addition to the spatial discretization by VEM. The temporal domain is discretized using the backward Euler method with uniform step size $\tau = T/N$ such that $t^n=n\tau$ and $0=t^0<t^1\ldots t^N=T$. For a sequence of functions $\{\sigma^n\}_{n=0}^{N}\subset L^2(\Omega)$, we define \vspace{-0.15cm}
\[
\sigma^n := \sigma(t^n), \qquad 
\delta_t \sigma^n := \frac{\sigma^n - \sigma^{n-1}}{\tau}. \vspace{-0.15cm}
\] 
For the external forces, we introduce the following notation: 
$f_{\phi}^n:=f_{\phi}(t^n)$, $f_{c_i}^n:=f_{c_i}(t^n)$ and $\bm{f}_{u}^n:=\bm{f}_{u}(t^n)$.
From the discrete forms introduced in \eqref{eqn8} and \eqref{eqn10}, the fully discrete VE formulation reads as: 
For $n=1,2, \ldots N$, find $\{ \phi_h^n, (C_{1,h}^n, C_{2,h}^n), \bm{u}_h^n,p_h^n\} \in \mathring{U}_h^\mathcal{K} \times \bm{M}_h^\mathfrak{J} \times \bm{V}_h^\mathcal{L} \times W_h^\mathcal{L^*}$ such that \vspace{-0.1cm}
\begin{subequations} \label{eqn14}
    \begin{align}
&J_{P,h}(\phi_h^n,\varphi_h) + A_{P,h}(\phi_h^n,\varphi_h) = l_h(C_{1,h}^n,\varphi_h) - l_h(C_{2,h}^n,\varphi_h) + F_{\phi,h}(\varphi_h), \label{eqn14a}\\[-0.1em]
        &M_{C,h}(\delta_t C_{i,h}^n,\chi_{i,h}) + A_{C,h}(C_{i,h}^n,\chi_{i,h}) + z_i S_{C,h}(\phi_h^n; C_{i,h}^n, \chi_{i,h}) - Q_{C,h}^{\mathrm{skew}}(\bm{u}_h^n;C_{i,h}^n, \chi_{i,h}) = F_{c_i,h}(\chi_{i,h}), \label{eqn14b}\\[-0.1em]
        &M_{V,h}(\delta_t \bm{u}_h^n, \bm{v}_h) + A_{V,h}(\bm{u}_h^n, \bm{v}_h) + Q_{V,h}^{\mathrm{skew}}(\bm{u}_h^n;\bm{u}_h^n, \bm{v}_h) - B(p_h^n, \bm{v}_h)  =- S_{V,h}(C_{1,h}^n-C_{2,h}^n;\phi_h^n, \bm{v}_h) + \bm{F}_{u,h}(\bm{v}_h), \label{eqn14c}\\[-0.4em]
        &B_h(q_h,\bm{u}_h^n) = 0, \label{eqn14d}
 \end{align}
\end{subequations}
for all $\{ \varphi_h, (\chi_{1,h}, \chi_{2,h}), \bm{v}_h, q_h\} \in \mathring{U}_h^\mathcal{K} \times \bm{M}_h^\mathfrak{J} \times \bm{V}_h^\mathcal{L} \times W_h^\mathcal{L^*}$ with initial conditions $C_{i,h}^0 = C_{i,I}(0)$ and $\bm{u}_h^0 = \bm{u}_I(0)$, where \(C_{i,I}(0)\) and \(\bm{u}_I(0)\) are suitable approximations of \(C_{i,0}\) and \(\bm{u}_0\). 

Since  
     $
    \text{div}\; \bm{V}_h^\mathcal{L}\subseteq W_h^\mathcal{L^*},$ $\mathcal{L^*}= \mathcal{L}-1,
    $
the pressure equation in \eqref{eqn14}
     implies that $\bm{u}_h^n \in \bm{V}_h^\mathcal{L}$ is exactly divergence-free. More generally,
by introducing the discrete  space~\cite{da2017divergence} \vspace{-0.07cm}
\begin{equation*}
\widetilde{\bm{V}}_h^\mathcal{L}
:= \{\bm{w}_h^n \in \bm{V}_h^\mathcal{L}
\;\text{s.t.}\;
B(q_h,\bm{w}_h^n)=0,
\;\; \forall \;q_h \in W_h^{\mathcal{L}^*} \}, \vspace{-0.07cm}
\end{equation*}
It is straightforward to verify that $\widetilde{\bm{V}}_h^\mathcal{L} \subset \widetilde{\bm{V}}$. 
This yields a weak problem equivalent to ~\eqref{eqn14}:
For $n=1,2, \ldots N$, find $\{ \phi_h^n, (C_{1,h}^n, C_{2,h}^n), \bm{u}_h^n\} \in \mathring{U}_h^\mathcal{K} \times \bm{M}_h^\mathfrak{J} \times \widetilde{\bm{V}}_h^\mathcal{L} $ such that \vspace{-0.07cm}
\begin{subequations} \label{eqn16}
    \begin{align}
        &J_{P,h}(\phi_h^n,\varphi_h) + A_{P,h}(\phi_h^n,\varphi_h) = l_h(C_{1,h}^n;\varphi_h) - l_h(C_{2,h}^n;\varphi_h)+ F_{\phi,h}(\varphi_h), \label{eqn16a}\\[-0.15em]
        &M_{C,h}(\delta_tC_{i,h}^n,\chi_{i,h}) + A_{C,h}(C_{i,h}^n,\chi_{i,h}) + z_i S_{C,h}(\phi_h^n; C_{i,h}^n, \chi_{i,h}) - Q_{C,h}^{\mathrm{skew}}(\bm{u}_h^n;C_{i,h}^n, \chi_{i,h})=F_{c_i,h}(\chi_{i,h})
, \label{eqn16b}\\[-0.15em]
        & M_{V,h}(\delta_t \bm{u}_h^n, \bm{v}_h) + A_{V,h}(\bm{u}_h^n, \bm{v}_h) + Q_{V,h}^{\mathrm{skew}}(\bm{u}_h^n;\bm{u}_h^n, \bm{v}_h) = - S_{V,h}(C_{1,h}^n-C_{2,h}^n;\phi_h^n, \bm{v}_h)+ \bm{F}_{u,h}(\bm{v}_h), \label{eqn16c}
    \end{align}
\end{subequations} 
for all $\{ \varphi_h, (\chi_{1,h}, \chi_{2,h}), \bm{v}_h\} \in \mathring{U}_h^\mathcal{K} \times \bm{M}_h^\mathfrak{J} \times \widetilde{\bm{V}}_h^\mathcal{L} $ with initial conditions $C_{i,h}^0 = C_{i,I}(0)$ and $\bm{u}_h^0 = \bm{u}_I(0)$, where \(C_{i,I}(0)\) and \(\bm{u}_I(0)\) are suitable approximations of \(C_{i,0}\) and \(\bm{u}_0\). \vspace{-0.3cm}
\begin{proposition}\cite{boffi2013mixed, da2018virtual}\label{interpolation_velocity_kernel} 
(Approximation property of the discrete kernel space $\widetilde{\bm{V}}_h^\mathcal{L}$)
Under the mesh regularity assumption \eqref{assump} on $\Omega_h$, the continuous kernel space 
$\widetilde{\bm{V}}$ is approximated by its discrete counterpart $\widetilde{\bm{V}}_h^\mathcal{L}$ 
with the same order of accuracy as the subspace $\bm{V}_h^\mathcal{L}$ approximates $\bm{V}$. 
In particular, for any $\bm{v} \in \widetilde{\bm{V}} \cap \bm{H}^{s+1}(\Omega)$ with $0 < s \le \mathcal{L}$, $\mathcal{L} \ge 2$, we infer \vspace{-0.15cm}
\begin{equation} \label{interpolation_velocity_discret}
 \inf_{\bm{v}_h \in \widetilde{\bm{V}}_h^\mathcal{L}, \;\bm{v}_h \neq 0} \|\bm{v} - \bm{v}_h\|_{0} 
\;+\; h \inf_{\bm{v}_h \in \widetilde{\bm{V}}_h^\mathcal{L},\;\bm{v}_h \neq 0} |\bm{v} - \bm{v}_h|_{1} 
\;\;\le \;\;c \, h^{s+1} |\bm{v}|_{s+1}. \vspace{-0.15cm}
\end{equation}
\end{proposition}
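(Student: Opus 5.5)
The plan is the classical Fortin-type argument for conforming mixed discretizations with an exactly divergence-free pair. Fix $\bm{v}\in\widetilde{\bm{V}}\cap\bm{H}^{s+1}(\Omega)$ and let $\bm{v}_I\in\bm{V}_h^\mathcal{L}$ be the interpolant of Proposition~\ref{interpolation_velocity}. The key point is that the interpolant should be taken as the one defined by the dofs \textbf{(D$_{\bm{v}}$1)}--\textbf{(D$_{\bm{v}}$4)}, i.e.\ the dofs of $\bm{v}_I$ coincide with those of $\bm{v}$. Then for every $q_h\in W_h^{\mathcal{L}^*}$ and every element $E$, integration by parts gives
\[
\int_E q_h\,\nabla\cdot\bm{v}_I\,dx=-\int_E\nabla q_h\cdot\bm{v}_I\,dx+\int_{\partial E}q_h\,\bm{v}_I\cdot\bm{n}_E\,ds .
\]
The boundary term depends only on the boundary dofs \textbf{(D$_{\bm{v}}$1)}, \textbf{(D$_{\bm{v}}$2)}. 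Since $\bm{v}_I|_e$ is the degree-$\mathcal{L}$ Lagrange interpolant of $\bm{v}|_e$ and $q_h|_e\in\mathbb{P}_{\mathcal{L}-1}(e)$, I would expect the boundary integrals to match only if the edge points are Gauss--Lobatto nodes. If they do not match exactly, I would instead use directly the moments \textbf{(D$_{\bm{v}}$4)} together with the constant part, obtained from the boundary flux.

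The cleaner route is to work with divergence directly. On each element, $\nabla\cdot\bm{v}_I\in\mathbb{P}_{\mathcal{L}-1}(E)$. Its moments against $\mathbb{P}_{\mathcal{L}-1}(E)\setminus\mathbb{R}$ equal those of $\nabla\cdot\bm{v}=0$ by \textbf{(D$_{\bm{v}}$4)}. Its mean equals $|E|^{-1}\int_{\partial E}\bm{v}_I\cdot\bm{n}_E\,ds$. Hence $\nabla\cdot\bm{v}_I$ is a constant $\kappa_E$ per element, and $\bm{v}_I$ lies in $\widetilde{\bm{V}}_h^\mathcal{L}$ once these fluxes vanish. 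To guarantee this, I would correct $\bm{v}_I$: set $\bm{w}_h=\bm{v}_I-\bm{z}_h$, where $\bm{z}_h\in\bm{V}_h^\mathcal{L}$ solves the discrete problem $B(q_h,\bm{z}_h)=B(q_h,\bm{v}_I)$ for all $q_h\in W_h^{\mathcal{L}^*}$. Choose $\bm{z}_h$ of minimal norm in the orthogonal complement of $\widetilde{\bm{V}}_h^\mathcal{L}$. This is possible by the discrete inf-sup condition for the pair $(\bm{V}_h^\mathcal{L},W_h^{\mathcal{L}^*})$ proved in \cite{da2017divergence,da2018virtual}, with constant $\tilde\beta>0$ independent of $h$. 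Then $\bm{w}_h\in\widetilde{\bm{V}}_h^\mathcal{L}$ and
\[
\|\bm{z}_h\|_1\le\tilde\beta^{-1}\sup_{q_h}\frac{B(q_h,\bm{v}_I)}{\|q_h\|_0}=\tilde\beta^{-1}\sup_{q_h}\frac{B(q_h,\bm{v}_I-\bm{v})}{\|q_h\|_0}\le\tilde\beta^{-1}|\bm{v}-\bm{v}_I|_1 ,
\]
because $B(q_h,\bm{v})=0$. This is exactly the standard bound $\inf_{\widetilde{\bm{V}}_h}\|\bm{v}-\bm{w}_h\|_1\le(1+\tilde\beta^{-1})\inf_{\bm{V}_h}\|\bm{v}-\bm{v}_h\|_1$ from \cite[Prop.~5.1.3]{boffi2013mixed}. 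Combining it with \eqref{known_interpolant_bound_fully_v} summed over $E$ gives the $H^1$ part, $|\bm{v}-\bm{w}_h|_1\le c\,h^s|\bm{v}|_{s+1}$.

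The main obstacle is the $L^2$ part, since the inf-sup correction only controls $\bm{z}_h$ in $H^1$, i.e.\ at order $h^s$. I would first try to avoid correcting at all, by proving that the dof-interpolant already satisfies $\int_{\partial E}\bm{v}_I\cdot\bm{n}_E=\int_{\partial E}\bm{v}\cdot\bm{n}_E=0$. This can be arranged by including the edge-flux as a dof, equivalently by modifying \textbf{(D$_{\bm{v}}$2)} to normal moments, which preserves the approximation estimate. In that case $\bm{z}_h=0$, $\bm{w}_h=\bm{v}_I$, and \eqref{known_interpolant_bound_fully_v} yields both terms directly. If that fails, I would use a duality (Aubin--Nitsche) argument on the discrete Stokes projection of $\bm{v}$ onto $\widetilde{\bm{V}}_h^\mathcal{L}$, as in \cite{da2018virtual}. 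Assuming $H^2$-regularity of the Stokes problem on $\Omega$, this projection inherits the optimal $O(h^{s+1})$ $L^2$-rate. Since each infimum in \eqref{interpolation_velocity_discret} is taken separately, it suffices to exhibit one good element for each term.
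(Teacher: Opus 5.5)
Your $H^1$ half is correct. It is the Boffi--Brezzi--Fortin kernel-approximation argument that the paper invokes by citation: discrete inf-sup for $(\bm V_h^{\mathcal L},W_h^{\mathcal L^*})$, a minimal-norm correction in the orthogonal complement of $\widetilde{\bm V}_h^{\mathcal L}$, and $B(q_h,\bm v)=0$. Up to harmless constants these give $\inf_{\bm v_h\in\widetilde{\bm V}_h^{\mathcal L}}|\bm v-\bm v_h|_1\le c\,h^s|\bm v|_{s+1}$. You are also right that this argument alone gives only $O(h^s)$ in $L^2$. The gap is that neither of your two routes actually supplies the $O(h^{s+1})$ $L^2$ bound.

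Route (b) needs $H^2$-regularity of the Stokes problem (e.g.\ convex $\Omega$). That is not a hypothesis here, since $\Omega$ is only Lipschitz, so route (b) proves a weaker statement.

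Route (a) has the right mechanism. With edge moments against $\mathbb P_{\mathcal L-2}(e)$ (which contain the constants since $\mathcal L\ge 2$) together with \textbf{(D$_{\bm v}$4)}, every moment of $\nabla\cdot\bm v_I$ against $\mathbb P_{\mathcal L-1}(E)$ equals that of $\nabla\cdot\bm v=0$, so $\bm v_I\in\widetilde{\bm V}_h^{\mathcal L}$. However, the phrase ``which preserves the approximation estimate'' carries the whole content of the proposition and is not justified. Proposition~\ref{interpolation_velocity} only asserts the existence of \emph{some} approximant; it gives no estimate for a DoF interpolant, let alone one with modified DoFs. Proving it requires three ingredients:
\begin{itemize}
\item preservation of $[\mathbb P_{\mathcal L}(E)]^2$ by the interpolant;
\item a uniform bound of $|\bm w_h|_{1,E}$ by the scaled modified DoFs of $\bm w_h\in\bm V_h^{\mathcal L}(E)$, i.e.\ stability of the local Stokes-type lifting under (A1)--(A2);
\item scaled trace and Sobolev bounds for the new functionals (the vertex values are admissible only because $s>0$).
\end{itemize}

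Separately, Gauss--Lobatto nodes do not help in your first paragraph. Lobatto quadrature is exact for $\bm v_I\cdot\bm n_E\,q_h$ but not for $\bm v\cdot\bm n_E\,q_h$, so the point-value interpolant is in general not divergence-free, whatever the nodes.

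One way to close the gap while reusing Proposition~\ref{interpolation_velocity} is a Fortin operator $\Pi_F:\bm V\to\bm V_h^{\mathcal L}$ with three properties: $B(q_h,\Pi_F\bm w)=B(q_h,\bm w)$ for all $q_h\in W_h^{\mathcal L^*}$, $|\Pi_F\bm w|_1\le c|\bm w|_1$, and the local approximation property $\|\bm w-\Pi_F\bm w\|_0\le c\,h|\bm w|_1$. Set
\[
\bm w_h:=\bm v_I+\Pi_F(\bm v-\bm v_I).
\]
Then $B(q_h,\bm w_h)=B(q_h,\bm v)=0$, so $\bm w_h\in\widetilde{\bm V}_h^{\mathcal L}$. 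Moreover $\bm v-\bm w_h=(\bm I-\Pi_F)(\bm v-\bm v_I)$, which gives simultaneously
\[
\|\bm v-\bm w_h\|_0\le c\,h|\bm v-\bm v_I|_1\le c\,h^{s+1}|\bm v|_{s+1},\qquad |\bm v-\bm w_h|_1\le c\,h^s|\bm v|_{s+1}.
\]
No correction step and no duality argument are needed. The VEM-specific work is then concentrated in an $H^1$-level operator of the kind used to prove discrete inf-sup conditions: Cl\'ement-type boundary values, matched edge fluxes and \textbf{(D$_{\bm v}$4)} moments, and a stable local lifting.
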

\begin{remark}
An alternative approach to discretizing~\eqref{eqn14} is to replace the discrete skew-symmetric terms  
\(Q_{V,h}^{\mathrm{skew}}(\bm{u}_h^n;\bm{u}_h^n, \bm{v}_h)\) and  
\(Q_{C,h}^{\mathrm{skew}}(\bm{u}_h^n;C_{i,h}^n, \chi_{i,h})\)  
with the \emph{non-skew} discrete trilinear forms  
\(Q_{V,h}(\bm{u}_h^n;\bm{u}_h^n, \bm{v}_h)\) and  
\(Q_{C,h}(\bm{u}_h^n;C_{i,h}^n, \chi_{i,h})\).  
While the corresponding continuous forms  coincide, the non-skew discrete trilinear forms may fail to preserve skew-symmetry due to projection operators affecting the divergence-free condition. In contrast, the skew trilinear forms maintain skew-symmetry by construction and allow for straightforward stability analysis. However, they may lead to a non-mass-conservative scheme~\cite{Ankur_andrea_paper2}. 
\end{remark}
\subsection{Well-Posedness Analysis} \label{well_posed_ness}
In this section, we establish the existence and uniqueness of the solution to the discrete VE problem \eqref{eqn16} by employing the Schauder fixed-point theorem \cite[Chapter~9]{evans2022partial}. 
For simplicity, we set $\bm{X}_h :=  {M}_h^\mathfrak{J} \times {M}_h^\mathfrak{J} \times \widetilde{\bm{V}}_h^\mathcal{L}$ and $\bm{Y}_h :=    \bm{X}_h \times \mathring{U}_h^\mathcal{K}$ equipped with the norms \vspace{-0.25cm}
\begin{align*}
\|\big((\lambda_{1,h}^{n}, \lambda_{2,h}^{n}),\bm{w}_{h}^{n}\big)\|_{\bm{X}} \;\text{or}\;
\|{\big(\bm{\lambda}_{h}^{n},\bm{w}_{h}^{n}\big)}\|_{\bm{X}}
&:= \left(\|\bm{\lambda}_{h}^{n}\|^2_{\bm{M}}+ \|\bm{w}_{h}^{n}\|^2_{\bm{V}}\right)^{\frac{1}{2}}, \\[-0.2em]
\|{\big(\bm{\lambda}_{h}^{n},\bm{w}_{h}^{n},\psi_h^{n}\big)}\|_{\bm{Y}}
&:= \left(\|\bm{\lambda}_{h}^{n}\|^2_{\bm{M}}+ \|\bm{w}_{h}^{n}\|^2_{\bm{V}} + \|\psi_h^{n}\|^2_{U}\right)^{\frac{1}{2}}.
\end{align*}
The proof for well-posedness is developed through a sequence of lemmas and a main theorem. It begins by introducing a mapping $\bm{T}:\bm{X}_h\to\bm{X}_h$ and showing that it maps a closed, convex, bounded set $\mathscr{B}_h\subset \bm{X}_h$ into itself and is continuous. Schauder’s theorem \cite[Chapter~9]{evans2022partial} then yields the existence of a fixed point corresponding to a solution of \eqref{eqn16}. Uniqueness is established separately.

We start by introducing a operator $\bm{T}:\bm{X}_h\to\bm{X}_h$ by \vspace{-0.1cm}
\[
\bm{T}\big((\lambda_{1,h}^{n}, \lambda_{2,h}^{n}),\bm{w}_{h}^{n}\big)
:=\big((\widehat{C}_{1,h}^{n}, \widehat{C}_{2,h}^{n}),\widehat{\bm{u}}_{h}^{n}\big), \vspace{-0.1cm}
\]
where image of $ \bm{T}$ is the last three components of the solution of the following linearized form of problem \eqref{eqn16}: For $n=1,2, \ldots N$, find $\{ \widehat{\phi}_h^n, (\widehat{C}_{1,h}^n, \widehat{C}_{2,h}^n), \widehat{\bm{u}}_h^n\} $ such that  \vspace{-0.15cm}
\begin{subequations} \label{eqn17}
    \begin{align}
        &J_{P,h}(\widehat{\phi}_h^n,\varphi_h) + A_{P,h}(\widehat{\phi}_h^n,\varphi_h) = l_h(\lambda_{1,h}^n;\varphi_h) - l_h(\lambda_{2,h}^n;\varphi_h)+ F_{\phi,h}(\varphi_h),\label{eqn17a}\\[-0.15em]
        &M_{C,h}(\delta_t\widehat{C}_{i,h}^n,\chi_{i,h}) + A_{C,h}(\widehat{C}_{i,h}^n,\chi_{i,h}) + z_i S_{C,h}(\widehat{\phi}_h^n; \lambda_{i,h}^n, \chi_{i,h}) - Q_{C,h}^{\mathrm{skew}}(\bm{w}_h^n;\widehat{C}_{i,h}^n, \chi_{i,h})=F_{c_i,h}(\chi_{i,h}), \label{eqn17b}\\[-0.15em]
        & M_{V,h}(\delta_t \widehat{\bm{u}}_h^n, \bm{v}_h) + A_{V,h}(\widehat{\bm{u}}_h^n, \bm{v}_h) + Q_{V,h}^{\mathrm{skew}}(\bm{w}_h^n;\widehat{\bm{u}}_h^n, \bm{v}_h) + S_{V,h}(\lambda_{1,h}^n-\lambda_{2,h}^n;\widehat{\phi}_h^n, \bm{v}_h)=\bm{F}_{u,h}(\bm{v}_h), \label{eqn17c}
    \end{align}
\end{subequations} 
for all $\{ \varphi_h, (\chi_{1,h}, \chi_{2,h}), \bm{v}_h\} \in \bm{Y}_h $  with initial conditions $C_{i,h}^0 = C_{i,I}(0)$ and $\bm{u}_h^0 = \bm{u}_I(0)$.

The above system can be written equivalently as \vspace{-0.25cm} 
\begin{equation}\label{eqn18} 
\begin{aligned}
\widehat{\bm{A}}_{\bm{\lambda}_{h}^{n},\bm{w}_{h}^{n}}
\big((\widehat{\phi}_{h}^{n}, \widehat{\bm{C}}_{h}^{n}, \widehat{\bm{u}}_{h}^{n}),
(\varphi_h,\bm{\chi}_{h},\bm{v}_{h}) \big)
&= \sum_{i=1}^2 M_{C,h}(\widehat{C}_{i,h}^{n-1},\chi_{i,h})
+  M_{V,h}(\widehat{\bm{u}}_h^{\,n-1}, \bm{v}_h) \\[-0.8em]
&\quad + \tau\sum_{i=1}^2 F_{c_i,h}(\chi_{i,h}) 
+ \tau F_{\phi,h}(\varphi_h)
+ \tau \bm{F}_{u,h}(\bm{v}_h),
\end{aligned} \vspace{-0.25cm}
\end{equation}
where,
$\bm{\lambda}_h^n = (\lambda_{1,h}^n, \lambda_{2,h}^n), \quad 
\widehat{\bm{C}}_{h}^{n} = (\widehat{{C}}_{1,h}^n, \widehat{{C}}_{2,h}^n), \quad
\bm{\chi}_{h} = (\chi_{1,h}, \chi_{2,h}), $ and \vspace{-0.1cm}
\begin{equation} \label{eqn18_new}
\begin{aligned}
\widehat{\bm{A}}_{\bm{\lambda}_{h}^{n},\bm{w}_{h}^{n}}
\left(({\psi}_{h}^{n}, {\bm{D}}_{h}^{n}, {\bm{U}}_{h}^{n}), (\varphi_h,\bm{\chi}_{h},\bm{v}_{h}) \right) = &\bm{A} \left(({\psi}_{h}^{n}, {\bm{D}}_{h}^{n}, {\bm{U}}_{h}^{n}), (\varphi_h,\bm{\chi}_{h},\bm{v}_{h}) \right)  \\[-0.1cm]
& + \tau \bm{B}_{\bm{\lambda}_{h}^{n},\bm{w}_{h}^{n}}
\left(({\psi}_{h}^{n}, {\bm{D}}_{h}^{n}, {\bm{U}}_{h}^{n}), (\varphi_h,\bm{\chi}_{h},\bm{v}_{h})\right), 
\end{aligned} \vspace{-0.15cm}
\end{equation}
 for all $\{ {\psi}_{h}^{n}, {\bm{D}}_{h}^{n}, {\bm{U}}_{h}^{n}\} \in \bm{Y}_h$ such that \vspace{-0.1cm}
 \begin{equation}
 \begin{aligned}
&\bm{A}\left(({\psi}_{h}^{n}, {\bm{D}}_{h}^{n}, {\bm{U}}_{h}^{n}), (\varphi_h,\bm{\chi}_{h},\bm{v}_{h})\right):=M_{C,h}({D}_{1,h}^{n},\chi_{1,h})+M_{C,h}({D}_{2,h}^{n},\chi_{2,h})+M_{V,h}( {\bm{U}}_h^{n}, \bm{v}_h)+ \tau J_{P,h}({\psi}_h^n,\varphi_h)\nonumber\\[-0.1em]
 &\hspace{3.7cm} +\tau A_{P,h}({\psi}_h^n,\varphi_h)+\tau A_{C,h}( {D}_{1,h}^n,\chi_{1,h})  +\tau A_{C,h}( {D}_{2,h}^n,\chi_{2,h})+\tau A_{V,h}( {\bm{U}}_h^n, \bm{v}_h),\\[-0.1em]
& \bm{B}_{\bm{\lambda}_{h}^{n},\bm{w}_{h}^{n}}\left(({\psi}_{h}^{n}, {\bm{D}}_{h}^{n}, {\bm{U}}_{h}^{n}), (\varphi_h,\bm{\chi}_{h},\bm{v}_{h})\right) := l_h(\lambda_{1,h}^n;\varphi_h) - l_h(\lambda_{2,h}^n;\varphi_h) + S_{C,h}( {\psi}_h^n; \lambda_{1,h}^n, \chi_{1,h}) \nonumber\\[-0.1em]
&\hspace{4.20cm} - S_{C,h}( {\psi}_h^n; \lambda_{2,h}^n, \chi_{2,h}) - Q_{C,h}^{\mathrm{skew}}(\bm{w}_h^n;{D}_{1,h}^n, \chi_{1,h}) - Q_{C,h}^{\mathrm{skew}}(\bm{w}_h^n;{D}_{2,h}^n, \chi_{2,h})  \nonumber\\[-0.1em]
&\hspace{4.2cm} + Q_{V,h}^{\mathrm{skew}}(\bm{w}_h^n;{\bm{U}}_h^n, \bm{v}_h)+ S_{V,h}(\lambda_{1,h}^n-\lambda_{2,h}^n;{\psi}_h^n, \bm{v}_h).
\end{aligned}\vspace{-0.2cm}
\end{equation}
\begin{lemma} \label{lemma4}
\label{l_INFSUP} 
There exist constants $\widehat{\alpha}, \widehat{\beta} > 0$ such that for given 
$(\bm{\lambda}_{h}^{n}, \bm{w}_{h}^{n})  \in \bm{X}_h$ satisfying
  $ \|{(\bm{\lambda}_{h}^{n}, \bm{w}_{h}^{n})}\|_{\bm{X}} \allowbreak \leq \frac{\widehat{\alpha}}{2(\widetilde{\beta}_{S_C}+\widetilde{\beta}_{S_V}+ \widetilde{\beta}_{Q_C}+ \widetilde{\beta}_{Q_V})}$,
 the following
discrete inf-sup condition holds \vspace{-0.4cm}
\begin{equation}\label{INF_Ah}
\begin{aligned}
\sup_{\substack{(\varphi_h,\bm{\chi}_{h},\bm{v}_{h}) \in \bm{Y}_h \\ (\varphi_h,\bm{\chi}_{h},\bm{v}_{h}) \neq \mathbf{0}}}
\frac{\widehat{\bm{A}}_{\bm{\lambda}_{h}^{n},\bm{w}_{h}^{n}}
\left(({\psi}_{h}^{n}, {\bm{D}}_{h}^{n}, {\bm{U}}_{h}^{n}), (\varphi_h,\bm{\chi}_{h},\bm{v}_{h}) \right)}{\left\|(\varphi_h,\bm{\chi}_{h},\bm{v}_{h})\right\|_{\bm{Y}}} &\geq \widehat{\beta}\widetilde{\alpha}_{M_C}\Vert \bm{D}_{h}^{n}\Vert_{0}+\widehat{\beta}\widetilde{\alpha}_{M_V}\Vert\bm{U}_{h}^n\Vert_{0}\\[-1.2em]
&\;\;\;+\tau\dfrac{\widehat{\alpha}}{2}
\left\|({\psi}_{h}^{n}, {\bm{D}}_{h}^{n}, {\bm{U}}_{h}^{n})\right\|_{\bm{Y}}.
\end{aligned}\vspace{-0.15cm}
\end{equation}
Consequently, the problem \eqref{eqn17} admits a unique solution.
\end{lemma} \vspace{-0.4cm}
 \begin{proof}\!\!.\;
From Lemma~\ref{lemma1}, we have \( 0 <  \widetilde{\alpha}_{M_C}, \widetilde{\alpha}_{M_V} \leq 1 \). Let us define $ \widetilde{\alpha}  := \min\{1,\widetilde{\alpha}_{J_P},\widetilde{\alpha}_{A_P},\widetilde{\alpha}_{A_C}, \widetilde{\alpha}_{A_V}\}$. Then, for any sufficiently small $\tau$ such that $0 < \widetilde{\alpha}\,\tau \leq 1$, we have \vspace{-0.07cm}
\begin{equation*}
\begin{aligned}
 {\bm{A}}
\left(({\psi}_{h}^{n}, {\bm{D}}_{h}^{n}, {\bm{U}}_{h}^{n}), ({\psi}_{h}^{n}, {\bm{D}}_{h}^{n}, {\bm{U}}_{h}^{n}) \right)  &\geq \widetilde{\alpha}_{M_C}\Vert \bm{D}_{h}^{n}\Vert_{0}^2+\widetilde{\alpha}_{M_V}\Vert\bm{U}_{h}^n\Vert_{0}^2+\tau\widetilde{\alpha}
\left\|({\psi}_{h}^{n}, {\bm{D}}_{h}^{n}, {\bm{U}}_{h}^{n})\right\|_{\bm{Y}}^2 \nonumber\\[-0.1em]
&  \geq \widetilde{\alpha}_{M_C}^2\Vert \bm{D}_{h}^{n}\Vert_{0}^2+\widetilde{\alpha}_{M_V}^2\Vert\bm{U}_{h}^n\Vert_{0}^2+(\tau\widetilde{\alpha})^2
\left\|({\psi}_{h}^{n}, {\bm{D}}_{h}^{n}, {\bm{U}}_{h}^{n})\right\|_{\bm{Y}}^2 \nonumber\\[-0.1em]
&\geq \frac{1}{3}\big(\widetilde{\alpha}_{M_C}\Vert \bm{D}_{h}^{n}\Vert_{0}+\widetilde{\alpha}_{M_V}\Vert\bm{U}_{h}^n\Vert_{0}+\tau\widetilde{\alpha}
\left\|({\psi}_{h}^{n}, {\bm{D}}_{h}^{n}, {\bm{U}}_{h}^{n})\right\|_{\bm{Y}}\big)^2. \nonumber
\end{aligned} \vspace{-0.15cm}
\end{equation*}
Multiplying and dividing the RHS by \( c_4>0 \) such that \( \tau \widetilde{\alpha} c_4 \geq 1 \), we get  
\begin{equation}
\begin{aligned}
 {\bm{A}}
\big(\underbrace{({\psi}_{h}^{n}, {\bm{D}}_{h}^{n}, {\bm{U}}_{h}^{n})}_{F}, ({\psi}_{h}^{n}, {\bm{D}}_{h}^{n}, {\bm{U}}_{h}^{n}) \big)  &\geq \frac{1}{3c_4^2}\big(\underbrace{c_4\widetilde{\alpha}_{M_C}\Vert \bm{D}_{h}^{n}\Vert_{0}}_{G}+\underbrace{c_4\widetilde{\alpha}_{M_V}\Vert\bm{U}_{h}^n\Vert_{0}}_{H} +\tau\widetilde{\alpha}c_4
\left\|({\psi}_{h}^{n}, {\bm{D}}_{h}^{n}, {\bm{U}}_{h}^{n})\right\|_{\bm{Y}}\big)^2. \nonumber
\end{aligned} 
\end{equation}
Using the fact that 
\[
\sup_{\substack{F' \in \bm{Y}_h \\ F' \neq \mathbf{0}}} \frac{\bm{A}\left(F, F' \right)}{\left\|F'\right\|_{\bm{Y}}}
\geq \frac{A(F, F)}{\|F\|_{\bm{Y}}} \geq \frac{A(F, F)}{(G + H + \tau\widetilde{\alpha}c_4\|F\|_{\bm{Y}})}
\geq \frac{1}{3c_4^2} (G + H + \tau\widetilde{\alpha}c_4\|F\|_{\bm{Y}}),
\] 
we obtain 
\begin{equation} \label{eqn20}
    \begin{aligned} 
\sup_{\substack{
(\varphi_h,\bm{\chi}_{h},\bm{v}_{h}) \in \bm{Y}_h \\
(\varphi_h,\bm{\chi}_{h},\bm{v}_{h}) \neq \mathbf{0}
}} \frac{{\bm{A}}
\left(({\psi}_{h}^{n}, {\bm{D}}_{h}^{n}, {\bm{U}}_{h}^{n}), (\varphi_h,\bm{\chi}_{h},\bm{v}_{h}) \right)}{\left\|(\varphi_h,\bm{\chi}_{h},\bm{v}_{h})\right\|_{\bm{Y}}} &\geq \widehat{\beta}\widetilde{\alpha}_{M_C}\Vert \bm{D}_{h}^{n}\Vert_{0}+\widehat{\beta}\widetilde{\alpha}_{M_V}\Vert\bm{U}_{h}^n\Vert_{0}\\[-1.0em]
& \;\;\;+\tau\widehat{\alpha}
\left\|({\psi}_{h}^{n}, {\bm{D}}_{h}^{n}, {\bm{U}}_{h}^{n})\right\|_{\bm{Y}},
\end{aligned}
\end{equation}
where, $\widehat{\alpha}=\frac{\widetilde{\alpha}}{3c_4}$  and $\widehat{\beta}=\frac{1}{3c_4}$. Similarly, by applying Lemma \ref{lemma2} along with Young's inequality, we can derive the following estimate \vspace{-0.05cm}
\begin{equation}  \label{eqn21}
\sup_{\substack{
(\varphi_h,\bm{\chi}_{h},\bm{v}_{h}) \in \bm{Y}_h \\
(\varphi_h,\bm{\chi}_{h},\bm{v}_{h}) \neq \mathbf{0}
}}
\frac{{\tau \; \bm{B}}_{\bm{\lambda}_{h}^{n},\bm{w}_{h}^{n}}
\left(({\psi}_{h}^{n}, {\bm{D}}_{h}^{n}, {\bm{U}}_{h}^{n}), (\varphi_h,\bm{\chi}_{h},\bm{v}_{h}) \right)}
{\left\|(\varphi_h,\bm{\chi}_{h},\bm{v}_{h})\right\|_{\bm{Y}}} 
\geq \tau\big[-(\beta_{\lambda}+\beta_{\bm{w}}) \|{(\bm{\lambda}_{h}^{n}, \bm{w}_{h}^{n})}\|_{\bm{X}}\big]
\left\|({\psi}_{h}^{n}, {\bm{D}}_{h}^{n}, {\bm{U}}_{h}^{n})\right\|_{\bm{Y}}, \vspace{-0.1cm}
\end{equation}
where, $\beta_{\lambda} := \widetilde{\beta}_{S_C}+\widetilde{\beta}_{S_V}$ and $ \beta_{\bm{w}} := \widetilde{\beta}_{Q_C}+\widetilde{\beta}_{Q_V}.$ 
Thus, the conclusion follows directly from the definition \eqref{eqn18_new} and the given hypotheses, together with  the Babu\v{s}ka--Brezzi theory. 
\end{proof}
\vspace{-0.3cm}
\begin{lemma} \label{lemma5}
    (Well-definedness of fixed-point operator $\bm{T}$ and stability estimates) Under the assumptions of Lemma \ref{lemma4}, there exists a positive constant $\widehat{c}_{\mathtt{data}}$ such that the following property holds\vspace{-0.1cm}
\begin{align}\label{E_Stab}
\| \bm{T}(\bm{\lambda}_{h}^{n},\bm{w}_{h}^{n})\|_0+\tau\sum_{j=1}^{n}\|{ \bm{T} (\bm{\lambda}_{h}^{j},\bm{w}_{h}^{j})}\|_{\bm{X}}
&\leq \Vert \widehat{\bm{C}}_{h}^{n}\Vert_{0}+\Vert\widehat{\bm{u}}_{h}^{n}\Vert_{0}+\tau\sum_{j=1}^{n}\left\|(\widehat{\phi}_{h}^{j}, \widehat{\bm{C}}_{h}^{j},\widehat{\bm{u}}_{h}^{j})\right\|_{\bm{Y}} \leq \widehat{c}_{\mathtt{data}}\Big(\Vert \bm{C}_{0}\Vert_{0}+\Vert\bm{u}_{0}\Vert_{0}\nonumber\\[-0.8em]
& + \tau\sum_{j=1}^{n}\|(f_{c_1}^n, f_{c_2}^n, f_{\phi}^n, \bm{f}_{u}^n)\|_{L^2(\Omega)\times L^2(\Omega)\times L^2(\Omega)\times \bm{L}^2(\Omega)}\Big) :=\widehat{\delta}_0.
\end{align} 
\end{lemma}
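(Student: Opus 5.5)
Well-definedness of $\bm{T}$ follows from Lemma~\ref{lemma4}. For fixed $(\bm{\lambda}_h^n,\bm{w}_h^n)$ in the admissible ball, the form $\widehat{\bm{A}}_{\bm{\lambda}_h^n,\bm{w}_h^n}$ satisfies the inf-sup bound \eqref{INF_Ah}. Since $\bm{Y}_h$ is finite dimensional, this gives a unique solution $(\widehat{\phi}_h^n,\widehat{\bm{C}}_h^n,\widehat{\bm{u}}_h^n)$ of \eqref{eqn18}, and hence $\bm{T}$ is a well-defined single-valued map. The first inequality in \eqref{E_Stab} is immediate: the $\bm{X}$-norm of $\bm{T}(\cdot)$ is bounded by the $\bm{Y}$-norm of the full triple, because it simply omits $\|\widehat{\phi}_h^j\|_U$.

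For the main bound I would apply \eqref{INF_Ah} to the solution triple $F^j:=(\widehat{\phi}_h^j,\widehat{\bm{C}}_h^j,\widehat{\bm{u}}_h^j)$ at each step $j$. By \eqref{eqn18}, the numerator equals the right-hand side functional. Using Lemma~\ref{lemma1} (continuity of $M_{C,h}$ and $M_{V,h}$, and the bounds on $F_{\phi,h}$, $F_{c_i,h}$, $\bm{F}_{u,h}$), I would bound this dual norm by
\[
\widetilde{\beta}_{M_C}\|\widehat{\bm{C}}_h^{j-1}\|_0+\widetilde{\beta}_{M_V}\|\widehat{\bm{u}}_h^{j-1}\|_0+c\,\tau\|(f_{c_1}^j,f_{c_2}^j,f_\phi^j,\bm{f}_u^j)\|_0 .
\]
This gives the one-step recursion
\[
\widehat{\beta}\widetilde{\alpha}_{M_C}\|\widehat{\bm{C}}_h^j\|_0+\widehat{\beta}\widetilde{\alpha}_{M_V}\|\widehat{\bm{u}}_h^j\|_0+\tfrac{\tau\widehat{\alpha}}{2}\|F^j\|_{\bm{Y}}\le \widetilde{\beta}_{M_C}\|\widehat{\bm{C}}_h^{j-1}\|_0+\widetilde{\beta}_{M_V}\|\widehat{\bm{u}}_h^{j-1}\|_0+c\tau\|f^j\|_0 .
\]
Summing over $j=1,\dots,n$ and telescoping the $L^2$ terms would then give the claim. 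The initial values are controlled by $\|C_{i,I}(0)\|_0+\|\bm{u}_I(0)\|_0\le c(\|\bm{C}_0\|_0+\|\bm{u}_0\|_0)$, using Propositions~\ref{interpolation_concentration} and \ref{interpolation_velocity} with $s=0$.

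The main obstacle is the telescoping. The coefficient $\widehat{\beta}\widetilde{\alpha}$ on the left need not dominate $\widetilde{\beta}$ on the right, so a naive sum accumulates a factor $(\widetilde{\beta}/\widehat{\beta}\widetilde{\alpha})^n$, which is unbounded as $\tau\to0$. I would therefore not use the inf-sup form for the $L^2$ part. Instead I would test \eqref{eqn17} directly with $F^j$ itself, in energy fashion.

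With this test, the skew terms $Q_{C,h}^{\mathrm{skew}}(\bm{w}_h^j;\widehat{C}_{i,h}^j,\widehat{C}_{i,h}^j)$ and $Q_{V,h}^{\mathrm{skew}}(\bm{w}_h^j;\widehat{\bm{u}}_h^j,\widehat{\bm{u}}_h^j)$ vanish identically. The remaining coupling terms $S_{C,h}$, $S_{V,h}$ and $l_h$ are bounded by Lemma~\ref{lemma2}. Their size is at most $(\widetilde{\beta}_{S_C}+\widetilde{\beta}_{S_V}+\widetilde{\beta}_l)\|(\bm{\lambda}_h^j,\bm{w}_h^j)\|_{\bm{X}}\|F^j\|_{\bm{Y}}^2$, which the smallness assumption of Lemma~\ref{lemma4} lets me absorb into $\tfrac{\widehat{\alpha}}{2}\|F^j\|_{\bm{Y}}^2$; the $\phi$-equation is handled together with this. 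The identity $2(a-b)a=a^2-b^2+(a-b)^2$ applied to $M_{C,h}$ and $M_{V,h}$ then telescopes with constant one, in the discrete norms $M_{C,h}(\cdot,\cdot)^{1/2}$ and $M_{V,h}(\cdot,\cdot)^{1/2}$. These norms are equivalent to $\|\cdot\|_0$ by Lemma~\ref{lemma1}.

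Young's inequality on the forcing terms then gives
\[
\|\widehat{\bm{C}}_h^n\|_0^2+\|\widehat{\bm{u}}_h^n\|_0^2+\tau\sum_{j=1}^n\|F^j\|_{\bm{Y}}^2\le c\Bigl(\|\bm{C}_0\|_0^2+\|\bm{u}_0\|_0^2+\tau\sum_{j=1}^n\|f^j\|_0^2\Bigr).
\]
Finally, taking square roots and using Cauchy--Schwarz in time gives the unsquared sum form in \eqref{E_Stab}, with the factor $T^{1/2}$ absorbed into $\widehat{c}_{\mathtt{data}}$.
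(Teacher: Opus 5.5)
You are right that the paper's proof does not telescope. That proof is exactly the inf-sup route you sketch first: since $c_4\ge 1/(\tau\widetilde{\alpha})$, one has $\widehat{\beta}\widetilde{\alpha}_{M_C}\le\tau\widetilde{\alpha}/3<1\le\widetilde{\beta}_{M_C}$, so the one-step recursion amplifies by a factor of at least $3/(\tau\widetilde{\alpha})$ per step. Testing \eqref{eqn17} with $F^j$ is the right repair for this. The skew terms vanish, the mass terms telescope with constant one, and $S_{C,h}(\widehat{\phi}_h^j;\lambda_{i,h}^j,\widehat{C}_{i,h}^j)$ and $S_{V,h}(\lambda_{1,h}^j-\lambda_{2,h}^j;\widehat{\phi}_h^j,\widehat{\bm{u}}_h^j)$ are indeed quadratic in $F^j$ with a factor $\|\bm{\lambda}_h^j\|_{\bm{M}}\le\rho_1$.

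\textbf{The gap is the $l_h$ term.} In \eqref{eqn17a} its first slot carries the \emph{input} $\bm{\lambda}_h^j$, not an unknown, so
\[
\bigl|l_h(\lambda_{1,h}^j-\lambda_{2,h}^j;\widehat{\phi}_h^j)\bigr|\le\widetilde{\beta}_l\,\|\lambda_{1,h}^j-\lambda_{2,h}^j\|_0\,\|\widehat{\phi}_h^j\|_0
\]
is only \emph{linear} in $F^j$. Your bound by $\widetilde{\beta}_l\|(\bm{\lambda}_h^j,\bm{w}_h^j)\|_{\bm{X}}\|F^j\|_{\bm{Y}}^2$ is false. This term is a source, and after Young's inequality it leaves $c\,\tau\sum_j\|\bm{\lambda}_h^j\|_0^2$ on the right. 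No argument can remove it, because the statement itself fails. Take all data zero and choose $\bm{\lambda}_h^1$ so that $l_h(\lambda_{1,h}^1-\lambda_{2,h}^1;\cdot)\not\equiv0$ on $\mathring{U}_h^{\mathcal{K}}$. Then \eqref{eqn17a} forces $\widehat{\phi}_h^1\neq0$, so the middle quantity in \eqref{E_Stab} is positive while $\widehat{\delta}_0=0$.

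The paper's proof hides the same defect in two ways. It places the terms $l_h(\lambda_{i,h}^n;\varphi_h)$ inside $\bm{B}$, although they do not depend on $(\psi_h^n,\bm{D}_h^n,\bm{U}_h^n)$. It also leaves $\widetilde{\beta}_l$ out of \eqref{eqn21}. What your energy argument actually proves is your squared estimate with the extra term $c\,\tau\sum_j\|\bm{\lambda}_h^j\|_0^2$ on the right, and Lemma~\ref{l_S2} has to be re-derived from that. Note that $\bm{\lambda}_h^j$ reaches $(\widehat{\bm{C}}_h^j,\widehat{\bm{u}}_h^j)$ only through $S_{C,h}$ and $S_{V,h}$, that is, with a further factor of size $\rho_1$. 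This is what keeps a self-mapping argument viable.

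There are two further slips. First, your last step runs the wrong way. Cauchy--Schwarz gives $\tau\sum_j\|f^j\|_0\le T^{1/2}\bigl(\tau\sum_j\|f^j\|_0^2\bigr)^{1/2}$, so a bound with $\bigl(\tau\sum_j\|f^j\|_0^2\bigr)^{1/2}$ on the right does not imply the $\ell^1$-in-time form of \eqref{E_Stab}. Having used Young's inequality on the forcing, you should state the result in $\ell^2$ form. Second, Propositions~\ref{interpolation_concentration} and \ref{interpolation_velocity} with $s=0$ do not give $\|C_{i,I}(0)\|_0+\|\bm{u}_I(0)\|_0\le c(\|\bm{C}_0\|_0+\|\bm{u}_0\|_0)$. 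The first needs $C_{i,0}\in H^1(\Omega)$, and the second requires $s>0$. You have to assume, or construct, $L^2$-stable initial approximations.
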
 \vspace{-0.5cm}
\begin{proof}\!\!.\;
    From equation \eqref{eqn18} and inf-sup condition \eqref{INF_Ah}, we obtain  \vspace{-0.1cm}
    \begin{equation}
    \begin{aligned}
\widehat{\beta}\widetilde{\alpha}_{M_C}\Vert \widehat{\bm{C}}_{h}^{n}\Vert_{0}
+\widehat{\beta}\widetilde{\alpha}_{M_V}\Vert\widehat{\bm{u}}_{h}^{n}\Vert_{0}
+\tau\dfrac{\widehat{\alpha}}{2}
\left\|(\widehat{\phi}_{h}^{n}, \widehat{\bm{C}}_{h}^{n}, \widehat{\bm{u}}_{h}^{n})\right\|_{\bm{Y}}
\leq\; & \widetilde{\beta}_{M_C}\|\widehat{\bm{C}}_{h}^{n-1}\|_0
+ \widetilde{\beta}_{M_V}\|\widehat{\bm{u}}_{h}^{n-1}\|_0 + \tau \Big( \widetilde{\beta}_{\bm{F}_u} \|\bm{f}_u^n\|_0
\notag \\
& +   \sum_{i=1}^2 \widetilde{\beta}_{F_{c_i}} \|f_{c_i}^n\|_0
+  \widetilde{\beta}_{F_\phi} \|f_\phi^n\|_0\Big).\notag
\end{aligned}     \vspace{-0.1cm}
    \end{equation}
Summing over $j=1,\dots,n$, we obtain a discrete inequality in which the intermediate time levels accumulate in a telescopic manner. Consequently, the estimate depends only on the initial data at level $j=0$ and the accumulated forcing contributions over the time interval.
\end{proof}
\vspace{-0.2cm}
\begin{lemma}\label{l_S2}
Consider the following closed ball in $\bm{X}_h$ \vspace{-0.1cm}
 \[
\mathscr{B}_h:=\big\{ (\bm{\lambda}_{h}^{n},\bm{w}_{h}^{n})\in \bm{X}_h:\;\;  \|{(\bm{\lambda}_{h}^{n}, \bm{w}_{h}^{n})}\|_{\bm{X}}\leq \rho_1 \big\}, \qquad  \rho_1 := \frac{\widehat{\alpha}}{2(\widetilde{\beta}_{S_C}+\widetilde{\beta}_{S_V}+ \widetilde{\beta}_{Q_C}+ \widetilde{\beta}_{Q_V})},\vspace{-0.1cm}
\]
and suppose that the initial data in above Lemma satisfy
$
\widehat{\delta}_0 \leq \rho$ with $\rho := \rho_1\,\tau.
$ Then, $\textbf{T}(\mathscr{B}_h)\subset \mathscr{B}_h$.
\end{lemma}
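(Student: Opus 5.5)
The plan is to show directly that for any $(\bm{\lambda}_h^n,\bm{w}_h^n)\in\mathscr{B}_h$ the image $\bm{T}(\bm{\lambda}_h^n,\bm{w}_h^n)=(\widehat{\bm{C}}_h^n,\widehat{\bm{u}}_h^n)$ satisfies $\|(\widehat{\bm{C}}_h^n,\widehat{\bm{u}}_h^n)\|_{\bm{X}}\le\rho_1$. The first observation is that membership in $\mathscr{B}_h$ means exactly $\|(\bm{\lambda}_h^n,\bm{w}_h^n)\|_{\bm{X}}\le\rho_1$, which is precisely the smallness hypothesis of Lemma~\ref{lemma4}. Hence \eqref{eqn17} is uniquely solvable and $\bm{T}$ is well defined on $\mathscr{B}_h$. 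Moreover, the assumptions of Lemma~\ref{lemma5} hold, so the stability estimate \eqref{E_Stab} is available.

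Next we extract a pointwise-in-time $\bm{X}$-norm bound from \eqref{E_Stab}. That estimate controls $\tau\sum_{j=1}^n\|(\widehat{\phi}_h^j,\widehat{\bm{C}}_h^j,\widehat{\bm{u}}_h^j)\|_{\bm{Y}}$ by $\widehat{\delta}_0$. Since all summands are nonnegative, we keep only the $j=n$ term, which gives
\[
\tau\,\|(\widehat{\bm{C}}_h^n,\widehat{\bm{u}}_h^n)\|_{\bm{X}}\le\tau\,\|(\widehat{\phi}_h^n,\widehat{\bm{C}}_h^n,\widehat{\bm{u}}_h^n)\|_{\bm{Y}}\le\widehat{\delta}_0 .
\]
Here the first inequality holds because the $\bm{Y}$-norm dominates the $\bm{X}$-norm: it simply adds the nonnegative term $\|\widehat{\phi}_h^n\|_U^2$. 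Dividing by $\tau$ and using the hypothesis $\widehat{\delta}_0\le\rho=\rho_1\tau$ yields $\|\bm{T}(\bm{\lambda}_h^n,\bm{w}_h^n)\|_{\bm{X}}\le\rho_1$, i.e.\ $\bm{T}(\mathscr{B}_h)\subset\mathscr{B}_h$.

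The main obstacle is ensuring that $\widehat{\delta}_0$ is independent of the particular $(\bm{\lambda}_h^n,\bm{w}_h^n)\in\mathscr{B}_h$ and of the earlier time levels. This requires the constants $\widehat{\alpha},\widehat{\beta}$ from Lemma~\ref{lemma4}, and hence $\widehat{c}_{\mathtt{data}}$, to be uniform over the ball, which they are because they depend only on the coercivity and continuity constants of Lemmas~\ref{lemma1} and~\ref{lemma2}. A related point concerns the telescoping argument in Lemma~\ref{lemma5}, which uses the previous-level iterates $\widehat{\bm{C}}_h^{j-1},\widehat{\bm{u}}_h^{j-1}$. To keep the data fixed, I would argue inductively in $n$: at each level the previous iterates are the already-computed discrete solutions, which lie in the ball by the induction hypothesis, and the base case is the initial data $C_{i,I}(0),\bm{u}_I(0)$, which enter $\widehat{\delta}_0$. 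Finally, I would check that the factor $\tau$ in $\rho=\rho_1\tau$ exactly compensates the $\tau$ weighting in \eqref{E_Stab}, so that no further restriction on $\tau$ is needed beyond $\widetilde{\alpha}\tau\le1$ from Lemma~\ref{lemma4}.
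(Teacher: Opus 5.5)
Your argument is correct and is essentially the paper's, which just says the result follows directly from Lemma~\ref{lemma5}. That is, you note that $\mathscr{B}_h$ is exactly the smallness set of Lemma~\ref{lemma4}, keep the $j=n$ term in $\tau\sum_{j=1}^n\|\bm{T}(\bm{\lambda}_h^j,\bm{w}_h^j)\|_{\bm{X}}\le\widehat{\delta}_0\le\rho_1\tau$, and divide by $\tau$. Your extra care about uniformity over the ball and the inductive treatment of the previous time level is reasonable and goes beyond what the paper makes explicit; one caveat is that $\widehat{\alpha}$ depends on $\tau$ through $c_4$, not only on the constants of Lemmas~\ref{lemma1} and~\ref{lemma2}, but this is harmless here since $\tau$ is fixed.
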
\vspace{-0.1cm}
\begin{proof}\!\!.\;
The proof follows directly from Lemma \ref{lemma5}.
\end{proof}
\begin{lemma}[Continuity of  operator $\bm{T}$]\label{l_S2a}
For any $1\leq n\leq N$, there exists a positive constant  ${{c}_{\mathtt{cnt}}=\frac{4}{\widehat{\alpha}}\max\left\{\widetilde{\beta}_{l}+\frac{(\widetilde{\beta}_{S_C} +\widetilde{\beta}_{S_V})\widetilde{c}_{\ast}}{\min\{\widetilde{\alpha}_{A_P}, \widetilde{\alpha}_{J_P}\}},\; \widetilde{\beta}_{Q_C}\rho_1+\widetilde{\beta}_{Q_V}\rho_1\right\}}$ such that
\begin{align}
\|{ \bm{T}(\bm{\lambda}_{h}^{n},\bm{w}_{h}^{n})-\bm{T}(\widetilde{\bm{\lambda}}_{h}^{n},\widetilde{\bm{w}}_{h}^{n})}\|_{\bm{X}}&\leq {c}_{\mathtt{cnt}}\|{( {\bm{\lambda}}_{h}^{n} ,\bm{w}_{h}^{n})- (\widetilde{\bm{\lambda}}_{h}^{n},  \widetilde{\bm{w}}_{h}^{n})}\|_{\bm{X}}.\nonumber
\end{align}
\end{lemma}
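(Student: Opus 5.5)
We subtract the two linearized systems \eqref{eqn17} and use the inf-sup condition of Lemma~\ref{lemma4}. Let $(\widehat{\phi}_h^n,\widehat{\bm{C}}_h^n,\widehat{\bm{u}}_h^n)$ and $(\widetilde{\phi}_h^n,\widetilde{\bm{C}}_h^n,\widetilde{\bm{u}}_h^n)$ be the solutions of \eqref{eqn17} generated by $(\bm{\lambda}_h^n,\bm{w}_h^n)$ and $(\widetilde{\bm{\lambda}}_h^n,\widetilde{\bm{w}}_h^n)$, respectively. Both inputs lie in $\mathscr{B}_h$, so Lemma~\ref{lemma4} applies to each. The two runs share the same initial data. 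By induction on the time level, or by treating the previous-level difference as a data term that is handled in the same way, we may regard $n$ as fixed and compare the differences $e_\phi:=\widehat{\phi}_h^n-\widetilde{\phi}_h^n$, $\bm{e}_C:=\widehat{\bm{C}}_h^n-\widetilde{\bm{C}}_h^n$ and $\bm{e}_u:=\widehat{\bm{u}}_h^n-\widetilde{\bm{u}}_h^n$.

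\textbf{Step 1 (potential).} Subtracting the two copies of \eqref{eqn17a} gives $J_{P,h}(e_\phi,\varphi_h)+A_{P,h}(e_\phi,\varphi_h)=l_h(\lambda_{1,h}^n-\widetilde{\lambda}_{1,h}^n;\varphi_h)-l_h(\lambda_{2,h}^n-\widetilde{\lambda}_{2,h}^n;\varphi_h)$. Testing with $e_\phi$ and using the coercivity in Lemma~\ref{lemma1} together with the bound on $l_h$ in Lemma~\ref{lemma2}, we get
\[
\|e_\phi\|_2\le \frac{c\,\widetilde{\beta}_l}{\min\{\widetilde{\alpha}_{A_P},\widetilde{\alpha}_{J_P}\}}\|\bm{\lambda}_h^n-\widetilde{\bm{\lambda}}_h^n\|_{\bm{M}}.
\]
The same step applied to a single system bounds $\|\widetilde{\phi}_h^n\|_2$ by the input norm plus data, uniformly on $\mathscr{B}_h$. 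We denote this uniform bound by $\widetilde{c}_\ast$; it is the constant that appears in $c_{\mathtt{cnt}}$.

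\textbf{Step 2 (add and subtract in the trilinear terms).} We write the difference of the full systems in the form \eqref{eqn18}, using $\widehat{\bm{A}}_{\bm{\lambda}_h^n,\bm{w}_h^n}$ evaluated at the difference $(e_\phi,\bm{e}_C,\bm{e}_u)$. Splitting each nonlinear term moves the remainders to the right-hand side:
\begin{align*}
S_{C,h}(\widehat{\phi};\lambda,\chi)-S_{C,h}(\widetilde{\phi};\widetilde{\lambda},\chi) &= S_{C,h}(e_\phi;\lambda,\chi)+S_{C,h}(\widetilde{\phi};\lambda-\widetilde{\lambda},\chi),\\
Q^{\mathrm{skew}}_{C,h}(\bm{w};\widehat{C},\chi)-Q^{\mathrm{skew}}_{C,h}(\widetilde{\bm{w}};\widetilde{C},\chi) &= Q^{\mathrm{skew}}_{C,h}(\bm{w};e_C,\chi)+Q^{\mathrm{skew}}_{C,h}(\bm{w}-\widetilde{\bm{w}};\widetilde{C},\chi).
\end{align*}
The terms $Q^{\mathrm{skew}}_{V,h}$ and $S_{V,h}$ are split in the same way. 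In each split, the first piece stays inside $\widehat{\bm{A}}_{\bm{\lambda}_h^n,\bm{w}_h^n}$. The remaining pieces are bounded using Lemma~\ref{lemma2} and the uniform bounds $\|\widetilde{\phi}_h^n\|_2\le\widetilde{c}_\ast$ and $\|(\widetilde{\bm{C}}_h^n,\widetilde{\bm{u}}_h^n)\|_{\bm{X}}\le\rho_1$, the latter from Lemma~\ref{l_S2}. This yields
\[
\tau\Big[(\widetilde{\beta}_{S_C}+\widetilde{\beta}_{S_V})\widetilde{c}_\ast\|\bm{\lambda}-\widetilde{\bm{\lambda}}\|_{\bm{M}}+(\widetilde{\beta}_{Q_C}+\widetilde{\beta}_{Q_V})\rho_1\|\bm{w}-\widetilde{\bm{w}}\|_{1}\Big]\|(\varphi_h,\bm{\chi}_h,\bm{v}_h)\|_{\bm{Y}}.
\]
The $l_h$ contribution is handled analogously.

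\textbf{Step 3 (conclude).} Applying the inf-sup estimate \eqref{INF_Ah}, we keep only the term $\tau\frac{\widehat{\alpha}}{2}\|(e_\phi,\bm{e}_C,\bm{e}_u)\|_{\bm{Y}}$ on the left and divide by $\tau\widehat{\alpha}/2$. We then substitute the Step 1 bound for the $e_\phi$ contribution and take the larger of the two coefficients. This gives exactly the constant $c_{\mathtt{cnt}}$ with prefactor $4/\widehat{\alpha}$; the factor $2$ absorbs the passage from the sum norm to $\|\cdot\|_{\bm{X}}$. Finally, $\|\bm{T}(\cdot)-\bm{T}(\cdot)\|_{\bm{X}}\le\|(e_\phi,\bm{e}_C,\bm{e}_u)\|_{\bm{Y}}$.

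\textbf{Main obstacle.} The delicate point is obtaining the uniform bounds on $\widetilde{\phi}_h^n$ and on $(\widetilde{\bm{C}}_h^n,\widetilde{\bm{u}}_h^n)$ that multiply the input differences. These bounds are needed so that the constant does not depend on $h$, and they must be uniform over $\mathscr{B}_h$. The plan is to take them from Lemma~\ref{lemma5} and Lemma~\ref{l_S2}. A second point to check is the previous-level data: it must enter either identically in both runs or through an induction on $n$ that does not accumulate constants.
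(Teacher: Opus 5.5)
Your proposal follows the paper's proof. Keeping $S_{C,h}(e_\phi;\bm{\lambda}_h^n,\cdot)$, $Q^{\mathrm{skew}}_{C,h}(\bm{w}_h^n;\bm{e}_C,\cdot)$ and the analogous pieces inside $\widehat{\bm{A}}_{\bm{\lambda}_h^n,\bm{w}_h^n}$ is exactly the paper's identity $\widehat{\bm{A}}_{\bm{\lambda}_h^n,\bm{w}_h^n}(\text{difference},\cdot)=\tau\bm{B}_{\widetilde{\bm{\lambda}}_h^n-\bm{\lambda}_h^n,\widetilde{\bm{w}}_h^n-\bm{w}_h^n}((\widetilde{\phi}_h^n,\widetilde{\bm{C}}_h^n,\widetilde{\bm{u}}_h^n),\cdot)$, which the paper then combines with the inf-sup condition, Lemma~\ref{lemma2}, $(\widetilde{\bm{C}}_h^n,\widetilde{\bm{u}}_h^n)\in\mathscr{B}_h$, and the potential bound \eqref{eqn24}.

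One small correction: your Step~1 estimate for $e_\phi$ is never used, since after the split $e_\phi$ appears only on the left, where the inf-sup controls it. The factor $1/\min\{\widetilde{\alpha}_{A_P},\widetilde{\alpha}_{J_P}\}$ in $c_{\mathtt{cnt}}$ comes instead from $\|\widetilde{\phi}_h^n\|_2\le\widetilde{c}_\ast/\min\{\widetilde{\alpha}_{A_P},\widetilde{\alpha}_{J_P}\}$, because the paper's $\widetilde{c}_\ast$ is the right-hand side of \eqref{eqn24}, not the bound on $\|\widetilde{\phi}_h^n\|_2$ itself.
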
 \vspace{-0.2cm}
\begin{proof}\!\!.\;
We prove that the operator $\bm{T}$ is Lipschitz continuous. For given pairs $(\bm{\lambda}_{h}^{n},\bm{w}_{h}^{n})$ and $(\widetilde{\bm{\lambda}}_{h}^{n},\widetilde{\bm{w}}_{h}^{n})$ in $\mathscr{B}_h$, we assume that the linearized scheme \eqref{eqn17} admits unique solutions $\big(\widehat{\phi}_h^n, \widehat{\bm{C}}_{h}^{n},\widehat{\bm{u}}_{h}^{n}\big)$ and $\big(\widetilde{\phi}_h^n, \widetilde{\bm{C}}_{h}^{n},\allowbreak \widetilde{\bm{u}}_{h}^{n}\big)$, respectively. These solutions satisfy  \vspace{-0.1cm}
\[
\bm{T}(\bm{\lambda}_{h}^{n},\bm{w}_{h}^{n})=\big(\widehat{\bm{C}}_{h}^{n},\widehat{\bm{u}}_{h}^{n}\big)\in \mathscr{B}_h, \quad  
\bm{T}(\widetilde{\bm{\lambda}}_{h}^{n},\widetilde{\bm{w}}_{h}^{n})=\big(\widetilde{\bm{C}}_{h}^{n},\widetilde{\bm{u}}_{h}^{n}\big)\in \mathscr{B}_h. \vspace{-0.1cm}
\] 
It is easy to see that \vspace{-0.05cm}
\begin{equation*}
\widehat{\bm{A}}_{\bm{\lambda}_{h}^{n},\bm{w}_{h}^{n}}\left((\widehat{\phi}_h^n, \widehat{\bm{C}}_{h}^{n},\widehat{\bm{u}}_{h}^{n})-(\widetilde{\phi}_h^n, \widetilde{\bm{C}}_{h}^{n},\widetilde{\bm{u}}_{h}^{n}), (\varphi_h,\bm{\chi}_{h},\bm{v}_{h})\right) =\tau\bm{B}_{{\widetilde{\bm{\lambda}}}_{h}^{n}-\bm{\lambda}_{h}^{n},{\widetilde{\bm{w}}}_{h}^{n}-\bm{w}_{h}^{n}}\left((\widetilde{\phi}_h^n, \widetilde{\bm{C}}_{h}^{n},\widetilde{\bm{u}}_{h}^{n}), (\varphi_h,\bm{\chi}_{h},\bm{v}_{h}) \right). \vspace{-0.05cm}
\end{equation*}
Using the inf-sup condition from Lemma \ref{l_INFSUP} and Lemma \ref{lemma2}, we obtain \vspace{-0.1cm}
\begin{equation}\label{eqn23}
\begin{aligned} 
&\tau\dfrac{\widehat{\alpha}}{2}\left\|(\widehat{\phi}_h^n, \widehat{\bm{C}}_{h}^{n},\widehat{\bm{u}}_{h}^{n})-(\widetilde{\phi}_h^n, \widetilde{\bm{C}}_{h}^{n},\widetilde{\bm{u}}_{h}^{n})\right\|_{ \bm{Y}} \leq \sup _{
(\varphi_h,\bm{\chi}_{h},\bm{v}_{h}) \neq \mathbf{0}} \frac{\tau\bm{B}_{{\widetilde{\bm{\lambda}}}_{h}^{n}-\bm{\lambda}_{h}^{n},{\widetilde{\bm{w}}}_{h}^{n}-\bm{w}_{h}^{n}}\left((\widetilde{\phi}_h^n, \widetilde{\bm{C}}_{h}^{n},\widetilde{\bm{u}}_{h}^{n}), (\varphi_h,\bm{\chi}_{h},\bm{v}_{h}) \right)}{\left\|(\varphi_h,\bm{\chi}_{h},\bm{v}_{h})\right\|_{ \bm{Y}}}\\[-0.3em]
&\hspace{2cm}\leq \tau\bigg\{\widetilde{\beta}_{l}\|{{\widetilde{\bm{\lambda}}}}_{h}^{n}-{\bm{\lambda}}_{h}^{n}\|_{\bm{M}}+\widetilde{\beta}_{S_C}\|{{\widetilde{\bm{\lambda}}}}_{h}^{n}-{\bm{\lambda}}_{h}^{n}\|_{\bm{M}}\|\widetilde{\phi}_h^n\|_2+\widetilde{\beta}_{Q_C}\Vert{{\widetilde{\bm{w}}}}_{h}^{n}-{\bm{w}}_{h}^{n}\Vert_{\bm{V}}\Vert\widetilde{\bm{C}}_{h}^{n}\Vert_{\bm{M}}\\[-0.3em]
&\hspace{2cm}\qquad +\widetilde{\beta}_{Q_V}\Vert{{\widetilde{\bm{w}}}}_{h}^{n}-{\bm{w}}_{h}^{n}\Vert_{\bm{V}}\Vert\widetilde{\bm{u}}_{h}^{n}\Vert_{\bm{V}} +\widetilde{\beta}_{S_V}\|{{\widetilde{\bm{\lambda}}}}_{h}^{n}-{\bm{\lambda}}_{h}^{n}\|_{\bm{M}}\|\widetilde{\phi}_h^n\|_2\bigg\}\\[-0.3em]
&\hspace{2cm}\leq \tau\left(\widetilde{\beta}_{l}+(\widetilde{\beta}_{S_C}+\widetilde{\beta}_{S_V}) \|\widetilde{\phi}_h^n\|_2\right)\|{{\widetilde{\bm{\lambda}}}}_{h}^{n}-{\bm{\lambda}}_{h}^{n}\|_{\bm{M}}+
\tau\left(\widetilde{\beta}_{Q_C}\rho_1+\widetilde{\beta}_{Q_V}\rho_1\right)\Vert{{\widetilde{\bm{w}}}}_{h}^{n}-{\bm{w}}_{h}^{n}\Vert_{\bm{V}}. 
\end{aligned} \vspace{-0.1cm}
\end{equation}
Using $\varphi_h=\widetilde{\phi}_h^n$ in potential equation of \eqref{eqn17} along with Lemmas \ref{lemma1}, we have \vspace{-0.1cm}
\begin{equation} \label{eqn24}
\min\{\widetilde{\alpha}_{A_P}, \widetilde{\alpha}_{J_P}\}\|\widetilde{\phi}_h^n\|_2 \leq \widetilde{\beta}_{l}\|\bm{\lambda}_h^n\|_{\bm{M}} + \widetilde{\beta}_{F_{\phi}} \,\|f_{\phi}\|_{0} \leq \widetilde{\beta}_{l} \rho_1 + \widetilde{\beta}_{F_{\phi}} \,\|f_{\phi}\|_{0} := \widetilde{c}_{\ast}. \vspace{-0.1cm}
\end{equation}
Substituting \eqref{eqn24} into \eqref{eqn23} yields the desired result. 
\end{proof} \vspace{-0.1cm}
 \begin{theorem} \label{Thm1}
(Existence) Assume that the initial data satisfy the condition of Lemma~\ref{l_S2}. Then, the fully discrete scheme \eqref{eqn16} admits a  solution \( \{ \phi_h^n, \bm{C}_h^n, \bm{u}_h^n \} \in \bm{Y}_h \) with \( (\bm{C}_h^n, \bm{u}_h^n) \in \mathscr{B}_h \), and the following estimate holds for \( 1 \leq n \leq N \): \vspace{-0.1cm}
\begin{equation*}
\Vert {\bm{C}}_{h}^{n}\Vert_{0}+\Vert{\bm{u}}_{h}^{n}\Vert_{0}+\tau\sum_{j=1}^{n}\left\|({\phi}_{h}^{j}, {\bm{C}}_{h}^{j},{\bm{u}}_{h}^{j})\right\|_{\bm{Y}}\leq \widehat{c}_{\mathtt{data}}\Big(\Vert \bm{C}_{0}\Vert_{0}+\Vert\bm{u}_{0}\Vert_{0} + \tau\sum_{j=1}^{n}\|(f_{c_1}^n, f_{c_2}^n, f_{\phi}^n, \bm{f}_{u}^n)\|_{[L^2(\Omega)]^3 \times \bm{L}^2(\Omega)}\Big) :=\widehat{\delta}_0. \nonumber \vspace{-0.2cm}
\end{equation*} 
 \end{theorem}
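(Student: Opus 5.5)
The plan is to apply the Schauder (or, since $\bm{X}_h$ is finite dimensional, Brouwer) fixed-point theorem to the operator $\bm{T}$ on the ball $\mathscr{B}_h$, proceeding time level by time level. We argue by induction on $n$. Suppose $(\bm{C}_h^{n-1},\bm{u}_h^{n-1})$ has been constructed, with $\bm{C}_h^0=\bm{C}_I(0)$ and $\bm{u}_h^0=\bm{u}_I(0)$ for $n=1$. Freeze this previous level as the data in \eqref{eqn18} and consider $\bm{T}:\mathscr{B}_h\to\bm{X}_h$ at level $n$.

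Fix any $(\bm{\lambda}_h^n,\bm{w}_h^n)\in\mathscr{B}_h$. Its norm is at most $\rho_1$, so the hypothesis of Lemma~\ref{lemma4} holds, and the linearized problem \eqref{eqn17} has a unique solution; hence $\bm{T}$ is well defined. The smallness assumption $\widehat{\delta}_0\le\rho_1\tau$ of Lemma~\ref{l_S2}, combined with the stability bound \eqref{E_Stab} of Lemma~\ref{lemma5}, gives $\tau\|\bm{T}(\bm{\lambda}_h^n,\bm{w}_h^n)\|_{\bm{X}}\le\widehat{\delta}_0\le\rho_1\tau$. Therefore $\bm{T}(\mathscr{B}_h)\subset\mathscr{B}_h$. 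The set $\mathscr{B}_h$ is a closed ball in a finite-dimensional normed space, so it is nonempty, convex and compact. By Lemma~\ref{l_S2a}, $\bm{T}$ is Lipschitz on $\mathscr{B}_h$, hence continuous. Brouwer's theorem (Schauder's theorem in finite dimensions) then provides a fixed point $(\bm{C}_h^n,\bm{u}_h^n)=\bm{T}(\bm{C}_h^n,\bm{u}_h^n)\in\mathscr{B}_h$.

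Next, let $\phi_h^n$ be the potential component of the corresponding linearized solution. Substituting $\bm{\lambda}_h^n=\bm{C}_h^n$ and $\bm{w}_h^n=\bm{u}_h^n$ into \eqref{eqn17} reproduces \eqref{eqn16} exactly, term by term. Equation \eqref{eqn17a} becomes \eqref{eqn16a}. In \eqref{eqn17b}, the term $S_{C,h}(\phi_h^n;C_{i,h}^n,\chi_{i,h})$ and the skew form now carry the true unknowns. In \eqref{eqn17c}, $S_{V,h}$ moves to the right-hand side with a minus sign. Thus $\{\phi_h^n,\bm{C}_h^n,\bm{u}_h^n\}\in\bm{Y}_h$ solves the scheme at level $n$. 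The induction then proceeds, since the same argument applies at level $n+1$ with the new previous data.

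For the stated estimate, apply Lemma~\ref{lemma5} at the fixed point, i.e.\ with $\widehat{\bm{C}}_h^j=\bm{C}_h^j$, $\widehat{\bm{u}}_h^j=\bm{u}_h^j$ and $\widehat{\phi}_h^j=\phi_h^j$ for all $j\le n$. Summing the per-step inf-sup inequality telescopically gives the bound by $\widehat{\delta}_0$.

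The main obstacle is the bookkeeping across time levels. The constant $\widehat{\delta}_0$ in Lemma~\ref{lemma5} involves sums up to $n$, and the self-map property must hold uniformly for every $n\le N$, with earlier levels fixed as the solutions already constructed. I would handle this by noting that $\widehat{\delta}_0$ is nondecreasing in $n$, so it suffices to impose the assumption at $n=N$. I would also check that the telescoping in Lemma~\ref{lemma5} does not lose the factors $\widehat\beta\widetilde\alpha_{M_C}$ versus $\widetilde\beta_{M_C}$ in an uncontrolled way; any such loss can be absorbed into $\widehat{c}_{\mathtt{data}}$.
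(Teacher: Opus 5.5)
Your proposal is correct and follows the paper's route: $\bm{T}$ is well defined on $\mathscr{B}_h$ by Lemma~\ref{lemma4}, maps $\mathscr{B}_h$ into itself by Lemmas~\ref{lemma5}--\ref{l_S2}, and is continuous by Lemma~\ref{l_S2a}. Schauder's theorem (Brouwer's, since $\bm{X}_h$ is finite dimensional) then gives a fixed point that solves \eqref{eqn16}, and the estimate is \eqref{E_Stab} evaluated at that fixed point; your level-by-level induction with the previous step frozen is a more explicit reading of the same argument, and it is the reading under which the difference identity in the proof of Lemma~\ref{l_S2a} is valid. One caution on your closing remark: since $\widehat{\beta}\le\tau\widetilde{\alpha}/3$, absorbing the mismatch between $\widehat{\beta}\widetilde{\alpha}_{M_C}$ and $\widetilde{\beta}_{M_C}$ into $\widehat{c}_{\mathtt{data}}$ is formally harmless for existence, but it makes that constant grow at least geometrically in $N$. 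The smallness condition $\widehat{\delta}_0\le\rho_1\tau$ then depends on $\tau$ and $N$.
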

 \begin{lemma} \label{lemma_phi_h^n}
    Under the assumptions of Theorem~\ref{Thm1}, for  $1 \leq n \leq N$, there holds 
     $   \|\phi_h^n\|_2 \leq c \widehat{\delta}_0$.  
\end{lemma}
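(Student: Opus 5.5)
The plan is to test the discrete potential equation \eqref{eqn16a} with $\varphi_h=\phi_h^n$ and then bound each term of the right-hand side by $\widehat{\delta}_0$, using the stability estimate of Theorem~\ref{Thm1}. The constant $c$ will be allowed to depend on the coercivity and continuity constants of Lemmas~\ref{lemma1}--\ref{lemma2} and on the fixed time step $\tau$. Handling the forcing term is the one delicate point, discussed below.

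\textbf{Step 1: coercivity bound.} By Lemma~\ref{lemma1}, $J_{P,h}(\phi_h^n,\phi_h^n)+A_{P,h}(\phi_h^n,\phi_h^n)\ge \widetilde{\alpha}_{J_P}|\phi_h^n|_2^2+\widetilde{\alpha}_{A_P}\|\phi_h^n\|_1^2 \ge \min\{\widetilde{\alpha}_{A_P},\widetilde{\alpha}_{J_P}\}\|\phi_h^n\|_2^2$. On the right-hand side, Lemma~\ref{lemma2} gives $|l_h(C_{1,h}^n;\phi_h^n)-l_h(C_{2,h}^n;\phi_h^n)|\le \widetilde{\beta}_l\|\bm{C}_h^n\|_0\|\phi_h^n\|_0$, and Lemma~\ref{lemma1} gives $|F_{\phi,h}(\phi_h^n)|\le\widetilde{\beta}_{F_\phi}\|f_\phi^n\|_0\|\phi_h^n\|_0$. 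Dividing by $\|\phi_h^n\|_2$, exactly as in \eqref{eqn24}, yields
\[
\min\{\widetilde{\alpha}_{A_P},\widetilde{\alpha}_{J_P}\}\,\|\phi_h^n\|_2\le \widetilde{\beta}_l\|\bm{C}_h^n\|_0+\widetilde{\beta}_{F_\phi}\|f_\phi^n\|_0 .
\]

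\textbf{Step 2: the concentration term.} Theorem~\ref{Thm1} gives $\|\bm{C}_h^n\|_0\le\widehat{\delta}_0$ directly. This part of the bound is independent of $\tau$.

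\textbf{Step 3: the forcing term (main obstacle).} Here $\|f_\phi^n\|_0$ must be controlled by $\widehat{\delta}_0$. By definition, $\widehat{\delta}_0$ contains the $j=n$ summand, so $\widehat{\delta}_0\ge \widehat{c}_{\mathtt{data}}\,\tau\,\|f_\phi^n\|_0$. Hence
\[
\|f_\phi^n\|_0\le (\widehat{c}_{\mathtt{data}}\tau)^{-1}\widehat{\delta}_0 .
\]
Substituting Steps 2 and 3 into Step 1 gives $\|\phi_h^n\|_2\le c\,\widehat{\delta}_0$ with
\[
c=\min\{\widetilde{\alpha}_{A_P},\widetilde{\alpha}_{J_P}\}^{-1}\bigl(\widetilde{\beta}_l+\widetilde{\beta}_{F_\phi}(\widehat{c}_{\mathtt{data}}\tau)^{-1}\bigr).
\]

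\textbf{A one-line alternative.} The same conclusion also follows immediately from Theorem~\ref{Thm1}. The $\bm{Y}$-norm contains $\|\phi_h^n\|_U=\|\phi_h^n\|_2$, and the sum $\tau\sum_{j=1}^n\|(\phi_h^j,\bm{C}_h^j,\bm{u}_h^j)\|_{\bm{Y}}$ includes its $j=n$ term, so $\tau\|\phi_h^n\|_2\le\widehat{\delta}_0$. This gives $\|\phi_h^n\|_2\le\tau^{-1}\widehat{\delta}_0$.

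On a fixed time grid, as in the well-posedness setting of Lemmas~\ref{l_S2}--\ref{l_S2a}, where $\tau$ already enters the smallness condition $\widehat{\delta}_0\le\rho_1\tau$, either route establishes the lemma as stated. The coercivity route is preferable because only the forcing contribution carries the factor $\tau^{-1}$.
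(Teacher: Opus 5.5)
Your coercivity route is the paper's proof: test \eqref{eqn16a} with $\varphi_h=\phi_h^n$, argue as in \eqref{eqn24}, and bound $\|\bm{C}_h^n\|_0$ by Theorem~\ref{Thm1}; your explicit handling of $\|f_\phi^n\|_0$ through the $j=n$ summand of $\widehat{\delta}_0$, at the price of a factor $\tau^{-1}$, fills in a step the paper leaves implicit. It is worth stating the consequence of the standing hypothesis $\widehat{\delta}_0\le\rho_1\tau$: it turns your bound into the $\tau$-uniform estimate $\|\phi_h^n\|_2\le c\,\rho_1$ (alternatively, using $\|f_\phi^n\|_0\le\|f_\phi\|_{L^\infty(0,T;L^2(\Omega))}$, one gets $\|\phi_h^n\|_2\le c\,(\widehat{\delta}_0+\|f_\phi\|_{L^\infty(0,T;L^2(\Omega))})$), and a $\tau$-uniform bound is what the error analysis needs where $\|\phi_h^n\|_2$ enters $\mu_i$.
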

\begin{proof}\!\!.\;
 The result follows from the potential equation of scheme \eqref{eqn16} with the test function $\varphi_h = \phi_h^n$, combined with Theorem~\ref{Thm1}. (proceeding similar to equation \eqref{eqn24}).
\end{proof}
\begin{theorem}(Uniqueness) \label{Uniqueness}
Suppose that the following condition holds
\[
\widehat{\alpha} - \max\{c_a, c_b, c_c\} > 0,
\]
where, $c_a = 1 + \widetilde{\beta}_{S_C} \widehat{\delta}_0 + \widetilde{\beta}_{S_V} \widehat{\delta}_0$, $c_b = \frac{1 + 3\widetilde{\beta}_{S_C} \widehat{\delta}_0 + \widetilde{\beta}_{Q_C} \widehat{\delta}_0 + \widetilde{\beta}_{S_V} \widehat{\delta}_0}{2}$ and $c_c = \widetilde{\beta}_{Q_C} \widehat{\delta}_0 + 2\widetilde{\beta}_{S_V} \widehat{\delta}_0 + \widetilde{\beta}_{Q_V} \widehat{\delta}_0$.
The constants $c_a$, $c_b$, and $c_c$ depend on the continuity bounds from
Lemma~\ref{lemma2}, $\widehat{\delta}_0$ is defined in
Theorem~\ref{Thm1}, and $\widehat{\alpha}$ is the constant from
Lemma~\ref{lemma4}. Then, for every $1 \le n \le N$, the solution of the fully
discrete scheme~\eqref{eqn16} is unique.
\end{theorem}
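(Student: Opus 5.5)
The plan is a difference argument at a fixed time level $n$, run by induction on $n$. Suppose $\{\phi_{h,1}^n,\bm{C}_{h,1}^n,\bm{u}_{h,1}^n\}$ and $\{\phi_{h,2}^n,\bm{C}_{h,2}^n,\bm{u}_{h,2}^n\}$ both solve \eqref{eqn16} with the same data at level $n-1$. This is legitimate because, by induction, the previous levels coincide (at $n=1$ they are the common initial interpolants). Both solutions satisfy the bounds of Theorem~\ref{Thm1} and Lemma~\ref{lemma_phi_h^n}, so every norm of a solution component is controlled by $c\,\widehat{\delta}_0$. Set
\[
e_\phi=\phi_{h,1}^n-\phi_{h,2}^n,\qquad e_{C_i}=C_{i,h,1}^n-C_{i,h,2}^n,\qquad e_{\bm u}=\bm{u}_{h,1}^n-\bm{u}_{h,2}^n .
\]
Subtracting the two copies of \eqref{eqn16}, the terms $M_{C,h}(C_{i,h}^{n-1},\cdot)$ and $M_{V,h}(\bm u_h^{n-1},\cdot)$ cancel. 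What remains is a linear system in $(e_\phi,e_{\bm C},e_{\bm u})$ whose left-hand side is exactly $\bm{A}(\cdot,\cdot)$ from \eqref{eqn18_new} (times $1/\tau$), together with nonlinear differences.

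I would split each nonlinear difference by adding and subtracting mixed terms, using the multilinearity of the discrete forms:
\[
S_{C,h}(\phi_1;C_1,\chi)-S_{C,h}(\phi_2;C_2,\chi)=S_{C,h}(e_\phi;C_1,\chi)+S_{C,h}(\phi_2;e_C,\chi),
\]
\[
Q_{C,h}^{\mathrm{skew}}(\bm u_1;C_1,\chi)-Q_{C,h}^{\mathrm{skew}}(\bm u_2;C_2,\chi)=Q_{C,h}^{\mathrm{skew}}(e_{\bm u};C_1,\chi)+Q_{C,h}^{\mathrm{skew}}(\bm u_2;e_C,\chi).
\]
The differences of $Q_{V,h}^{\mathrm{skew}}$ and $S_{V,h}$ are treated the same way, the latter splitting as $S_{V,h}(e_{C_1}-e_{C_2};\phi_1,\bm v)+S_{V,h}(C_{1,2}-C_{2,2};e_\phi,\bm v)$. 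The terms $l_h(e_{C_i};\cdot)$ in the potential equation are linear and stay as they are. I would test with $(e_\phi,e_{\bm C},e_{\bm u})$ itself. Then the discrete skew-symmetric pieces $Q_{C,h}^{\mathrm{skew}}(\bm u_2;e_C,e_C)$ and $Q_{V,h}^{\mathrm{skew}}(\bm u_2;e_{\bm u},e_{\bm u})$ vanish by construction. Every remaining cross term is bounded with Lemma~\ref{lemma2} and the a priori bound $\widehat{\delta}_0$. This produces, per unknown, coefficients of the form $1+\widetilde\beta_{S_C}\widehat{\delta}_0+\widetilde\beta_{S_V}\widehat{\delta}_0$ (where the ``$1$'' comes from $l_h$), and similarly for the concentration and velocity rows. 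After Young's inequality splits the off-diagonal products evenly, these become $c_a$, $c_b$ and $c_c$ multiplying $\|e_\phi\|_2^2$, $\|e_{\bm C}\|_1^2$ and $\|e_{\bm u}\|_1^2$ respectively.

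On the left I would use the coercivity from the proof of Lemma~\ref{lemma4} (equation \eqref{eqn20}). This gives a lower bound by $\widehat{\alpha}\,\|(e_\phi,e_{\bm C},e_{\bm u})\|_{\bm Y}^2$, up to the $\tau$ scaling, which cancels throughout. The mass contributions are nonnegative and can be dropped. Combining the two sides,
\[
\bigl(\widehat{\alpha}-\max\{c_a,c_b,c_c\}\bigr)\,\|(e_\phi,e_{\bm C},e_{\bm u})\|_{\bm Y}^2\le 0 ,
\]
and the hypothesis forces all differences to vanish. This closes the induction over $n$.

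The main obstacle is the bookkeeping in the Young's-inequality step. The cross terms must be distributed so that they land exactly on the three diagonal norms with the stated constants. The difficulty is that $S_{C,h}(e_\phi;C_1,e_C)$ needs the $\|\cdot\|_2$ norm on $e_\phi$, whereas $S_{V,h}$ only uses $\|\phi\|_1$, and $l_h$ couples $e_C$ to $e_\phi$ in $L^2$. A related point is that $\|\phi_{h,j}\|_2$ is bounded by Lemma~\ref{lemma_phi_h^n} only up to a constant $c$, which I would absorb into $\widehat{\delta}_0$. I would first write all the bounds with generic weights and only then fix the splits to match $c_a$, $c_b$, $c_c$.
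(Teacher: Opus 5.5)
Your proposal follows the paper's proof essentially step by step. You subtract the two copies of \eqref{eqn16}, split each trilinear difference by adding and subtracting mixed terms, and test with the differences. You then bound the left side through the coercivity underlying \eqref{eqn20} and the right side through Lemma~\ref{lemma2} and $\widehat{\delta}_0$, and close with Young's inequality. Your splitting reproduces $c_b$ exactly and gives coefficients no larger than $c_a$ and $c_c$.

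You make explicit two things the paper leaves tacit: the induction on $n$ that cancels the level-$(n-1)$ data, and the vanishing of $Q_{C,h}^{\mathrm{skew}}(\bm{u}_{h,2}^n;e_{C_i},e_{C_i})$ and $Q_{V,h}^{\mathrm{skew}}(\bm{u}_{h,2}^n;e_{\bm u},e_{\bm u})$. You also share the paper's soft spot. Theorem~\ref{Thm1} gives bounds only for the solution it constructs, and even then only on $\|\bm{C}_h^n\|_0+\|\bm{u}_h^n\|_0$ and $\tau\sum_{j}\|(\phi_h^j,\bm{C}_h^j,\bm{u}_h^j)\|_{\bm{Y}}$. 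So the single-level bounds $\|C_{i,h}^n\|_1,\|\bm{u}_h^n\|_1\le c\,\widehat{\delta}_0$, which you apply to both solutions, are assumed rather than derived.
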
 \vspace{-0.4cm}
\begin{proof}\!\!.\;
Let $\{ \phi_h^n, (C_{1,h}^n, C_{2,h}^n), \bm{u}_h^n \}$ and
$\{ \Phi_h^n, (D_{1,h}^n, D_{2,h}^n), \bm{w}_h^n \}$ be two solutions of the fully
discrete scheme~\eqref{eqn16}. For simplicity, we omit the subscript $h$ and define the difference between the solutions as \vspace{-0.15cm}
\[
e_{\phi}^n := \phi^n - \Phi^n, \quad 
\bm{e}_{C}^n = ({e}_{C_1}^n, {e}_{C_2}^n) := (C_1^n - D_1^n,\; C_2^n - D_2^n), \quad 
\bm{e}_u^n := \bm{u}^n - \bm{w}^n. \vspace{-0.15cm}
\]
Testing equation \eqref{eqn16} with $\{ e_{\phi}^n, \bm{e}_C^n, \bm{e}_u^n \}$ and using  equation \eqref{eqn18}, we obtain \vspace{-0.15cm}
\begin{equation} 
\widehat{\bm{A}}_{\bm{C}^n, \bm{u}^n}\left(( \phi^n, \bm{C}^n, \bm{u}^n), (e_{\phi}^n, \bm{e}_C^n, \bm{e}_u^n) \right)- \widehat{\bm{A}}_{\bm{D}^n, \bm{w}^n}\left( (\Phi^n, \bm{D}^n, \bm{w}^n), (e_{\phi}^n, \bm{e}_C^n, \bm{e}_u^n) \right)=0, \nonumber \vspace{-0.15cm}
\end{equation}
which implies (by definition from \eqref{eqn18}) \vspace{-0.15cm}
\begin{equation}\label{uni1}
\begin{aligned} 
    \bm{A} \left((e_{\phi}^n, \bm{e}_C^n, \bm{e}_u^n), (e_{\phi}^n, \bm{e}_C^n, \bm{e}_u^n) \right) 
    = -\tau \Big[\bm{B}_{\bm{C}^n, \bm{u}^n}
    \left((\phi^n, \bm{C}^n, \bm{u}^n), (e_{\phi}^n, \bm{e}_C^n, \bm{e}_u^n)\right)\\[-0.3em]
    -  \bm{B}_{\bm{D}^n, \bm{w}^n}
    \left((\Phi^n, \bm{D}^n, \bm{w}^n), (e_{\phi}^n, \bm{e}_C^n, \bm{e}_u^n)\right)\Big],
\end{aligned} \vspace{-0.20cm}
\end{equation}
where \vspace{-0.15cm}
\begin{equation*}
   \begin{aligned}
&\bm{B}_{\bm{C}^n, \bm{u}^n}
\left((\phi^n, \bm{C}^n, \bm{u}^n), (e_{\phi}^n, \bm{e}_C^n, \bm{e}_u^n)\right)
- \bm{B}_{\bm{D}^n, \bm{w}^n}
\left((\Phi^n, \bm{D}^n, \bm{w}^n), (e_{\phi}^n, \bm{e}_C^n, \bm{e}_u^n)\right)  \\[-0.3em]
& =\left[l_h(C_1^n; e_{\phi}^n) - l_h(D_1^n; e_{\phi}^n)\right]
- \left[l_h(C_2^n; e_{\phi}^n) - l_h(D_2^n; e_{\phi}^n)\right] + \big[S_{C,h}(\phi^n; C_1^n, e_{C_1}^n) \\[-0.3em]
&  - S_{C,h}(\Phi^n; D_1^n, e_{C_1}^n)\big]
+ \left[S_{C,h}(\phi^n; C_2^n, e_{C_2}^n) - S_{C,h}(\Phi^n; D_2^n, e_{C_2}^n)\right]  - \big[Q_{C,h}^{\mathrm{skew}}(\bm{u}^n; C_1^n, e_{C_1}^n) \\[-0.3em]
& - Q_{C,h}^{\mathrm{skew}}(\bm{w}^n; D_1^n, e_{C_1}^n)\big] 
\; - \left[Q_{C,h}^{\mathrm{skew}}(\bm{u}^n; C_2^n, e_{C_2}^n) - Q_{C,h}^{\mathrm{skew}}(\bm{w}^n; D_2^n, e_{C_2}^n)\right] + \big[Q_{V,h}^{\mathrm{skew}}(\bm{u}^n;  \\[-0.3em]
&  \bm{u}^n, \bm{e}_u^n) - Q_{V,h}^{\mathrm{skew}}(\bm{w}^n; \bm{w}^n, \bm{e}_u^n)\big] 
\; + \left[S_{V,h}(C_1^n - C_2^n; \phi^n, \bm{e}_u^n) - S_{V,h}(D_1^n - D_2^n; \Phi^n, \bm{e}_u^n)\right].
\end{aligned} \vspace{-0.15cm}
\end{equation*}
For each trilinear form, we add and subtract appropriate intermediate terms (similar to equation \eqref{addsub1}) and, using the continuity properties from Lemma~\ref{lemma2}  together with equation \eqref{eqn20}, we obtain  \vspace{-0.15cm}
\begin{equation*}
\begin{aligned}
\widehat{\alpha} \|(e_{\phi}^n, \bm{e}_C^n, \bm{e}_u^n)\|^2_{U \times \bm{M} \times \bm{V}} 
&\le \|e_{\phi}^n\|_U (\|e_{C_1}^n\|_M + \|e_{C_2}^n\|_M)                     
+ \widetilde{\beta}_{S_C} \widehat{\delta}_0 (\|e_{\phi}^n\|_U (\|e_{C_1}^n\|_M + \|e_{C_2}^n\|_M)\\[-0.2em]
&
+ \|e_{C_1}^n\|_M^2 + \|e_{C_2}^n\|_M^2)  
+ \widetilde{\beta}_{Q_C} \widehat{\delta}_0 \|\bm{e}_u^n\|_V (\|e_{C_1}^n\|_M + \|e_{C_2}^n\|_M)                                     
\\[-0.2em]
&+ \widetilde{\beta}_{Q_V} \widehat{\delta}_0 \|\bm{e}_u^n\|_V^2                               
+ \widetilde{\beta}_{S_V} \widehat{\delta}_0 (\|e_{\phi}^n\|_U + \|e_{C_1}^n\|_M + \|e_{C_2}^n\|_M) \|\bm{e}_u^n\|_V.                    
\end{aligned} \vspace{-0.15cm} 
\end{equation*}
 Young’s inequality, along with the given assumptions concludes the proof.
\end{proof} 
\section{Convergence analysis}
In this section, we establish an a priori error estimate for the fully discrete VE scheme \eqref{eqn16}.
We start with establishing three technical lemmas that quantify the  errors between the continuous trilinear forms \( S_V, S_C \), and \( Q_C^{\mathrm{skew}} \) and their discrete counterparts \( S_{V,h}, S_{C,h} \), and \( Q_{C,h}^{\mathrm{skew}} \), respectively. These estimates will play a key role in deriving the error estimates. \vspace{-0.1cm}
\begin{lemma} \label{lemma_S_V}
Let \(\varphi_1 \in \mathring{U} \cap H^{\gamma+2}(\Omega)\) and \(\xi_1 \in M \cap H^{\gamma+1}(\Omega)\) with \(0 \leq \gamma \leq \min\{\mathfrak{J}, \mathcal{K}-1, \mathcal{L}\}\). Then, for any \(\varphi_2 \in \mathring{U}\), \(\xi_2 \in M\), and \(\bm{v} \in \bm{V}\), it holds that   \vspace{-0.1cm}
\begin{align}
    |S_{V}(\xi_1; \varphi_1, \bm{v}) - S_{V,h}(\xi_2; \varphi_2, \bm{v})|  &\leq c h^\gamma \big( \|\xi_1\|_{\gamma+\frac{1}{2}} \|\varphi_1\|_{\gamma+\frac{3}{2}} 
    + \|\xi_1\|_{\gamma+1} \|\varphi_1\|_{1} 
     \notag \\
    & \;\;+ \|\xi_1\|_{1} \|\varphi_1\|_{\gamma+1} \big) \|\bm{v}\|_{1} +  S_{V,h}(\xi_1;\varphi_1- \varphi_2, \bm{v}) + S_{V,h}(\xi_1-\xi_2; \varphi_2, \bm{v}). \nonumber
\end{align}
\end{lemma}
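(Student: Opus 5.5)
The approach is to split the difference by inserting the discrete form evaluated at the exact arguments. We write
\[
S_V(\xi_1;\varphi_1,\bm v)-S_{V,h}(\xi_2;\varphi_2,\bm v)=\big[S_V(\xi_1;\varphi_1,\bm v)-S_{V,h}(\xi_1;\varphi_1,\bm v)\big]+S_{V,h}(\xi_1;\varphi_1-\varphi_2,\bm v)+S_{V,h}(\xi_1-\xi_2;\varphi_2,\bm v),
\]
using the trilinearity of $S_{V,h}$ (extended to the continuous spaces as remarked before Lemma~\ref{lemma2}). The last two terms are exactly the ones retained in the statement. It therefore suffices to bound the pure consistency error $S_V(\xi_1;\varphi_1,\bm v)-S_{V,h}(\xi_1;\varphi_1,\bm v)$ by $ch^\gamma(\cdots)\|\bm v\|_1$. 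The factor $e^t\le e^T$ is absorbed into $c$.

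Elementwise, this consistency error equals
\[
e^t\int_E\Big(\xi_1\nabla\varphi_1\cdot\bm v-\Pi^{0,E}_{\mathfrak J}\xi_1\,\bm\Pi^{0,E}_{\mathcal K-1}\nabla\varphi_1\cdot\bm\Pi^{0,E}_{\mathcal L}\bm v\Big)dx .
\]
We telescope it into three pieces:
\begin{itemize}
\item[(i)] $\int_E(\xi_1-\Pi^0_{\mathfrak J}\xi_1)\nabla\varphi_1\cdot\bm v$;
\item[(ii)] $\int_E\Pi^0_{\mathfrak J}\xi_1(\nabla\varphi_1-\bm\Pi^0_{\mathcal K-1}\nabla\varphi_1)\cdot\bm v$;
\item[(iii)] $\int_E\Pi^0_{\mathfrak J}\xi_1\,\bm\Pi^0_{\mathcal K-1}\nabla\varphi_1\cdot(\bm v-\bm\Pi^0_{\mathcal L}\bm v)$.
\end{itemize}
Term (iii) is handled by orthogonality of $\bm\Pi^{0,E}_{\mathcal L}$. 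We subtract $\bm\Pi^0_{\mathcal L}$ of the product $\Pi^0_{\mathfrak J}\xi_1\,\bm\Pi^0_{\mathcal K-1}\nabla\varphi_1$, or more simply of $\bm\Pi^0_0$ of it, which gains one power of $h$ from $\|\bm v-\bm\Pi^0_{\mathcal L}\bm v\|_{0,E}\le ch_E|\bm v|_{1,E}$. The product is then estimated via $L^4$ stability of the $L^2$ projections together with \eqref{sobolev_ine}. In terms (i) and (ii) we likewise use orthogonality to move a projection onto the smooth factor. For instance, in (i) we replace $\nabla\varphi_1\cdot\bm v$ by $\nabla\varphi_1\cdot\bm v-\Pi^0_{\mathfrak J}(\nabla\varphi_1\cdot\bm v)$ when the order bookkeeping requires it. Otherwise we apply H\"older with exponents $(2,4,4)$, using
\[
\|\xi_1-\Pi^0_{\mathfrak J}\xi_1\|_{0,E}\le ch_E^{\gamma+1}|\xi_1|_{\gamma+1,E},\qquad \|\nabla\varphi_1-\bm\Pi^0_{\mathcal K-1}\nabla\varphi_1\|_{0,E}\le ch_E^{\gamma}|\varphi_1|_{\gamma+1,E}
\]
from Proposition~\ref{brenner2008mathematica}, valid since $\gamma\le\mathfrak J$ and $\gamma\le \mathcal K-1$. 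Summing over elements with discrete H\"older and the embedding $H^1\subset L^4$ produces the contributions $h^\gamma\|\xi_1\|_{\gamma+1}\|\varphi_1\|_1\|\bm v\|_1$ and $h^\gamma\|\xi_1\|_1\|\varphi_1\|_{\gamma+1}\|\bm v\|_1$. The $L^\infty$ or $L^4$ norm of $\Pi^0_{\mathfrak J}\xi_1$ is controlled by $\|\xi_1\|_1$ via elementwise polynomial inverse and stability estimates.

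The main obstacle is term (iii), together with the cross-interaction in which both $\xi_1$ and $\nabla\varphi_1$ are only partially smooth. When we split $\Pi^0_{\mathfrak J}\xi_1\,\bm\Pi^0_{\mathcal K-1}\nabla\varphi_1-\bm\Pi^0_{\mathcal L}(\cdot)$, we need the product $\xi_1\nabla\varphi_1$ to lie in $H^{\gamma}$ elementwise, with $h^{\gamma}$ accuracy, while only one derivative of $\bm v$ is available. The plan is to estimate the product's Sobolev norm by a fractional product rule (Leibniz in $H^{\gamma}$ in $d\le3$). Distributing $\gamma$ derivatives symmetrically gives the bound $\|\xi_1\nabla\varphi_1\|_{\gamma}\lesssim\|\xi_1\|_{\gamma+1/2}\|\varphi_1\|_{\gamma+3/2}$ plus the two endpoint terms, which is exactly the first term in the statement. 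Concretely, I would first write $\Pi^0_{\mathfrak J}\xi_1\bm\Pi^0_{\mathcal K-1}\nabla\varphi_1=\xi_1\nabla\varphi_1-[\text{(i)+(ii)-type errors}]$. I would then bound $\|(I-\bm\Pi^0_{\mathcal L})(\xi_1\nabla\varphi_1)\|_{0,E}\le ch_E^{\gamma}|\xi_1\nabla\varphi_1|_{\gamma,E}$, using $\gamma\le\mathcal L$, and apply the product estimate with interpolation between $H^{\gamma+1/2}$ and $H^{\gamma+3/2}$. If the elementwise fractional norms cause trouble, I would fall back on a global $H^\gamma(\Omega)$ product bound, since $\xi_1$ and $\varphi_1$ are globally smooth, and sum the elementwise approximation estimates.
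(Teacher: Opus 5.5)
Your proposal is correct and follows essentially the paper's proof. Both arguments split off $S_{V,h}(\xi_1;\varphi_1-\varphi_2,\bm v)+S_{V,h}(\xi_1-\xi_2;\varphi_2,\bm v)$ and telescope the consistency error into three pieces, bounded by H\"older, $L^4$-stability of the projections and approximation estimates. Orthogonality of $\bm\Pi^{0,E}_{\mathcal L}$ handles the $(\bm I-\bm\Pi^{0,E}_{\mathcal L})\bm v$ piece: one power of $h$ when $\gamma\le 1$, and approximation of the product $\xi_1\nabla\varphi_1$ when $\gamma>1$. The paper simply orders the telescoping so that $(\bm I-\bm\Pi^{0,E}_{\mathcal L})\bm v$ meets the exact product $\xi_1\nabla\varphi_1$ and the other two pieces carry $\bm\Pi^{0,E}_{\mathcal L}\bm v$; your substitution in (iii) recovers exactly this.

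One small fix is needed in (i): put $\xi_1-\Pi^{0,E}_{\mathfrak J}\xi_1$ in $L^4$ and $\nabla\varphi_1$ in $L^2$, using $\|\xi_1-\Pi^{0,E}_{\mathfrak J}\xi_1\|_{0,4,E}\le c\,h_E^{\gamma}\|\xi_1\|_{\gamma+1,E}$ from a scaled Sobolev inequality, as the paper does. The $L^2$ estimate you quote for $\xi_1-\Pi^{0,E}_{\mathfrak J}\xi_1$ forces $\nabla\varphi_1$ into $L^4$. That leaves $\|\varphi_1\|_2$ instead of the stated $\|\varphi_1\|_1$.
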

\begin{proof}\!\!.\;
We decompose the error term as \vspace{-0.1cm}
\begin{align}
\mathscr{R}_{S, V} &:=  S_{V}(\xi_1;\varphi_1, \bm{v}) - S_{V,h}(\xi_2;\varphi_2, \bm{v}) \nonumber\\[-0.2em]
&= S_{V}(\xi_1;\varphi_1, \bm{v}) - S_{V,h}(\xi_1;\varphi_1, \bm{v}) + S_{V,h}(\xi_1;\varphi_1, \bm{v}) - S_{V,h}(\xi_2;\varphi_2, \bm{v}) \nonumber\\[-0.2em]
&= S_{V}(\xi_1;\varphi_1, \bm{v}) - S_{V,h}(\xi_1;\varphi_1, \bm{v}) \nonumber\\[-0.2em]
&\hspace{2cm} + S_{V,h}(\xi_1;\varphi_1- \varphi_2, \bm{v}) + S_{V,h}(\xi_1-\xi_2; \varphi_2, \bm{v}) \qquad \left(\text{using } \pm S_{V,h}(\xi_1;\varphi_2, \bm{v})\right) \label{addsub1}\\[-0.2em]
&:= \mathscr{R}_{S, V, 1} + \mathscr{R}_{S, V, 2}. \nonumber 
\end{align}
We split the error term $\mathscr{R}_{S,V,1}$ into local contributions $\mathscr{R}_{S,V,1}^E$ over each element $E$. Using the definitions of the continuous and discrete forms, we have \vspace{-0.2cm}
\begin{equation*}
 \begin{aligned}
\mathscr{R}_{S,V,1}^E &=: \int_E \xi_1 \nabla \varphi_1 \cdot \bm{v} \,dx - \int_E \left[({\Pi}^{0,E}_{\mathfrak{J}} \xi_1) (\bm{\Pi}^{0,E}_{\mathcal{K}-1}\nabla \varphi_1) \right] \cdot \left[\bm{\Pi}^{0,E}_{\mathcal{L}} \bm{v}\right] \,dx \\[-0.2em]
&= \int_E \xi_1 \nabla\varphi_1 \cdot (\bm{I} - \bm{\Pi}^{0,E}_{\mathcal{L}}) \bm{v} \,dx + \int_E \left[\xi_1 - {\Pi}^{0,E}_{\mathfrak{J}} \xi_1 \right] \nabla \varphi_1 \cdot \bm{\Pi}^{0,E}_{\mathcal{L}} \bm{v} \,dx \\[-0.2em]
&\quad + \int_E {\Pi}^{0,E}_{\mathfrak{J}} \xi_1 (\nabla\varphi_1 - \bm{\Pi}^{0,E}_{\mathcal{K}-1}\nabla \varphi_1) \cdot \bm{\Pi}^{0,E}_{\mathcal{L}} \bm{v} \,dx := \sum_{i=1}^3\mathscr{R}_{S,V,1, i}^E\;.
\end{aligned} \vspace{-0.25cm}
\end{equation*}
For $\mathscr{R}_{S,V,1,1}^E$, using Sobolev embeddings 
\(H^{\frac{1}{2}+\gamma}(E) \hookrightarrow L^4(E)\) and 
\(H^{\frac{3}{2}+\gamma}(E) \hookrightarrow W^{1,4}(E)\), we have \vspace{-0.15cm}
\begin{equation*}
    \begin{aligned}
\text{For}\;\;0 \leq \gamma \leq 1:\quad 
\mathscr{R}_{S,V,1,1}^E
&\;\leq \; \|\xi_1\|_{0,4,E} \|\nabla\varphi_1\|_{0,4,E}\; h_E |\bm{v}|_{1,E} \;\leq \; c\, h_E \|\xi_1\|_{\frac{1}{2}+\gamma,E} \|\varphi_1\|_{\frac{3}{2}+\gamma,E} \|\bm{v}\|_{1,E}.\\[-0.2em]
\text{For}\;\;1 < \gamma \leq \mathcal{L}:\quad
\mathscr{R}_{S,V,1,1}^E
&= \int_E (\bm{I} - \bm{\Pi}^{0,E}_{\mathcal{L}}) (\xi_1 \nabla\varphi_1) 
     \cdot (\bm{I} - \bm{\Pi}^{0,E}_{\mathcal{L}}) \bm{v} \,dx \\[-0.2em]
&\leq \|(\bm{I} - \bm{\Pi}^{0,E}_{\mathcal{L}}) (\xi_1 \nabla\varphi_1)\|_{0,E} 
     \|(\bm{I} - \bm{\Pi}^{0,E}_{\mathcal{L}}) \bm{v} \|_{0,E} \\[-0.2em]
&\leq c\, h_E^{\gamma} \|\xi_1\|_{\gamma,E} \|\varphi_1\|_{1+\gamma,E} \|\bm{v}\|_{1,E}.
\end{aligned} \vspace{-0.15cm}
\end{equation*}
For the second and third terms, we again use Sobolev embeddings and continuity of projections with respect to the $L^2$ and $L^4$ norms to get  \vspace{-0.15cm}
\begin{equation*}
    \begin{aligned}
\mathscr{R}_{S,V,1,2}^E
&\leq \;\|\xi_1 - {\Pi}^{0,E}_{\mathfrak{J}} \xi_1 \|_{0,4,E}
     \|\nabla\varphi_1\|_{0,E}
     \|\bm{\Pi}^{0,E}_{\mathcal{L}} \bm{v} \|_{0,4,E} \; \leq \; c\, h_E^\gamma \|\xi_1\|_{\gamma+1,E} \|\varphi_1\|_{1,E} \|\bm{v}\|_{1,E}.\\[-0.2em]
\mathscr{R}_{S,V,1,3}^E
&\leq \; \|{\Pi}^{0,E}_{\mathfrak{J}} \xi_1\|_{0,4,E}
     \|\nabla\varphi_1 - \bm{\Pi}^{0,E}_{\mathcal{K}-1}\nabla \varphi_1 \|_{0,E}
     \|{\Pi}^{0,E}_{\mathcal{L}} \bm{v} \|_{0,4,E} \\[-0.2em]
&\leq c\, h_E^\gamma \|\xi_1\|_{1,E} \|\varphi_1\|_{\gamma+1,E} \|\bm{v}\|_{1,E}.
\end{aligned} \vspace{-0.15cm}
\end{equation*}
Summing the contribution from all three terms of $\mathscr{R}_{S,V,1}$ over all elements yields  \vspace{-0.1cm}
\[
\mathscr{R}_{S,V,1} \leq ch^\gamma(\|\xi_1\|_{\gamma+\frac{1}{2}} \|\varphi_1\|_{\gamma+\frac{3}{2}}+ \|\xi_1\|_{\gamma+1} \|\varphi_1\|_{1}+ \|\xi_1\|_{1} \|\varphi_1\|_{\gamma+1})|\bm{v}|_{1}. \vspace{-0.1cm}
\]
Combining estimates for $\mathscr{R}_{S,V,1}$ and $\mathscr{R}_{S,V,2}$ using the triangle inequality concludes the proof. 
\end{proof} 
\begin{lemma} \label{lemma_S_C} 
Let $\phi_1 \in \mathring{U} \cap H^{\gamma+2}(\Omega)$, $\xi_1 \in M \cap H^{\gamma+1}(\Omega)$ with $0 \leq \gamma \leq \min\{\mathfrak{J}, \mathcal{K}-1\}$. Then, for any $\phi_2 \in \mathring{U}$ and  $\xi_2, \xi \in M$, it holds that \vspace{-0.15cm}
\begin{align}
    |S_C(\phi_1; \xi_1, \xi) - S_{C,h}(\phi_2; \xi_2, \xi)|  &\leq c h^\gamma \big( \|\xi_1\|_{\gamma+1} \|\phi_1\|_{\gamma+2} 
    + \|\xi_1\|_{\gamma+1} \|\phi_1\|_{2}
     \notag \\
    &\;\; + \|\xi_1\|_{1} \|\phi_1\|_{\gamma+2} \big)  \|\xi\|_{1} +  S_{C,h}(\phi_1-\phi_2; \xi_1, \xi) + S_{C,h}(\phi_2; \xi_1-\xi_2, \xi). \nonumber
\end{align}
\end{lemma}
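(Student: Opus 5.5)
We follow the template of Lemma~\ref{lemma_S_V}. First we insert and subtract $S_{C,h}(\phi_1;\xi_1,\xi)$ and $S_{C,h}(\phi_2;\xi_1,\xi)$, which gives
\[
S_C(\phi_1;\xi_1,\xi)-S_{C,h}(\phi_2;\xi_2,\xi) = \big[S_C(\phi_1;\xi_1,\xi)-S_{C,h}(\phi_1;\xi_1,\xi)\big] + S_{C,h}(\phi_1-\phi_2;\xi_1,\xi) + S_{C,h}(\phi_2;\xi_1-\xi_2,\xi).
\]
Here we use that $S_{C,h}$ is linear in its first two arguments, being a product of projections. The last two terms already appear in the statement. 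It therefore remains to bound the consistency term $\mathscr{R}_{S,C,1}:=S_C(\phi_1;\xi_1,\xi)-S_{C,h}(\phi_1;\xi_1,\xi)$ element by element.

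On each $E$ we telescope over the three projections appearing in $S_{C,h}^E$:
\begin{align*}
\mathscr{R}_{S,C,1}^E &= \int_E \xi_1\nabla\phi_1\cdot(\bm{I}-\bm{\Pi}^{0,E}_{\mathfrak{J}-1})\nabla\xi\,dx + \int_E (\xi_1-\Pi^{0,E}_{\mathfrak{J}}\xi_1)\nabla\phi_1\cdot\bm{\Pi}^{0,E}_{\mathfrak{J}-1}\nabla\xi\,dx\\
&\quad + \int_E \Pi^{0,E}_{\mathfrak{J}}\xi_1(\nabla\phi_1-\bm{\Pi}^{0,E}_{\mathcal{K}-1}\nabla\phi_1)\cdot\bm{\Pi}^{0,E}_{\mathfrak{J}-1}\nabla\xi\,dx.
\end{align*}
\emph{Second term.} By H\"older, this is at most $\|\xi_1-\Pi^{0,E}_{\mathfrak{J}}\xi_1\|_{0,E}\,\|\nabla\phi_1\|_{0,\infty,E}\,\|\nabla\xi\|_{0,E}$. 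Proposition~\ref{brenner2008mathematica} gives $h_E^{\gamma+1}|\xi_1|_{\gamma+1,E}$ for the first factor, and $H^2\hookrightarrow W^{1,4}$ handles $\nabla\phi_1$. Alternatively, an $L^4$--$L^4$--$L^2$ split with $\|\xi_1-\Pi^0\xi_1\|_{0,4,E}\le ch_E^\gamma\|\xi_1\|_{\gamma+1,E}$ works, as in Lemma~\ref{lemma_S_V}. Either way we obtain $ch_E^\gamma\|\xi_1\|_{\gamma+1,E}\|\phi_1\|_{2,E}\|\xi\|_{1,E}$.

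\emph{Third term.} Using $L^4$-stability of $\Pi^{0,E}_{\mathfrak{J}}$, $\|\Pi^{0,E}_{\mathfrak{J}}\xi_1\|_{0,4,E}\le c\|\xi_1\|_{1,E}$, together with an $L^4$ approximation estimate $\|\nabla\phi_1-\bm{\Pi}^{0,E}_{\mathcal{K}-1}\nabla\phi_1\|_{0,4,E}\le ch_E^\gamma\|\phi_1\|_{\gamma+2,E}$ and $L^2$-stability of $\bm{\Pi}^{0,E}_{\mathfrak{J}-1}$, we obtain $ch_E^\gamma\|\xi_1\|_{1,E}\|\phi_1\|_{\gamma+2,E}\|\xi\|_{1,E}$. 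The $L^4$ approximation estimate follows from local Sobolev embedding plus Bramble--Hilbert on star-shaped elements.

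\emph{First term (the main obstacle).} Here $\xi$ is only in $H^1$, so $(\bm{I}-\bm{\Pi}^{0,E}_{\mathfrak{J}-1})\nabla\xi$ carries no powers of $h$. We use orthogonality to move the projection onto the smooth factor:
\[
\int_E(\bm{I}-\bm{\Pi}^{0,E}_{\mathfrak{J}-1})(\xi_1\nabla\phi_1)\cdot\nabla\xi\,dx \le \|(\bm{I}-\bm{\Pi}^{0,E}_{\mathfrak{J}-1})(\xi_1\nabla\phi_1)\|_{0,E}\,|\xi|_{1,E}.
\]
For $\gamma\le\mathfrak{J}$ we need $h_E^\gamma|\xi_1\nabla\phi_1|_{\gamma,E}$. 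The Leibniz rule gives a sum of $\partial^\alpha\xi_1\,\partial^\beta\nabla\phi_1$ with $|\alpha|+|\beta|=\gamma$. The endpoint terms are bounded by $\|\xi_1\|_{\gamma,E}\|\phi_1\|_{2,E}$-type quantities, i.e.\ $\|\xi_1\|_{\gamma,4}\|\nabla\phi_1\|_{0,4}$, and by $\|\xi_1\|_{0,\infty}\|\phi_1\|_{\gamma+1}$, using embeddings valid for $d\le3$. Intermediate terms are handled by H\"older and Gagliardo--Nirenberg, after summing over elements with global norms. This produces the mixed product $\|\xi_1\|_{\gamma+1}\|\phi_1\|_{\gamma+2}$ as a crude upper bound for all intermediate terms, which explains its presence in the statement.

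Finally, we sum the local bounds over $E\in\Omega_h$ with Cauchy--Schwarz, using $h_E\le h$, and obtain the stated estimate. For fractional $\gamma$ the argument is the same via interpolation.
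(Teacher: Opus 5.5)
Your proposal is correct and follows the paper's proof essentially step for step. It uses the same insertion of $S_{C,h}(\phi_1;\xi_1,\xi)$ and $S_{C,h}(\phi_2;\xi_1,\xi)$, the same three-term telescoping over the projections, $L^2$-orthogonality to move $\bm{I}-\bm{\Pi}^{0,E}_{\mathfrak{J}-1}$ onto $\xi_1\nabla\phi_1$ with the crude bound $\|\xi_1\|_{\gamma+1}\|\phi_1\|_{\gamma+2}$, and $L^4$--$L^4$--$L^2$ H\"older splits for the other two terms. One slip: your first option for the second term needs $\|\nabla\phi_1\|_{0,\infty,E}$, which $H^2\hookrightarrow W^{1,4}$ does not control ($H^2$ does not embed into $W^{1,\infty}$ for $d=2,3$). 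You should use only your alternative $L^4$--$L^4$--$L^2$ split, which is what the paper does.
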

\vspace{-0.35cm}
\begin{proof}\!\!.\;
We proceed as in the previous lemma 
\begin{align*}
{\mathscr{R}}_{S, C} &:= S_C(\phi_1; \xi_1, \xi) - S_{C,h}(\phi_2; \xi_2, \xi) \\
&= S_C(\phi_1; \xi_1, \xi) - S_{C,h}(\phi_1; \xi_1, \xi)   + S_{C,h}(\phi_1-\phi_2; \xi_1, \xi) + S_{C,h}(\phi_2; \xi_1-\xi_2, \xi) \\
&:= \mathscr{R}_{S, C, 1} + \mathscr{R}_{S, C, 2}. 
\end{align*}
We split the error term $\mathscr{R}_{S,C,1}$ into local contributions $\mathscr{R}_{S,C,1}^E$ over each element $E$. Using the definitions of the continuous and discrete forms, we have \vspace{-0.15cm}
\begin{equation*}
\begin{aligned}
\mathscr{R}_{S,C,1}^E &=:  \int_E \xi_1 \nabla \phi_1 \cdot (\bm{I} - \bm{\Pi}^{0,E}_{\mathfrak{J}-1}) \nabla \xi \,dx + \int_E \left[\xi_1 - {\Pi}^{0,E}_{\mathfrak{J}} \xi_1 \right] \nabla \phi_1 \cdot \bm{\Pi}^{0,E}_{\mathfrak{J}-1} \nabla \xi \,dx \\[-0.3em]
&\quad + \int_E {\Pi}^{0,E}_{\mathfrak{J}} \xi_1 (\nabla \phi_1 - \bm{\Pi}^{0,E}_{\mathcal{K}-1}\nabla \phi_1) \cdot \bm{\Pi}^{0,E}_{\mathfrak{J}-1} \nabla \xi \,dx :=  \sum_{i=1}^3\mathscr{R}_{S,C,1, i}^E\;.
\end{aligned} \vspace{-0.25cm}
\end{equation*}
For first term $\mathscr{R}_{S,C,1,1}^E$, we use the definition of $\bm{\Pi}^{0,E}_{\mathfrak{J}-1}$, Cauchy--Schwarz inequality, and the approximation properties to get \vspace{-0.15cm}
\begin{equation*}
 \begin{aligned}
\mathscr{R}_{S,C,1,1}^E &=: \int_E (\bm{I} - \bm{\Pi}^{0,E}_{\mathfrak{J}-1}) \left(\xi_1 \nabla \phi_1 \right) \cdot (\bm{I}- \bm{\Pi}^{0,E}_{\mathfrak{J}-1}) \nabla \xi \,dx \\[-0.3em]
&\leq \left\|(\bm{I} - \bm{\Pi}^{0,E}_{\mathfrak{J}-1}) (\xi_1 \nabla \phi_1) \right\|_{0,E} \left\|\nabla \xi\right\|_{0,E} \\[-0.3em]
&\leq c h_E^{\gamma} \|\xi_1 \nabla \phi_1\|_{\gamma,E} \|\xi\|_{1,E} \;\;\leq \;\;c h_E^{\gamma } \|\xi_1\|_{\gamma+1,E} \|\phi_1\|_{\gamma+2,E} \|\xi\|_{1,E}.
\end{aligned}   \vspace{-0.15cm}
\end{equation*}
Other two terms can be bounded using Hölder's inequality, continuity of projections with respect to $L^2$ and $L^4$ norms, Sobolev embeddings and the approximation properties such that \vspace{-0.15cm}
\begin{equation*}
  \begin{aligned}
\mathscr{R}_{S,C,1,2}^E
&\leq \left\|(I- {\Pi}^{0,E}_{\mathfrak{J}}) \xi_1\right\|_{0,4,E} \left\| \nabla \phi_1\right\|_{0,4,E} \left\|\bm{\Pi}^{0,E}_{\mathfrak{J}-1} \nabla \xi\right\|_{0,E} \leq c h_E^\gamma \|\xi_1\|_{\gamma+1,E} \|\phi_1\|_{2,E} \|\xi\|_{1,E}.\\[-0.3em]
\mathscr{R}_{S,C,1,3}^E
&\leq \|\Pi^{0,E}_\mathfrak{J} \xi_1\|_{0,4,E} \|(\bm{I}-\bm{\Pi}^{0,E}_{\mathcal{K}-1} )\nabla \phi_1 \|_{0,4,E} \|\bm{\Pi}^{0,E}_{\mathfrak{J}-1} \nabla \xi\|_{0,E}\; \leq c h_E^\gamma \|\xi_1\|_{1,E} \| \phi_1\|_{\gamma+2,E} \|\xi\|_{1,E}.
\end{aligned}   \vspace{-0.05cm}
\end{equation*}
Combining all element-wise estimates and the triangle inequality yields the result.
\end{proof} 
\begin{lemma}\label{lemma_Q_C}
Let $\bm{u}_1 \in \bm{V} \cap \bm{H}^{\gamma+1}(\Omega)$ and $\xi_1 \in M \cap H^{\gamma+1}(\Omega)$, with $\frac{1}{2} \leq \gamma \leq \min\{\mathfrak{J},\mathcal{L}\}$. For any $\xi_2 \in M$, define the decomposition \vspace{-0.1cm}
\[
\xi_1 - \xi_2 = (\xi_1 - \xi_{1,I}) + (\xi_{1,I} - \xi_2) := \omega_I + \omega_h, \vspace{-0.1cm}
\]
where $\xi_{1,I}$ is the interpolant of $\xi_1$ as defined in Proposition~\ref{interpolation_concentration}. Then for any $\bm{u}_2 \in \bm{V}$, the following estimate holds: \vspace{-0.1cm}
\begin{equation}
  \begin{aligned}
   \big| Q_C^{\mathrm{skew}}(\bm{u}_1; \xi_1, \omega_h) &- Q_{C,h}^{\mathrm{skew}}(\bm{u}_2; \xi_2, \omega_h) \big| \leq ch^\gamma (\|\bm{u}_1\|_{\gamma+1}\|\xi_1\|_{\gamma+1} + \|\bm{u}_1\|_{\gamma}\|\xi_1\|_{\gamma+1}   + \|\bm{u}_1\|_{\gamma+1}\|\xi_1\|_{1}  \notag \\[-0.2em]
    &\qquad \qquad+ \|\bm{u}_1\|_{1}\|\xi_1\|_{\gamma+1} )\|\omega_h\|_{1} + Q_{C,h}^{\mathrm{skew}}(\bm{u}_2; \omega_I, \omega_h)  +  Q_{C,h}^{\mathrm{skew}}(\bm{u}_1 - \bm{u}_2; \xi_1, \omega_h).\notag \vspace{-0.25cm}
\end{aligned}   
\end{equation}
\end{lemma}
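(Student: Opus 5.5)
We follow the pattern of Lemmas~\ref{lemma_S_V} and \ref{lemma_S_C}. We first insert intermediate terms, writing
\begin{align*}
Q_C^{\mathrm{skew}}(\bm{u}_1;\xi_1,\omega_h) - Q_{C,h}^{\mathrm{skew}}(\bm{u}_2;\xi_2,\omega_h)
&= \big[Q_C^{\mathrm{skew}}(\bm{u}_1;\xi_1,\omega_h) - Q_{C,h}^{\mathrm{skew}}(\bm{u}_1;\xi_1,\omega_h)\big]
+ Q_{C,h}^{\mathrm{skew}}(\bm{u}_1-\bm{u}_2;\xi_1,\omega_h)\\
&\quad + Q_{C,h}^{\mathrm{skew}}(\bm{u}_2;\xi_1-\xi_2,\omega_h).
\end{align*}
The last term we decompose with $\xi_1-\xi_2=\omega_I+\omega_h$. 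The part $Q_{C,h}^{\mathrm{skew}}(\bm{u}_2;\omega_h,\omega_h)$ vanishes identically, because the discrete skew form is antisymmetric in its last two arguments by construction in \eqref{eqn10}. What remains is $Q_{C,h}^{\mathrm{skew}}(\bm{u}_2;\omega_I,\omega_h)$. This explains why the statement is phrased with the special test function $\omega_h$, and it produces exactly the two residual discrete terms in the claimed bound. So the whole task is to estimate the pure consistency error $\mathscr{R}_{Q,C,1}:=Q_C^{\mathrm{skew}}(\bm{u}_1;\xi_1,\omega_h)-Q_{C,h}^{\mathrm{skew}}(\bm{u}_1;\xi_1,\omega_h)$ elementwise.

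Locally, $\mathscr{R}_{Q,C,1}^E$ is one half of the difference of two pieces, the ``$C\bm{v}\cdot\nabla\chi$'' piece and the ``$\bm{v}\cdot\nabla C\,\chi$'' piece. For the first piece we split, as in Lemma~\ref{lemma_S_C}, into three integrals:
\begin{itemize}
\item $\int_E \xi_1\bm{u}_1\cdot(\bm{I}-\bm{\Pi}^{0,E}_{\mathfrak{J}-1})\nabla\omega_h$. We rewrite it via orthogonality as $\int_E(\bm{I}-\bm{\Pi}^{0,E}_{\mathfrak{J}-1})(\xi_1\bm{u}_1)\cdot\nabla\omega_h$ and bound it by $ch_E^\gamma\|\xi_1\bm{u}_1\|_{\gamma,E}|\omega_h|_{1,E}$. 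The product is controlled by $\|\bm{u}_1\|_{\gamma+1}\|\xi_1\|_{\gamma+1}$ using $H^{\gamma+1}$ algebra/embedding in $d\le 3$.
\item $\int_E(\xi_1-\Pi^{0,E}_{\mathfrak{J}}\xi_1)\bm{u}_1\cdot\bm{\Pi}^{0,E}_{\mathfrak{J}-1}\nabla\omega_h$. We use H\"older in $L^4\times L^4\times L^2$, giving $ch^\gamma\|\xi_1\|_{\gamma+1}\|\bm{u}_1\|_1$.
\item $\int_E\Pi^{0,E}_{\mathfrak{J}}\xi_1(\bm{u}_1-\bm{\Pi}^{0,E}_{\mathcal{L}}\bm{u}_1)\cdot\bm{\Pi}^{0,E}_{\mathfrak{J}-1}\nabla\omega_h$. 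This gives $ch^\gamma\|\xi_1\|_1\|\bm{u}_1\|_{\gamma+1}$, using $L^4$-stability of the projection.
\end{itemize}
For the second piece $\int_E \bm{u}_1\cdot\nabla\xi_1\,\omega_h$ versus its projected version, we split similarly into three integrals:
\begin{itemize}
\item $\int_E(\bm{u}_1-\bm{\Pi}^{0,E}_{\mathcal{L}}\bm{u}_1)\cdot\nabla\xi_1\,\omega_h$;
\item $\int_E\bm{\Pi}^{0,E}_{\mathcal{L}}\bm{u}_1\cdot(\nabla\xi_1-\bm{\Pi}^{0,E}_{\mathfrak{J}-1}\nabla\xi_1)\omega_h$;
\item $\int_E\bm{\Pi}^{0,E}_{\mathcal{L}}\bm{u}_1\cdot\bm{\Pi}^{0,E}_{\mathfrak{J}-1}\nabla\xi_1(\omega_h-\Pi^{0,E}_{\mathfrak{J}}\omega_h)$.
\end{itemize}
The first two we bound by H\"older ($L^4$ on $\bm{u}_1$-type and $\omega_h$ factors) and the approximation estimates \eqref{BH}. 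This yields the terms $\|\bm{u}_1\|_{\gamma+1}\|\xi_1\|_1$ and $\|\bm{u}_1\|_1\|\xi_1\|_{\gamma+1}$. The third involves only $\omega_h-\Pi^{0}_{\mathfrak{J}}\omega_h$, which gives just one power of $h$. To recover $h^\gamma$ we move the projection onto the smooth factor: $\int_E(\bm{I}-\Pi^{0,E}_{\mathfrak{J}})(\bm{\Pi}^{0}\bm{u}_1\cdot\bm{\Pi}^{0}\nabla\xi_1)\,(\omega_h-\Pi^0\omega_h)$. We expect this to produce the mixed term $\|\bm{u}_1\|_{\gamma}\|\xi_1\|_{\gamma+1}$, bounded by $h^{\gamma-1}\cdot h$ times the norms.

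Finally we sum over $E$, apply Cauchy--Schwarz over elements, and replace local norms by global ones. We expect the main obstacle to be the terms where only $\gamma$ (not $\gamma+1$) derivatives are available on a product involving $\nabla\xi_1$ or on $\bm{u}_1$ itself. This is where the restriction $\gamma\ge\frac12$ enters. For $\frac12\le\gamma\le1$ we use the embedding $H^{\gamma+\frac12}\hookrightarrow L^4$-type bounds in H\"older's inequality, as in the $0\le\gamma\le1$ case of Lemma~\ref{lemma_S_V}. For $\gamma>1$ we use the orthogonality trick on products, and the $H^\gamma$-norm of products of projected polynomials is controlled by inverse/stability estimates under Assumption~\ref{assump}. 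If this proves awkward, we fall back on estimating those products in broken Sobolev norms elementwise.
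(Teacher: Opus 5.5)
Your proposal is correct and is essentially the argument the paper has in mind, since its proof only says it mirrors Lemmas~\ref{lemma_S_V} and~\ref{lemma_S_C}: you use the same add-and-subtract, with the cancellation $Q_{C,h}^{\mathrm{skew}}(\bm{u}_2;\omega_h,\omega_h)=0$ explaining the role of $\omega_I$, followed by an elementwise three-way split of each half of the skew form, bounded via H\"older, $L^4$-stability of the projections and \eqref{BH}. For the piece containing $\omega_h-\Pi^{0,E}_{\mathfrak{J}}\omega_h$ when $\gamma>1$, the cleanest route is to add and subtract $\bm{u}_1\cdot\nabla\xi_1$ inside $(I-\Pi^{0,E}_{\mathfrak{J}})$: the polynomial perturbation is then controlled by $L^2$-approximation errors, and the smooth part by \eqref{BH} with $|\bm{u}_1\cdot\nabla\xi_1|_{\gamma-1,E}\le c\|\bm{u}_1\|_{\gamma,E}\|\xi_1\|_{\gamma+1,E}$, which gives exactly the mixed term without any appeal to inverse estimates.
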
\vspace{-0.3cm}
\begin{proof}\!\!.\;
The results follows by a similar argument to those used for the previous two results.
\end{proof}
We are now in a position to derive an a priori error bound, which is proven under the following assumptions on the regularity of the exact solutions.\vspace{-0.15cm}
\begin{assumption} \label{assumption-fully_discrete}
 The exact solution tuple \(\{ \phi, (C_1, C_2), \bm{u}\}\) and forces $\{f_{\phi}, f_{c_1}, f_{c_2}, \bm{f}_{u}\} $ satisfies \vspace{-0.2cm}
    \[
    \begin{aligned}
        C_i &\in    L^{\infty}(0,T; H^{1}(\Omega)) \cap L^{4}(0,T; H^{j+1}(\Omega)), && \partial_tC_i \in L^{1}(0,T; H^{j}(\Omega)), &&& \partial_{tt}C_i \in L^{1}(0,T; L^2(\Omega)), \\[-0.3em]
        &\bm{u} \in L^{\infty}(0,T; \bm{H}^{1}(\Omega)) \cap L^{4}(0,T; \bm{H}^{r+1}(\Omega)), 
     &&\partial_t\bm{u} \in L^{1}(0,T; \bm{H}^{r}(\Omega)) ,  &&&\partial_{tt}\bm{u} \in L^{1}(0,T; \bm{L}^{2}(\Omega)), \\[-0.3em]
    \phi &\in L^{\infty}(0,T; H^{2}(\Omega)) \cap L^{4}(0,T; H^{k+2}(\Omega)), &&  &&& \\[-0.3em]
    f_{c_i} &\in L^{2}(0,T; {H}^{j}(\Omega)), && f_{\phi} \in L^{2}(0,T; {H}^{k}(\Omega)), &&& \bm{f}_{u} \in L^{2}(0,T; \bm{H}^{r}(\Omega)),
    \end{aligned} \vspace{-0.2cm}
    \]
    for \(i=1,2, \;\;\frac{1}{2}\leq  j \leq \mathfrak{J}, \;\;0 \leq  k \leq \mathcal{K}-1\)\; and \;\( \frac{1}{2}\leq  r \leq \mathcal{L}\).
\end{assumption} 
\begin{theorem} \label{thm_fully_error}
Let \(\{ \phi^n, (C_1^n, C_2^n), \bm{u}^n\}\in \mathring{U} \times \bm{M} \times \widetilde{\bm{V}}\)  be the solution of \eqref{eqn6*} under Assumption \ref{assumption-fully_discrete}. Also, assume that \(\{ \phi_h^n, (C_{1,h}^n, C_{2,h}^n), \bm{u}_h^n\} \in \mathring{U}_h^\mathcal{K} \times \bm{M}_h^\mathfrak{J} \times \widetilde{\bm{V}}_h^\mathcal{L}\) is the solution of fully discrete scheme  \eqref{eqn16},
then the following error estimate holds \vspace{-0.15cm}
\begin{equation}
\begin{aligned}
    \|\phi^n-\phi_h^n\|_2^2+ \|\bm{u}^n-\bm{u}_h^n\|_0^2 &+\|C_{1}^n-C_{1,h}^n\|_0^2+ \|C_{2}^n-C_{2,h}^n\|_0^2 \nonumber\\[-0.4em]
    &+ \tau\sum_{i=1}^n\left(\|\bm{u}^i-\bm{u}_h^i\|_1^2 + \|C_{1}^i-C_{1,h}^i\|_1^2+ \|C_{2}^i-C_{2,h}^i\|_1^2\right) \leq c(h^{2\min\{j,k, r\}}+\tau^2),\nonumber 
\end{aligned} \vspace{-0.2cm}
\end{equation}
for $1\leq n\leq N$, where \(\tfrac12 \le j \le \mathfrak{J}, 0 \le k \le \mathcal{K}-1\),
\(\tfrac12 \le r \le \mathcal{L}\), and \(c>0\) depends on the mesh regularity
and the solution regularity, but is independent of \(h\) and \(\tau\).
\end{theorem}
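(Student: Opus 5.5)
The plan is the standard energy argument with interpolation splitting. At each time level we split every error into an interpolation part and a discrete part:
\[
\phi^n-\phi_h^n=(\phi^n-\phi_I^n)+(\phi_I^n-\phi_h^n)=:\eta_\phi^n+\theta_\phi^n ,
\]
and analogously $C_i^n-C_{i,h}^n=\eta_{C_i}^n+\theta_{C_i}^n$ and $\bm{u}^n-\bm{u}_h^n=\bm{\eta}_u^n+\bm{\theta}_u^n$. Here $\phi_I$ and $C_{i,I}$ are the interpolants of Propositions~\ref{interpolation_potential} and \ref{interpolation_concentration}. For the velocity, $\bm{u}_I\in\widetilde{\bm{V}}_h^\mathcal{L}$ is the divergence-free approximant of Proposition~\ref{interpolation_velocity_kernel}, so that $\bm{\theta}_u^n\in\widetilde{\bm{V}}_h^\mathcal{L}$ is an admissible test function. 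The $\eta$ terms are bounded directly by the approximation propositions, giving $O(h^{\min\{j,k,r\}})$. The whole task is therefore to bound the $\theta$ terms.

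Next we write the error equations. We evaluate \eqref{eqn6*} at $t^n$, subtract \eqref{eqn16}, and test with $(\theta_\phi^n,\theta_{C_i}^n,\bm{\theta}_u^n)$. The linear parts produce three kinds of terms:
\begin{itemize}
\item time-truncation terms $\partial_t C_i^n-\delta_t C_i^n$ and $\partial_t\bm{u}^n-\delta_t\bm{u}^n$. Taylor's formula with integral remainder bounds these by $\tau\int_{t^{n-1}}^{t^n}\|\partial_{tt}\cdot\|_0$, which is the source of the $\tau^2$ in the final bound.
\item consistency errors of the discrete forms $M_{C,h},A_{C,h},J_{P,h},A_{P,h},M_{V,h},A_{V,h}$ acting on the interpolants. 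These are handled by the polynomial consistency of the projections together with Proposition~\ref{brenner2008mathematica}.
\item load errors $F-F_h$, which are bounded by $h^{\min}$ times the data norms of Assumption~\ref{assumption-fully_discrete}.
\end{itemize}
The nonlinear terms are treated with Lemmas~\ref{lemma_S_V}, \ref{lemma_S_C} and \ref{lemma_Q_C}. We take $(\xi_1,\varphi_1,\bm{u}_1)$ to be the exact solution and $(\xi_2,\varphi_2,\bm{u}_2)$ the discrete one. Each lemma then leaves an $O(h^\gamma)$ consistency term plus residual terms such as $S_{C,h}(\phi^n-\phi_h^n;C_i^n,\theta_{C_i}^n)$ and $S_{C,h}(\phi_h^n;C_i^n-C_{i,h}^n,\theta_{C_i}^n)$. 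We split each residual again into its $\eta$ and $\theta$ parts.

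Estimating the residuals is the main obstacle. The terms that are linear in $\theta$ with a discrete coefficient, for example $S_{C,h}(\phi_h^n;\theta_{C_i}^n,\theta_{C_i}^n)$, cannot be absorbed by a smallness assumption on continuity constants. We plan two devices for these.
\begin{itemize}
\item \emph{Concentration equation.} The coefficient $\|\phi_h^n\|_2$ is bounded by Lemma~\ref{lemma_phi_h^n}. Instead of the trilinear continuity bound we use H\"older's inequality, the $L^4$-stability of $\Pi^0$, and the interpolation inequality \eqref{sobolev_ine}:
\[
\|\theta_C\|_{0,4}\le c\|\theta_C\|_1^{1/2}\|\theta_C\|_0^{1/2}.
\]
Young's inequality then splits this into $\epsilon\|\theta_C\|_1^2+c_\epsilon\|\theta_C\|_0^2$. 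The first piece is absorbed by the coercivity of $A_{C,h}$, and the second is left for Gronwall.
\item \emph{Convective terms.} $Q_{C,h}^{\mathrm{skew}}(\bm{u}_h^n;\theta_C,\theta_C)$ and $Q_{V,h}^{\mathrm{skew}}(\bm{u}_h^n;\bm{\theta}_u,\bm{\theta}_u)$ vanish by skew-symmetry. The remaining terms, such as $Q_{V,h}^{\mathrm{skew}}(\bm{\theta}_u;\bm{u}^n,\bm{\theta}_u)$, involve exact-solution coefficients in $L^\infty(H^1)$ or $W^{1,4}$, and are handled with the same $L^4$ interpolation trick.
\end{itemize}

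The potential error is elliptic in $\theta_\phi$. From \eqref{eqn16a} we get
\[
\|\theta_\phi^n\|_2\le c\big(\|\theta_{C_1}^n\|_0+\|\theta_{C_2}^n\|_0+h^{\min}\big),
\]
exactly as in \eqref{eq52}. We substitute this wherever $\theta_\phi$ appears in the concentration and velocity equations, which decouples the potential.

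To conclude, we use $2(a-b,a)\ge\|a\|^2-\|b\|^2$ on the backward-difference terms, multiply by $\tau$, and sum over $n$. The interpolation-error time derivatives are controlled using $\partial_t C_i\in L^1(H^j)$, and the $L^4$-in-time regularity handles the products of solution norms. We choose the initial data $\theta^0=0$ and then apply the discrete Gronwall lemma for $\tau$ small. This bounds $\|\theta_C^n\|_0^2+\|\bm{\theta}_u^n\|_0^2+\tau\sum(\|\theta_C\|_1^2+\|\bm{\theta}_u\|_1^2)$ by $c(h^{2\min}+\tau^2)$. The potential estimate above then gives the $\|\cdot\|_2$ bound on $\phi$, and the triangle inequality with the $\eta$ estimates yields the theorem.
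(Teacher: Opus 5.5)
Your proposal is correct and follows essentially the same route as the paper's proof. The shared steps are: interpolation splitting with the divergence-free approximant $\bm{u}_I\in\widetilde{\bm{V}}_h^\mathcal{L}$; the consistency Lemmas \ref{lemma_S_V}--\ref{lemma_Q_C} for the coupling terms; the inequality \eqref{sobolev_ine} plus Young for terms with discrete coefficients, with $\|\phi_h^n\|_2$ controlled by Lemma \ref{lemma_phi_h^n}; the elliptic bound of the potential error by the concentration $L^2$ errors; and a discrete Gronwall argument with zero initial discrete error.

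One small slip: the truncation error $\partial_t\bm{u}^n-\delta_t\bm{u}^n$ is bounded by $\int_{t^{n-1}}^{t^n}\|\partial_{tt}\bm{u}\|_0\,ds$, without an extra factor $\tau$. The $\tau^2$ only appears after the $\tau$-weighted summation and a Young/maximum-in-time argument, as in the paper.
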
 \vspace{-0.35cm}
\begin{proof}\!\!. \;
     For all $ \bm{v}_h\in \widetilde{\bm{V}}_h^\mathcal{L}, C_h \in {M}_h^\mathfrak{J}$, we define the following norms \vspace{-0.20cm}
     \begin{equation}  \label{equivalent_norm}
    \begin{aligned} 
        &\triplenorm{\bm{v}_h^n}_{v} := \left(M_{V,h}( \bm{v}_h^n, \bm{v}_h^n)\right)^{\frac{1}{2}},  &\triplenorm{{C}_h^n}_{C} := \left(M_{C,h}( C_h^n, C_h^n)\right)^{\frac{1}{2}}. \vspace{-0.2cm}
    \end{aligned} 
    \end{equation}
    These norms are equivalent on the respective discrete spaces. We proceed by splitting the proof into the following steps:
\vspace{0.1cm}

    \noindent \textbf{Step 1. Interpolation error estimates:}     
We split the errors as follows \vspace{-0.2cm}
\begin{alignat}{3}
    \bm{u}_h^n - \bm{u}^n      &\;:=\; \bm{u}_h^n - \bm{u}_I^n    &&\;+\; \bm{u}_I^n - \bm{u}^n       &&\;=:\; \bm{\theta}_h^n + \bm{\theta}_I^n, \\[-0.2em]
    C_{i,h}^n - C_i^n &\;:=\; C_{i,h}^n - C_{i,I}^n &&\;+\; C_{i,I}^n - C_i^n &&\;=:\; \xi_{i,h}^n + \xi_{i,I}^n, \quad i=1,2, \\[-0.2em]
    \phi_h^n - \phi^n &\;:=\; \phi_h^n - \phi_I^n &&\;+\; \phi_I^n - \phi^n &&\;=:\; \upnu_h^n + \upnu_I^n, 
\end{alignat}
where $ C_{i,I}^n, \phi_I^n$ and $\bm{u}_I^n$ are the discrete approximations of $ C_{i}^n, \phi^n$ and $\bm{u}^n$ defined in propositions $\ref{interpolation_concentration}, \ref{interpolation_potential}$ and $\ref{interpolation_velocity_kernel}$ respectively. Therefore, we have 
\begin{align}
\label{known_interpolant_bound_fully}
    \|\xi_{i,I}^n\|_{0} &\leq c h^{j+1} \|C_i^n\|_{j+1},  \quad 
    \|\xi_{i,I}^n\|_{1} \leq c h^j \|C_i^n\|_{j+1}, \quad i=1,2, \nonumber\\
    \|\upnu_I^n\|_{2} &\leq c h^k \|\phi^n\|_{k+2},  \\
    \|\bm{\theta}_I^n\|_{0} &\leq c h^{r+1} \|\bm{u}^n\|_{r+1}, \quad 
    \|\bm{\theta}_I^n\|_{1} \leq c h^r \|\bm{u}^n\|_{r+1}. \nonumber
\end{align}
\noindent \textbf{Step 2. Error equation for the velocity:} 
Using the fully discrete scheme \eqref{eqn16c} for velocity field and continuous weak form  \eqref{eqn6*c} while taking into account that $\bm{\theta}_h^n \in \widetilde{\bm{V}}_h^\mathcal{L}$, we obtained \vspace{-0.2cm}
\begin{align} \label{velocity_fully}
  M_{V,h}(\delta_t \bm{\theta}_h^n, \bm{\theta}_h^n) + A_{V,h}( \bm{\theta}_h^n, \bm{\theta}_h^n) 
 & = \left(M_{V}(\partial_t \bm{u}^n, \bm{\theta}_h^n) - M_{V,h}(\delta_t \bm{u}_I^n, \bm{\theta}_h^n)\right) 
   + \big(A_{V}( \bm{u}^n, \bm{\theta}_h^n) - A_{V,h}( \bm{u}_I^n, \bm{\theta}_h^n) \big) \nonumber \\
  &\quad  + \left(Q_{V}^{\mathrm{skew}}(\bm{u}^n;\bm{u}^n, \bm{\theta}_h^n) - Q_{V,h}^{\mathrm{skew}}(\bm{u}_h^n;\bm{u}_h^n, \bm{\theta}_h^n) \right) + \big(S_{V}(C_{1}^n-C_{2}^n;\phi^n, \bm{\theta}_h^n)\nonumber \\
  &\quad  - S_{V,h}(C_{1,h}^n-C_{2,h}^n;\phi_h^n, \bm{\theta}_h^n) \big) + \left( \bm{F}_{u,h}(\bm{\theta}_h^n)- \bm{F}_{u}(\bm{\theta}_h^n)\right)\nonumber \\
  &\phantom{=}:= \Upsilon_{M, V}^n + \Upsilon_{A, V}^n + \Upsilon_{Q, V}^n + \Upsilon_{S, V}^n + \Upsilon_{\bm{F}, V}^n.
\end{align} 
Now, we bound the terms in right hand side one by one. 
For the term $\Upsilon_{M, V}^n$, using  polynomial consistency property, we get \vspace{-0.1cm}
\begin{align}  \label{estimate_vel_1_fully}
\Upsilon_{M, V}^n 
&=: M_{V}(\partial_t \bm{u}^n, \bm{\theta}_h^n) - M_{V,h}(\delta_t \bm{u}_I^n, \bm{\theta}_h^n) \nonumber\\[-0.1em]
& =  M_{V}\big(\partial_t \bm{u}^n- \delta_t \bm{u}^n, \bm{\theta}_h^n\big) + \sum_{E\in \Omega_h} M_{V}^E\big( \delta_t \bm{u}^n-\delta_t\bm{u}_{\pi}^n, \bm{\theta}_h^n\big)+ M_{V,h}^E\big( \delta_t\bm{u}_{\pi}^n-\delta_t\bm{u}_I^n, \bm{\theta}_h^n\big) \nonumber\\[-0.1em]
&:= \Upsilon_{M, V, 1}^n+ \Upsilon_{M, V, 2}^n+ \Upsilon_{M, V, 3}^n. 
\end{align}
To estimate the term $\Upsilon_{M,V,1}^n$, we use Lemma~\ref{lemma1} and apply Taylor’s theorem  to obtain \vspace{-0.2cm}
\begin{equation*}
    \begin{aligned}
    \Upsilon_{M, V, 1}^n &\leq \frac{1}{\tau} \left\|\tau \bm{u}^n_t- ({\bm{u}^n-\bm{u}^{n-1}})\right\|_0 \|\bm{\theta}_h^n\|_0 = \frac{1}{\tau} \left\| \int_{t^{n-1}}^{t^n}(s-t^{n-1})u_{tt}(\cdot, s)ds\right\|_0 \|\bm{\theta}_h^n\|_0  \nonumber\\[-0.3em]
    & \leq   \left\|\partial_{tt}u\right\|_{L^1(t^{n-1},\, t^n; L^2(\Omega))}\|\bm{\theta}_h^n\|_0. \nonumber
\end{aligned} \vspace{-0.15cm}
\end{equation*}
To estimate the terms $\Upsilon_{M,V,2}^n$ and $\Upsilon_{M,V,3}^n$, using Lemma~\ref{lemma1} and prepositions  \ref{brenner2008mathematica} and \ref{interpolation_velocity_kernel}, we have \vspace{-0.15cm}
\begin{equation*}
\begin{aligned}
    \sum_{i=2,3}\Upsilon_{M, V, i}^n &\leq \sum_{E\in \Omega_h} \left(\|\delta_t \bm{u}^n-\delta_t\bm{u}_{\pi}^n\|_{0,E} + \|\delta_t \bm{u}^n_{\pi}-\delta_t\bm{u}_{I}^n\|_{0,E} \right)\| \bm{\theta}_h^n\|_{0,E}  \nonumber\\[-0.5em]
    & \leq \frac{1}{\tau}\sum_{E\in \Omega_h} \left(2\|(\bm{u}^n-\bm{u}^{n-1})-(\bm{u}_{\pi}^n-\bm{u}_{\pi}^{n-1})\|_{0,E}  + \|(\bm{u}^n-\bm{u}^{n-1})-(\bm{u}_{I}^n-\bm{u}_{I}^{n-1})\|_{0,E} \right)\| \bm{\theta}_h^n\|_{0,E}\nonumber\\[-0.5em]
    & \leq \frac{1}{\tau}\left(ch^r \|\partial_tu\|_{L^1(t^{n-1},\, t^n; H^r(\Omega))} \| \bm{\theta}_h^n\|_{0} \right). \nonumber 
\end{aligned} \vspace{-0.2cm}
\end{equation*}
For $\Upsilon_{A, V}^n$, using  polynomial consistency, continuity of $A_{V,h}$ and prepositions  \ref{brenner2008mathematica} and \ref{interpolation_velocity_kernel}, we get \vspace{-0.2cm}
\begin{align}  \label{estimate_vel_2_fully}
\Upsilon_{A, V}^n    &=: A_{V}( \bm{u}^n, \bm{\theta}_h^n) - A_{V,h}( \bm{u}_I^n, \bm{\theta}_h^n)  = \sum_{E\in \Omega_h}  A_{V}^E\big( \bm{u}^n- \bm{u}_{\pi}^n, \bm{\theta}_h^n\big) + A_{V,h}^E\big( \bm{u}_{\pi}^n-\bm{u}_I^n, \bm{\theta}_h^n\big) \nonumber\\[-0.4em]
&\leq  \sum_{E\in \Omega_h} \Big(2 \| \bm{u}^n- \bm{u}_{\pi}^n\|_{1,E}+ \| \bm{u}^n- \bm{u}_I^n\|_{1,E}\Big) \|\bm{\theta}_h^n\|_{1,E} \nonumber\\[-0.4em]
& \leq ch^r \|\bm{u}^n\|_{r+1} \|\bm{\theta}_h^n\|_{1} \; \leq \;ch^{2r} \|\bm{u}^n\|_{r+1}^2 + \frac{\widetilde{\alpha}_{A_V}}{24}{\|\bm{\theta}_h^n\|_{1}^2}. 
\end{align}
The bound for the skew-symmetric nonlinear term $\Upsilon_{Q, V}$ has already been established in Lemma $4.3$ of~\cite{da2018virtual}. In particular, using Lemma \ref{lemma2}, preposition  \ref{interpolation_velocity_kernel} with assumptions \ref{assumption-fully_discrete} yields \vspace{-0.2cm}
\begin{align} \label{estimate_vel_3_fully}
\Upsilon_{Q, V}^n &= \left(Q_{V}^{\mathrm{skew}}(\bm{u}^n;\bm{u}^n, \bm{\theta}_h^n)-Q_{V,h}^{\mathrm{skew}}(\bm{u}^n;\bm{u}^n, \bm{\theta}_h^n) \right)+ \left(Q_{V,h}^{\mathrm{skew}}(\bm{u}^n;\bm{u}^n, \bm{\theta}_h^n) - Q_{V,h}^{\mathrm{skew}}(\bm{u}_h^n;\bm{u}_h^n, \bm{\theta}_h^n) \right) \nonumber\\[-0.1em]
& \leq ch^r(\|\bm{u}^n\|_r+ \|\bm{u}^n\|_1+ \|\bm{u}^n\|_{r+1})\|\bm{u}^n\|_{r+1} \|\bm{\theta}_h^n\|_{1} - Q_{V,h}^{\mathrm{skew}}(\bm{u}^n;\bm{\theta}_I^n, \bm{\theta}_h^n) - Q_{V,h}^{\mathrm{skew}}(\bm{u}^n;\bm{\theta}_h^n, \bm{\theta}_h^n) \nonumber \\[-0.1em]
&\qquad \qquad\ -Q_{V,h}^{\mathrm{skew}}(\bm{\theta}_I^n;\bm{u}_h^n, \bm{\theta}_h^n) -Q_{V,h}^{\mathrm{skew}}(\bm{\theta}_h^n;\bm{u}_h^n, \bm{\theta}_h^n)\nonumber\\[-0.1em]
& \leq ch^{2r} \|\bm{u}^n\|_{r+1}^4 + \frac{2\widetilde{\alpha}_{A_V}}{24} \|\bm{\theta}_h^n\|_{1}^2 + c \|\bm{u}^n\|_1^2\|\bm{\theta}_I^n\|_1^2 - Q_{V,h}^{\mathrm{skew}}(\bm{\theta}_I^n;\bm{u}_h^n, \bm{\theta}_h^n)- Q_{V,h}^{\mathrm{skew}}(\bm{\theta}_h^n;\bm{u}_h^n, \bm{\theta}_h^n)\nonumber\\[-0.3em]
&\leq ch^{2r} (\|\bm{u}^n\|_{r+1}^4+ \|\bm{u}^n\|_{r+1}^2) + \frac{\widetilde{\alpha}_{A_V}}{12} \|\bm{\theta}_h^n\|_{1}^2  - Q_{V,h}^{\mathrm{skew}}(\bm{\theta}_I^n;\bm{u}_h^n, \bm{\theta}_h^n)- Q_{V,h}^{\mathrm{skew}}(\bm{\theta}_h^n;\bm{u}_h^n, \bm{\theta}_h^n)\nonumber\\[-0.3em]
&:= ch^{2r} (\|\bm{u}^n\|_{r+1}^4+ \|\bm{u}^n\|_{r+1}^2) + \frac{\widetilde{\alpha}_{A_V}}{12} \|\bm{\theta}_h^n\|_{1}^2   -\left( \Upsilon_{Q, V,1}^n+ \Upsilon_{Q, V,2}^n\right).
\end{align}
Using the same hypothesis as above, we have   \vspace{-0.1cm} 
\begin{equation*}
\begin{aligned}
    -\Upsilon_{Q, V,1}^n &=: Q_{V,h}^{\mathrm{skew}}(\bm{\theta}_I^n;\bm{u}^n-\bm{u}_h^n-\bm{u}^n, \bm{\theta}_h^n) = Q_{V,h}^{\mathrm{skew}}(\bm{\theta}_I^n; -\bm{\theta}_I^n, \bm{\theta}_h^n)+ Q_{V,h}^{\mathrm{skew}}(\bm{\theta}_I^n;-\bm{u}^n, \bm{\theta}_h^n)\nonumber\\[-0.4em]
    & \leq \widetilde{\beta}_{Q_V}  \|\bm{\theta}_I^n\|_1\left(\|\bm{\theta}_I^n\|_1+\|\bm{u}^n\|_1\right) \|\bm{\theta}_h^n\|_1  \leq ch^{2r} \|\bm{u}^n\|_{r+1}^2\left(\|\bm{\theta}_I^n\|_1^2+\|\bm{u}^n\|_1^2\right) + \frac{\widetilde{\alpha}_{A_V}}{24} \|\bm{\theta}_h^n\|_{1}^2 \nonumber\\[-0.4em]
    & \leq ch^{2r} \|\bm{u}^n\|_{r+1}^2 + \frac{\widetilde{\alpha}_{A_V}}{24} \|\bm{\theta}_h^n\|_{1}^2. \vspace{-0.1cm}
\end{aligned}
\end{equation*}
For $\Upsilon_{Q, V,2}^n$, Sobolev inequality \eqref{sobolev_ine} and two successive applications of Young’s inequality yields \vspace{-0.1cm}
\begin{equation*}
   \begin{aligned}
    -\Upsilon_{Q, V,2}^n &=: Q_{V,h}^{\mathrm{skew}}(\bm{\theta}_h^n;\bm{u}^n-\bm{u}_h^n-\bm{u}^n, \bm{\theta}_h^n) = Q_{V,h}^{\mathrm{skew}}(\bm{\theta}_h^n; -\bm{\theta}_I^n, \bm{\theta}_h^n)+ Q_{V,h}^{\mathrm{skew}}(\bm{\theta}_h^n;-\bm{u}^n, \bm{\theta}_h^n)\nonumber\\[-0.3em]
    & \leq \widetilde{\beta}_{Q_V} \|\bm{\theta}_h^n\|_1\left(\|\bm{\theta}_I^n\|_1+\|\bm{u}^n\|_1\right)  \|\bm{\theta}_h^n\|_1^{\frac{1}{2}} \|\bm{\theta}_h^n\|_0^{\frac{1}{2}}\nonumber\\[-0.3em]
    & \leq \frac{\widetilde{\alpha}_{A_V}}{24} \|\bm{\theta}_h^n\|_{1}^2 + c \left(\|\bm{\theta}_I^n\|_1^2+\|\bm{u}^n\|_1^2\right)  \|\bm{\theta}_h^n\|_1 \|\bm{\theta}_h^n\|_0 \leq \frac{\widetilde{\alpha}_{A_V}}{12} \|\bm{\theta}_h^n\|_{1}^2 + c \|\bm{u}^n\|_1^4   \|\bm{\theta}_h^n\|_0^2. 
\end{aligned} \vspace{-0.1cm}
\end{equation*}
To derive an upper bound for $\Upsilon_{S,V}^n$, we apply Lemma~\ref{lemma_S_V} with 
$\gamma=\min\{j,k,r\}$ and the stability estimate 
$\|\phi_h^n\|_{L^{\infty}(0,T; H^2(\Omega))}$ from Lemma~\ref{lemma_phi_h^n}, yielding \vspace{-0.1cm}
\begin{align} \label{estimate_vel_4_fully}
\Upsilon_{S,V}^n 
& \leq c h^{\min\{j,k, r\}}  
     \left(\|C_{1}^n-C_{2}^n\|_{j+1}  
    \|\phi^n\|_{k+2} \right) \|\bm{\theta}_h^n\|_{1} \notag \\[-0.2em]
    &\qquad\qquad +  S_{V,h}\left(C_{1}^n-C_{2}^n;\phi^n-\phi_h^n, \bm{\theta}_h^n\right) + S_{V,h}\left((C_{1}^n-C_{2}^n)-(C_{1,h}^n-C_{2,h}^n); \phi_h^n, \bm{\theta}_h^n\right)  \notag\\[-0.2em]
    & \leq c h^{2\min\{j,k, r\}}(\|\phi^n\|_{k+2}^4+\|C_1^n\|_{j+1}^4+\|C_2^n\|_{j+1}^4) + \frac{\widetilde{\alpha}_{A_V}}{24} \|\bm{\theta}_h^n\|_{1}^2 \nonumber\\[-0.2em]
& \qquad +  S_{V,h}\left(C_{1}^n-C_{2}^n;-\upnu_h^n-\upnu_I^n, \bm{\theta}_h^n\right)  + S_{V,h}\left((\xi_{2,h}^n- \xi_{1,h}^n)+(\xi_{2,I}^n- \xi_{1,I}^n); \phi_h^n, \bm{\theta}_h^n\right) \nonumber\\[-0.2em]
& \leq c h^{2\min\{j,k, r\}}\left(\|\phi^n\|_{k+2}^4+\|C_1^n\|_{j+1}^4 +\|C_2^n\|_{j+1}^4\right) + \frac{\widetilde{\alpha}_{A_V}}{24} \|\bm{\theta}_h^n\|_{1}^2 + \frac{\widetilde{\alpha}_{A_V}}{24} \|\bm{\theta}_h^n\|_{1}^2+ c(\|C_1^n\|^2_1+\|C_2^n\|^2_1)\|\upnu_I^n\|_2^2 \nonumber\\[-0.2em]
&\quad + \frac{\widetilde{\alpha}_{A_V}}{24} \|\bm{\theta}_h^n\|_{1}^2  +   c(\|\xi_{1,I}^n\|^2_1+\|\xi_{2,I}^n\|^2_1)\|\phi_h^n\|_2^2+S_{V,h}\left(C_{1}^n-C_{2}^n;-\upnu_h^n, \bm{\theta}_h^n\right) + S_{V,h}\left(\xi_{2,h}^n-\xi_{1,h}^n; \phi_h^n, \bm{\theta}_h^n\right) \nonumber\\[-0.2em]
 &\leq  c h^{2\min\{j,k, r\}}(\|\phi^n\|_{k+2}^4+\|C_1^n\|_{j+1}^4+\|C_2^n\|_{j+1}^4+ \|\phi^n\|_{k+2}^2+\|C_1^n\|_{j+1}^2+\|C_2^n\|_{j+1}^2) 
\nonumber\\[-0.2em]
& \qquad+ \frac{\widetilde{\alpha}_{A_V}}{8} \|\bm{\theta}_h^n\|_{1}^2  
  +\underbrace{{S_{V,h}\left(C_{1}^n-C_{2}^n;-\upnu_h^n, \bm{\theta}_h^n\right)}}_{\Upsilon_{S,V, 1}^n} + \underbrace{{S_{V,h}\left(\xi_{2,h}^n-\xi_{1,h}^n; \phi_h^n, \bm{\theta}_h^n\right)}}_{\Upsilon_{S,V, 2}^n}. 
\end{align}
Denoting  $\widetilde{\alpha}^{*}_{A_P}= \min\{\widetilde{\alpha}_{A_P}, \widetilde{\alpha}_{J_P}\}$ and  employing Sobolev inequality \eqref{sobolev_ine} with two successive applications of Young's inequality yields 
\begin{align*}
 \Upsilon_{S, V,1}^n &+ \Upsilon_{S, V,2}^n
    \leq \widetilde{\beta}_{S_V}\left( \|C_{1}^n-C_{2}^n\|_1 \|\upnu_h^n\|_2   + \|\xi_{2,h}^n-\xi_{1,h}^n\|_1 \|\phi_h^n\|_2  \right)\|\bm{\theta}_h^n\|_0^{\frac{1}{2}} \|\bm{\theta}_h^n\|_1^{\frac{1}{2}}\nonumber\\[-0.2em]
    & \leq \frac{\widetilde{\alpha}^{*}_{A_P}}{2} \|\upnu_h^n\|_2^2 + c\left(\|C_{1}^n-C_{2}^n\|_1^2 + \|\phi_h^n\|_2^2\right) \|\bm{\theta}_h^n\|_0\|\bm{\theta}_h^n\|_1+ \frac{\widetilde{\alpha}_{A_C}}{26} (\|\xi_{1,h}^n\|_1^2+\|\xi_{2,h}^n\|_1^2) \nonumber\\[-0.2em]
    & \leq \frac{\widetilde{\alpha}^{*}_{A_P}}{2} \|\upnu_h^n\|_2^2 + c\left(\|C_{1}^n-C_{2}^n\|_1^4+ \|\phi_h^n\|_2^4\right)  \|\bm{\theta}_h^n\|_0^2 + \frac{\widetilde{\alpha}_{A_V}}{12} \|\bm{\theta}_h^n\|_{1}^2  + \frac{\widetilde{\alpha}_{A_C}}{26} (\|\xi_{1,h}^n\|_1^2+\|\xi_{2,h}^n\|_1^2). 
\end{align*}
For the last term, using the definition of $\bm{F}_{u,h}(\cdot)$ and $\bm{F}_{u}(\cdot)$ together with Cauchy-Schwarz and Young's inequalities, we get  \vspace{-0.05cm}
\begin{equation}  \label{estimate_force_1_fully}
\Upsilon_{\bm{F}, V}^n
 \leq  \frac{c}{2\epsilon}h^{2r} \|\bm{f}_{u}^n\|_{r}^2 + \frac{\epsilon}{2}{\|\bm{\theta}_h^n\|_{0}^2}. \vspace{-0.05cm}
\end{equation}
Now, from equation~\eqref{velocity_fully} and the estimates~\eqref{estimate_vel_1_fully}--\eqref{estimate_force_1_fully},  together with equivalent norm  \eqref{equivalent_norm}  implies \vspace{-0.1cm}
\begin{align} \label{estimate_vel_fnl_fully}
\frac{1}{2\tau}\left(\triplenorm{\bm{\theta}_h^n}_v^2-\triplenorm{\bm{\theta}_h^{n-1}}_v^2\right) &+ \frac{13\widetilde{\alpha}_{A_V}}{24}\|\bm{\theta}_h^n\|_1^2  \leq \frac{\widetilde{\alpha}^*_{A_P}}{2}\|\upnu_h^n\|_2^2 + \frac{\widetilde{\alpha}_{A_C}}{26}\Big(\|\xi_{1,h}^n\|_1^2 + \|\xi_{2,h}^n\|_1^2\Big)  \nonumber \\[-0.2em]
&+ \frac{1}{\tau}\left(ch^r \|\partial_tu\|_{L^1(t^{n-1},\, t^n; H^r(\Omega))} \triplenorm{\bm{\theta}_h^n}_{v}\right) + \left\|\partial_{tt}u\right\|_{L^1(t^{n-1},\, t^n; L^2(\Omega))}\triplenorm{\bm{\theta}_h^n}_{v} \nonumber\\[-0.2em]
&\quad + ch^{2\min\{j,k,r\}}\Big(\|\bm{u}^n\|_{r+1}^2 + \|\phi^n\|_{k+2}^2 + \|C_1^n\|_{j+1}^2 + \|C_2^n\|_{j+1}^2 \nonumber\\[-0.2em]
&\quad + \|\bm{u}^n\|_{r+1}^4 + \|\phi^n\|_{k+2}^4 + \|C_1^n\|_{j+1}^4 + \|C_2^n\|_{j+1}^4\Big) + ch^{2r} \|\bm{f}_{u}^n\|_{r}^2\nonumber \\[-0.2em]
&\quad   + c\left( 1+ \|\bm{u}^n\|_1^4 + \|C_1^n-C_2^n\|_1^4 + \|\phi_h^n\|_2^4\right)\triplenorm{\bm{\theta}_h^n}_{v}^2.
\end{align}

\noindent \textbf{Step 3. Error equation for potential:} 
Using the semi-discrete scheme \eqref{eqn16a} and continuous weak form  \eqref{eqn6*a} for potential while taking into account that $\upnu_h^n \in \mathring{U}{}_{h}^{k}$, we obtained \vspace{-0.1cm}
\begin{align} \label{pot-fully}
  J_{P,h}( \upnu_h^n, \upnu_h^n) + A_{P,h}( \upnu_h^n, \upnu_h^n) 
  &= \left(J_{P}( \phi^n, \upnu_h^n) - J_{P,h}( {\phi}_I^n, \upnu_h^n)\right) + \big(A_{P}( \phi^n, \upnu_h^n) - A_{P,h}( {\phi}_I^n, \upnu_h^n)\big)  \nonumber\\[-0.2em]
  &\qquad+ \left( F_{\phi,h}(\upnu_h^n)- F_{\phi}(\upnu_h^n)\right) + \left(l_h(C_{1,h}^n - C_{2,h}^n; \upnu_h^n) \right) - \left(l(C_{1}^n - C_{2}^n; \upnu_h^n) \right) \nonumber\\[-0.2em]
  &\phantom{=}:= \Upsilon_{J, P}^n + \Upsilon_{A, P}^n + \Upsilon_{F, P}^n + \Upsilon_{l, P}^n. 
\end{align}
The estimate of first three terms follows the same argument as for the bound of $\Upsilon_{A, V}^n$ and $\Upsilon_{\bm{F}, V}^n$ \vspace{-0.2cm}
    \begin{equation}
    \Upsilon_{J, P}^n  + \Upsilon_{A, P}^n + \Upsilon_{F, P}^n  \leq ch^k \left(\|\phi^n\|_{k+2}  + \|f_{\phi}^n\|_{k} \right) \|{\upnu}_h^n\|_{2} \leq ch^{2k} \left(\|\phi^n\|_{k+2}^2 + \|f_{\phi}^n\|_{k}^2 \right)+ \frac{\widetilde{\alpha}^{*}_{A_P}}{4} \|\upnu_h^n\|_2^2,   \nonumber \vspace{-0.1cm}
\end{equation}
where $\widetilde{\alpha}^{*}_{A_P}= \min\{\widetilde{\alpha}_{A_P}, \widetilde{\alpha}_{J_P}\}$. 
For third term, we split the last term $\Upsilon_{l, P}^n$ into local contributions $\Upsilon_{l, P}^{n,E}$. Using definitions \eqref{eqn7} and \eqref{eqn10} while applying Cauchy-Schwarz inequality and continuity of projections $\bm{\Pi}^{0,E}_{\mathfrak{J}}$ and $\bm{\Pi}^{0,E}_{\mathcal{K}}$ with respect to $L^2$ norm  \vspace{-0.20cm}
\begin{align*}
\Upsilon_{l, P}^{n,E}  
&=  l_h\big((C_{1,h}^n - C_{2,h}^n)-(C_{1}^n - C_{2}^n); \upnu_h^n\big)+    l_h^E(C_{1}^n - C_{2}^n; \upnu_h^n)   - l^E(C_{1}^n - C_{2}^n; \upnu_h^n)     \nonumber\\[-0.25em]
& =  l_h\big((C_{1,h}^n - C_{2,h}^n)-(C_{1}^n - C_{2}^n); \upnu_h^n\big)  + c  \int_E \big[ \Pi^{0,E}_{\mathfrak{J}} (C_{1}^n - C_{2}^n) - (C_1^n - C_2^n) \big] \, \Pi^{0,E}_{\mathcal{K}} \upnu_h^n \nonumber\\[-0.25em]
&\quad -c  \int_E(C_1^n - C_2^n) \,  (I - \Pi^{0,E}_{\mathcal{K}}) \upnu_h^n \, dx    \nonumber\\[-0.25em]
& \leq  c(\|C_{1}^n- C_{1,h}^n\|_{0, E}+ \|C_{2}^n-C_{2,h}^n\|_{0, E})\|\upnu_h^n\|_{0, E}  + c  \int_E \big[ \Pi^{0,E}_{\mathfrak{J}} (C_{1}^n - C_{2}^n) - (C_1^n - C_2^n) \big] \, \Pi^{0,E}_{\mathcal{K}} \upnu_h^n \nonumber\\[-0.25em]
&\quad -c  \int_E  (I - \Pi^{0,E}_{\mathcal{K}}) (C_1^n - C_2^n) \,  (I - \Pi^{0,E}_{\mathcal{K}}) \upnu_h^n \, dx    \nonumber\\[-0.25em]
&\leq c\Big[ \big(\|\xi_{1,h}^n\|_{0,E}+ \|\xi_{1,I}^n\|_{0,E} + \|\xi_{2,h}^n\|_{0,E}+ \|\xi_{2,I}^n\|_{0,E}\big) + h^{\min\{j,k\}} \|C_1^n\|_{j, E}+h^{\min\{j,k\}}\|C_2^n\|_{j, E}\Big]\|\upnu_h^n\|_{2, E} \nonumber\\[-0.25em]
&\leq c(\|\xi_{1,h}^n\|_{0, E}^2+\|\xi_{2,h}^n\|_{0, E}^2) + ch^{2\min\{j, k\}} (\|C_1^n\|_{j, E}^2+ \|C_2^n\|_{j, E}^2) +  \frac{\widetilde{\alpha}^{*}_{A_P}}{4} \|\upnu_h^n\|_{2, E}^2.
\end{align*}
Therefore, summing the local contributions of the above over all elements and collecting the contribution from first two terms as well, we obtain the final bound for \eqref{pot-fully} \vspace{-0.15cm}
\begin{equation} \label{estimate_poten_fnl_fully}
    \frac{\widetilde{\alpha}^{*}_{A_P}}{2}\|\upnu_h^n\|_2^2 \leq ch^{2\min\{j, k\}}(\|C_1^n\|_{j}^2 +
    \|C_2^n\|_{j}^2+ \|\phi^n\|_{k+2}^2) +  ch^{2k}  \|f_{\phi}^n\|_{k}^2+ c\|\xi_{1,h}^n\|_0^2 + c\|\xi_{2,h}^n\|_0^2. \vspace{-0.15cm}
\end{equation}
\noindent \textbf{Step 4. Error equation for the concentrations:}  Using the semi-discrete scheme \eqref{eqn16b} for concentrations  $C_i, i=1,2$ and continuous weak form  \eqref{eqn6*b}, we obtained \vspace{-0.15cm}
\begin{align} \label{concentration_fully}
M_{C,h}(\partial_t \xi_{i,h}^n, \xi_{i,h}^n) + A_{C,h}(\xi_{i,h}^n, \xi_{i,h}^n) &= \left(M_C(\partial_t C_i^n, \xi_{i,h}^n) - M_{C,h}(\partial_t C_{i,I}^n, \xi_{i,h}^n)\right)  + \big(A_C(C_i^n, \xi_{i,h}^n) \nonumber \\[-0.2em]
& \quad - A_{C,h}(C_{i,I}^n, \xi_{i,h}^n)\big) + z_i \big(S_C(\phi^n; C_i^n, \xi_{i,h}^n) - S_{C,h}(\phi_h^n; C_{i,h}^n, \xi_{i,h}^n)\big) \nonumber \\[-0.2em]
&\quad  - \left(  Q_C^{\mathrm{skew}}(\bm{u}^n; C_i^n, \xi_{i,h}^n) - Q_{C,h}^{\mathrm{skew}}(\bm{u}_h^n; C_{i,h}^n, \xi_{i,h}^n)\right) + \left( F_{c_i,h}(\xi_{i,h}^n)- F_{c_i}(\xi_{i,h}^n)\right)\nonumber \\[-0.2em]
&:= \Upsilon_{M,C}^n + \Upsilon_{A,C}^n + z_i \Upsilon_{S,C}^n + \Upsilon_{Q,C}^n + \Upsilon_{F,C}^n.
\end{align}
We now proceed to bound the terms on the right-hand side individually. The estimates for the last term and first two terms follow the same steps as in Step~2, yielding \vspace{-0.2cm}
\begin{subequations}
    \begin{align}
    &\Upsilon_{M,C}^n \leq  \frac{1}{\tau} \Big(c h^{j}\|\partial_t C_{i}\|_{L^1(t^{n-1},t^n;H^j(\Omega))} {\|\xi_{i,h}^n\|_{0}}\Big)+ \|\partial_{tt}C_{i}\|_{L^1(t^{n-1},t^n;L^2(\Omega))}   {\|\xi_{i,h}^n\|_{0}}, \label{estimate_conc_1_fully}\\[-0.2em]
    & \Upsilon_{A, C}^n + \Upsilon_{F,C}^n  \leq ch^{2j}\| C_i^n\|_{j+1}^2 + \frac{\widetilde{\alpha}_{A_C}}{26} \|\xi_{i,h}^n\|_{1}^2 + \frac{c}{2\epsilon}h^{2j}\| f_{c_i}^n\|_{j}^2 + \frac{\epsilon}{2} \|\xi_{i,h}^n\|_{0}^2. \label{estimate_conc_11_fully} 
\end{align}
\end{subequations}
To estimate the skew-symmetric nonlinear term $\Upsilon_{Q,C}^n$, we apply Lemma~\ref{lemma_Q_C} with $\gamma=\min\{ j, r\}$, which yields the following bound
\begin{align} \label{estimate_conc_2_fully}\vspace{-0.15cm}
\Upsilon_{Q, C}^{n} 
& \leq c h^{\min\{ j, r\}} \left( \|\bm{u}^{n}\|_{r+1} + \|\bm{u}^{n}\|_{r} + \|\bm{u}^{n}\|_{r+1} + \|\bm{u}^{n}\|_{1} \right) \|C_i^{n}\|_{j+1} \|\xi_{i,h}^{n}\|_{1} + Q_{C,h}^{\mathrm{skew}}(\bm{u}_h^{n}; \xi_{i,I}^{n}, \xi_{i,h}^{n}) \nonumber \\[-0.2em]
& \quad\quad  + Q_{C,h}^{\mathrm{skew}}(\bm{u}^{n} - \bm{u}_h^{n}; C_i^{n}, \xi_{i,h}^{n}) \nonumber\\[-0.2em]
& \leq c h^{2\min\{ j, r\}} \left(\|C_i^{n}\|_{j+1}^4 + \|\bm{u}^{n}\|_{r+1}^4\right)+ \frac{\widetilde{\alpha}_{A_C}}{26} \|\xi_{i,h}^{n}\|_{1}^2 + Q_{C,h}^{\mathrm{skew}}(\bm{u}_h^{n}; \xi_{i,I}^{n}, \xi_{i,h}^{n}) + Q_{C,h}^{\mathrm{skew}}(\bm{u}^{n} - \bm{u}_h^{n}; C_i^{n}, \xi_{i,h}^{n}) \nonumber\\[-0.2em]
& := c h^{2\min\{ j, r\}} \left(\|C_i^{n}\|_{j+1}^4 + \|\bm{u}^{n}\|_{r+1}^4\right)+ \frac{\widetilde{\alpha}_{A_C}}{26} \|\xi_{i,h}^{n}\|_{1}^2 + \Upsilon_{Q, C, 1}^{n} + \Upsilon_{Q, C, 2}^{n}.
\end{align}
We now estimate the terms $\Upsilon_{Q, C,1}^n$ and $\Upsilon_{Q, C,2}^n$ using Sobolev inequality \eqref{sobolev_ine}, followed by two successive applications of Young’s inequality, to obtain \vspace{-0.15cm}
\begin{align*}
    \Upsilon_{Q, C,1}^{n}+ \Upsilon_{Q, C,2}^{n}
    &=  Q_{C,h}^{\mathrm{skew}}(\bm{u}_h^{n}-\bm{u}^{n}+\bm{u}^{n}; \xi_{i,I}^{n}, \xi_{i,h}^{n})
    -  Q_{C,h}^{\mathrm{skew}}(\bm{\theta}_h^{n} + \bm{\theta}_I^{n}; C_i^{n}, \xi_{i,h}^{n}) \nonumber\\[-0.2em]
    &=Q_{C,h}^{\mathrm{skew}}(\bm{u}^{n}; \xi_{i,I}^{n}, \xi_{i,h}^{n})+ Q_{C,h}^{\mathrm{skew}}(\bm{\theta}_h^{n}+ \bm{\theta}_I^{n}; \xi_{i,I}^{n}, \xi_{i,h}^{n}) -  Q_{C,h}^{\mathrm{skew}}(\bm{\theta}_h^{n} + \bm{\theta}_I^{n}; C_i^{n}, \xi_{i,h}^{n})  \nonumber\\[-0.2em]
    & \leq c\|\bm{u}^{n}\|_1^2\|\xi_{i,I}^{n}\|_1^2 +  \frac{\widetilde{\alpha}_{A_C}}{26} \|\xi_{i,h}^{n}\|_{1}^2 + c\|\bm{\theta}_I^{n}\|_1^2\|\xi_{i,I}^{n}\|_1^2 +  \frac{\widetilde{\alpha}_{A_C}}{26} \|\xi_{i,h}^{n}\|_{1}^2 + c\|\bm{\theta}_I^{n}\|_1^2\|C_i^{n}\|_1^2 \nonumber\\[-0.2em]
    & \qquad \qquad +  \frac{\widetilde{\alpha}_{A_C}}{26} \|\xi_{i,h}^{n}\|_{1}^2 + Q_{C,h}^{\mathrm{skew}}(\bm{\theta}_h^{n}; \xi_{i,I}^{n}, \xi_{i,h}^{n}) -  Q_{C,h}^{\mathrm{skew}}(\bm{\theta}_h^{n}; C_i^{n}, \xi_{i,h}^{n}) \nonumber\\[-0.2em]
    & \leq c h^{2\min\{ j, r\}} (\|C_i^{n}\|_{j+1}^2+\|\bm{u}^{n}\|_{r+1}^2)+ \frac{3\widetilde{\alpha}_{A_C}}{26} \|\xi_{i,h}^{n}\|_{1}^2   \nonumber\\[-0.2em]
    & \qquad\qquad + \widetilde{\beta}_{Q_C} (\|\xi_{i,I}^{n}\|_1+\|C_{i}^{n}\|_1)\|\bm{\theta}_h^{n}\|_1  \|\xi_{i,h}^{n}\|_0^{\frac{1}{2}} \|\xi_{i,h}^{n}\|_1^{\frac{1}{2}} \nonumber\\[-0.2em]
    & \leq c h^{2\min\{ j, r\}}(\|C_i^{n}\|_{j+1}^2+\|\bm{u}^{n}\|_{r+1}^2) + \frac{3\widetilde{\alpha}_{A_C}}{26} \|\xi_{i,h}^{n}\|_{1}^2   + \frac{\widetilde{\alpha}_{A_V}}{24} \|\bm{\theta}_h^{n}\|_1^2\nonumber\\[-0.2em]
    & \qquad\qquad  + c (\|\xi_{i,I}^{n}\|_1^2 +\|C_{i}^{n}\|_1^2)  \|\xi_{i,h}^{n}\|_0 \|\xi_{i,h}^{n}\|_1\nonumber\\[-0.2em]
    & \leq c h^{2\min\{ j, r\}}(\|C_i^{n}\|_{j+1}^2+\|\bm{u}^{n}\|_{r+1}^2) + \frac{4\widetilde{\alpha}_{A_C}}{26} \|\xi_{i,h}^{n}\|_{1}^2 + \frac{\widetilde{\alpha}_{A_V}}{24} \|\bm{\theta}_h^{n}\|_1^2 + c \|C_{i}^{n}\|_1^4  \|\xi_{i,h}^{n}\|_0^2.  
\end{align*}
To derive an upper bound for \(\Upsilon_{S,C}^n\), we proceed similar to \(\Upsilon_{S,V}^n\). In particular, we apply Lemma~\ref{lemma_S_C} with $\gamma=\min\{j,k\}$ to get \vspace{-0.25cm}
\begin{align} \label{estimate_conc_3_fully}
\Upsilon_{S,C}^n 
& \leq c h^{2\min\{j,k\}}(\|C_{i}^n\|_{j+1}^4+\|\phi^n\|_{k+2}^4) + \frac{\widetilde{\alpha}_{A_C}}{26} \|\xi_{i,h}^n\|_1^2 +  S_{C,h}\left(-\upnu_h^n-\upnu_I^n; C_i^n, \xi_{i,h}^n\right) \nonumber\\[-0.2em]
&\quad - S_{C,h}(\phi_h^n; \xi_{i,I}^n+\xi_{i,h}^n, \xi_{i,h}^n) \nonumber\\[-0.2em]
& \leq c h^{2\min\{j,k\}}(\|C_{i}^n\|_{j+1}^4+\|\phi^n\|_{k+2}^4)  + c\|C_i^n\|^2_1\|\upnu_I^n\|_2^2 + c\|\phi_h^n\|^2_2\|\xi_{i,I}^n\|_1^2\nonumber\\[-0.2em]
& \quad + \frac{\widetilde{3\alpha}_{A_C}\|\xi_{i,h}^n\|_1^2}{26}  +   \widetilde{\beta}_{S_C} \|C_{i}^n\|_1\|\upnu_h^n\|_2  \|\xi_{i,h}^n\|_0^{\frac{1}{2}} \|\xi_{i,h}^n\|_1^{\frac{1}{2}} +  \widetilde{\beta}_{S_C} \|\phi_h^n\|_2\|\xi_{i,h}^n\|_1  \|\xi_{i,h}^n\|_0^{\frac{1}{2}} \|\xi_{i,h}^n\|_1^{\frac{1}{2}}\nonumber\\[-0.2cm]
&\leq  c h^{2\min\{j,k\}} (\|C_{i}^n\|_{j+1}^4+ \|\phi^n\|_{k+2}^4+ \|C_{i}^n\|_{j+1}^2+ \|\phi^n\|_{k+2}^2)+ \frac{6\widetilde{\alpha}_{A_C}}{26} \|\xi_{i,h}^n\|_1^2  \nonumber\\[-0.2cm]
&\quad +  c \|C_{i}^n\|_1^4  \|\xi_{i,h}^n\|_0^2 +\frac{\widetilde{\alpha}^{*}_{A_P}}{2}\|\upnu_h^n\|_2^2
+  c \|\phi_h^n\|_2^4  \|\xi_{i,h}^n\|_0^2. 
\end{align}
Now, from equation~\eqref{concentration_fully} and the estimates~\eqref{estimate_conc_1_fully}-\eqref{estimate_conc_3_fully}, together with equivalent norm  \eqref{equivalent_norm}, we obtain  \vspace{-0.2cm}
\begin{align} \label{estimate_conc_fnl_fully}
\frac{1}{2\tau}\bigg(\triplenorm{\xi_{i,h}^n}_C^2-&\triplenorm{\xi_{i,h}^{n-1}}_C^2\bigg) +\widetilde{\alpha}_{A_C}\|\xi_{i,h}^n\|_1^2 
\leq  c\left(  1+  \|C_i^n\|_1^4 + \|\phi_h^n\|_2^4  \right) \triplenorm{\xi_{i,h}^n}_C^2 + ch^{2j}\| f_{c_i}^n\|_{j}^2\nonumber\\[-0.2cm]
&+c h^{2\min\{j,k, r\}} \Big(  \|C_i^n\|_{j+1}^2 + \|\phi^n\|_{k+2}^2 + \|\bm{u}^n\|_{r+1}^2 + \|C_i^n\|_{j+1}^4  \nonumber\\[-0.2cm]
&+ \|\phi^n\|_{k+2}^4 + \|\bm{u}^n\|_{r+1}^4\Big) + \frac{12\widetilde{\alpha}_{A_C}}{26} \|\xi_{i,h}^n\|_1^2  + \frac{\widetilde{\alpha}^*_{A_P}}{2} \|\upnu_h^n\|_2^2 
 + \frac{\widetilde{\alpha}_{A_V}}{24} \|\bm{\theta}_h^n\|_1^2  \nonumber\\[-0.2cm]
& + \frac{1}{\tau} \Big(c h^{j}\|\partial_t C_{i}\|_{L^1(t^{n-1},t^n;H^j(\Omega))} {\triplenorm{\xi_{i,h}^n}_{C}}\Big)+ \|\partial_{tt}C_{i}\|_{L^1(t^{n-1},t^n;L^2(\Omega))}   {\triplenorm{\xi_{i,h}^n}_{C}}. 
\end{align}
At this stage, we begin by substituting the estimate \eqref{estimate_poten_fnl_fully} into the estimates \eqref{estimate_vel_fnl_fully} and \eqref{estimate_conc_fnl_fully}. We then combine the resulting expressions and iterate over $i=1,2 \ldots n$ to obtain the following inequality \vspace{-0.15cm}
\begin{align*}
\triplenorm{\bm{\theta}_h^n}_v^2 &+\triplenorm{\xi_{1,h}^n}_C^2+ \triplenorm{\xi_{2,h}^n}_C^2 + \tau\sum_{i=0}^n\left(\|\bm{\theta}_h^i\|_1^2 + \|\xi_{1,h}^i\|_1^2+ \|\xi_{2,h}^i\|_1^2\right)  \\[-0.2cm]
&\leq  \tau\sum_{i=0}^n\mu_i\left(\triplenorm{\bm{\theta}_h^i}_v^2 + \triplenorm{\xi_{1,h}^i}_C^2+ \triplenorm{\xi_{2,h}^i}_C^2\right) + c\left(\|\bm{\theta}_h^0\|_0^2 + \|\xi_{1,h}^0\|_0^2 + + \|\xi_{2,h}^0\|_0^2 \right)\\[-0.1cm]
&\quad + c h^{2\min\{j, r\}}\left(\|\partial_t\bm{u}\|_{L^1(0,t^n;H^r(\Omega))}^2 + \|\partial_t C_1\|_{L^1(0,t^n;H^j(\Omega))}^2+ \|\partial_t C_2\|_{L^1(0,t^n;H^j(\Omega))}^2\right) \\[-0.1cm]
&\quad + c\tau^2\left(\|\bm{u}_{tt}\|_{L^1(0,t^n;L^2(\Omega))}^2 + \|\partial_{tt}C_1\|_{L^1(0,t^n;L^2(\Omega))}^2+ \|\partial_{tt}C_2\|_{L^1(0,t^n;L^2(\Omega))}^2\right) \\[-0.1cm]
&\quad + c h^{2\min\{j,k, r\}}\tau\sum_{i=0}^n \left(\|\bm{u}^i\|_{r+1}^2 + \|\phi^i\|_{k+2}^2 + \|C_1^i\|_{j+1}^2+ \|C_2^i\|_{j+1}^2\right)\\[-0.1cm]
&\quad + c h^{2\min\{j,k, r\}}\tau\sum_{i=0}^n \left(\|\bm{u}^i\|_{r+1}^4 + \|\phi^i\|_{k+2}^4 + \|C_1^i\|_{j+1}^4+ \|C_2^i\|_{j+1}^4\right)\\[-0.2cm]
&\quad + c h^{2\min\{j,k, r\}}\tau\sum_{i=0}^n \left(\|\bm{f}_{u}^i\|_{r+1}^2 + \|f_{\phi}^i\|_{k+2}^2 + \|f_{c_1}^i\|_{j+1}^2+ \|f_{c_2}^i\|_{j+1}^2\right).
\end{align*}
where \(\mu_i = c\left( 1+ \|\bm{u}^i\|_1^4 + \|C_{1}^i\|_1^4 + \|C_{2}^i\|_1^4+ \|\phi_h^i\|_2^4\right)\). Assuming \( \tau \mu_i < 1 \) and defining \( \rho_i := (1 - \tau \mu_i)^{-1} \), we apply discrete Gronwall inequality \cite[Lemma 5.1]{heywood1990finite} with the initial condition \( \left(\bm{u}_h^0, C_{1,h}^0, C_{2,h}^0\right) = \left(\bm{u}_I(0), C_{1,I}(0), C_{2,I}(0)\right) \) to get \vspace{-0.15cm}
\begin{equation}
    \|\bm{\theta}_h^n\|_0^2 +\|\xi_{1,h}^n\|_0^2+ \|\xi_{2,h}^n\|_0^2 + \tau\sum_{i=1}^n\left(\|\bm{\theta}_h^i\|_1^2 + \|\xi_{1,h}^i\|_1^2+ \|\xi_{2,h}^i\|_1^2\right) \leq c( h^{2\min\{j,k, r\}}+\tau^2). \nonumber \vspace{-0.15cm}
\end{equation}

Note that an estimate for \( \|\upnu_h^n\|_2 \) can be obtained from the equation \eqref{estimate_poten_fnl_fully} by employing the above estimate for the concentration in the \( L^2 \)-norm. The desired result then follows by using the approximation properties given in \eqref{known_interpolant_bound_fully}, and the application of the triangle inequality.
\end{proof} 
\vspace{-0.5cm}
\begin{figure}[H]
\centering
\setlength{\tabcolsep}{1pt}
\includegraphics[width=0.23\textwidth]{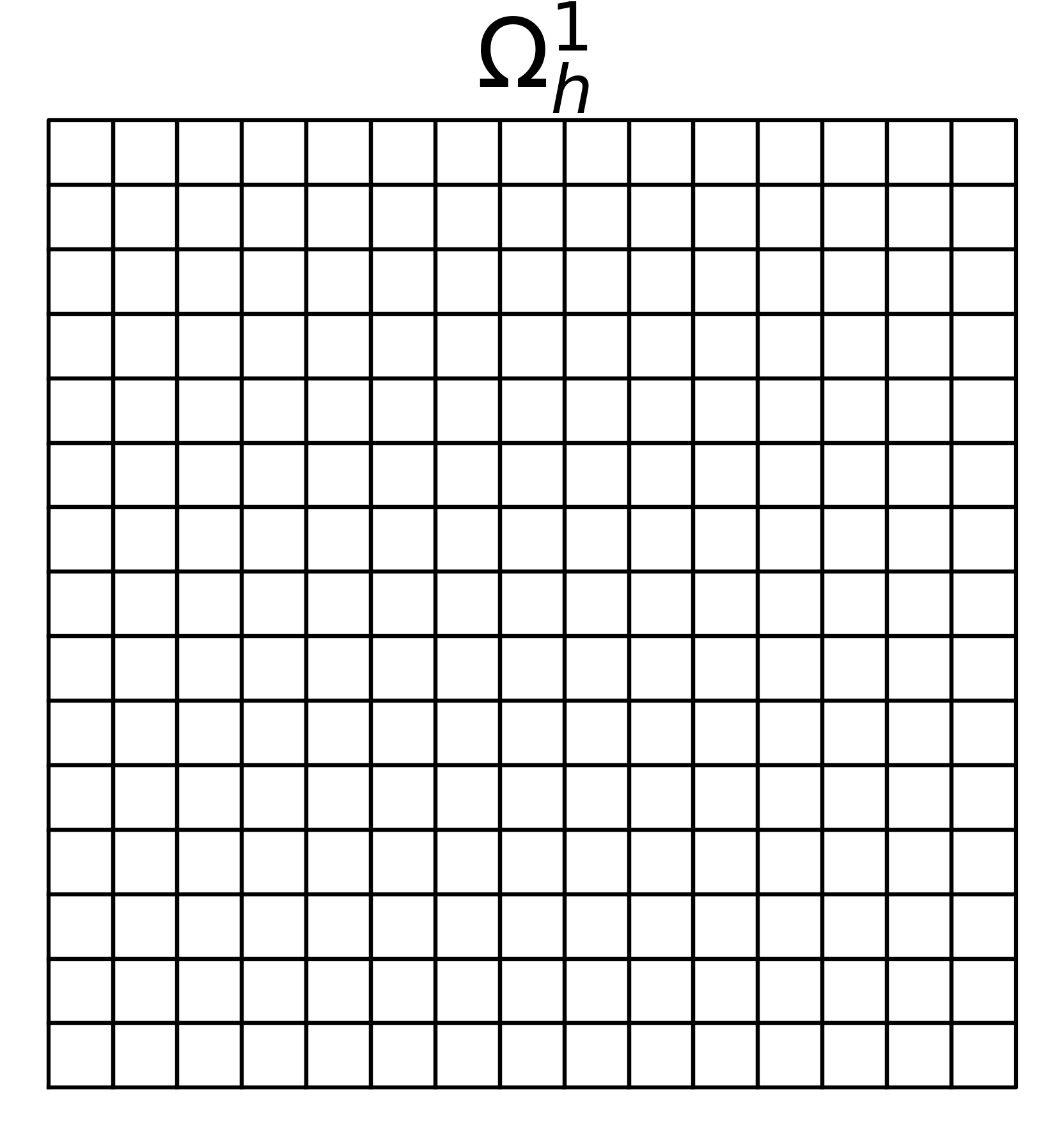}
\includegraphics[width=0.23\textwidth]{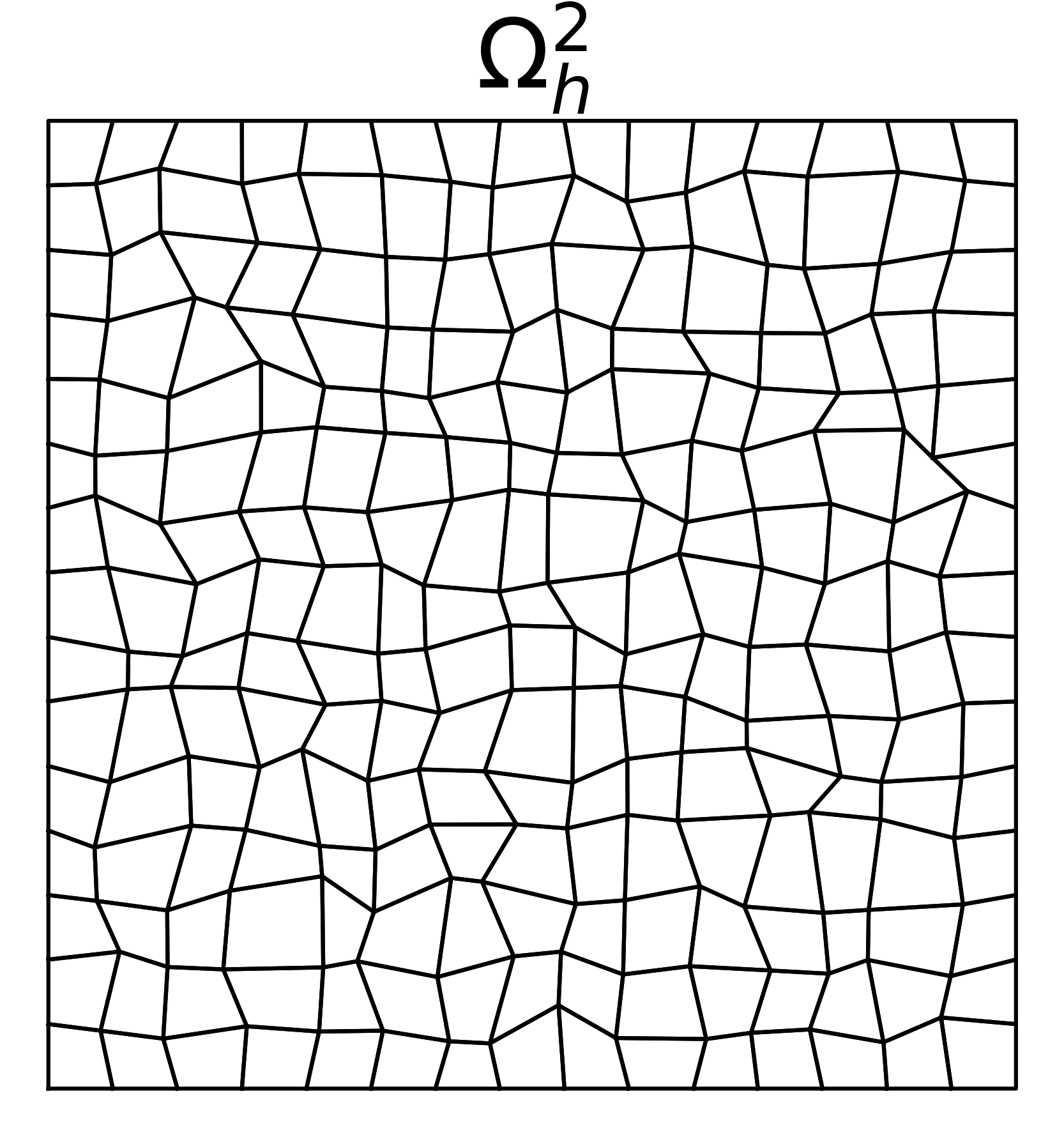}
\includegraphics[width=0.23\textwidth]{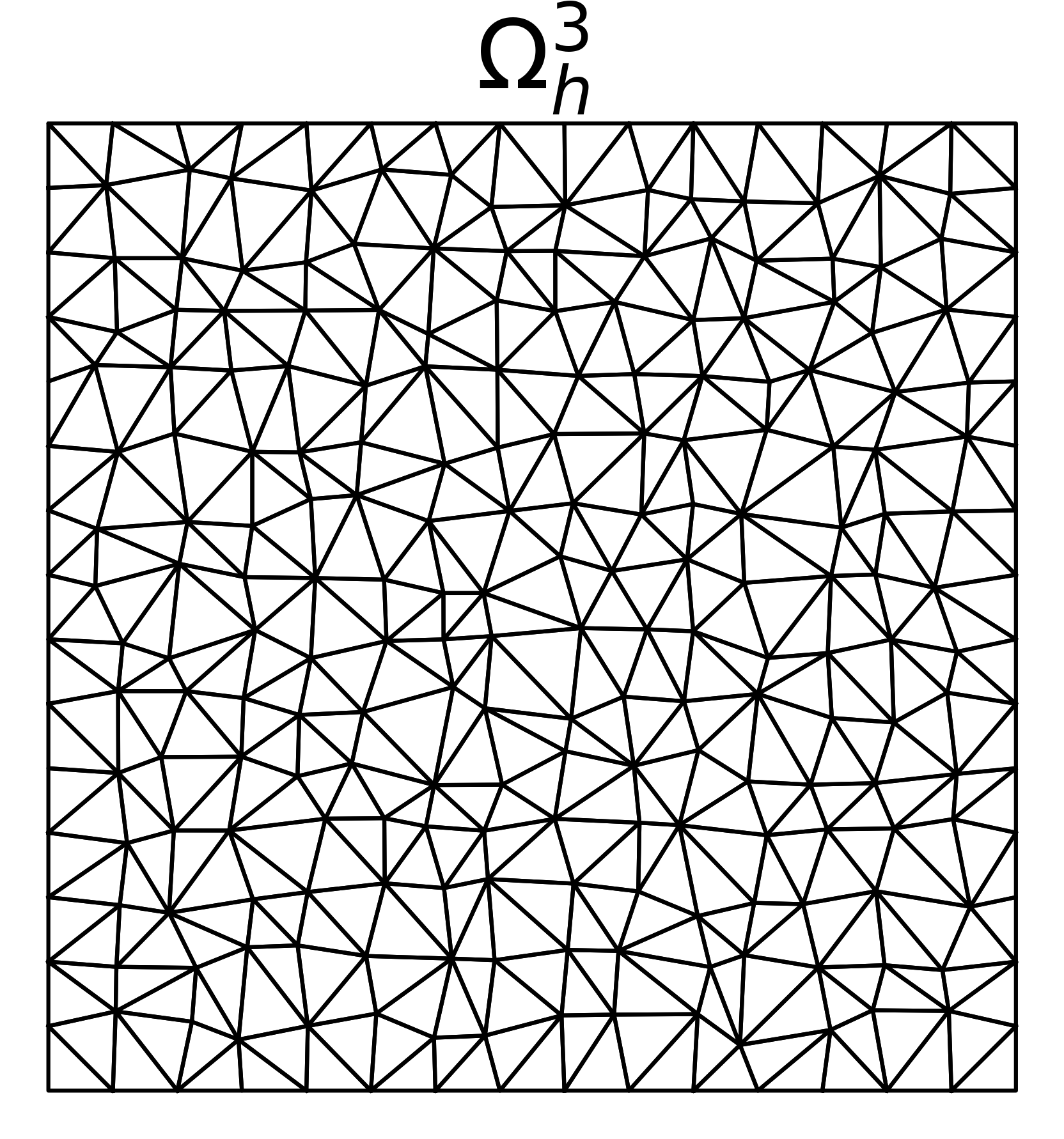}
\includegraphics[width=0.23\textwidth]{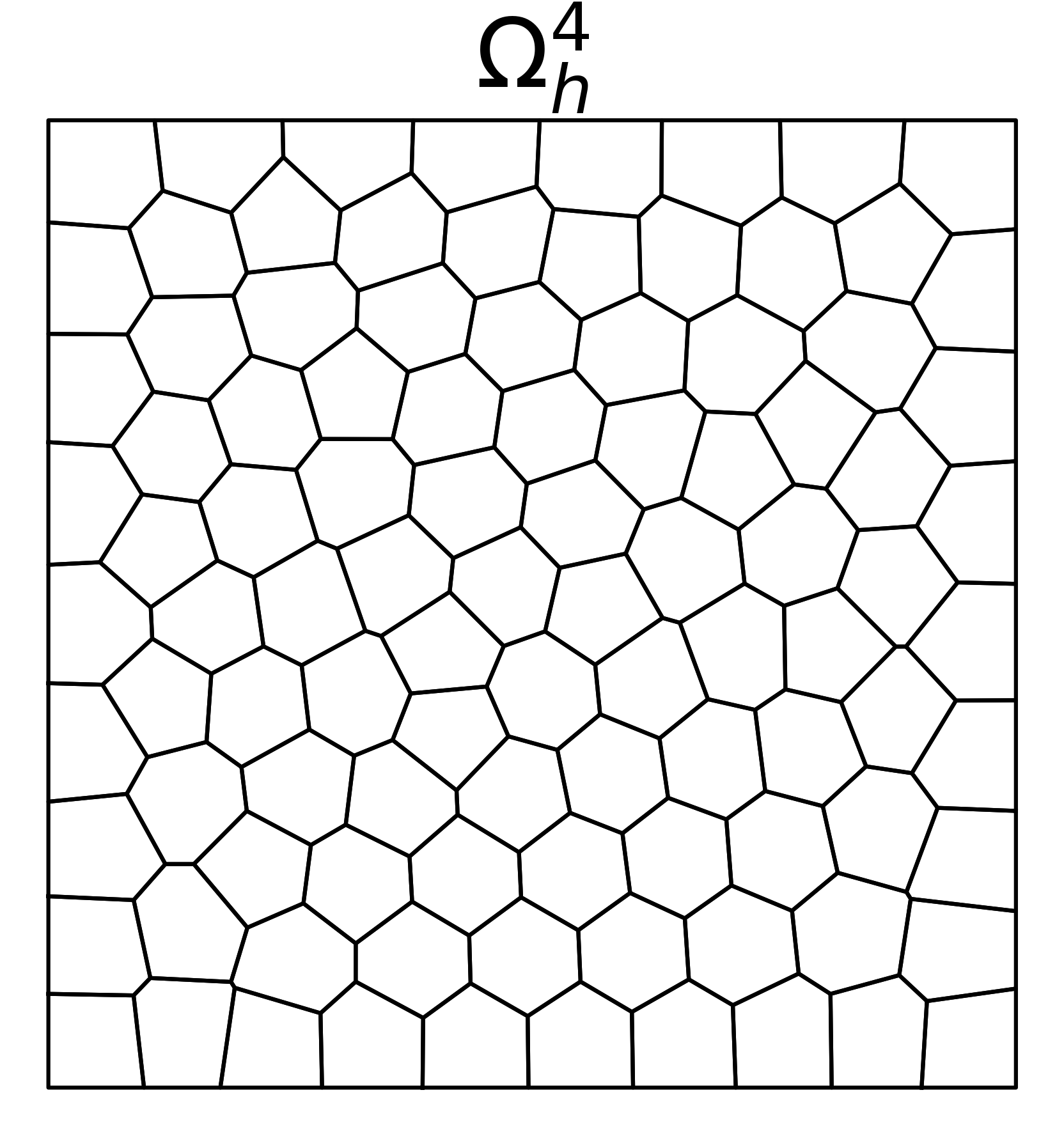}
\caption{Examples of polygonal meshes.}
\label{fig:meshes}
\end{figure}
\vspace{-0.9cm}
\section{Numerical experiments}
In this section, we present numerical experiments validating the theoretical analysis of the fully discrete VE scheme~\eqref{eqn16}. All simulations are carried out using the open-source finite element library DUNE \cite{bastian2008generic, dedner2010generic}. 
Unless otherwise stated, we use polynomial degree two for the discrete spaces $\bm{M}_h^{\mathfrak{J}}$, $\bm{V}_h^{\mathcal{L}}$, and $\mathring{U}_h^{\mathcal{K}}$, while $W_h^\mathcal{L^*} $ is consistently taken to be of degree zero. This choice for $W_h^\mathcal{L^*} $ reduces the number of degrees of freedom in the Navier–Stokes subsystem, as discussed in \cite[Section 5]{da2017divergence}.  The computational domain is discretized using the four polygonal mesh families exemplified in  Fig.~\ref{fig:meshes}. The error measures are defined as (see Section~\ref{VE_formation} for the  definitions of projections): \vspace{-0.15cm}
\begin{equation} \label{error_formula_1}
\begin{aligned}
\mathtt{E}(\bm{u}, L^2, H^1) 
&= \left( \tau \sum_{n=1}^{N} \left| \bm{u}^n - \bm{\Pi}_k^{\nabla} \bm{u}_h^n \right|_1^2 \right)^{\frac{1}{2}}, \; \mathtt{E}(p, L^2, L^2) 
= \left( \tau \sum_{n=1}^{N} \left\| p^n - \Pi_k^{0} p_h^n \right\|_0^2 \right)^{\frac{1}{2}}, \\[-0.2em]
\mathtt{E}(C_i, L^2, H^1) 
&= \left( \tau \sum_{n=1}^{N} \left| C_i^n - \Pi_k^{\nabla} C_{i,h}^n \right|_1^2 \right)^{\frac{1}{2}}, 
\quad i=1,2. 
\end{aligned} \vspace{-0.15cm}
\end{equation}
\begin{equation} \label{error_formula_2}
\begin{aligned}
&\mathtt{E}(\bm{u}, L^\infty, L^2) 
= \left\| \bm{u}(T) - \bm{\Pi}_k^{\nabla} \bm{u}_h^N \right\|_0,\qquad \qquad  
\mathtt{E}(\phi, L^\infty, H^2) 
&= \left| \phi(T) - \Pi_k^{\Delta} \phi_h^N \right|_2,\\[-0.2em]
&\mathtt{E}(C_i, L^\infty, L^2) 
= \left\| C_i(T) - \Pi_k^{\nabla} C_{i,h}^N \right\|_0,
\quad i=1,2.
\end{aligned}
\end{equation}
\begin{table}
\centering
\setlength{\tabcolsep}{8pt} 

\resizebox{\linewidth}{!}{
\begin{tabular}{llcccccc}
\hline
dofs & \diagbox[innerwidth=0.8cm]{$h$}{$\tau$} & $1/4$ &$1/8$& $1/16$&$1/32$&$1/64$\\
\hline
\\[-0.9em]
\multicolumn{7}{c}{$\mathtt{E}(C_2, L^2, H^1)$} \\
\\[-0.9em]
\hline
383   & $1/4$  & \fbox{$1.2291$} & $7.7841 \times 10^{-1}$ & $6.0098 \times 10^{-1}$ & $5.4206 \times 10^{-1}$ & \fbox{$5.2625 \times 10^{-1}$} \\[-0.42em] 
1335  & $1/8$  & $1.0920$ & \fbox{$5.6443 \times 10^{-1}$} & $3.0753 \times 10^{-1}$ & $1.9399 \times 10^{-1}$ & \fbox{$1.5187 \times 10^{-1}$} \\[-0.42em]  
4967  & $1/16$ & $1.0811$ & $5.4565 \times 10^{-1}$ & \fbox{$2.7506\times 10^{-1}$} & $1.4078\times 10^{-1}$ & \fbox{$7.6295\times 10^{-2}$} \\[-0.42em]  
19143 & $1/32$ & $1.0803$ & $5.4440 \times 10^{-1}$ & $2.7283\times 10^{-1}$ & \fbox{$1.3669\times 10^{-1}$} & {$6.8749\times 10^{-2}$} \\[-0.42em]  
75143 & $1/64$ & \fbox{$1.0803$} & \fbox{$5.4432 \times 10^{-1}$} & \fbox{$2.7269\times 10^{-1}$} & \fbox{$1.3642\times 10^{-1}$} & \fbox{$6.8247\times 10^{-2}$} \\
\hline
\\[-0.9em]
\multicolumn{7}{c}{$\mathtt{E}(\bm{u}, L^2, H^1)$} \\
\\[-0.9em]
\hline
383   & $1/4$  & \fbox{$9.1935 \times 10^{-1}$} & $6.5161 \times 10^{-1}$ & $5.5815 \times 10^{-1}$ & $5.3452 \times 10^{-1}$ & \fbox{$5.4535 \times 10^{-1}$} \\[-0.42em]   
1335  & $1/8$  & $7.3854 \times 10^{-1}$ & \fbox{$3.8993 \times 10^{-1}$} & $2.2883 \times 10^{-1}$ & $1.6438 \times 10^{-1}$ & \fbox{$1.4365 \times 10^{-1}$} \\[-0.42em]   
4967  & $1/16$ & $7.2309 \times 10^{-1}$ & $3.6355 \times 10^{-1}$ & \fbox{$1.8436\times 10^{-1}$} & $9.6878\times 10^{-2}$ & \fbox{$5.6714\times 10^{-2}$} \\[-0.42em]   
19143 & $1/32$ & $7.2203 \times 10^{-1}$ & $3.6173 \times 10^{-1}$ & $1.8110\times 10^{-1}$ & \fbox{$9.0863\times 10^{-2}$} & {$4.6025\times 10^{-2}$} \\[-0.42em]   
75143 & $1/64$ & \fbox{$7.2196 \times 10^{-1}$} & \fbox{$3.6162 \times 10^{-1}$} & \fbox{$1.8089\times 10^{-1}$} & \fbox{$9.0473\times 10^{-2}$} & \fbox{$4.5274\times 10^{-2}$} \\
\hline
\\[-0.9em]
\multicolumn{7}{c}{$\mathtt{E}(p, L^2, L^2)$} \\
\\[-0.9em]
\hline
383   & $1/4$  & \fbox{$1.0500 \times 10^{-1}$} & $8.0241 \times 10^{-2}$ & $7.7429 \times 10^{-2}$ & $8.4669 \times 10^{-2}$ & \fbox{$9.7159 \times 10^{-2}$} \\[-0.42em]  
1335  & $1/8$  & $9.5272 \times 10^{-2}$ & \fbox{$5.4356 \times 10^{-2}$} & $3.7218 \times 10^{-2}$ & $3.1440 \times 10^{-2}$ & \fbox{$2.9887 \times 10^{-2}$} \\[-0.42em]  
4967  & $1/16$ & $9.3093 \times 10^{-2}$ & $4.8576 \times 10^{-2}$ & \fbox{$2.7412\times 10^{-2}$} & $1.8560\times 10^{-2}$ & \fbox{$1.5529\times 10^{-2}$} \\[-0.42em]  
19143 & $1/32$ & $9.2493 \times 10^{-2}$ & $4.6996 \times 10^{-2}$ & $2.4379\times 10^{-2}$ & \fbox{$1.3710\times 10^{-2}$} & {$9.2617\times 10^{-3}$} \\[-0.42em]  
75143 & $1/64$ & \fbox{$9.2337 \times 10^{-2}$} & \fbox{$4.6588 \times 10^{-2}$} & \fbox{$2.3556\times 10^{-2}$} & \fbox{$1.2195\times 10^{-2}$} & \fbox{$6.8480\times 10^{-3}$} \\ 
\hline
\end{tabular}
}
\vspace{0.1cm}
\caption{Example~\ref{manufactured}. Error norms \eqref{error_formula_1} using the VE scheme \eqref{eqn16} with polynomial degrees $\mathfrak{J} = \mathcal{K} = \mathcal{L} = 2$ on the mesh $\Omega_h^1$.}
\label{tabla1-1}
\end{table}
\subsection{Accuracy test} \label{manufactured} Here, we employ the method of manufactured solutions  and consider the following prescribed smooth functions on the spatial domain $\Omega = (0,1)^2$ and the time interval $[0,1]$: \vspace{-0.15cm}
\begin{equation}
\begin{aligned} 
    &C_1 = e^{-2t}\sin(4\pi x)\sin(4\pi y), 
    &&C_2 = e^{-2t}\sin(2\pi x)\sin(2\pi y), \\[-0.2em]
    &\phi = e^{-t}\cos(2\pi x)\cos(2\pi y), 
    &&p = e^{-t}\bigl(\sin x - \sin y\bigr), \\[-0.2em]
    &\bm{u} = \bigl(e^{-t}\sin^2(\pi x)\sin(2\pi y),\;
    -e^{-t}\sin(2\pi x)\sin^2(\pi y)\bigr).\nonumber
\end{aligned} \vspace{-0.15cm}
\end{equation}
We set $\upnu=1$ and augment system~\eqref{eqn3}  with suitable forcing terms so that  the above set of functions forms an exact solution. 

We begin by fixing $\mathfrak{J} = \mathcal{K} = \mathcal{L} = 2$ and performing simultaneous refinements in both space and time, with $h$ and $\tau$ taking the values $1/4$, $1/8$, $1/16$, $1/32$, and $1/64$. The results are reported in Table~\ref{tabla1-1}, where the columns (left to right) correspond to time step refinements $\tau$  and the rows (top to bottom) correspond to spatial mesh refinements $h$ . Boxed entries highlight the convergence trends observed on the mesh $\Omega_h^1$ for the velocity, pressure, and concentration errors defined in \eqref{error_formula_1}.  
Figure~\ref{fig:manufactured_errors} (top row) presents the corresponding Log-Log plots for the case $h = \tau = 1/4, 1/8, 1/16, 1/32, 1/64$ on all the meshes depicted in Figure~\ref{fig:meshes}, illustrating that the errors follow the expected linear behavior i.e. $h^{\min\{\mathfrak{J},\mathcal{K}-1,\mathcal{L}\}}=h$ (see Theorem~\ref{thm_fully_error}). All results are reported for the concentration $C_2$, since $C_1$ exhibits the same convergence behavior.

To further verify whether the observed convergence rate for $\mathcal{K} = 2$ is intrinsic or coincidental, we increased $\mathcal{K}$ to 3 while keeping the other polynomial degrees unchanged and fixing $\tau=h^2$. 
In agreement with Theorem~\ref{thm_fully_error}, the concentration and velocity errors measured in the norms~\eqref{error_formula_1} converge quadratically, 
while the pressure error remains linear (pressure robust scheme), as observed in Fig.~\ref{fig:manufactured_errors} (middle row).

We then investigate the error norms listed in \eqref{error_formula_2}, using the time step  $\tau=h^2$,  \( \mathfrak{J} = \mathcal{L} =2\) and $\mathcal{K}=3$.  Since standard interpolation estimates yield third-order accuracy in $h$ for the concentrations and velocity, and second-order accuracy for the electrostatic potential in the norms appearing in \eqref{error_formula_2}, it is reasonable to expect that the fully discrete solution may attain similar convergence behavior. The results, obtained across all four meshes, as shown in the bottom row of Fig.~\ref{fig:manufactured_errors}, confirm that the concentrations and velocity indeed achieve third-order convergence with respect to \( h \).
\begin{figure}
\centering
\setlength{\tabcolsep}{2pt}
\renewcommand{\arraystretch}{1}

\begin{tabular}{cccc}
\includegraphics[width=0.33\textwidth]{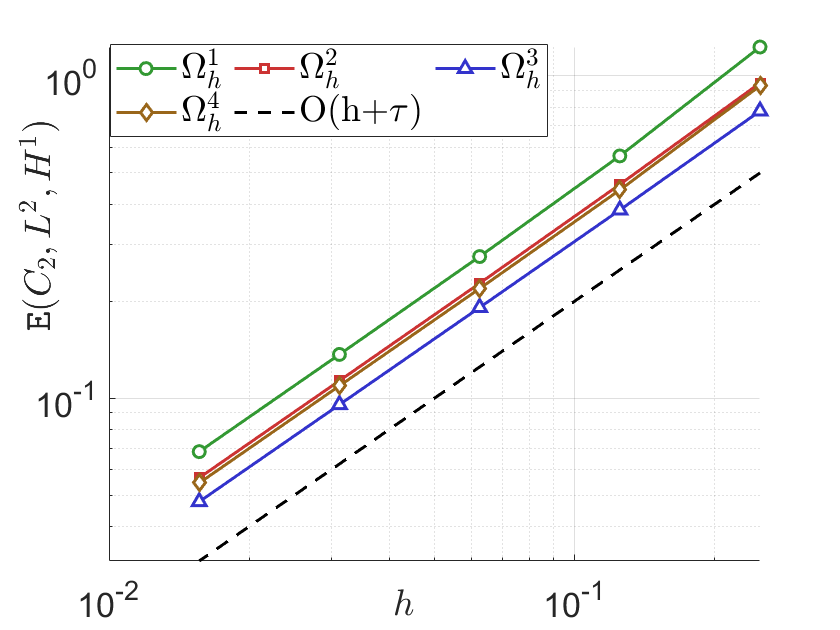} &
\includegraphics[width=0.33\textwidth]{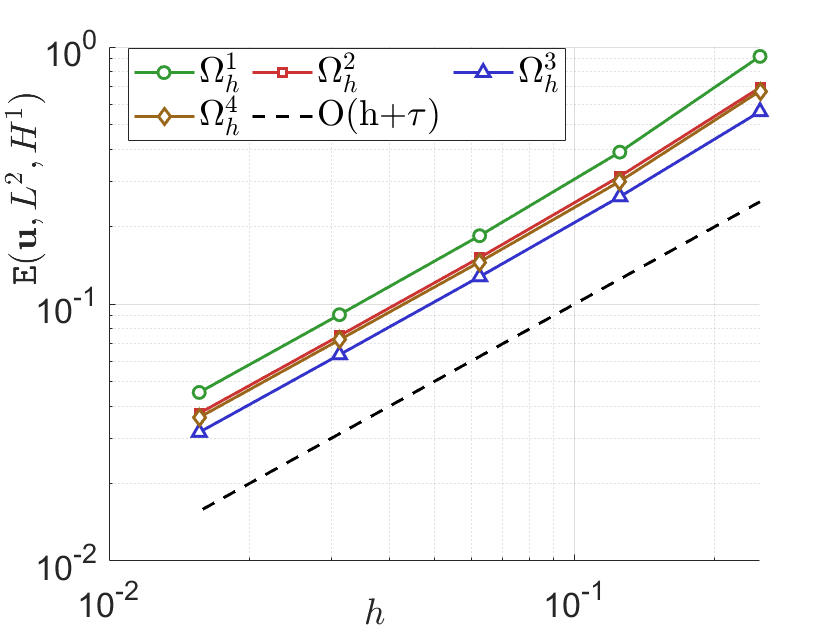} &
\includegraphics[width=0.33\textwidth]{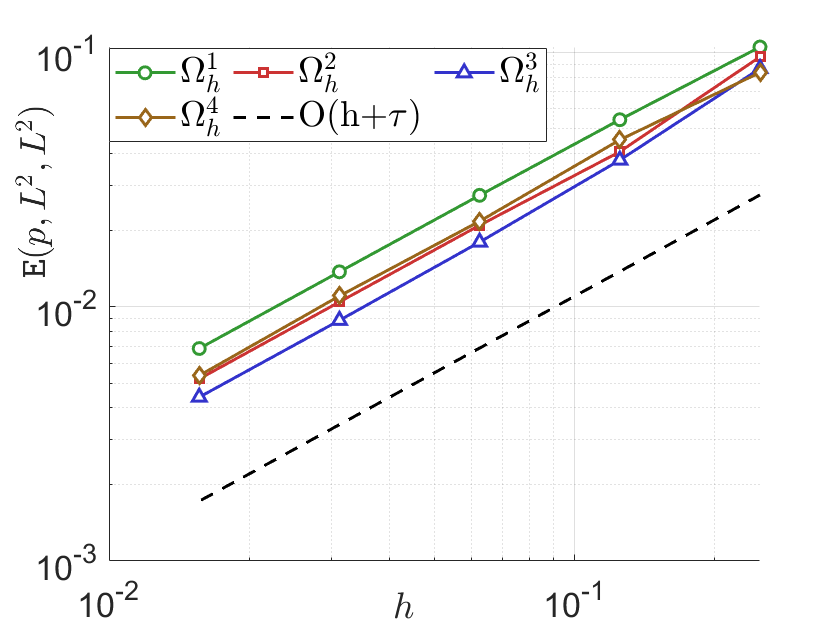} \\
\includegraphics[width=0.33\textwidth]{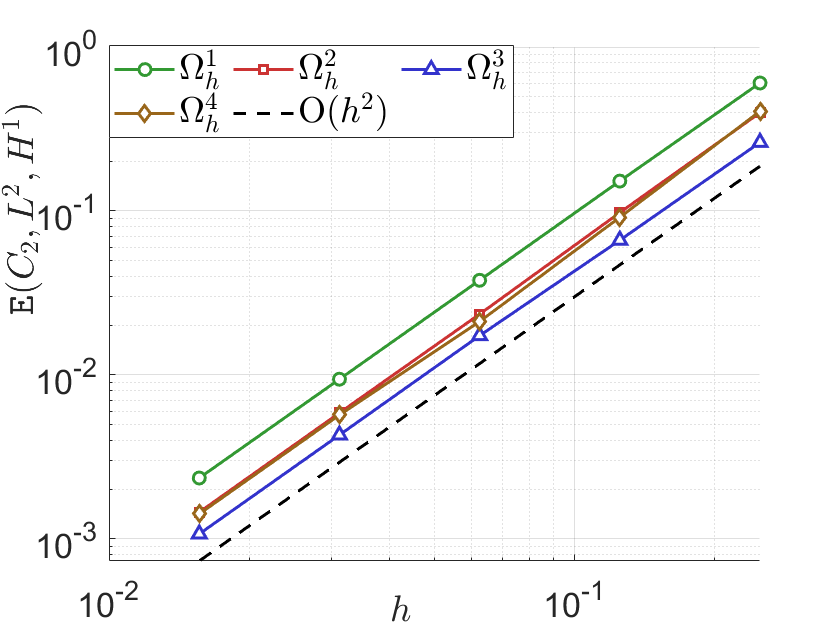} &
\includegraphics[width=0.33\textwidth]{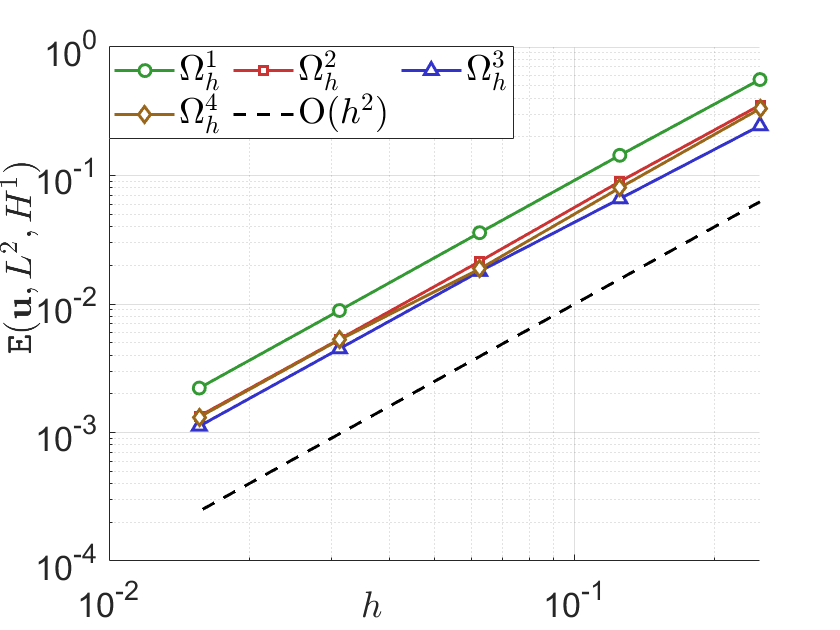} &
\includegraphics[width=0.33\textwidth]{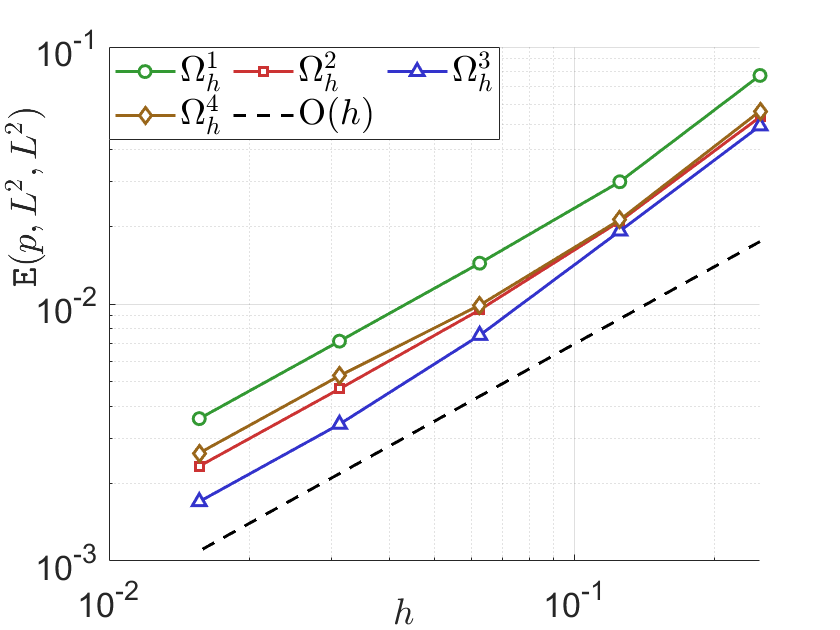} \\
\includegraphics[width=0.33\textwidth]{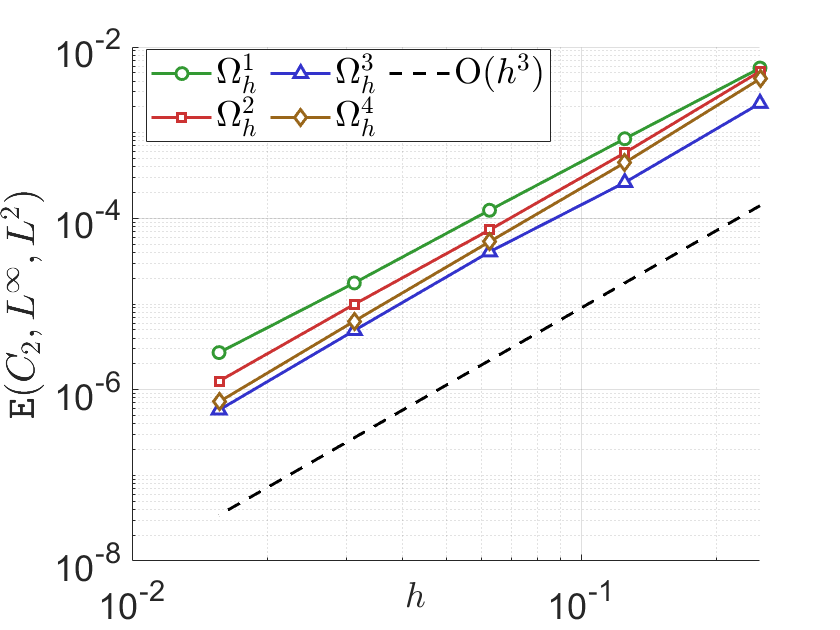} &
\includegraphics[width=0.33\textwidth]{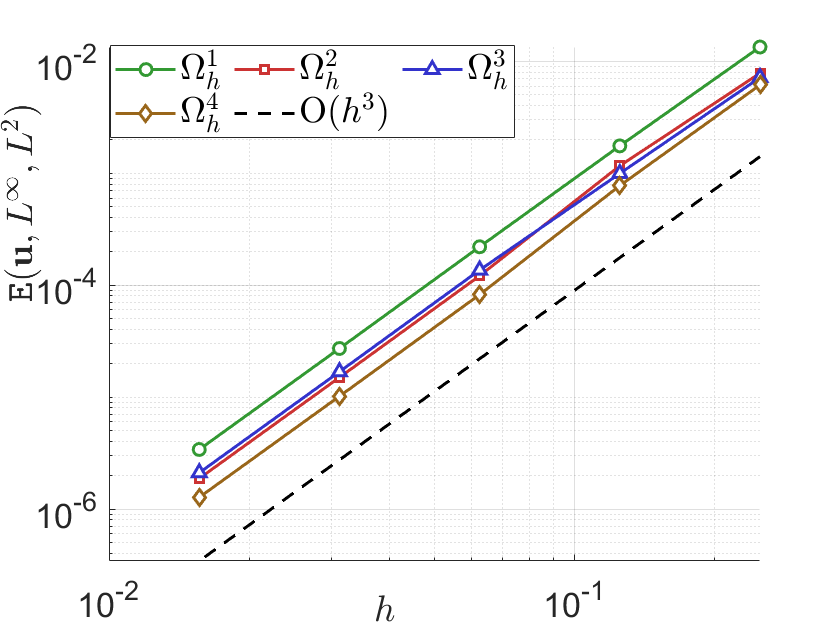} &
\includegraphics[width=0.33\textwidth]{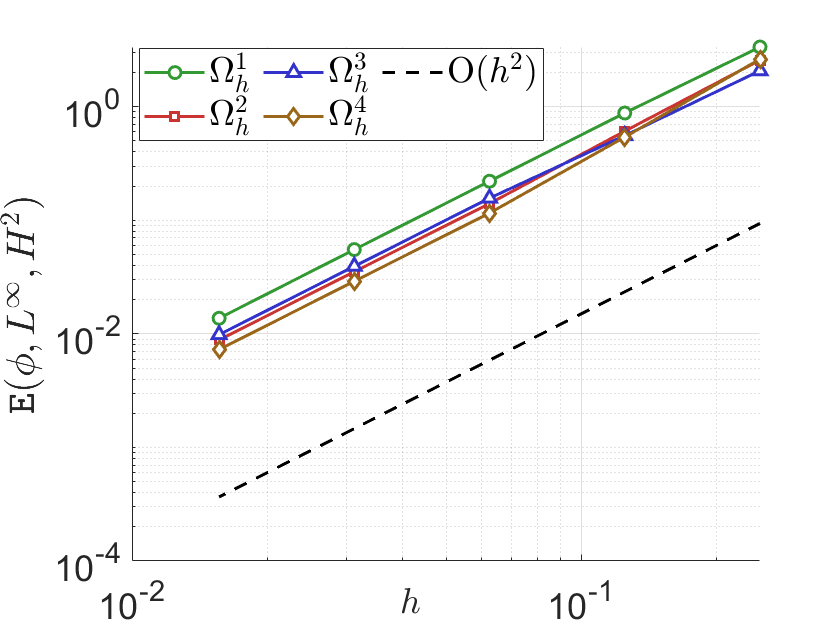}
\end{tabular}
\caption{Example~\ref{manufactured}. Error norms obtained on all four meshes depicted in Fig.~\ref{fig:meshes}. 
Top row: \eqref{error_formula_1} with time step $\tau=h$ and polynomial degrees $\mathfrak{J}=\mathcal{K}=\mathcal{L}=2$. 
Middle row: \eqref{error_formula_1} with time step $\tau=h^2$ and polynomial degrees $\mathfrak{J}=\mathcal{L}=2$, $\mathcal{K}=3$. 
Bottom row: \eqref{error_formula_2} with time step $\tau=h^2$ and polynomial degrees $\mathfrak{J}=\mathcal{L}=2$, $\mathcal{K}=3$.}
\label{fig:manufactured_errors}
\end{figure}
\subsection{Small viscosity test}\label{ex_viscosity}
We test the robustness of scheme \eqref{eqn16} for low viscosities by setting $\upnu = 10^{-3}, 10^{-2}, 10^{-1}$, and $10^{0}$, using the setup from the Accuracy Test (\ref{manufactured}). Figure~\ref{viscosity} shows the error norms \eqref{error_formula_1} and \eqref{error_formula_2} of the velocity variable as functions of the viscosity $\upnu$. The spatial meshes considered are $h = 1/4, 1/8, 1/16, 1/32, 1/64$, with time step choices $\tau = h$ for the left plot and $\tau = h^2$ for the right plot.

The results demonstrate that the proposed VE scheme maintains stability and delivers accurate approximations even for very small viscosity values (see Figure~\ref{viscosity}). As expected, the errors increase slightly as $\upnu$ decreases, reflecting the higher numerical challenge in resolving sharper gradients at lower viscosity values. Moreover, the computed velocity field is (numerically) divergence-free, with $\|\nabla\cdot u_h\|_{L^2(\Omega)} \approx 5.0\times 10^{-7}$ for the smallest viscosity considered, confirming the incompressibility-preserving nature of the method. These observations are consistent with the general observation that exactly divergence-free Galerkin methods are more robust with respect to small viscosity parameters; see, for instance, \cite{schroeder2017pressure}, and also \cite{da2018virtual} in the VEM context.
\begin{figure}
\centering
\includegraphics[width=0.55\linewidth]{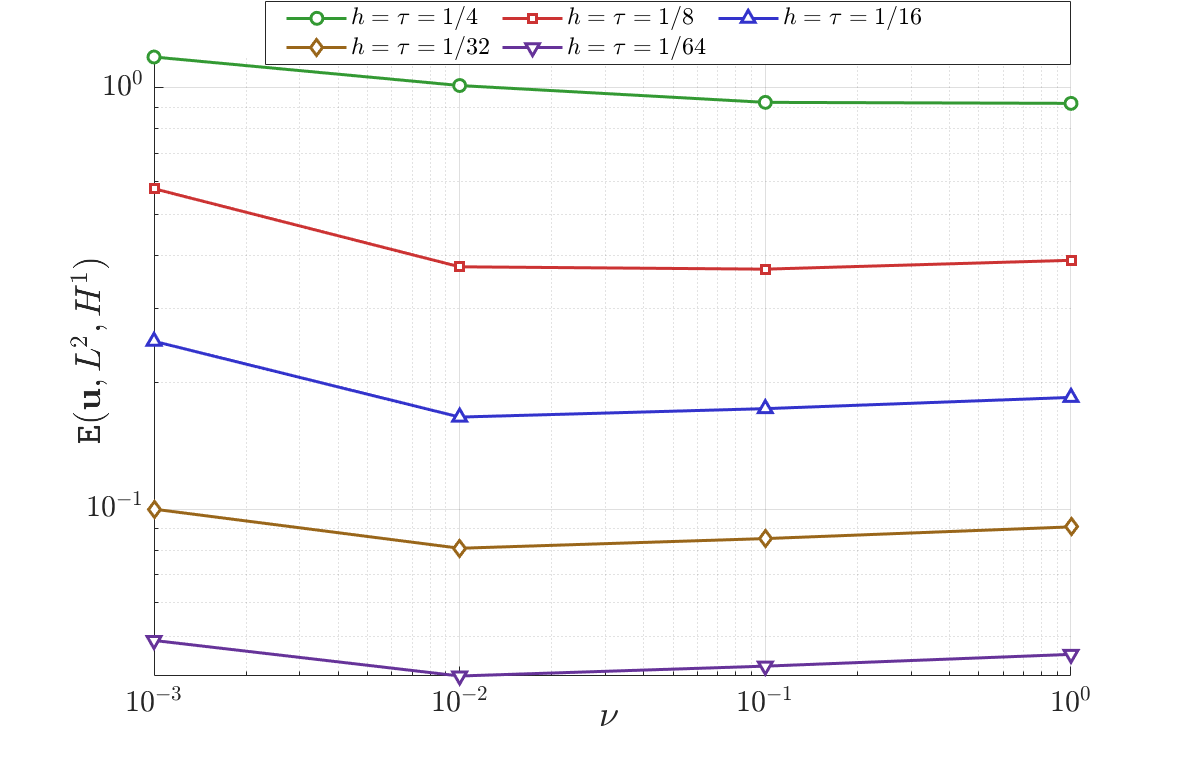}\hspace{-7mm}
\includegraphics[width=0.48\linewidth]{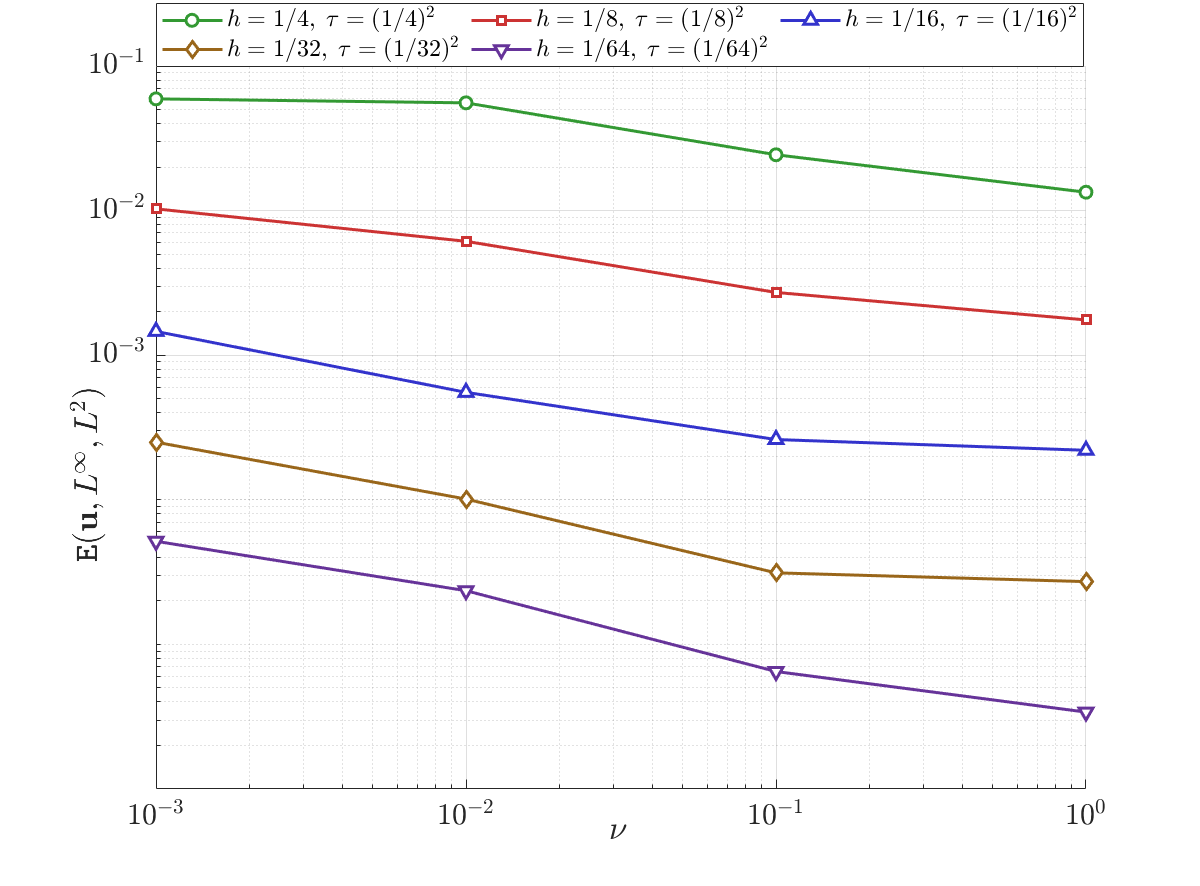}
\caption{Example~\ref{ex_viscosity}. Velocity error norms \eqref{error_formula_1} and \eqref{error_formula_2} for different viscosities~$\upnu$ using $\tau=h$ (left) and $\tau=h^2$ (right), with $\mathfrak{J}=\mathcal{K}=\mathcal{L}=2$ on $\Omega_h^1$.}
\label{viscosity}
\end{figure}
\subsection{Dynamics under Initial Discontinuous
Concentrations} \label{Disconti}
Here, we study the effect of discontinuous initial data on the evolution of the numerical scheme \eqref{eqn16} posed on the unit square with all forcing terms set to zero. We consider initial concentrations \cite{prohl2009convergent} \vspace{-0.2cm}
\[
c_{1,0}(x,y) =
\begin{cases}
1, & (x,y) \in (0,1)^2 \setminus \Big( (0,0.75)\times(0,1)\;\cup\;(0.75,1)\times\bigl(0,\tfrac{1}{2}\bigr) \Big), \\[-0.2em]
10^{-6}, & \text{otherwise},
\end{cases}\vspace{-0.1cm}
\]
\[
c_{2,0}(x,y) =
\begin{cases}
1, & (x,y) \in (0,1)^2 \setminus \Big( (0,0.75)\times(0,1)\;\cup\;(0.75,1)\times\bigl(\tfrac{1}{2},1\bigr) \Big), \\[-0.2em]
10^{-6}, & \text{otherwise}.
\end{cases} \vspace{-0.1cm}
\]
and initial velocity $\bm{u}_0(x,y)=\bm{0}$. The discontinuity of the initial concentrations represents an interface between
the electrolyte and the solid surfaces where electroosmosis is expected to occur. The simulations are performed with mesh size $h=1/64$, time step $\tau=10^{-3}$, and final time $T=0.25$. The temporal evolution of the concentrations, electric potential, velocity field, and pressure is illustrated in Figure~\ref{Fig-Dis2}.
\begin{figure}
			\centering
			\includegraphics[width=0.32\textwidth]{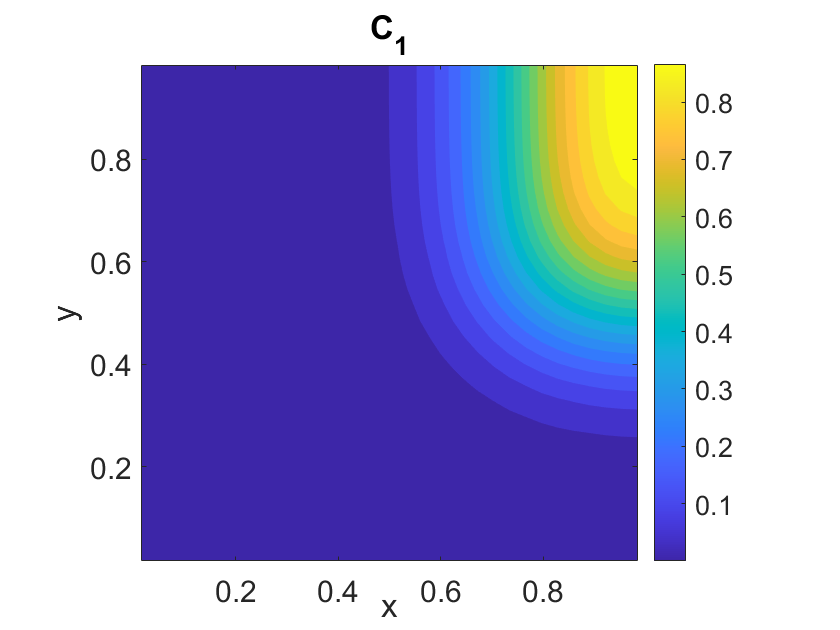}\;
            \includegraphics[width=0.32\textwidth]{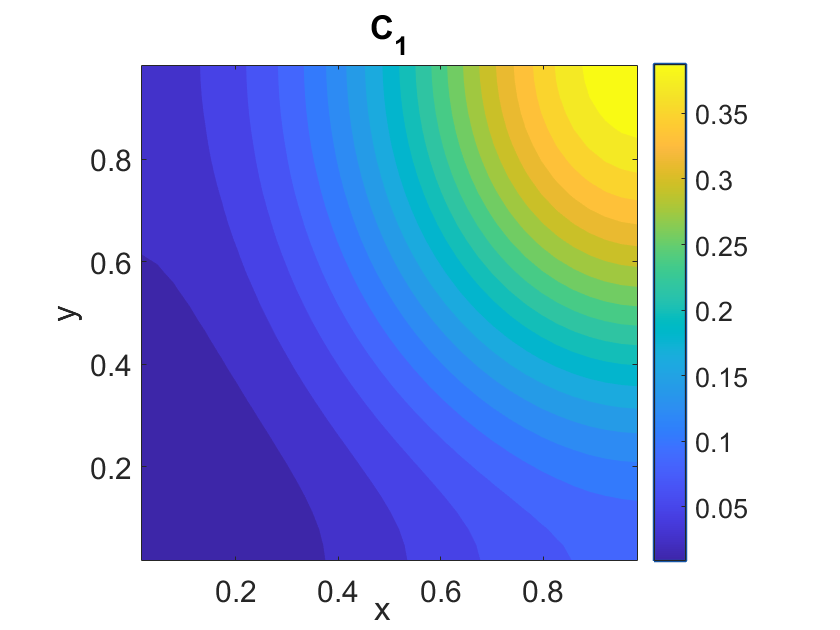}\;
            \includegraphics[width=0.32\textwidth]{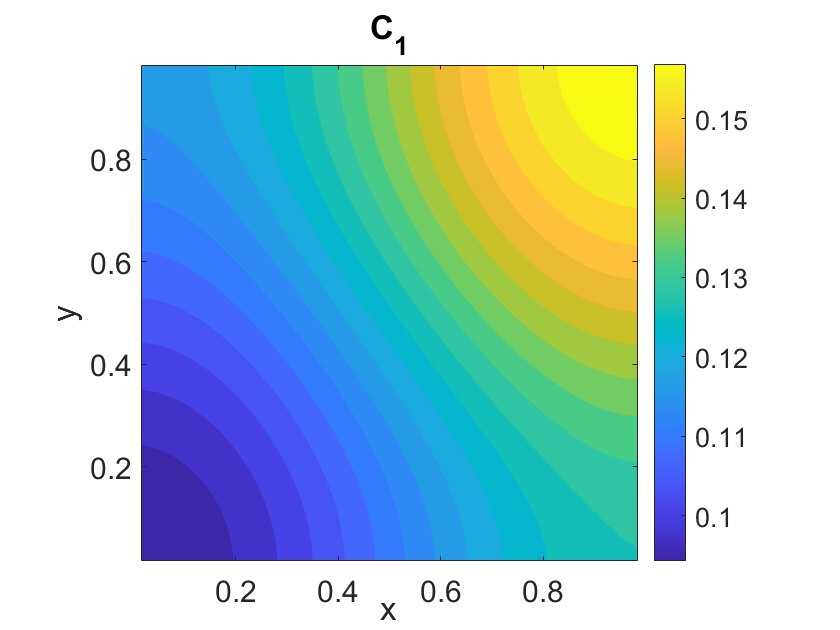}\\
			\includegraphics[width=0.32\textwidth]{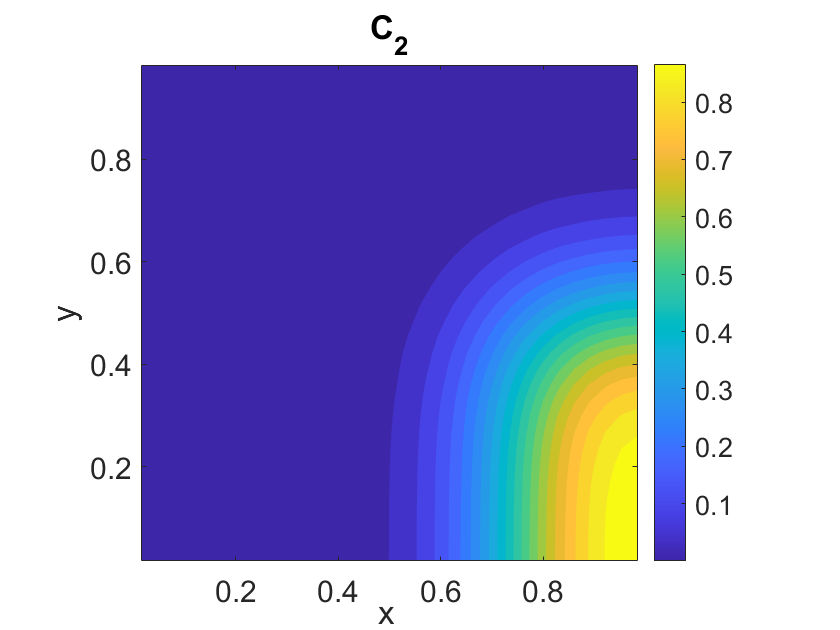}\;
            \includegraphics[width=0.32\textwidth]{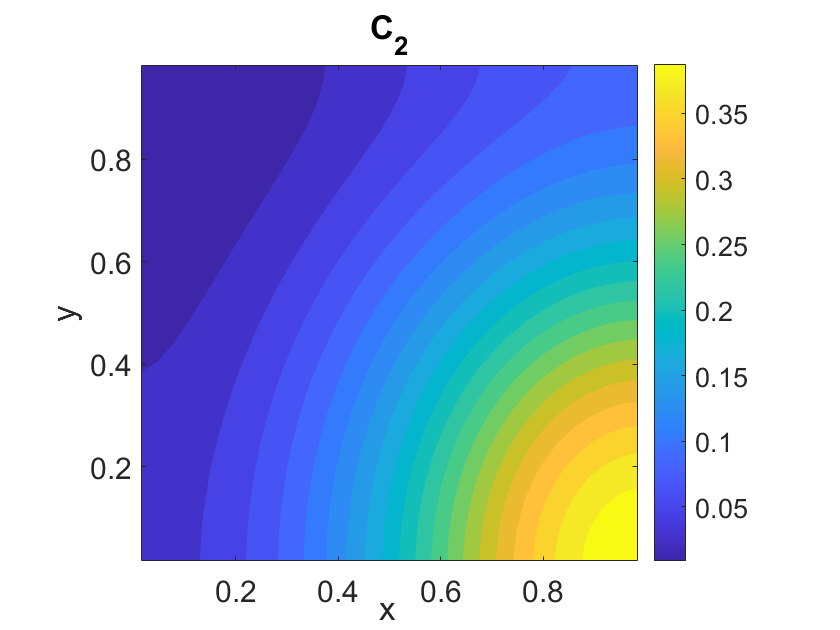}\;
            \includegraphics[width=0.32\textwidth]{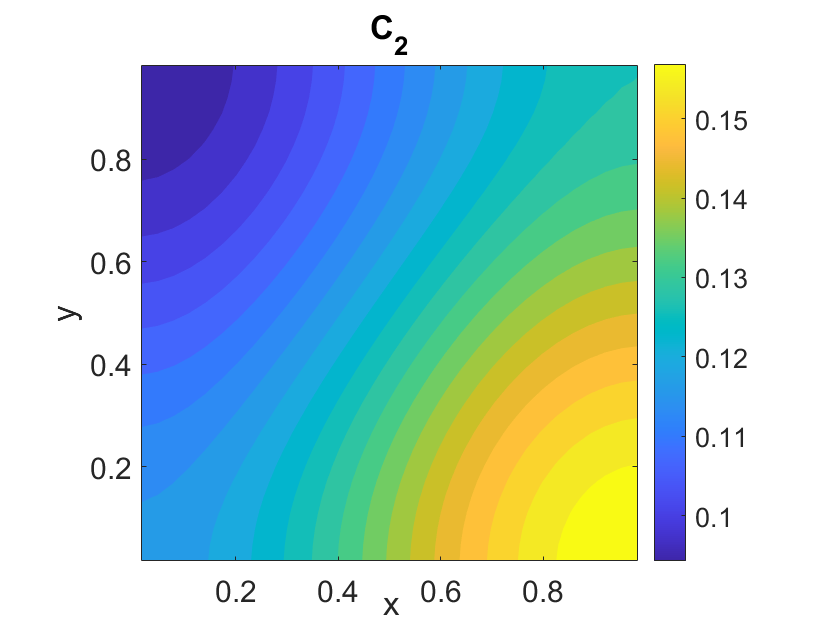}\\
            \includegraphics[width=0.32\textwidth]{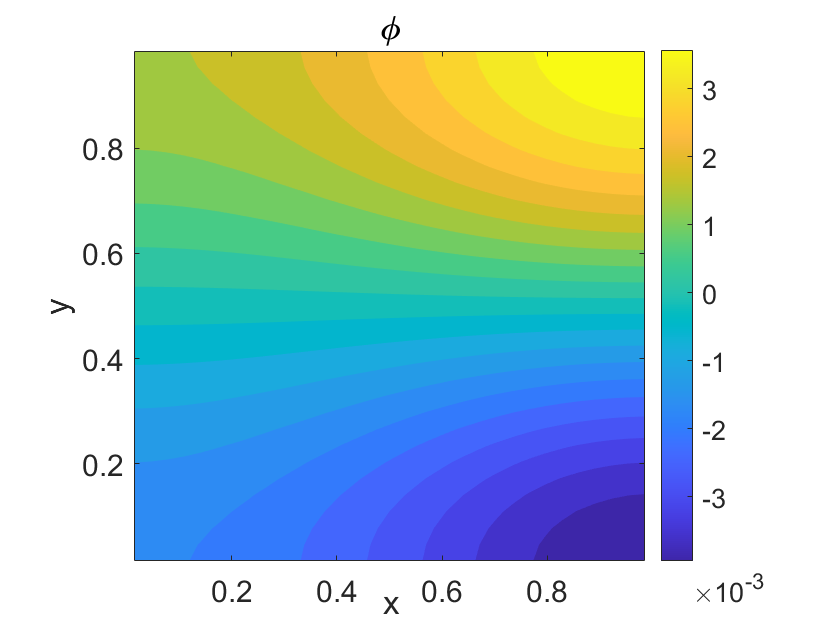}\;
            \includegraphics[width=0.32\textwidth]{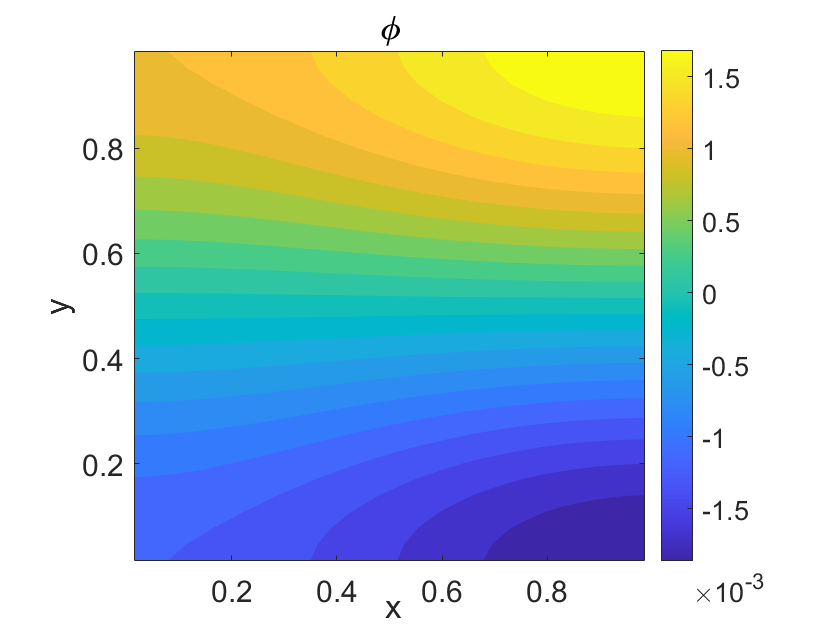}\;
            \includegraphics[width=0.32\textwidth]{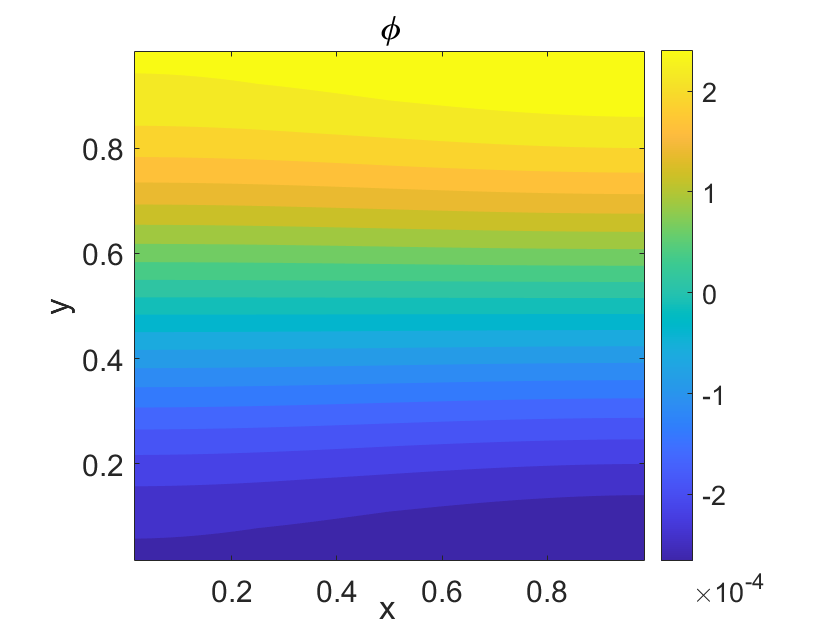}\\
            \includegraphics[width=0.32\textwidth]{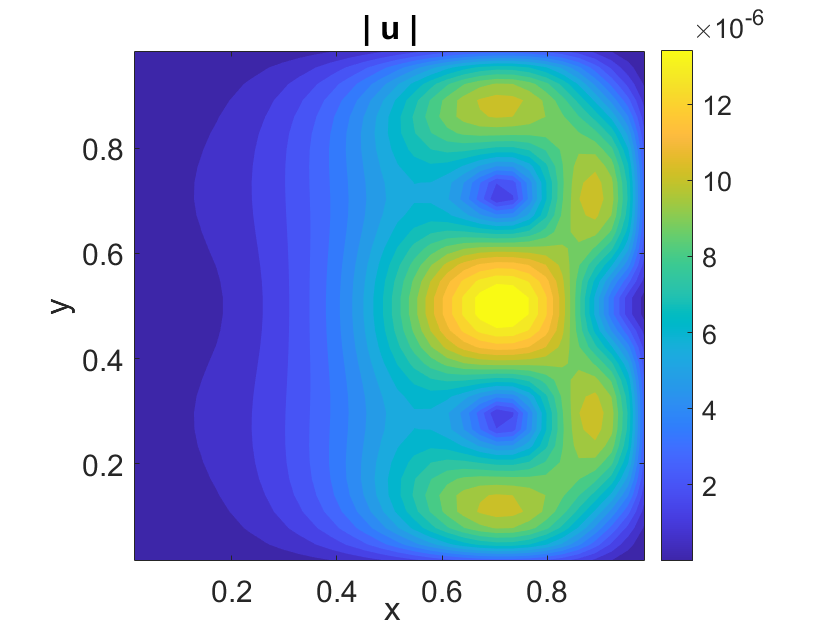}\;
            \includegraphics[width=0.32\textwidth]{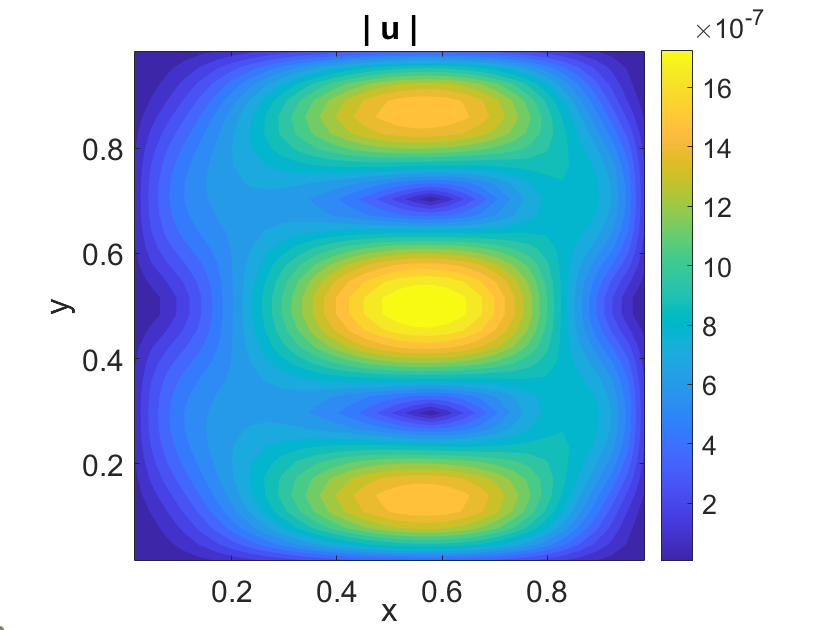}\;
            \includegraphics[width=0.32\textwidth]{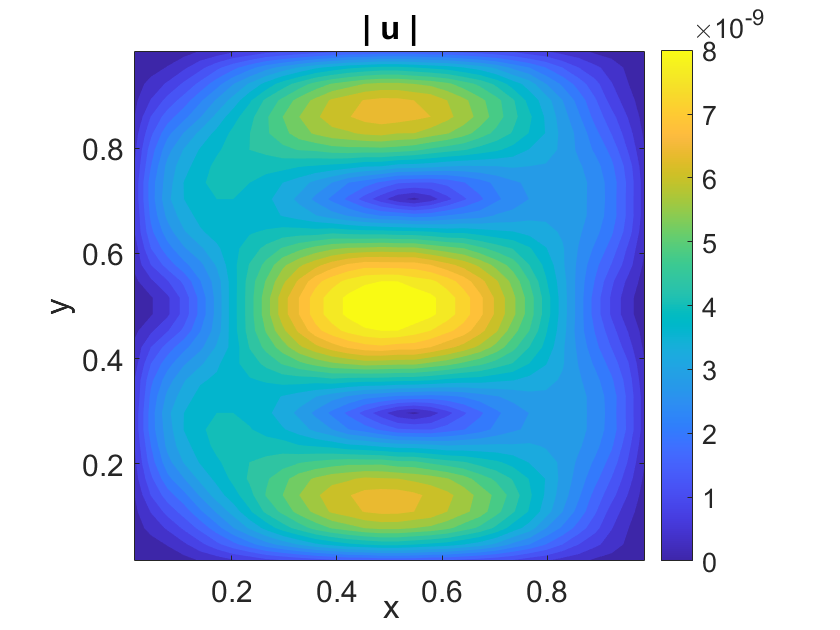}\\
            \includegraphics[width=0.32\textwidth]{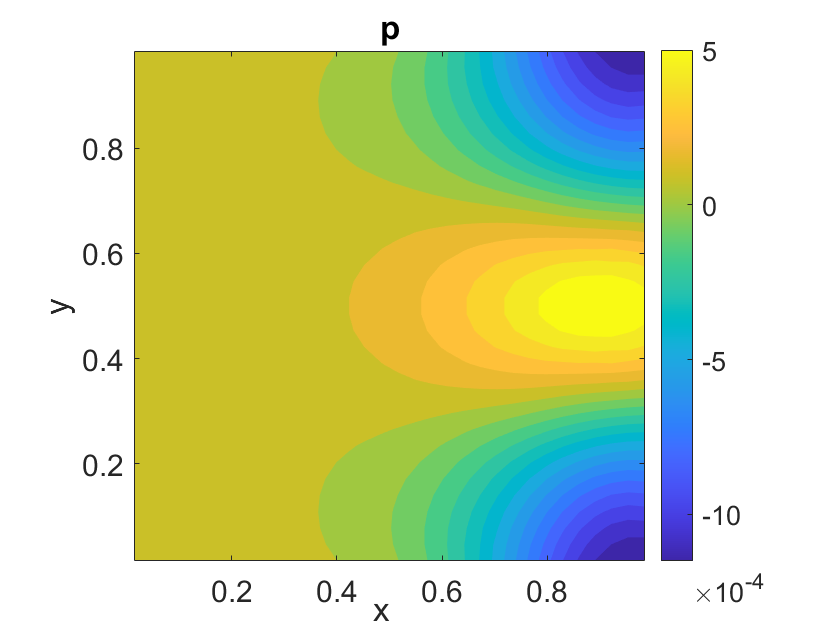}\;
            \includegraphics[width=0.32\textwidth]{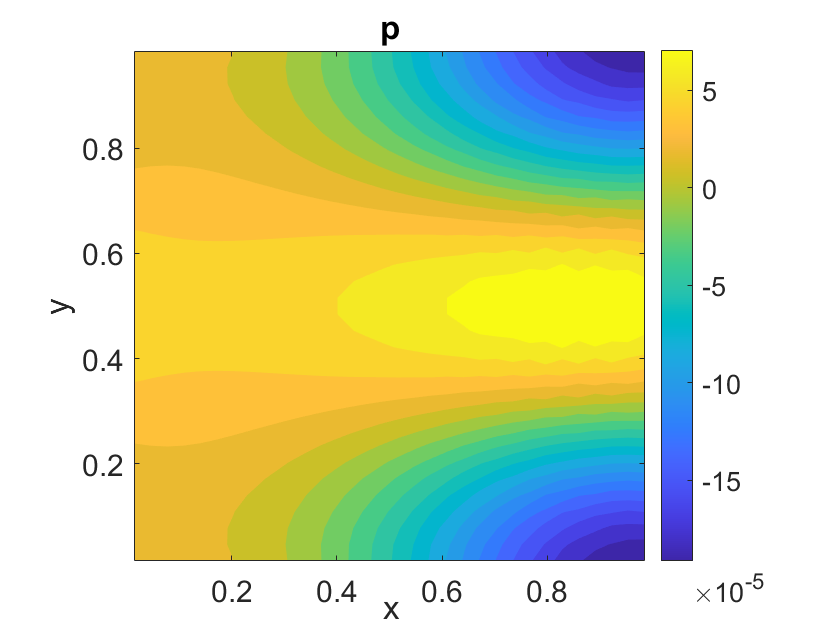}\;
            \includegraphics[width=0.32\textwidth]{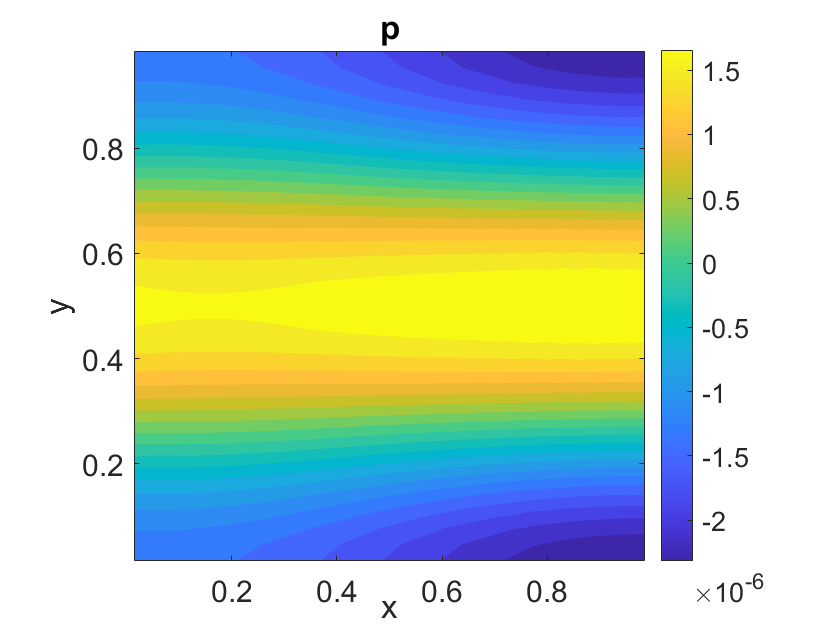}
\caption{Example~\ref{Disconti}. Temporal evolution of the solution. From top to bottom: concentration $C_1$, concentration $C_2$, electric potential, velocity magnitude, and pressure. From left to right: $t=0.01$, $t=0.07$, and $t=0.25$.}
\label{Fig-Dis2}
\end{figure}

\newpage
\section*{Acknowledgments}
AC is a member of Gruppo Nazionale per il Calcolo Scientifico (GNCS) of Istituto Nazionale di Alta Matematica (INdAM)and acknowledge the support of the European Research Council (ERC) under the European Union’s Horizon
2020 research and innovation programme (call HORIZON-EUROHPC-JU-2023-COE-03, grant agreement No. 101172493 ``dealii-X'').

\bibliographystyle{abbrv}
\bibliography{references}

\end{document}